\DeclareFontFamily{U}{calligra}{}
\DeclareFontShape{U}{calligra}{m}{n}{<->callig15}{}
\newcommand{\calE}{{\!\!\text{\usefont{U}{calligra}{m}{n}E}\,\,}}
\newcommand{\calC}{{\!\!\text{\usefont{U}{calligra}{m}{n}C}\,\,}}
\newcommand{\calT}{{\!\!\text{\usefont{U}{calligra}{m}{n}T}\,\,}}
\newtheorem{theorem}{Theorem}
\newtheorem{prop}{Proposition}
\newcommand\norm[1]{\left\lVert#1\right\rVert}
\DeclarePairedDelimiterX{\Iintv}[1]{\llbracket}{\rrbracket}{\iintvargs{#1}}
\title{A Mass Preserving Numerical Scheme for Kinetic Equations that Model Social Phenomena}
\author{Yassin Bahid\textsuperscript{1,*},
Eduardo Corona\textsuperscript{1},
Nancy Rodriguez\textsuperscript{1}\\
\bigskip
\textbf{\textsuperscript{1}}Department of Applied Mathematics, University of Colorado, Boulder, Colorado, USA\\
\bigskip
*Corresponding Author: yassin.bahid@colorado.edu}
\begin{document}
\begin{frontmatter}

\begin{abstract}
In recent years, kinetic equations have been used to model many social phenomena. A key feature of these models is that transition rate kernels involve Dirac delta functions, which capture sudden, discontinuous state changes. Here, we study kinetic equations with transition rates of the form
$$
T(x,y,u) = \delta_{\phi(x,y) - u}.
$$
We establish the global existence and uniqueness of solutions for these systems and introduce a fully deterministic scheme, the \emph{Mass Preserving Collocation Method}, which enables efficient, high fidelity simulation of models with multiple subsystems. We validate the accuracy, efficiency, and consistency of the solver on models with up to five subsystems, and compare its performance against two state-of-the-art agent-based methods: Tau-leaping and hybrid methods. Our scheme resolves subsystem distributions captured by these stochastic approaches while preserving mass numerically, requiring significantly less computational time and resources, and avoiding variability and hyperparameter tuning characteristic of these methods.

\end{abstract}
\end{frontmatter}
\section{Introduction}


Kinetic equations have emerged as a powerful framework for modeling complex social dynamics. Originally developed in the study of gas dynamics \cite{cercignani1988boltzmann}, kinetic modeling provides a bridge between agent-based models and macroscopic descriptions, capturing interactions at the mesoscopic scale. The application of kinetic ideas to social dynamics began with the Weidlich master equation approach to opinion formation \cite{weidlich1971statistical}, followed by the kinetic model of Galam and his collaborators for social conflict \cite{galam1994towards}. Toscani later proposed a kinetic framework for opinion dynamics that incorporated both self-thinking and interaction effects \cite{toscani2006kinetic}. At the same time, Pareschi and Toscani extended this approach to study wealth distribution patterns \cite{pareschi2013interacting}.

Kinetic equations have since been applied across various domains in the social sciences. Borsche and collaborators developed models of migration flows based on population density and perceived location advantages \cite{borsche2014coupling}. Bellomo and collaborators used kinetic modeling to investigate criminal behavior and the effectiveness of law enforcement \cite{bellomo2014systems}. More recent studies have focused on the propagation of information and misinformation in online networks, namely, Franceschi {\it et al.} modeled the dynamics of fake news by accounting for user gullibility and fact-checking behavior \cite{franceschi2022spreading}. During the COVID-19 pandemic, kinetic models were also used to study the spread of the epidemics in heterogeneous populations, as shown by Dimarco {\it et al.} \cite{dimarco2020wealth}.

While kinetic models provide a flexible framework for representing social dynamics, their application to social systems introduces distinctive features and challenges that are not typically encountered in physical systems. One notable difference is the need to capture abrupt state changes, such as immediate decisions in response to crime opportunities or rapid escalations in unrest following provocation. In kinetic models, these changes are captured by the transition rates. Since changes in social systems are frequently sudden and triggered by specific events, they are best modeled with a Dirac delta function, which represents an instant shift rather than a gradual transition. Another key aspect of interest is the assumption that mass is preserved. In other words, the ``mass" of the system, representing the population, should remain constant unless explicitly accounted for by factors such as population growth or decline, e.g., as incorporated in \cite{bellomo2014systems}. The presence of these Dirac delta functions alongside strict enforcement of mass preservation poses unique challenges in the development of efficient and accurate numerical solvers. 

\emph{Numerical methods for kinetic equations:} the simulation of systems via kinetic models has a rich history in the physical and biological sciences; a variety of numerical methods have been developed to handle application-specific complexities and challenges. In rarefied gas dynamics, the Direct Simulation Monte Carlo (DSMC) technique, introduced by Bird \cite{bird1994molecular}, has become a standard approach. In the study of dense gases and liquids, Molecular Dynamics (MD) simulations are commonly used to capture interactions at the particle level \cite{frenkel2023understanding}. In biological systems, stochastic chemical kinetics are often modeled using the Gillespie algorithm, which accurately simulates the timing and sequence of reaction events \cite{gillespie, gillespie2007stochastic}. 

Many of these numerical techniques have been adapted to social systems, most notably agent-based simulations based on the Gillespie \cite{gillespie} and Tau-leaping \cite{gillespie2001approximate} algorithms, which use Monte Carlo methods to capture individual-level variability \cite{helbing2012social}. Hybrid approaches combining stochastic ABMs with deterministic solvers have also gained traction \cite{yates2020blending}. Yet challenges remain: ABMs are computationally expensive, scale poorly with population size, and are difficult to parallelize due to frequent interactions. A limitation of hybrid methods is the reliance on user-defined hyperparameters that govern the transition from deterministic to stochastic dynamics, which can complicate implementation and reduce consistency \cite{flegg2012two}.



Deterministic integration of kinetic equation models at both the system and subsystem levels offers a useful alternative to agent-based models (ABMs). In this work, we develop a numerical solver to address the specific computational challenges that arise in models typical of social applications.


{\it Key contributions}: This paper introduces the mass-preserving collocation method (MPCM), a fully deterministic scheme for efficiently and accurately integrating kinetic equations with sharp transition rates, in contrast to existing stochastic or hybrid methods. The MPCM is constructed to automatically preserve mass numerically and, unlike the Tau-leaping and hybrid methods, it does not require any hyperparameter tuning.

We demonstrate the effectiveness of the MPCM on a number of test case kinetic models featuring two subsystems and four representative transition rates, showing that it conserves mass and reproduces the expected qualitative behavior. For each of these, we compare its performance with that of two state-of-the-art algorithms: Tau-leaping and hybrid. Across all examples presented on this work, the MPCM method reliably matches system behavior and subsystem distributions while requiring orders of magnitude less computational resources. Finally, we show how MPCM allows us to simulate models with a larger number of subsystems for which competing approaches may become impractical. 


{\it Outline}: The paper is structured as follows. Section \ref{sec:KE} introduces a general framework of kinetic equations to model social dynamics. In Section \ref{sec:transitions}, we detail the specific transition rates considered and provide illustrative examples of their projected behavior. Section \ref{sec:scheme} introduces the mass-preserving collocation scheme (MPCM). Section \ref{sec:results} presents the simulation results for various kinetic systems using MPCM and compares them with Tau-leaping and hybrid methods, demonstrating that MPCM offers substantial computational advantages. 

\section{Kinetic Equations - Model Derivation}\label{sec:KE}

Kinetic equations, when used in sociological modeling, borrow ideas from statistical mechanics to help explain how social systems change over time. In this approach, individuals (or agents) are treated as particles that interact within a social space. To build such a model, we follow the framework from \cite{bellomo2013difficult}. In a large population, each person is seen as an active particle with a certain strategy, called {\it microstate}, which defines their individual state. Based on their microstate, agents are grouped into subsystems, each described by a distribution over possible microstates. Interactions between agents are modeled using ideas from game theory and sociology \cite{nowak2006evolutionary, short2010cooperation}. The overall behavior of the system is then described using integro-differential equations, which track how agents move between subsystems. Figure \ref{fig:kineq} illustrates this approach: agents are divided into subsystems, each linked to a microstate. When the population is large, we focus less on individual behavior and more on how the sizes of these subsystems change over time.
 
\begin{figure}[H]
    \centering
    \includegraphics[width=0.9\linewidth]{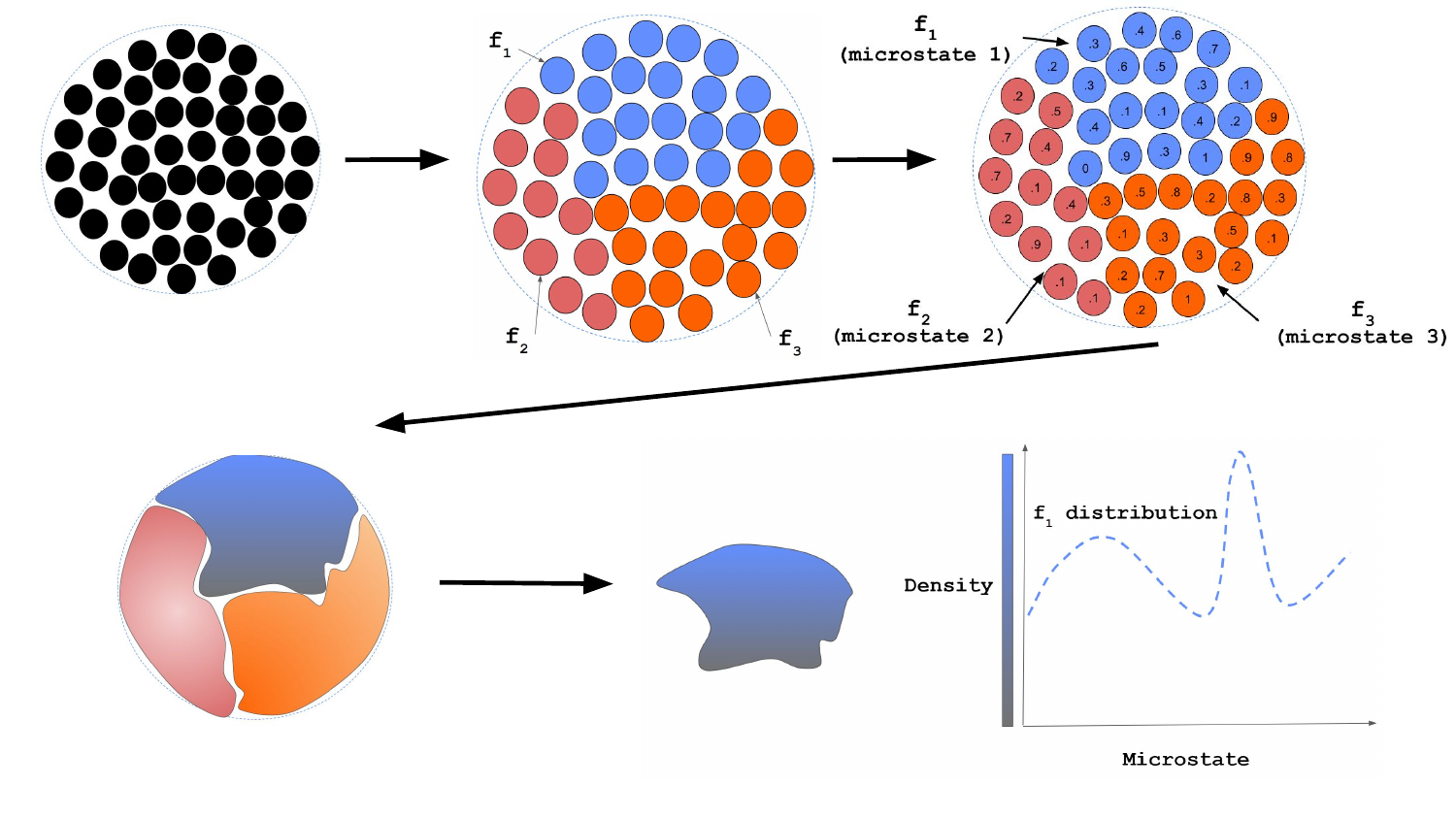}
    \caption{The transition from agent-based models to kinetic equations involves shifting from tracking individual agents to grouping them into subsystems with assigned microstates. The system is then described using density distributions for each subsystem. The objective is to capture macroscopic behavior while preserving essential microscopic dynamics. This approach bridges discrete agent interactions and continuous statistical representations.}
    \label{fig:kineq}
\end{figure}

\subsection{Functional Subsystems}\label{sec: Func Subsystems}
In a social system, functional subsystems represent groups based on roles, traits, or positions, such as social status, income, or beliefs. They help us understand how interactions between individuals influence the spread of these traits and lead to shared norms or opinions. Microstates describe the individual situation of each person, while subsystems group similar individuals and influence how the whole system changes over time.
For a kinetic system with $n$ subsystems, the microstate of subsystem $i$ lies within a non-negative real interval $D_i \subset \mathbb{R}^{+}$. The distribution function of each subsystem is represented by
$$
f_i : [0,T] \times D_i \rightarrow \mathbb{R}^+, \;\text{for}\; i \in \mathbb{S},
$$
where $T$ is a final time, possibly $+\infty$ and $\mathbb{S}=\{1,2,\dots, n\}$. The function $f_i(t,u)$ denotes the microstate density of subsystem $i$ at time $t$, with microstate varying with $u$. It is assumed that $f_i(t,u) \geq 0$ for all subsystems $i$, microstates $u \in D_i$, and times $t \in [0,T]$.

\subsection{Micro-scale interactions}\label{sec:micscale}

Microscopic interactions link sociological theories to mathematical representations through rules that govern the evolution of agents. Two rates of micro-scale interactions are necessary. First, the {\it encounter rates}, $\eta_{ij}(x,y)$, real, non-negative bounded functions, describe the interaction rate between an agent of the subsystem $i$ with microstate $x$ and an agent from subsystem $j$ with microstate $y$. Second, the {\it transition rates}, $T_{ij}^k(x,y,z)$, represent the probability that an agent from subsystem $i$ with microstate $x$ transitions to subsystem $k$ with microstate $z$ after interacting with an agent from subsystem $j$ with microstate $y$, where $i, j, k \in \mathbb{S}$, and the respective microstates are $x \in D_i$, $y \in D_j$, and $z \in D_k$.


\subsection{Kinetic Systems}\label{sec:IDE}

The way micro-scale interactions, captured by the transition rates and encounter rates, influence the dynamics of the density functions defined in Section \ref{sec: Func Subsystems} can then be modeled with an integro-differential equation. Given $n$ subsystems $\{f_1, f_2, \dots, f_n\}$, the equation governing the dynamics of subsystem $i$ at time $t$ is given by
\begin{equation}\label{eq:integrodiff}
\begin{split}
    \partial_t f_i(t,u) =& \sum_{h,k} \int_{D_h} \int_{D_k}\eta_{hk}(x,y)T_{hk}^i(x,y,u)f_h(t,x)f_k(t,y)dxdy \\
    &- f_i(t,u)\sum_{k}\int_{D_k}\eta_{ik}(u,x)f_k(t, x)dx,
\end{split}
\end{equation}
for $i\in \mathbb{S}$ and $t>0.$ In equation \eqref{eq:integrodiff}, the infinitesimal change in the density of a given microstate $u$ in subsystem $i$ at time $t$ is determined by the net effect of inflow and outflow resulting from pairwise interactions between subsystems. 

The first term on the right-hand side of equation \eqref{eq:integrodiff}
%
%
represents the inflow of density into subsystem $i$ with microstate $u$. This inflow results from interactions between agents from subsystem $h$, in all possible microstates $x \in D_h$, and agents from subsystem $k$, in all microstates $y \in D_k$. The encounter rate $\eta_{hk}(x,y)$ determines how frequently these interactions occur, while the transition rate $T_{hk}^i(x,y,u)$ specifies the likelihood that such an interaction causes an agent from subsystem $h$ to transition into subsystem $i$ with microstate $u$.
The second term on the right-hand side
%
%
describes the outflow of density from subsystem $i$, specifically from microstate $u$, due to interactions with agents from subsystem $k$ in microstate $x \in D_k$. The interaction occurs at a rate $\eta_{ik}(u,x)$, and the magnitude of this term reflects the frequency and intensity of these interactions.

Note that the model given by equation \eqref{eq:integrodiff} does not include birth or death processes. Therefore, we expect the solutions to system \eqref{eq:integrodiff} to preserve a constant total mass. Mathematically, mass conservation implies that the total infinitesimal change in the density functions of all subsystems is zero
$$
\sum_{i\in \mathbb{S}} \int_{D_i} \partial_t f_i(t,u) du = 0,\quad \forall t \in [0,T],
$$
where $T$ is defined as before.  Mass is conserved in this system if and only if the transition rates satisfy the following condition:
\begin{equation}\label{eq:trans_condition}
\sum_{i\in \mathbb{S}} \int_{D_i} T_{hk}^i(x,y,u) du = 1,\quad \forall (x,y) \in D_h\times D_k,\ \forall  h,k\in \mathbb{S}.
\end{equation}

All kinetic models studied in this work satisfy condition \eqref{eq:trans_condition}. For ease of calculation, the total mass is normalized to one 
$$
\sum_{i \in \mathbb{S}} \int_{D_i} f_i(t,u) \, du = 1, \quad \forall t \in [0,T].
$$
Before presenting our proposed numerical method, we first state a result regarding existence and uniqueness of solutions to the system of equations in \eqref{eq:integrodiff}. Bellomo and collaborators prove the existence and uniqueness of solutions for a kinetic system with three subsystems in \cite{bellomo2014systems}. We generalize their result to a system of $n$ equations with general transition rates satisfying the condition \eqref{eq:trans_condition}.
We prove the existence of solutions in $\mathcal{C} = \{ (f_1, \dots, f_n) \mid f_i \in C^1([0,T] \times \mathbf{X}) \}$, where $T \in \mathbb{R}^+$ is an arbitrary final time, and $\mathbf{X} = \{ f = (f_1, f_2\dots, f_n) \mid f_i \in L^1(D_i) \ \forall i \in \mathbb{S} \}$. A function $f \in \mathcal{C}$ means that $f(\cdot, u)$ is continuously differentiable for all $u \in D_i$ and that $f(t, \cdot)$ is integrable for all $t \in [0, T]$. The proof can be found in \ref{app:prf}.

\begin{theorem}[Existence and Uniqueness of Solution]\label{Th:wellpos}
    Given $T$ a final time in $(0,+\infty]$, consider a kinetic system with $n$ subsystems
\begin{equation}\label{eq:integrodiffthrm}  
    \begin{cases}
        \partial_t f_i(t,u) &=\sum_{h,k} \int_{D_h} \int_{D_k}\eta_{hk}(x,y)T_{hk}^i(x,y,u)f_h(t,x)f_k(t,y)dxdy\vspace{2pt} \\
    &\quad - f_i(t,u)\sum_{k}\int_{D_k}\eta_{ik}(u,x)f_k(t, x)dx \vspace{2pt}\\
     f_i(0,u) &=\ f_i^0(u),
   \end{cases}
   \end{equation}
    for $i\in\mathbb{S},$ with positive encounter rates $\{\eta_{ij}\}_{i,j \in \mathbb{S}}$ and positive transition rates $\{T_{ij}^k\}_{i,j,k \in \mathbb{S}}$ that satisfy condition \eqref{eq:trans_condition}. Let $f_0 = (f_1^0,....,f_n^0)\in \{(f_1^0,....,f_n^0)|\; f_i^0 \in  L^1(D_i), \;f_i^0\ge 0 \;\text{for}\; i \in \mathbb{S}\}$ be the initial state of the system satisfying $$\sum_i \int_{D_i}f_i^0(u) du = 1.$$ 
    If all encounter rates are bounded, such that for some $C_{\eta} \in \mathbb{R}^+$
    \begin{align*}
         \max_{(x,y)\ \in \ D_h\times D_k} \eta_{hk}(x,y) < C_\eta,\quad \forall\ h,k \in \mathbb{S},
    \end{align*}
    then the system \eqref{eq:integrodiffthrm} has a unique solution $f \in \calC$.
\end{theorem}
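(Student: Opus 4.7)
The plan is to recast the system \eqref{eq:integrodiffthrm} as a fixed-point problem in the Banach space $C([0,T^*];\mathbf{X})$ on a short time interval, and then bootstrap to arbitrary $T$ using conservation of mass. Writing $\lambda_i[f](t,u) := \sum_k \int_{D_k} \eta_{ik}(u,x) f_k(t,x)\,dx$ for the total outflow rate at microstate $u$, I would first convert each equation to mild form using an integrating factor,
$$
f_i(t,u) = e^{-\int_0^t \lambda_i[f](\tau,u)\,d\tau} f_i^0(u) + \int_0^t e^{-\int_s^t \lambda_i[f](\tau,u)\,d\tau}\, G_i[f](s,u)\,ds,
$$
where $G_i[f]$ denotes the bilinear gain integral. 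This form is convenient because the associated map $\Phi$ manifestly preserves non-negativity, so one can seek fixed points in the closed convex set $\mathcal{K}_R = \{f \in C([0,T^*];\mathbf{X}) : f_i \geq 0,\ \|f(t)\|_{\mathbf{X}} \leq R\}$ for any $R>1$.

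Next, using $\eta_{hk} \leq C_\eta$, Fubini's theorem, and the normalization \eqref{eq:trans_condition}, I would show that the gain and loss operators each obey the bilinear bound $\sum_i \|G_i[f](t,\cdot)\|_{L^1(D_i)} \leq C_\eta \|f(t)\|_{\mathbf{X}}^2$. The standard splitting $f_h f_k - g_h g_k = (f_h - g_h) f_k + g_h (f_k - g_k)$ then yields the Lipschitz estimate
$$
\|\Phi(f) - \Phi(g)\|_{C([0,T^*];\mathbf{X})} \leq C\, T^* \bigl(\|f\| + \|g\|\bigr)\, \|f-g\|_{C([0,T^*];\mathbf{X})},
$$
for a constant $C$ depending only on $C_\eta$ and $n$. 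Choosing $T^*$ small enough compared to $1/(CR)$ makes $\Phi$ a strict contraction on $\mathcal{K}_R$, and Banach's fixed-point theorem yields a unique solution on $[0,T^*]$; continuity of the right-hand side in $t$ promotes it to $\mathcal{C}$.

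For global extension, I would observe that any fixed point in $\mathcal{K}_R$ is non-negative, so $\|f(t)\|_{\mathbf{X}} = \sum_i \int_{D_i} f_i(t,u)\,du$. Integrating \eqref{eq:integrodiffthrm} over $D_i$, summing over $i$, and invoking \eqref{eq:trans_condition} shows the gain and loss contributions cancel, giving $\|f(t)\|_{\mathbf{X}} \equiv 1$ on $[0,T^*]$. Since the local existence time $T^*$ depends only on $C_\eta$ (through the fixed choice $R = 1 + \varepsilon$) rather than on the solution itself, I can iterate the fixed-point argument on successive intervals $[kT^*,(k{+}1)T^*]$ using the endpoint value as new initial datum, exhausting any finite $T$ and, by union, the half-line $T = +\infty$.

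The main obstacle will be cleanly coupling the signed Lipschitz estimate used in the contraction with the mass-conservation bound, which is only meaningful for non-negative solutions. The integrating-factor formulation is what resolves this: it makes $\mathcal{K}_R$ an invariant convex subset of the complete metric space $C([0,T^*];\mathbf{X})$, so non-negativity propagates automatically through the Picard iterates and is inherited by the limit, sidestepping any separate comparison or maximum-principle argument and ensuring the same $T^*$ works at every step of the bootstrap.
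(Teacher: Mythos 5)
Your proposal is correct and follows essentially the same route as the paper: an exponential integrating-factor (mild) reformulation, a Banach fixed-point argument on a bounded, non-negative convex subset of $C([0,T^*];\mathbf{X})$ using the bilinear splitting $f_hf_k-g_hg_k=(f_h-g_h)f_k+g_h(f_k-g_k)$ together with condition \eqref{eq:trans_condition} and the bound $C_\eta$, and a bootstrap to arbitrary $T$ exploiting that the local existence time is independent of the (mass-normalized, non-negative) data. The only notable variation is that you damp with the state-dependent loss rate $\lambda_i[f]$ where the paper uses the constant $C_\eta$ (adding and subtracting $C_\eta f_i$), which costs you one extra routine estimate for the difference of exponentials in the Lipschitz bound but changes nothing essential.
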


\section{On Transition Rates for Social Dynamics}\label{sec:transitions}

We define a general class of transition rate kernels depending on a transition function $\phi$, obtaining necessary and sufficient conditions such that mass is conserved. 

\begin{prop}\label{prop:deltamasspreserve}
Let $D_i, D_j, D_k$ be measurable sets. 
Suppose that the transition kernel is given by
\[
T_{ij}^k(x,y,u) = \delta_{\phi(x,y)-u},
\]
where $\phi : D_i \times D_j \to \mathbb{R}$ is measurable. 
Then, for each $(x,y) \in D_i \times D_j$,
\[
\int_{D_k} T_{ij}^k(x,y,u)\,du =
\begin{cases}
1, & \text{if } \phi(x,y) \in D_k, \\[4pt]
0, & \text{if } \phi(x,y) \notin D_k.
\end{cases}
\]
Consequently,
\[
\int_{D_k} T_{ij}^k(x,y,u)\,du = 1 \quad \text{for all } (x,y) \in D_i \times D_j
\]
if and only if $\phi(D_i \times D_j) \subset D_k$.
\end{prop}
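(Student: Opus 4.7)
The plan is to reduce the statement to the defining property of the Dirac delta regarded as a Borel measure. Fix $(x,y) \in D_i \times D_j$ and set $a := \phi(x,y) \in \mathbb{R}$. Under the (orientation-preserving after a sign flip) change of variable $v = \phi(x,y) - u$, the distribution $u \mapsto \delta_{\phi(x,y)-u}$ coincides with the Dirac measure $\delta_a$ concentrated at the single point $a$. Hence, for any measurable $D_k$,
\[
\int_{D_k} \delta_{\phi(x,y)-u}\, du \;=\; \delta_a(D_k) \;=\; \mathbf{1}_{D_k}\!\bigl(\phi(x,y)\bigr),
\]
which is $1$ when $\phi(x,y) \in D_k$ and $0$ otherwise. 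This is exactly the case distinction asserted in the first part of the proposition.

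For the consequent equivalence, I would quantify the previous identity over all $(x,y) \in D_i \times D_j$. The integral equals $1$ for every $(x,y)$ if and only if $\mathbf{1}_{D_k}(\phi(x,y)) = 1$ for every $(x,y)$, which is precisely $\phi(D_i \times D_j) \subset D_k$. The reverse implication is immediate from the hypothesis; for the forward direction, I would argue by contrapositive: if there existed $(x_0,y_0)$ with $\phi(x_0,y_0) \notin D_k$, then the integral at that point would be $0 \neq 1$, contradicting the assumption.

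The main obstacle is essentially definitional rather than technical: one must adopt a consistent interpretation of $\delta_{\phi(x,y)-u}$ as the Dirac measure at $\phi(x,y)$, rather than treating it through a regularization (such as a mollifier limit), which would introduce boundary subtleties when $\phi(x,y) \in \partial D_k$. Under the measure-theoretic reading, measurability of $D_k$ makes $\delta_a(D_k)$ unambiguous and the dichotomy $\{0,1\}$ exhaustive, so no approximation argument or additional regularity on $\phi$ beyond measurability is required.
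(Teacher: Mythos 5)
Your proposal is correct and follows essentially the same route as the paper's proof: both recognize $T_{ij}^k(x,y,\cdot)$ as the Dirac mass at $u=\phi(x,y)$, evaluate the integral as the indicator $\mathbf{1}_{D_k}(\phi(x,y))$, and establish the equivalence by direct implication in one direction and by contradiction (equivalently, contrapositive) in the other. Your added remark on adopting the measure-theoretic reading of the delta, rather than a mollifier limit, is a sensible clarification but does not change the argument.
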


\begin{proof}
For fixed $(x,y) \in D_i \times D_j$, $T_{ij}^k(x,y,\cdot)$ is the Dirac mass centered at $u=\phi(x,y)$. 
Hence,
\[
\int_{D_k} \delta_{\phi(x,y)-u}\,du =
\begin{cases}
1, & \phi(x,y) \in D_k, \\[4pt]
0, & \phi(x,y) \notin D_k.
\end{cases}
\]

If $\phi(D_i \times D_j) \subset D_k$, then $\phi(x,y) \in D_k$ for all $(x,y)$, so the integral equals $1$ everywhere.  

Conversely, suppose $\int_{D_k} T_{ij}^k(x,y,u)\,du = 1$ for all $(x,y)\in D_i \times D_j$. If there existed $(x^*,y^*)$ with $\phi(x^*,y^*) \notin D_k$, then the corresponding integral would be $0$, a contradiction. Hence $\phi(D_i \times D_j) \subset D_k$. 
\end{proof}

\subsection{Sample Transition Rates}

We present four parametric families as representative choices for $\phi$ drawn from kinetic models of social dynamics, and explain the meaning and expected behavior for each case; these are summarized in Table \ref{tab:phitab}. In Section \ref{sec:results}, we apply our numerical scheme to integrate instances of equation \eqref{eq:integrodiff} where transition rates are defined by our sample $\phi$ functions and then analyze the resulting dynamics. We note that while all sample functions used in this work are bilinear or piecewise bilinear, the approach presented in this paper applies to a wide class of $\phi$ satisfying the conditions in Proposition \ref{prop:deltamasspreserve}. \\

\noindent
{\it Example 1: Lowering interactions (``Left").} The transition function
$$
\phi_L(x,y) = x - \gamma x y,
$$
for $\gamma>0$, represents a situation where an individual’s state $x$ is reduced through interaction with another individual in state $y$.
The term $\gamma x y$ shows that the reduction is stronger when both $x$ and $y$ are large, pulling the new state closer to 0. Figure \ref{fig:trans_left} illustrates this effect.
At the boundary $x = 0$, we have $\phi_L(0,y) = 0$, meaning the state cannot decrease further. At the other boundary, $x = 1$, the update is $\phi_L(1,y) = 1 - \gamma y$, so the decrease grows with larger $y$. The parameter $\gamma$ controls the rate at which an individual’s state goes toward $0$. \\ 


\noindent 
{\it Example 2: Reinforcing interactions (``Right").} The function
$$
\phi_R(x,y) = x + \gamma (1-x)(1-y),
$$
shown in Figure \ref{fig:trans_right}, describes dynamics that depend on how far both $x$ and $y$ are from their maximum value of 1. It represents a reinforcement effect: when both $x$ and $y$ are low, individuals are more likely to shift toward higher states.
At the boundary $x = 0$, we get $\phi_R(0,y) = \gamma (1-y)$. This means that the smaller $y$ is, the more strongly individuals at $x = 0$ move upward. If $y = 1$, the update vanishes, so $x = 0$ stays fixed. At the other boundary, $x = 1$, we have $\phi_R(1,y) = 1$, so no further increase is possible, making $x = 1$ a stable state. Over time, individuals are therefore drawn toward $x = 1$.

\begin{figure}[H]
    \centering
    \subcaptionbox{ $\phi_L$ \label{fig:trans_left}}{\includegraphics[width=.45\textwidth]{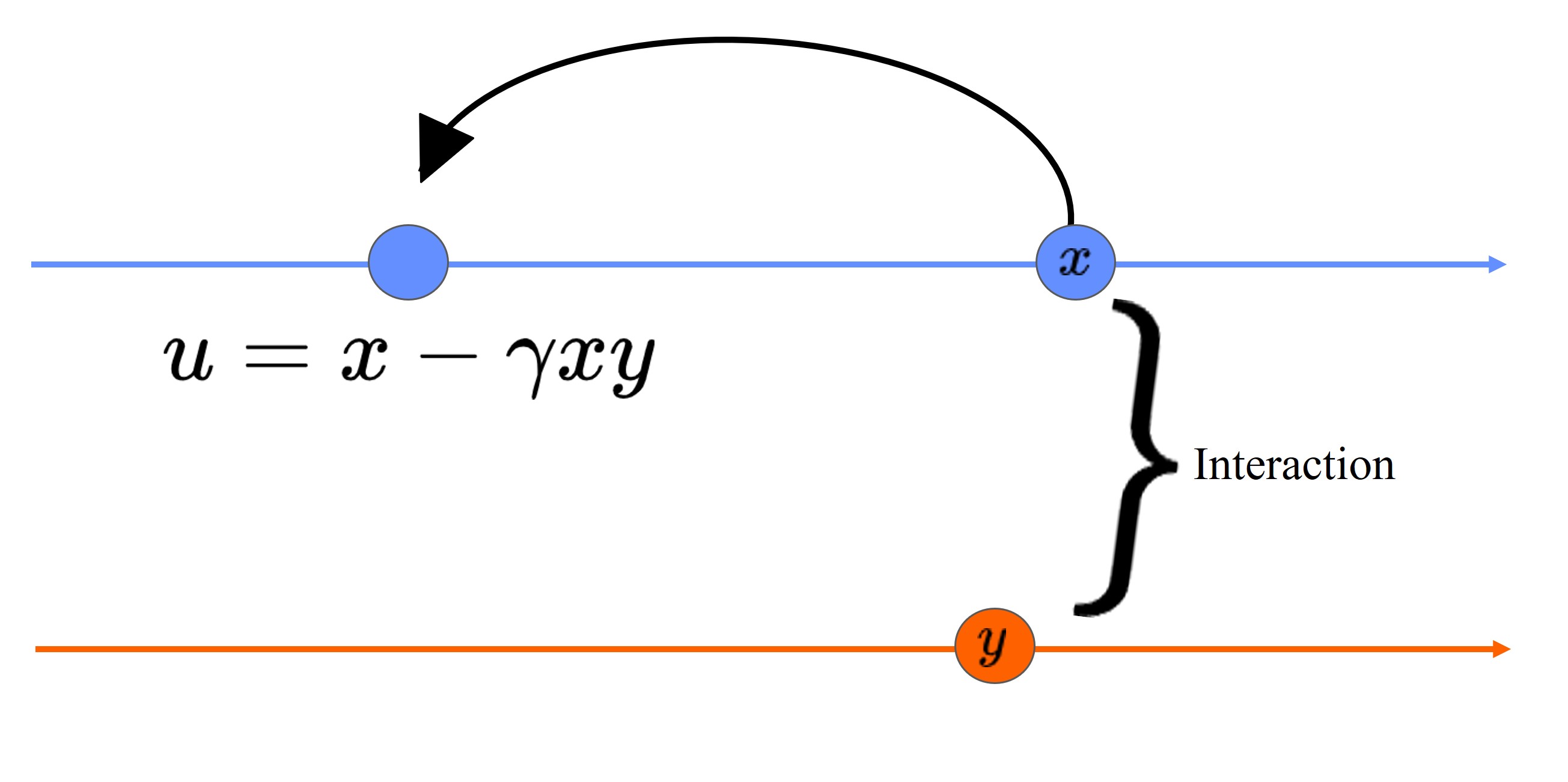}}\hspace{1em}%
    \subcaptionbox{$\phi_R$ \label{fig:trans_right}}{\includegraphics[width=.45\textwidth]{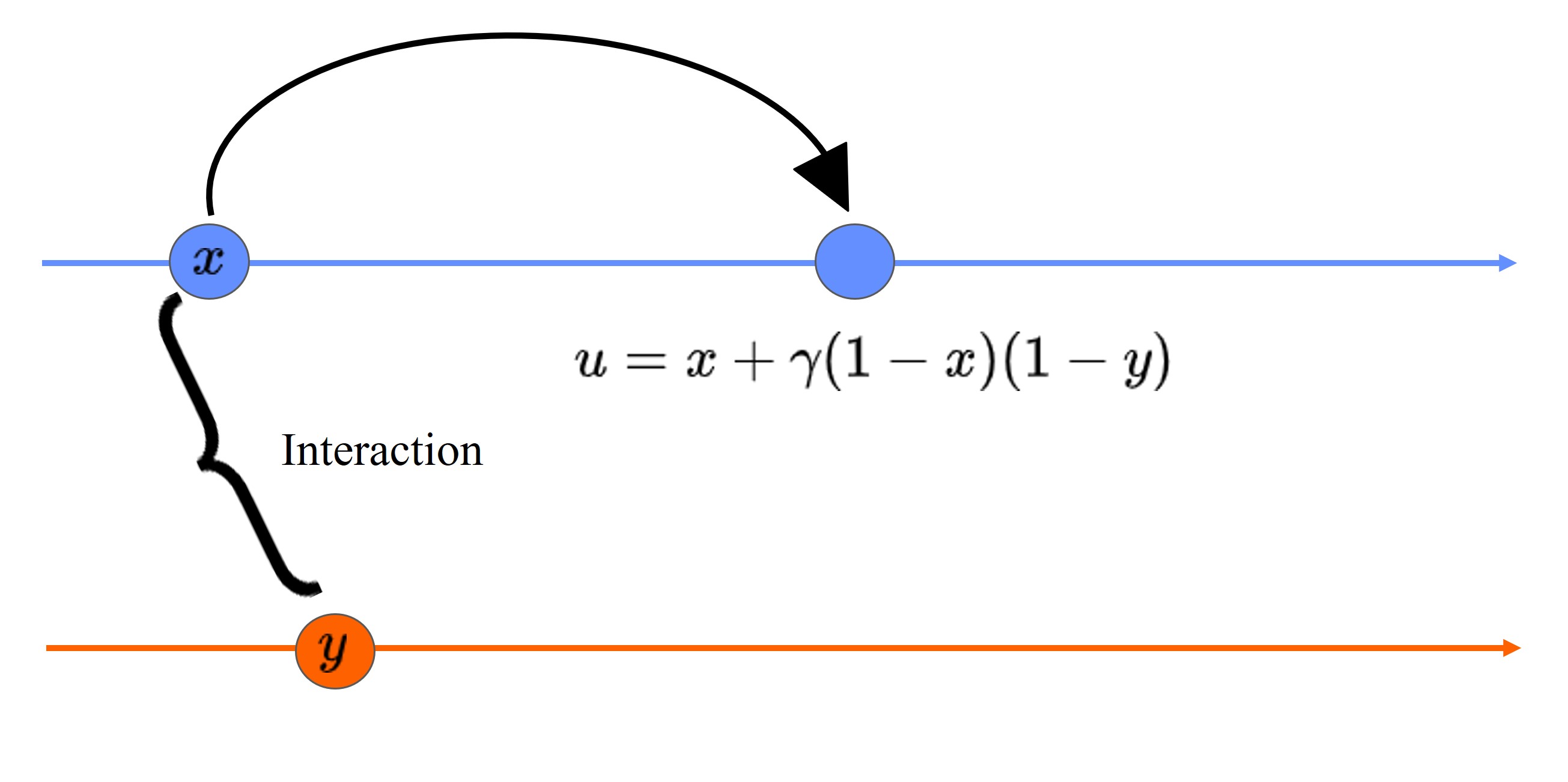}}\hspace{1em}%
    \caption{Illustrations of transition effects based on the first two functions listed in Table \ref{tab:phitab}. Each function models the rate at which an agent in subsystem $i$ with microstate $x$ will transition to a microstate $u$ in subsystem $k$. All transition functions depend on $\gamma$. (a) The transition function $\phi_L$, where an agent in subsystem $i$ with microstate $x$ transitions to a lower microstate $x - \gamma xy$, transitioning closer to 0. (b) A depiction of the transition function $\phi_R$, where an agent in subsystem $i$ with microstate $x$ transitions to a higher microstate $x + \gamma(1 - x)(1 - y)$, transitioning closer to 1.}

    \label{fig:errorruntime2}
\end{figure}

\noindent 
{\it Example 3: Attractive threshold interactions (``Toward").}  Consider the piecewise transition function

$$
\phi_T(x,y) = 
\begin{cases} 
x - \gamma xy, & x \leq a,\\ 
x + \gamma (1-x)(1-y), & x > a,
\end{cases}
$$
illustrated in Figure \ref{fig:trans_awa}. The threshold $a$ determines whether interactions push individuals downward or upward. For $x \leq a$, interactions reduce $x$ through $x - \gamma xy$, so individuals with lower states are pulled toward smaller values when paired with higher $y$. For $x > a$, interactions increase $x$ via $x + \gamma (1-x)(1-y)$, reinforcing higher states. At the boundaries, $\phi_T(0,y) = 0$ and $\phi_T(1,y) = 1$, so states cannot go below 0 or above 1. Over time, individuals move away from the threshold $a$ and are drawn toward the lower or upper boundaries.

{\it Example 4: Repulsive threshold interactions (``Away").} As a final example, consider the piecewise function

$$
\phi_A(x,y) = 
\begin{cases} 
x - \gamma (x-a)y, & x > a,\\ 
x + \gamma (a-x)(1-y), & x \leq a,
\end{cases}
$$

illustrated in Figure \ref{fig:trans_tow}. For $x > a$, interactions with $y$ pull individuals toward lower states, while for $x \leq a$, interactions push them toward higher states.  At the boundaries, $\phi_A(0,y) = \gamma a (1-y)$ moves individuals away from 0, and $\phi_A(1,y) = \gamma (1-a)(1-y)$ pulls them away from 1. Over time, the threshold $a$ determines the dynamics: states above $a$ drift downward, while states at or below $a$ tend to increase.

\begin{figure}[H]
    \centering
    \subcaptionbox{$\phi_T$ \label{fig:trans_tow}}{\includegraphics[width=.45\textwidth]{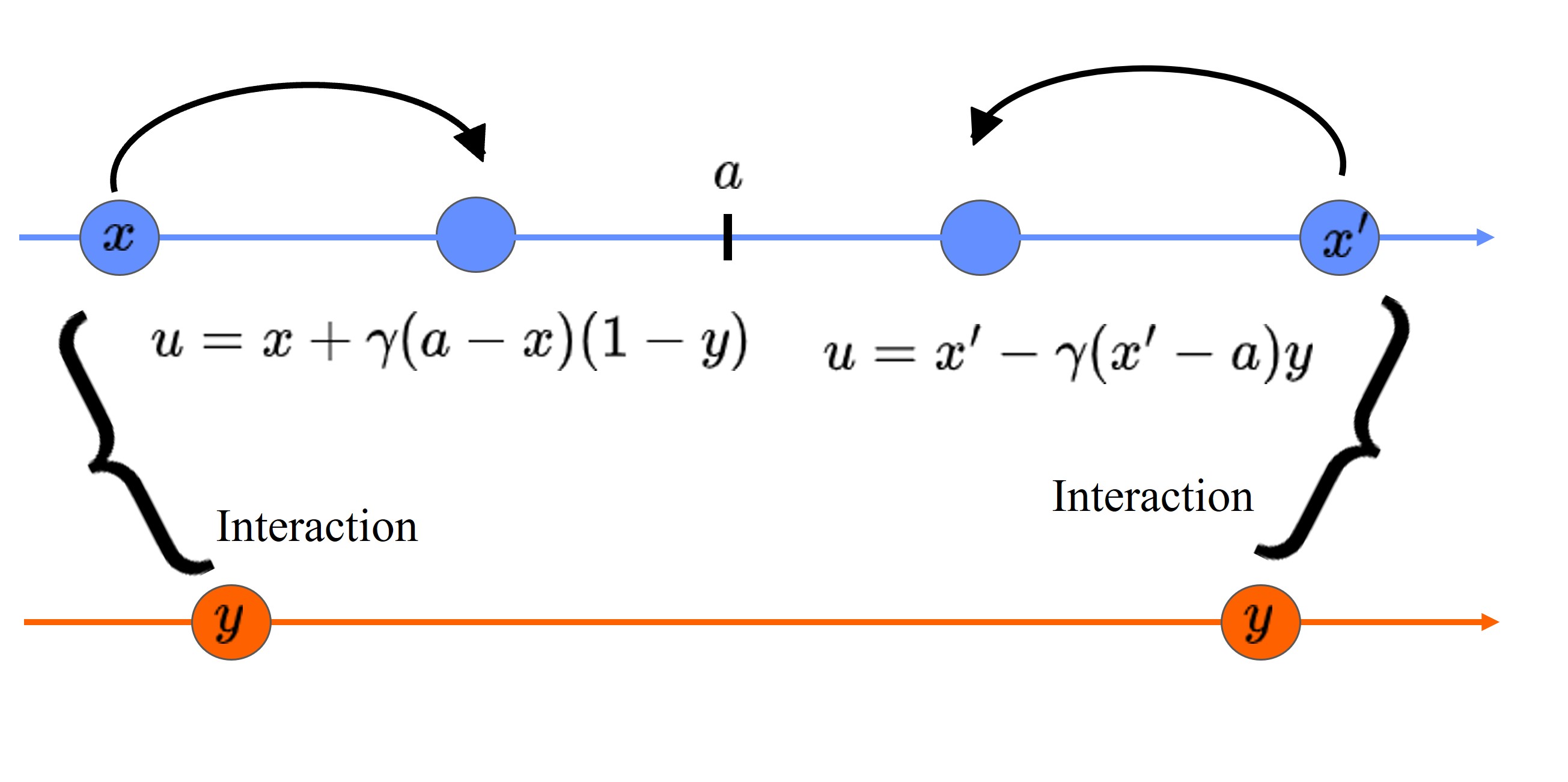}}\hspace{1em}%
    \subcaptionbox{$\phi_A$ \label{fig:trans_awa}}{\includegraphics[width=.45\textwidth]{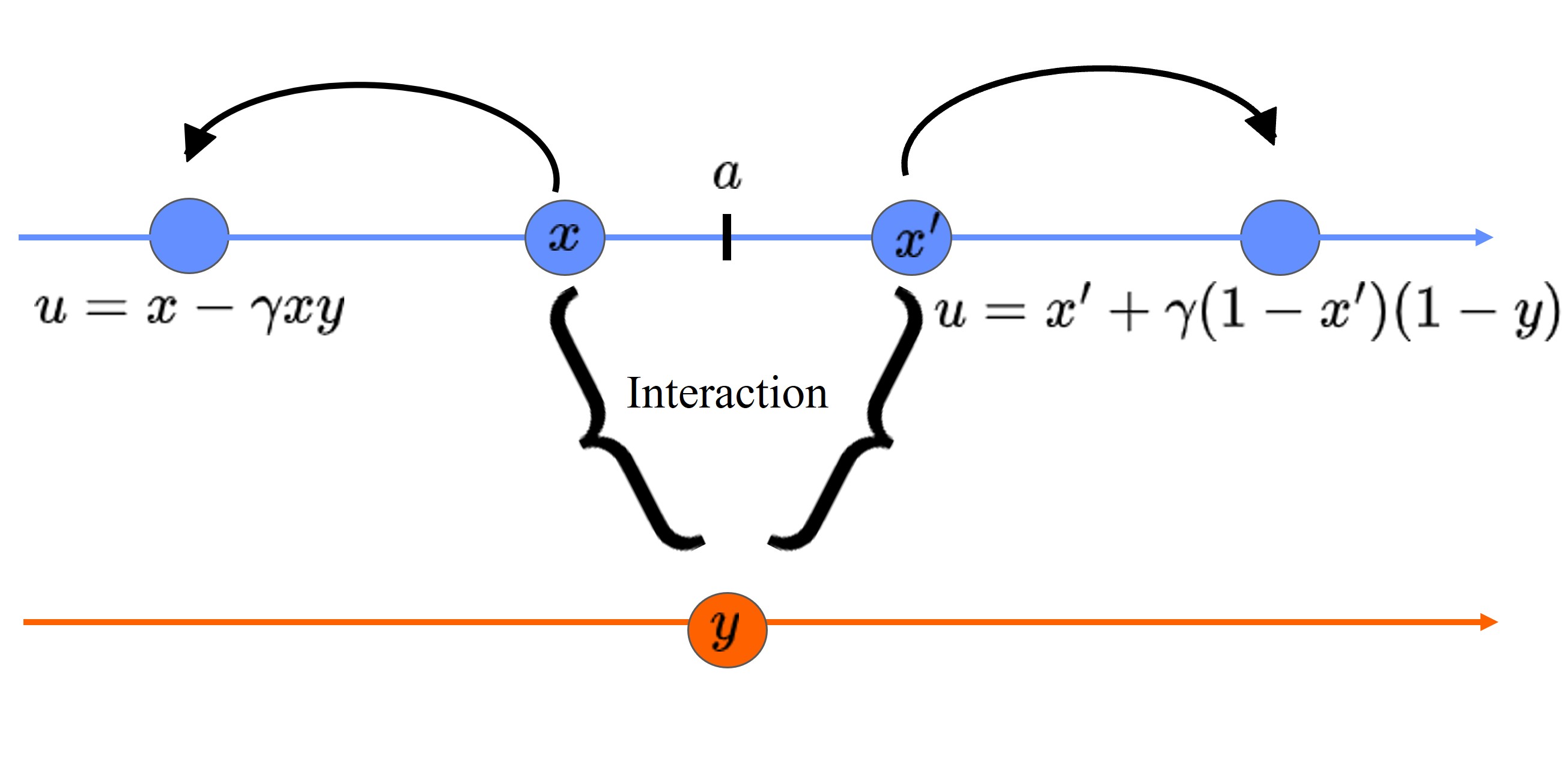}}\hspace{1em}%
    \caption{Illustrations of transition effects based on function $\phi$, which models an agent in subsystem $i$ with microstate $x$ moving to a microstate $u$ in subsystem $k$. For these functions, the direction of the transition depends on threshold $a$. (a) Transition function $\phi_T$: if $x < a$, the agent shifts to the right, and if $x > a$, the agent shifts to the left. In both cases, the agents move toward $a$. (b) Transition function $\phi_A$: if $x < a$, the agent shifts to the left, and if $x > a$, the agent shifts to the right. In both cases, the agents move away from $a$, toward $1$ if $x > a$ and toward $0$ if $x < a$.}
    \label{fig:errorruntime2}
\end{figure}
\begin{table}[!htbp]
    
    \begin{tabularx}{\linewidth}{|X|X|}
        \toprule
        Transition function $\phi$ & Interpretation of movement \\
        \midrule
        \midrule
        {\scriptsize$\phi_L(x,y) = x - \gamma xy$} & Agents in micro-state $x$ will transition to the \textbf{left} upon interacting with agents in micro-state $y$. \\
        \midrule
        {\scriptsize$\phi_R(x,y) = x + \gamma(1-x)(1-y)$} & All agents in micro-state $x$ will transition to the \textbf{right} upon interacting with agents in micro-state $y$. \\
        \midrule
        {\scriptsize $\phi_A(x,y) = \begin{cases} x - \gamma xy,& \text{if } x \leq a,\\ x + \gamma (1-x)(1-y), & \text{if } x > a \end{cases}$ }& Agents in micro-states to the left and right of constant $a$ transition \textbf{away from} $a$.\\
        \midrule
        {\scriptsize $\phi_T(x,y) = \begin{cases} x - \gamma (x-a)y, & \text{if } x > a,\\ x + \gamma (a-x)(1-y), & \text{if } x \leq a \end{cases}$ }& Agents in micro-states to the left and right of some constant $a$ transition \textbf{towards} $a$.\\
        \bottomrule
    \end{tabularx}
   \centering
    \caption{Four examples of transition functions and their physical interpretation. Each function represents a specific transition mechanism. The change is proportional to $\gamma.$}
    \label{tab:phitab}
\end{table}

\section{A Mass Preserving Scheme for Kinetic Equations Integration}\label{sec:scheme}

In this section, we develop a mass-preserving numerical scheme tailored to accurate integration of kinetic systems such as \eqref{eq:integrodiff}, with transition rates as defined in Section \ref{sec:transitions}. Given that spatial discretization of \eqref{eq:integrodiff} involves integrals defined by the encounter and transition rate kernels, the latter of which encode sharp transitions via an indicator function for the pre-image of $\phi$, we must choose a numerical integration technique with this in mind. Approaches typically used in the discretization of integral equations include the Nystrom method (numerical quadrature), as well as the Galerkin and collocation methods, which approximate the solution within a finite-dimensional subspace, such as polynomials, splines, or trigonometric functions \cite{hughes2003finite}. 

In this work, we develop a collocation method; this discretization scheme asks that the equation be satisfied by all functions in a finite-dimensional space at a specified set of collocation points. As compared to a Nystrom approach, collocation and Galerkin schemes avoid the potential need for specialized quadrature, instead requiring the precomputation of integrals for a chosen basis of the finite-dimensional subspaces. Collocation methods are generally easier to implement and can be more computationally efficient than Galerkin; they can, however, become unstable if the chosen points miss key features of the solution \cite{canuto2007spectral}. We note that the approach presented below could be readily adapted to a wide array of Galerkin schemes; however, we leave this development for future work.

\subsection{Spatial discretization}

We discretize integrals in equation \eqref{eq:integrodiff} by approximating unknown density functions $\{f_i\}_{i=1}^n$ for each functional subsystem with polynomials of degree $\leq N$ over their respective microstate domains $\{D_i\}_{i=1}^n$. Given that our collocation scheme enforces these equations at a set of collocation points, we opt to use the corresponding Lagrange interpolation basis $\{B_j^i\}_{j=0}^N$; our approximation to the density function takes the form 

$$f_i(t,u) = \sum_{j=0}^N a_{ij}(t) B_j^i(u), \quad i\in \{1,2,\cdots,n\}.$$

This transforms our problem from finding continuous functions $f_i$ to determining finite sets of time-dependent coefficients $a_{ij}(t)$. Substituting this representation into equation \eqref{eq:integrodiff} yields 
\begin{align}\label{NumIntDiff}
a_{ij}' &= \sum_{l,m}^n\sum_{p,q}^Na_{lp}a_{mq}\int_{D_l} \int_{D_m} \eta_{lm}(x,y)T_{lm}^i(x,y,u)B_p^l(x)B_q^m(y)dxdy \notag\\
&-  \sum_p^N a_{ip}B_p^i(u)\sum_l^n\sum_q^Na_{lq}\int_{D_l} \eta_{il}(u,y)B_q^l(y)dy, \quad \forall i \in \{1,...,n\} .
\end{align}

We note that using the Lagrange basis simplifies the left-hand side of this equation (generally $\sum_j^N a_{ij}'B_j^i(u)$). All interpolants and basis terms are evaluated using barycentric formulas to ensure computational efficiency and stability.

\subsection{Ensuring mass conservation}
Given a set of collocation points $\{x_j^i\}_{j=0}^N$ over $D_i$, Equation \eqref{NumIntDiff} thus reduces to a system of first-order ODEs. In our experiments, however, we find that integrating this system of ODEs can lead to considerable violations of mass conservation. \ref{app:naivefail} demonstrates that the approximation error can lead to substantial mass growth, contradicting the mass conservation established in Section \ref{sec:micscale}. As mentioned in Section \ref{sec:KE}, these violations render approximate solutions practically useless, as they are nonphysical.

To address this issue, we modify our scheme so that it satisfies a numerical mass preservation property while remaining convergent. We achieve this by adding a corrective term. Given an initial state $a_{ij}(0) = f_i^0(x_j^i)$ for some $f_i^0 \in L_1(D_i)$, the proposed scheme takes the following form
\begin{align}\label{ODESYS}
 a_{ij}'&= \sum_{l,m}^n\sum_{p,q}^N T_{ilmj}^{pq} a_{lp}a_{mq} \\
 &-  C(\{a_{ij}\})a_{ij}\sum_l^n\sum_{q}^N M_{ijlq} a_{lq},\quad \forall i \in \{1,...,n\},\ \forall j \in {0,1,\cdots,N} ,\notag
\end{align}
where 
\begin{align}\label{eqn:coeff}
\left\{\begin{array}{ll}
 T_{ilmj}^{pq} &= \mathlarger{\int}_{D_l} \mathlarger{\int}_{D_m} \eta_{lm}(x,y)T_{lm}^i(x,y,x_j)B_p^l(x)B_q^m(y)dxdy,\vspace{8pt}\\
    M_{ijlq} &= \mathlarger{\int}_{D_l} \eta_{il}(x_j,y)B_q^l(y)dy,
    \end{array}\right.
\end{align}
and
\begin{align}\label{eq:caij}
    C(\{a_{ij}\}) &= \frac{\sum\limits_{i}^{n} \sum\limits_j^N w_{ij} \sum\limits_{l,m}^{n} \sum\limits_{p,q}^N T_{ilmj}^{pq} a_{lp}a_{mq}}{\sum\limits_i^n \sum\limits_j^N  a_{ij}w_{ij}\sum\limits_l^n\sum\limits_q^N M_{ijlq} a_{lq}},
\end{align}
where $w_{ij}=\int_{D_i} B_j^i(u)du.$
Note that the term $C(\{a_{ij}\})$ serves as a correction factor. At each time step, it compensates for the error in total mass introduced by the approximation, ensuring that our solutions satisfy an exact discrete conservation property. 
These terms avoid singularities except for the trivial case where there are no interactions. Furthermore, as the number of collocation points increases, $C(\{a_{ij}\})$ approaches one. This behavior is illustrated in Figure \ref{fig:errmassmpcs} in \ref{app:naivefail}.
\begin{prop} 
Consider the discrete mass functional
$$
\mathcal{M}(\{a\})(t)=\sum_i\sum_j a_{ij}(t)w_{ij}. 
$$
The scheme presented in equation \eqref{ODESYS} with coefficients $ T_{ilmj}^{pq}$ and $M_{ijlq}$ defined in \eqref{eqn:coeff} and $C(\{a_{i,j}\})$ defined in \eqref{eq:caij} numerically preserves the mass, 
    $$
    \frac{d}{dt}\mathcal{M}(\{a\})(t) = 0.
    $$
\end{prop}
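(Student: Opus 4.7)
The plan is a direct verification: differentiate $\mathcal{M}(\{a\})(t) = \sum_{i,j} a_{ij}(t)\,w_{ij}$ with respect to $t$, substitute the ODE \eqref{ODESYS} for each $a_{ij}'$, and show that the corrective factor $C(\{a_{ij}\})$ was engineered precisely so that the two resulting double sums cancel.

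First I would write
\[
\frac{d}{dt}\mathcal{M}(\{a\})(t) = \sum_{i}\sum_{j} w_{ij}\,a_{ij}'(t),
\]
and split the right-hand side into a ``gain'' contribution $G$ coming from the first (bilinear, $T_{ilmj}^{pq}$) term of \eqref{ODESYS} and a ``loss'' contribution $L$ coming from the second term, so that
\[
\frac{d}{dt}\mathcal{M}(\{a\})(t) = G - L,
\]
with
\[
G = \sum_{i}\sum_{j} w_{ij} \sum_{l,m}^{n}\sum_{p,q}^{N} T_{ilmj}^{pq}\,a_{lp}a_{mq},
\qquad
L = C(\{a_{ij}\}) \sum_{i}\sum_{j} w_{ij}\,a_{ij}\sum_{l}^{n}\sum_{q}^{N} M_{ijlq}\,a_{lq}.
\]

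Next I would compare these with the definition \eqref{eq:caij} of $C(\{a_{ij}\})$. The numerator of $C$ is exactly $G$ and the denominator is exactly the weighted double sum that appears as the factor multiplying $C$ in $L$. Therefore $L = C \cdot (G/C) = G$, which yields $\frac{d}{dt}\mathcal{M} = 0$. A small sanity check is to note that the denominator of $C$ is strictly positive whenever the system has any ongoing interactions (all $\eta_{il}\ge 0$, and $a_{ij}>0$ for a nontrivial state), so the definition of $C$ is well posed on the trajectories of interest, as already observed in the text just before the proposition.

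I do not expect a genuine obstacle here: the proposition is essentially a consistency check that the correction $C$ was defined by exactly the ratio needed for the telescoping to hold. The only care point is bookkeeping of the summation indices --- making sure the indices $(i,j)$ in the outer weighted sum match the $(i,j)$ that index the collocation equations in \eqref{ODESYS}, and that the coefficients $T_{ilmj}^{pq}$ and $M_{ijlq}$ from \eqref{eqn:coeff} are substituted into $G$ and $L$ with the same labeling as in \eqref{eq:caij}. Once these indices are aligned, the identification of $G$ with the numerator of $C$ and of the factor of $C$ in $L$ with its denominator is immediate, completing the proof.
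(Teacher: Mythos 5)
Your proposal is correct and follows essentially the same route as the paper: differentiate $\mathcal{M}$, substitute \eqref{ODESYS}, and observe that the gain term equals the numerator of $C(\{a_{ij}\})$ while the factor multiplying $C$ in the loss term equals its denominator, so the two contributions cancel identically. Your write-up is in fact cleaner than the paper's, which contains some index typos in its definitions of the auxiliary quantities $A$ and $B$ but relies on exactly the same cancellation.
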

\begin{proof}
Computing the time derivative of $\mathcal{M}(\{a\})$ we get
\begin{align*}
    \frac{d}{dt}\mathcal{M}(\{a\})(t)&=\sum_{i}^n \sum_j^N a_{ij}'w_{ij} \\
    &= \sum_{i}^n \Bigg[\sum_{l,m}^n\sum_{p,q}^N T_{ilmj}^{pq} a_{lp}a_{mq}
    -  C(\{a_{ij}\})a_{ij}\sum_l^n\sum_{q}^N M_{ijlq} a_{lq} \Bigg]w_{ij}.
    \end{align*}
    Note that $$\sum_i^n  C(\{a_{ij}\})a_{ij} w_{ij} = \frac{B}{A},$$ where
    \begin{align*}
       A := \sum_i^n \sum_j^N w_{ij} \sum_{l,m}^n \sum_{p,q}^N T_{ilmj}^{pq} a_{jp}a_{lq}
       \;\;\text{and}\;
       B :=  \sum_i^N \sum_j^N  a_{ij}w_{ij}\sum_l^n\sum_q^N M_{ijlq} a_{lq}.
    \end{align*}
Thus,
\begin{align*}
 \frac{d}{dt}\mathcal{M}(\{a\})(t) &= \frac{1}{A}\bigg(1 - \frac{B}{A} \frac{A}{B}\bigg)= 0.
\end{align*}
Thus, we conclude that the mass is numerically preserved.

\end{proof}

A full proof of convergence for this scheme is left for future work. However, in Section \ref{sec:results}, we show strong evidence of the robustness, efficiency, and ability of the method to capture the qualitative dynamics of the transition rates in Section \ref{sec:transitions}.
In summary, we introduce a discretization method for integro-differential kinetic equations, which we call the {\bf Mass-Preserving Collocation Method (MPCM)}. A pseudocode description is given in Algorithm \ref{algo:coloc}. This method consists of two main steps:

\begin{itemize}
    \item A \emph{precomputation step} in which we compute coefficient arrays $T_{ilmj}^{pq}$ and $M_{ijlq}$ to high accuracy and store them. This involves the use of analytical solutions or numerical quadrature to compute integrals involving polynomial basis functions. This can be computationally expensive if numerical integration is required, but needs to be performed only once for a given choice of interaction and transition kernels, exploiting symmetries to reduce storage requirements. We note that the maximum storage requirements for these coefficient arrays are $\mathcal{O}(n^3 N^3)$; dependency on $n$ is likely to be lower due to the sparsity of subsystem connectivity networks, such as those shown in Figure \ref{fig:sys5}. 
    
    \item A \emph{solve step}, in which we set up an initial value problem of the form $A'(t) = F(A(t))$ with $A(t)$ a matrix with entries $a_{ij}(t)$ given initial conditions. The IVP can then be integrated in time using a chosen numerical ODE solver. 
\end{itemize} 
{\tiny
\begin{algorithm}[H]
\caption{Mass Preserving Collocation Method (MPCM)}\label{algo:coloc}
\begin{algorithmic}[1]
\Require $n, N$: number of subdomains and basis functions
\Require $\{D_i\}_{i=1}^n$: domain partitions
\Require $\{x_j^i\}_{j=0}^N, \{B_j^i\}_{j=0}^N$: collocation nodes and Lagrange basis
\Require $\eta_{lm}(x,y), T_{lm}^i(x,y,u)$: interaction kernels
\Require Initial coefficients $a_{ij}(0) = f_i^0(x_j^i)$
\Require Time step $dt$, final time $T$
\Ensure Approximate solution $f_i(t,u) \approx \sum_{j=0}^N a_{ij}(t)B_j^i(u)$

\State \textbf{Precomputation (via Gaussian quadrature):}
\State Compute
\[
T_{ilmj}^{pq} = \int_{D_l}\int_{D_m} \eta_{lm}(x,y)T_{lm}^i(x,y,x_j) B_p^l(x)B_q^m(y)\,dx\,dy
\]
\[
M_{ijlq} = \int_{D_l} \eta_{il}(x_j,y) B_q^l(y)\,dy,
\quad
w_{ij} = \int_{D_i} B_j^i(u)\,du
\]

\State \textbf{Time-stepping:} For $t=0:dt:T$
\Statex \quad Compute correction factor
\[
A = \sum_{i,j} w_{ij}\Big(\sum_{l,m,p,q} T_{ilmj}^{pq} a_{lp}a_{mq}\Big),
\;
B = \sum_{i,j} a_{ij} w_{ij}\Big(\sum_{l,q} M_{ijlq}a_{lq}\Big),
\;
C = A/B
\]
\Statex \quad For each $i,j$ update coefficient
\[
a_{ij}' = \sum_{l,m,p,q} T_{ilmj}^{pq} a_{lp}a_{mq}
- C \cdot a_{ij}\Big(\sum_{l,q} M_{ijlq}a_{lq}\Big)
\]
\Statex \quad Advance $a_{ij}(t+dt)$ using an ODE solver (e.g. Runge--Kutta).

\State \textbf{Output:} Approximate densities $f_i(t,u)$ reconstructed from coefficients.
\end{algorithmic}
\end{algorithm}
}
%

 \section{Numerical Experiments}\label{sec:results}
In this section, we conduct a series of experiments to study the performance of the MPCM. First, we test the ability of the method to resolve the dynamics of the subsystem for representative transition rate models, as described in Section \ref{sec:transitions}. In this initial set of experiments, our goals are to study the convergence properties and computational costs involved, as well as to assess how well the resulting approximate solutions capture the qualitative behavior expected of the solutions determined by moment analysis (see \ref{app:asymp}). 

To further validate our numerical method and demonstrate its utility in resolving interactions within large systems, we then compare its performance with what are arguably the state-of-the-art approaches in stochastic modeling of multi-particle interactions: the Tau-leaping method and a hybrid method. In these comparisons, we make choices specific to each algorithm to achieve similar high accuracy and mass preservation; due to the stochastic nature of these methods, this involves averaging over repeated realizations. We find that MPCM consistently matches subsystem dynamics obtained by the competing methods, is orders of magnitude faster, and requires significantly less parameter tuning to ensure mass preservation and accurate results. 

Finally, we test MPCM on a larger, more complex system involving five subsystems and six transition rates. Our results show that MPCM is an efficient and reliable tool for studying kinetic equations in social systems.

\paragraph{Experimental setup and metrics} We use Chebyshev collocation points of the first kind to obtain the equation for the coefficients $\{a_{ij}\}$. As the initial value problems (IVPs) obtained from the examples in this work are not noticeably stiff, we use an explicit fourth-order Runge-Kutta method to integrate the IVP for coefficients $a_{ij}$. To empirically monitor convergence, we introduce a self-convergence metric, which measures the difference between solutions computed at two levels of mesh refinement,  $N$ and $2N$, evaluated on a common set of discretization points. Specifically, we define the {\it self-convergence metric} as $\norm{f^{2N} - f^N}$, where 
\begin{align*}
    \norm{f} = \max_{t\in[0,T_f]} \sum_i \int_0^1 |f_i(t,u)| du,
\end{align*}
with $T_f$ denoting the final time. To compute this metric, we evaluate both $f^{2N}$ and $f^N$ at the same $N$-point Gaussian quadrature nodes for all time steps. The integrals are then approximated using Gaussian quadrature, and the maximum over time is taken to yield the final value of the self-convergence metric.

\subsection{Kinetic Models of Two Subsystems}\label{sec:KMresults}

We assess the performance of our scheme on kinetic systems comprised of two subsystems. Each subsystem is represented by a density function $f_1$ and $f_2$, defined on the microstate domain $[0,1]$. We run two rounds of experiments with this setup in order of increasing complexity. First, we only allow one subsystem's density to evolve in time, requiring a single transition rate to describe how it changes as a function of interactions with a second subsystem. We then perform tests that involve sets of two evolving subsystems, requiring two distinct transition rates. 

\paragraph{Tests with one dynamic subsystem} As stated above, $f_2$ is first assumed to be constant in time; we set the encounter rate $\eta_{12} = 1$. We then carry out four experiments, setting the transition rate for subsystem $1$ (due to its interaction with subsystem $2$) $T_{12}^1(x,y,u) = \delta_{\phi(x,y) - u}$, where $\phi$ is chosen as a member of each of the four parametric families representative of models of social dynamics, as listed in Table \ref{tab:phitab}. The resulting equation is
\begin{align}\label{EQ:oneparticles}
    \partial_t f_1(t,u) &= \int_0^1 \int_0^1 \delta_{\phi(x,y) - u}f_1(t,x)f_2(y)\;dxdy - f_1(t,u)\int_0^1 f_2(y)dy.
\end{align}
As illustrated in Figure \ref{fig:simonepart}, our numerical scheme accurately reproduces expected behavior across all transition functions $\phi$ described in Table \ref{tab:phitab} (see \ref{app:asymp}). Each row in this figure shows snapshots of the evolution of $f_1$ corresponding to each transition function $\phi$. Note that, starting from the same initial state, $\phi_L$ shifts microstate density to the left, $\phi_R$ shifts them to the right, $\phi_A$ pulls them away from value $a$, and $\phi_T$ pulls them toward $a$.
In \ref{app:diffgam}, we study the effect of $\gamma$ on the dynamics of the solution; as shown in Figure \ref{fig:simonepartdiffg}, $\gamma$ is directly proportional to the corresponding rate of transition of density moments, as reflected by asymptotic rates in \ref{app:asymp}. The consistent match between expected qualitative and quantitative behavior for these systems and the density trajectory integrated by MPCM validates our approach. 

In Figure \ref{fig:err}, we observe that the self-convergence metric converges to zero as the number of nodes $N$ grows across experiments at an increasing rate, suggesting spectral convergence. Figure \ref{fig:runtime1} shows total runtime as a function of $N$, which is consistently observed to be $O(N^3)$. We note that across simulations, the precomputation cost accounts for $70-80\%$ of the runtime and is responsible for the observed scaling; this is mostly due to computing the tensors $T_{ilmj}^{pq}$. For a given set of transition and interaction kernels, the precomputation step only needs to be done once. After that, multiple simulations can be run with different initial conditions. The computational cost and scaling of each solve then depend on the chosen integrator and corresponding parameters, such as timestep size or target accuracy.

\begin{figure}[H]
    \centering
    \begin{subfigure}{0.22\textwidth}
        \includegraphics[width=\textwidth]{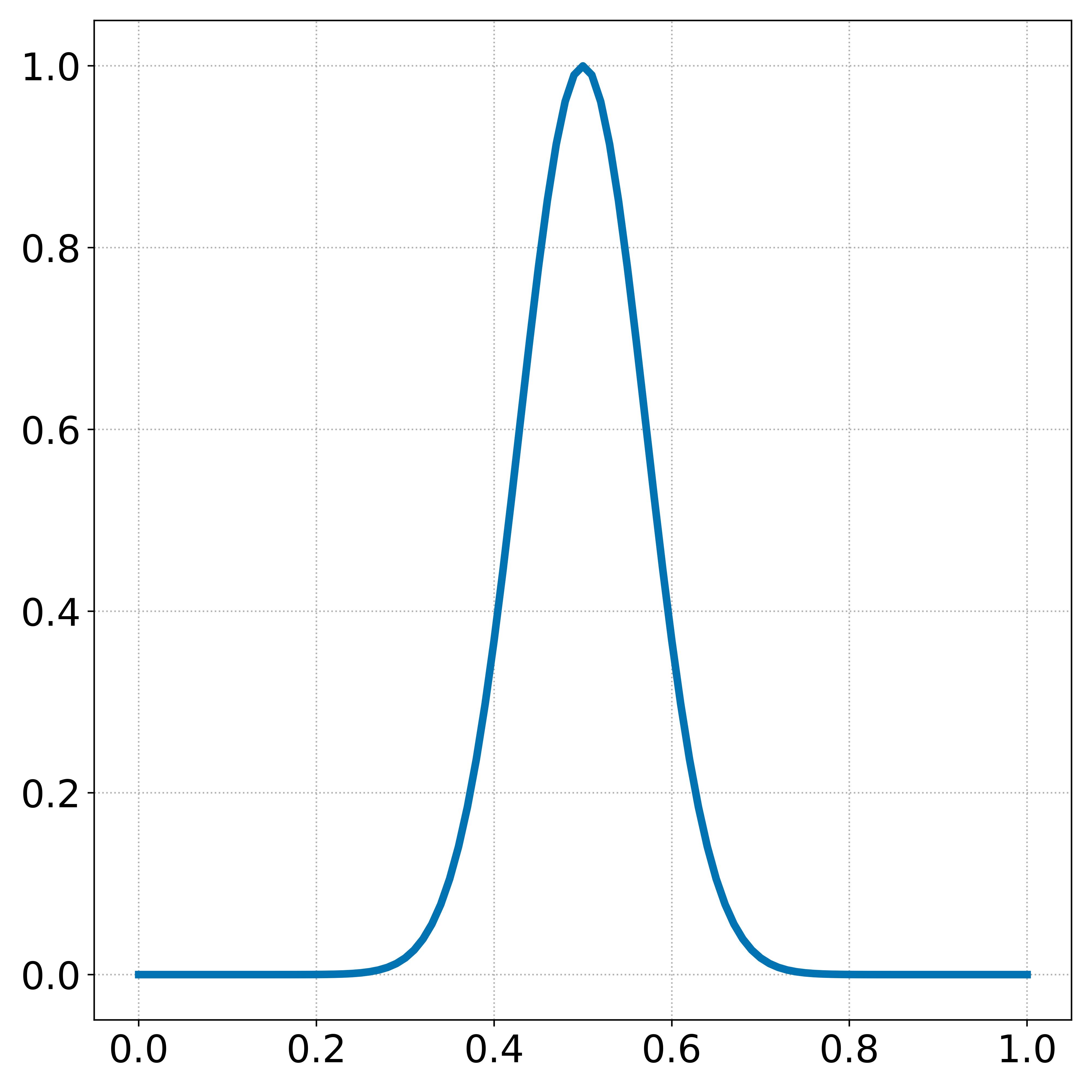}
          \caption{$\phi_L$ at $t = 0$} \label{fig:phiLt0}
          \end{subfigure}
        \begin{subfigure}{0.22\textwidth}
        \includegraphics[width=\textwidth]{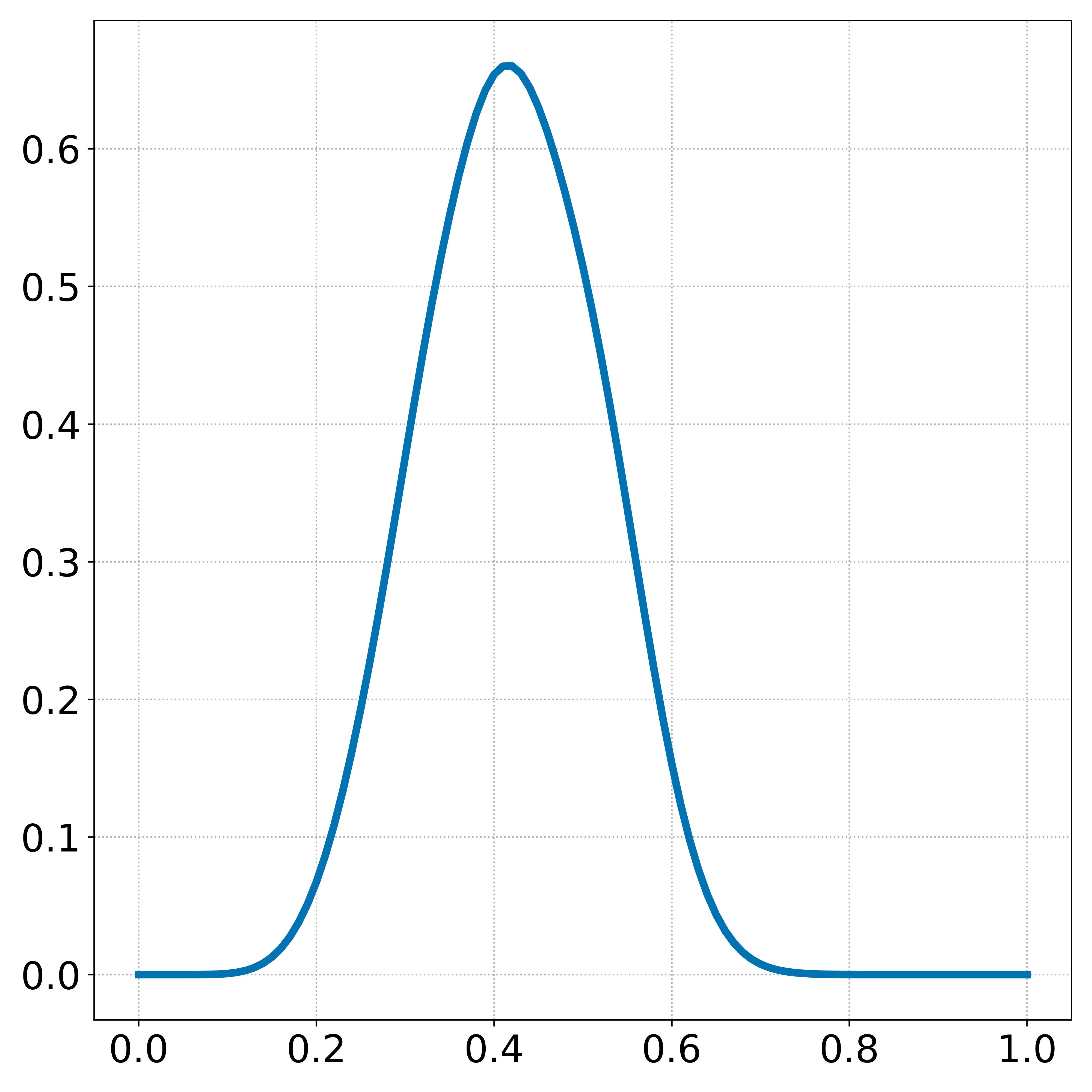}
          \caption{$\phi_L$ at $t = 5$}\label{fig:phiLt5}
          \end{subfigure}
        \begin{subfigure}{0.22\textwidth}
        \includegraphics[width=\textwidth]{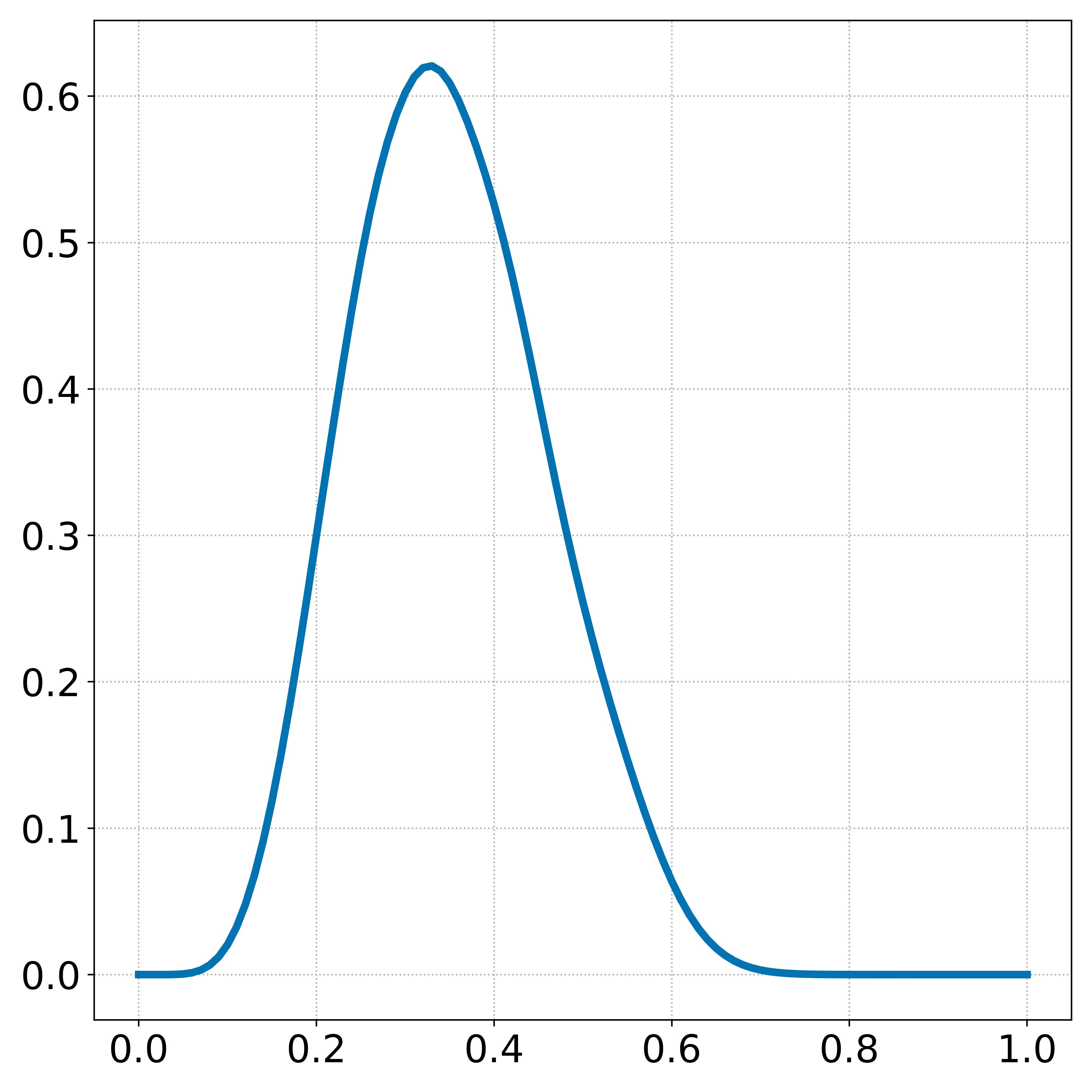}
          \caption{$\phi_L$ at $t = 10$}\label{fig:phiLt10}
          \end{subfigure}
          \begin{subfigure}{0.22\textwidth}
        \includegraphics[width=\textwidth]{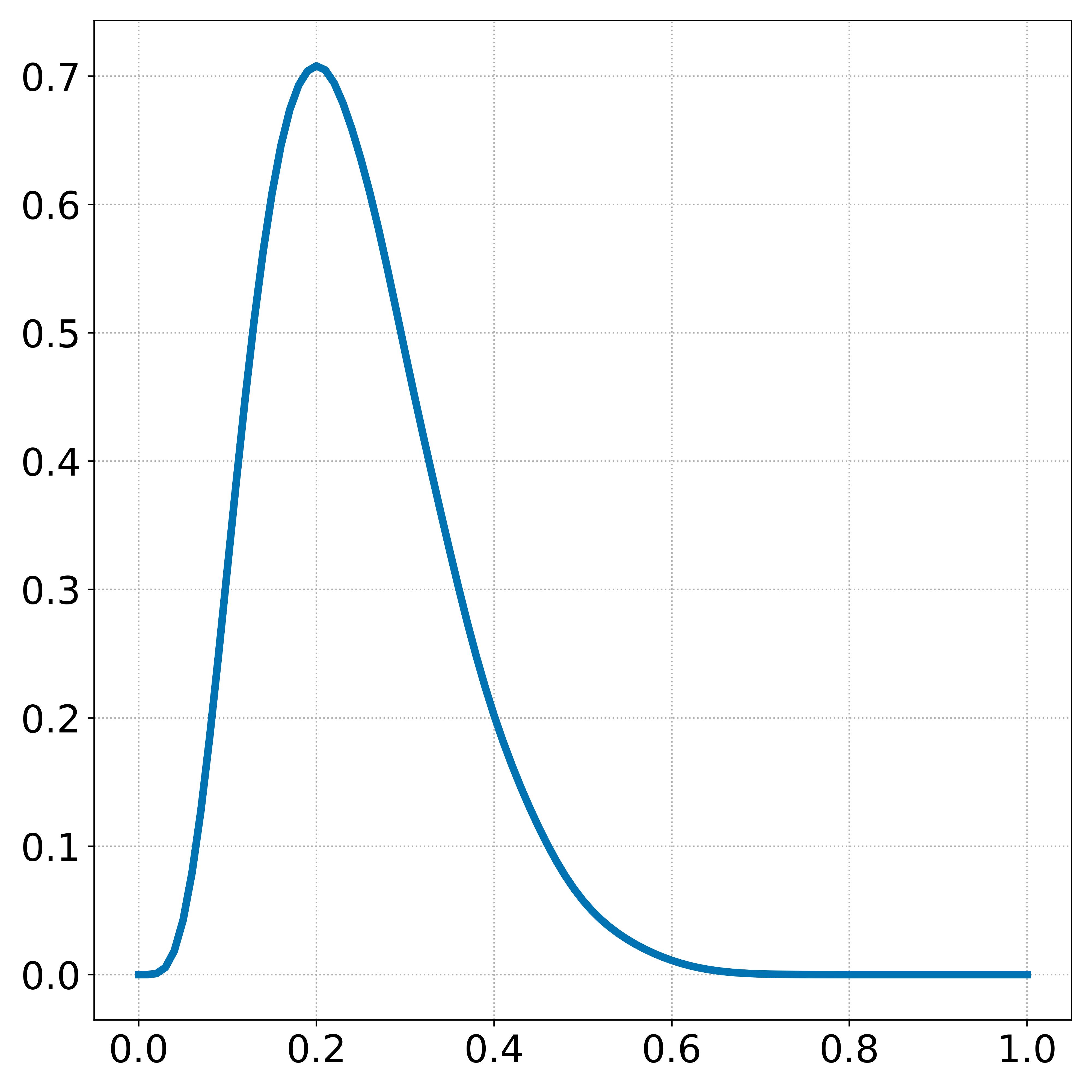}
          \caption{$\phi_L$ at $t = 20$}\label{fig:sphiLt20}
          \end{subfigure}\\
          %
           \begin{subfigure}{0.22\textwidth}
        \includegraphics[width=\textwidth]{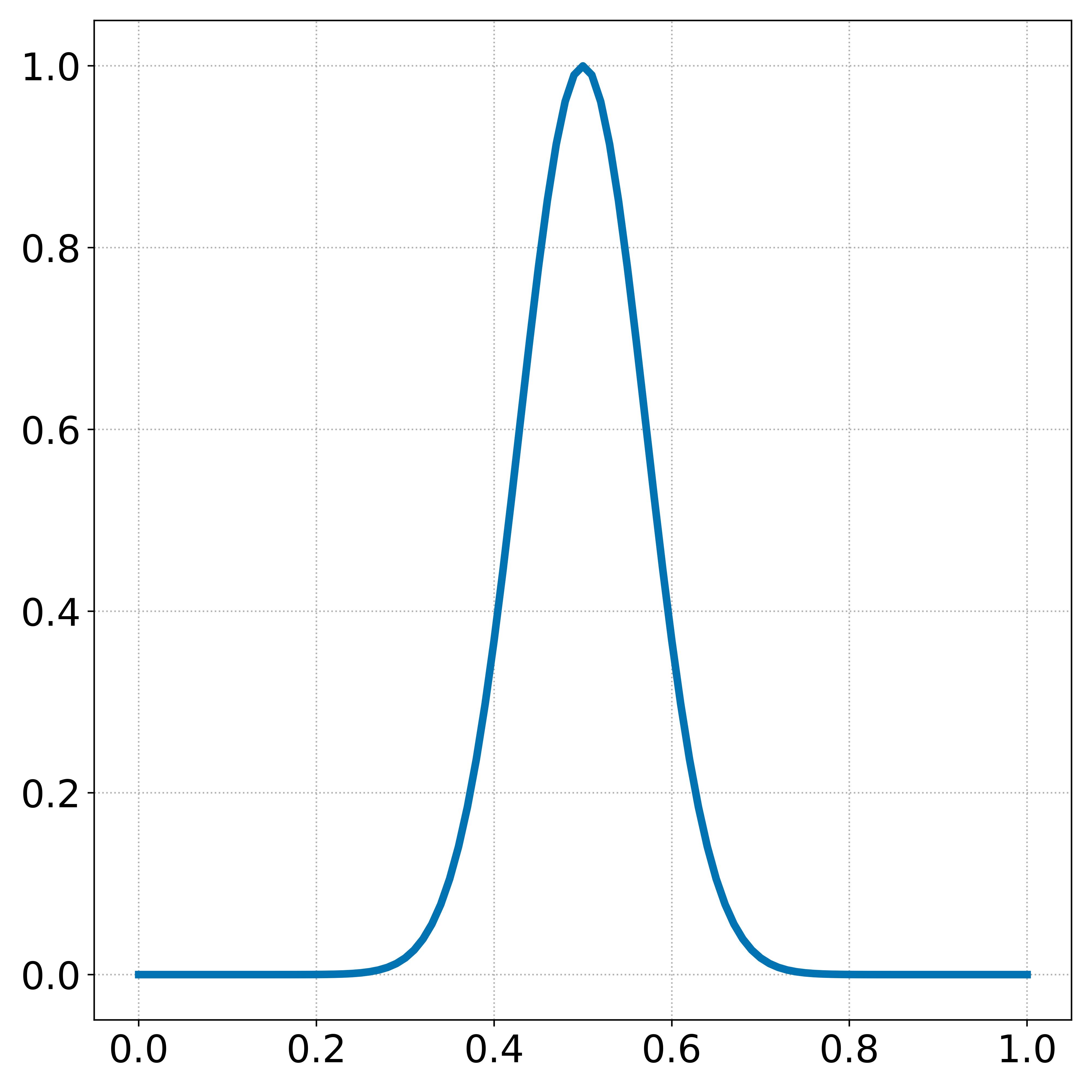}
        \caption{$\phi_R$ at $t = 0$}
        \label{fig:phiRt0}
          \end{subfigure}
        \begin{subfigure}{0.22\textwidth}
        \includegraphics[width=\textwidth]{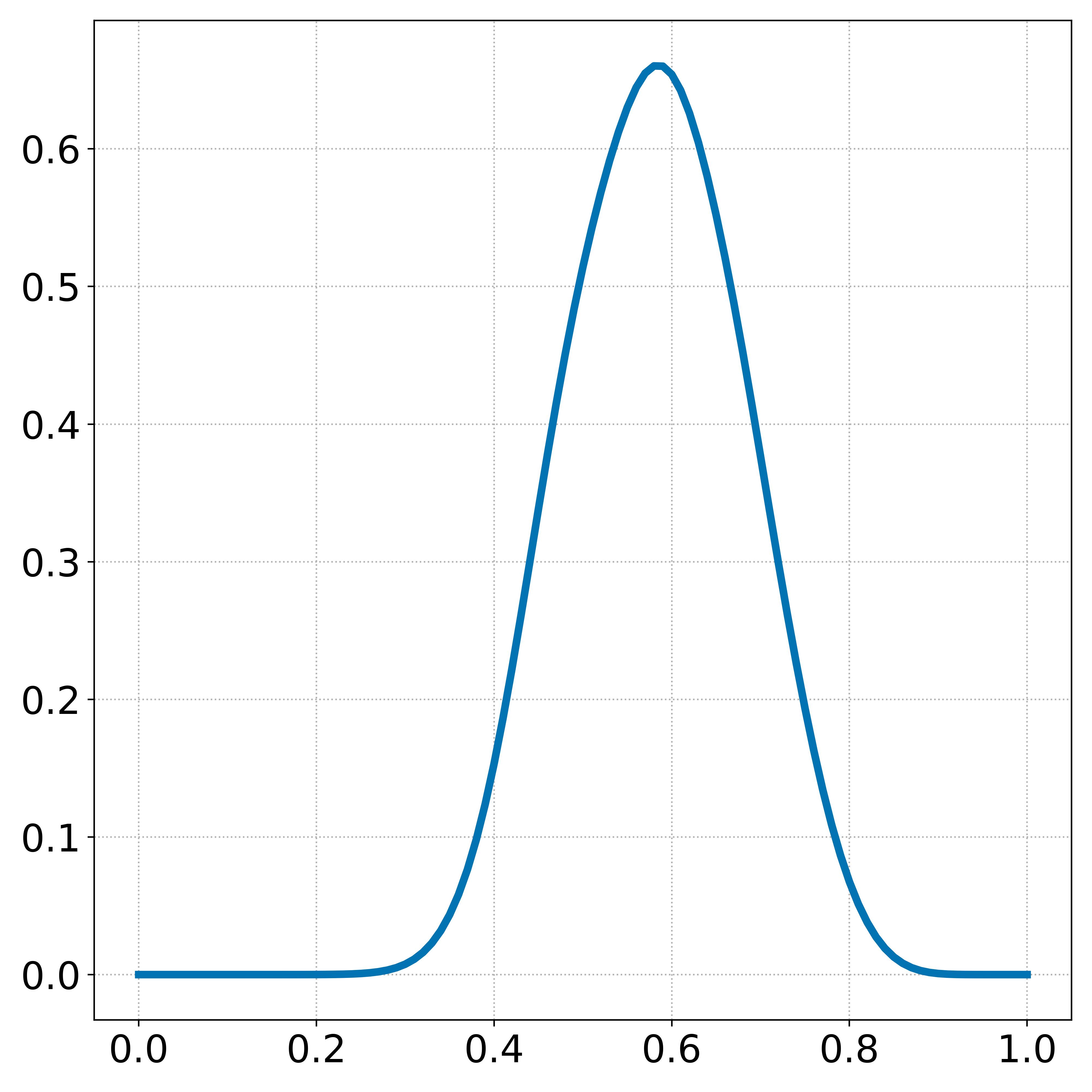}
          \caption{$\phi_R$ at $t = 5$}\label{fig:phiRt5}
          \end{subfigure}
        \begin{subfigure}{0.22\textwidth}
        \includegraphics[width=\textwidth]{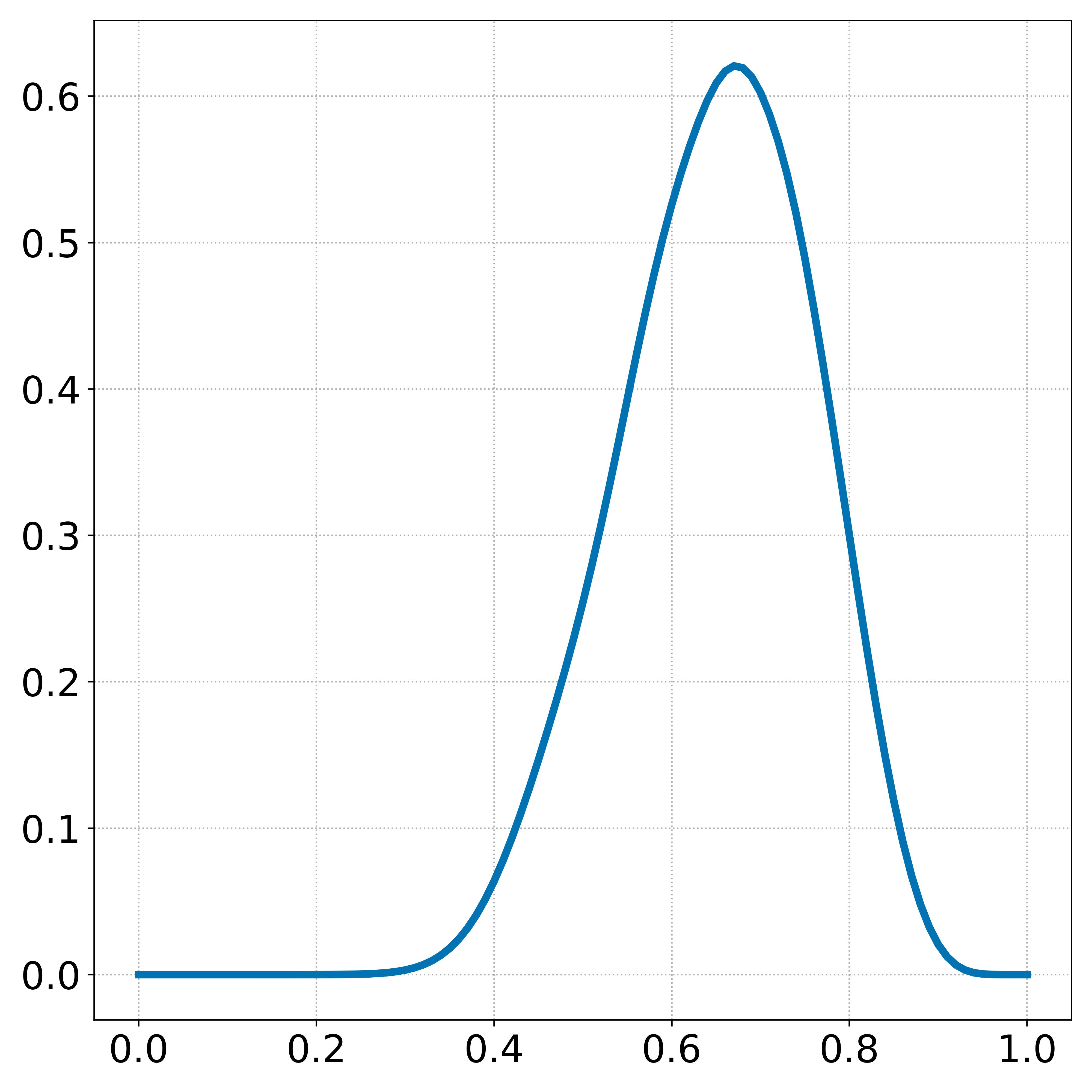}
          \caption{$\phi_R$ at $t = 10$}\label{fig:phiRt10}
          \end{subfigure}
          \begin{subfigure}{0.22\textwidth}
        \includegraphics[width=\textwidth]{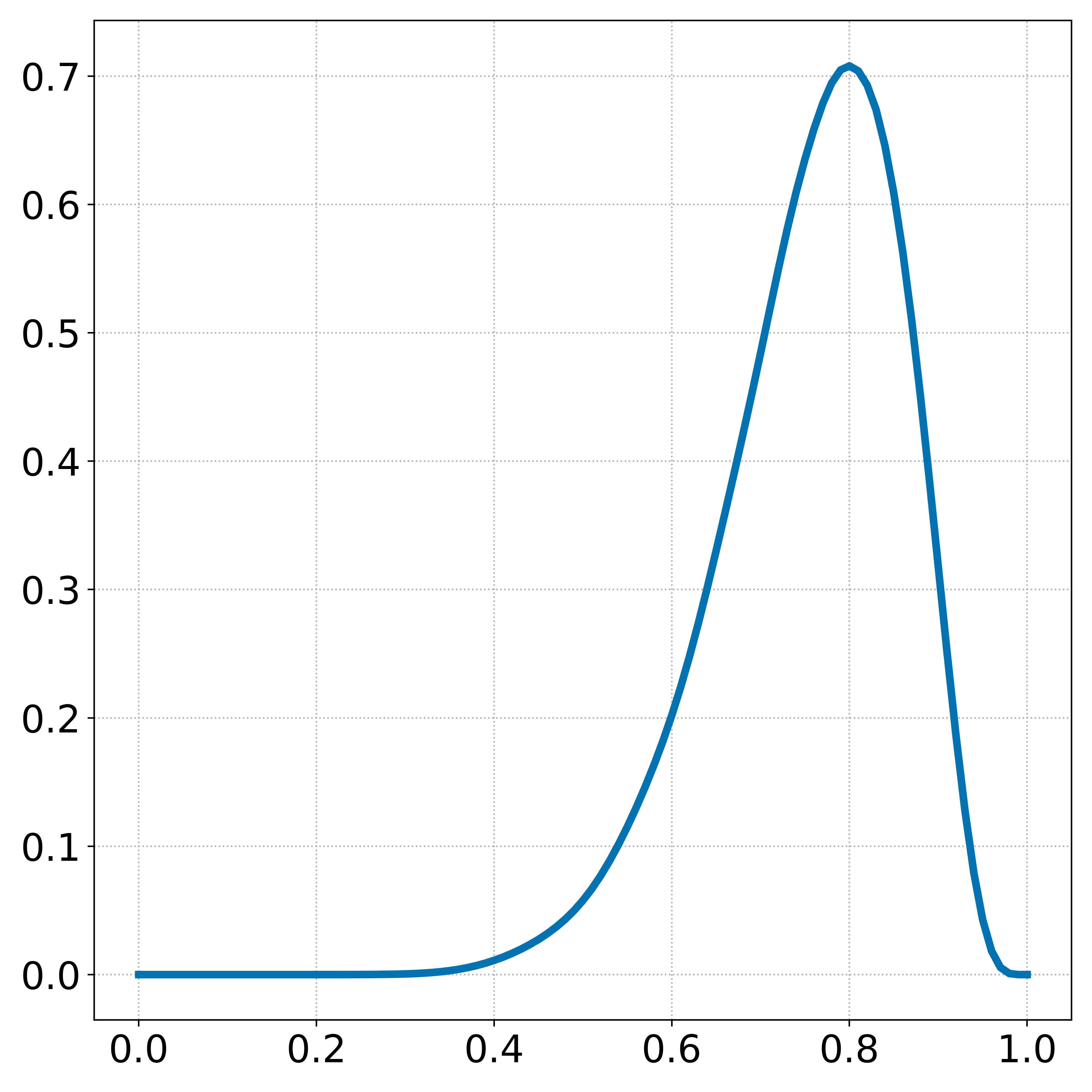}
          \caption{$\phi_R$ at $t = 20$}\label{fig:phiRt20}
          \end{subfigure}\\
           \begin{subfigure}{0.22\textwidth}
        \includegraphics[width=\textwidth]{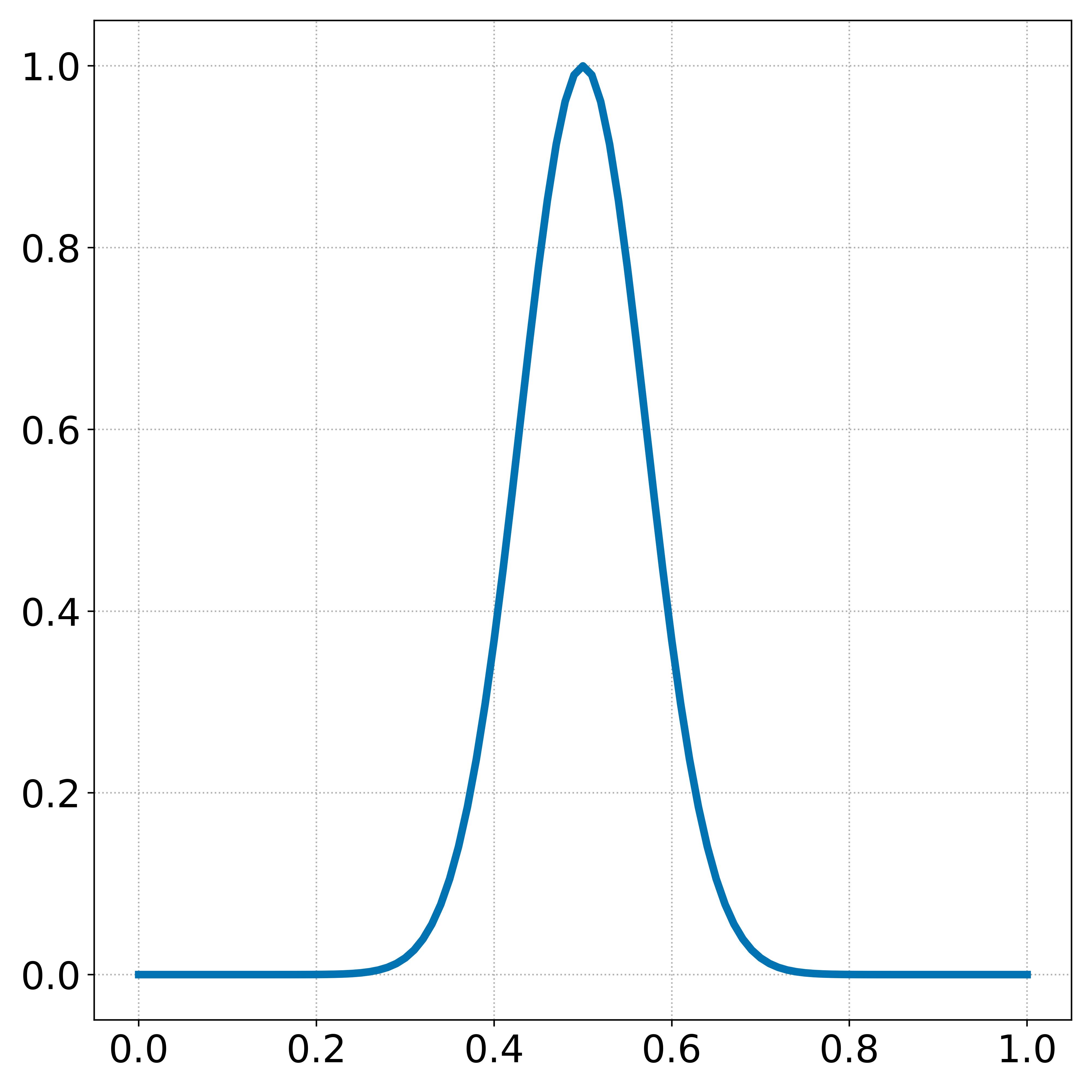}
          \caption{$\phi_A$ at $t = 0$}\label{fig:phiAt0}
          \end{subfigure}
        \begin{subfigure}{0.22\textwidth}
        \includegraphics[width=\textwidth]{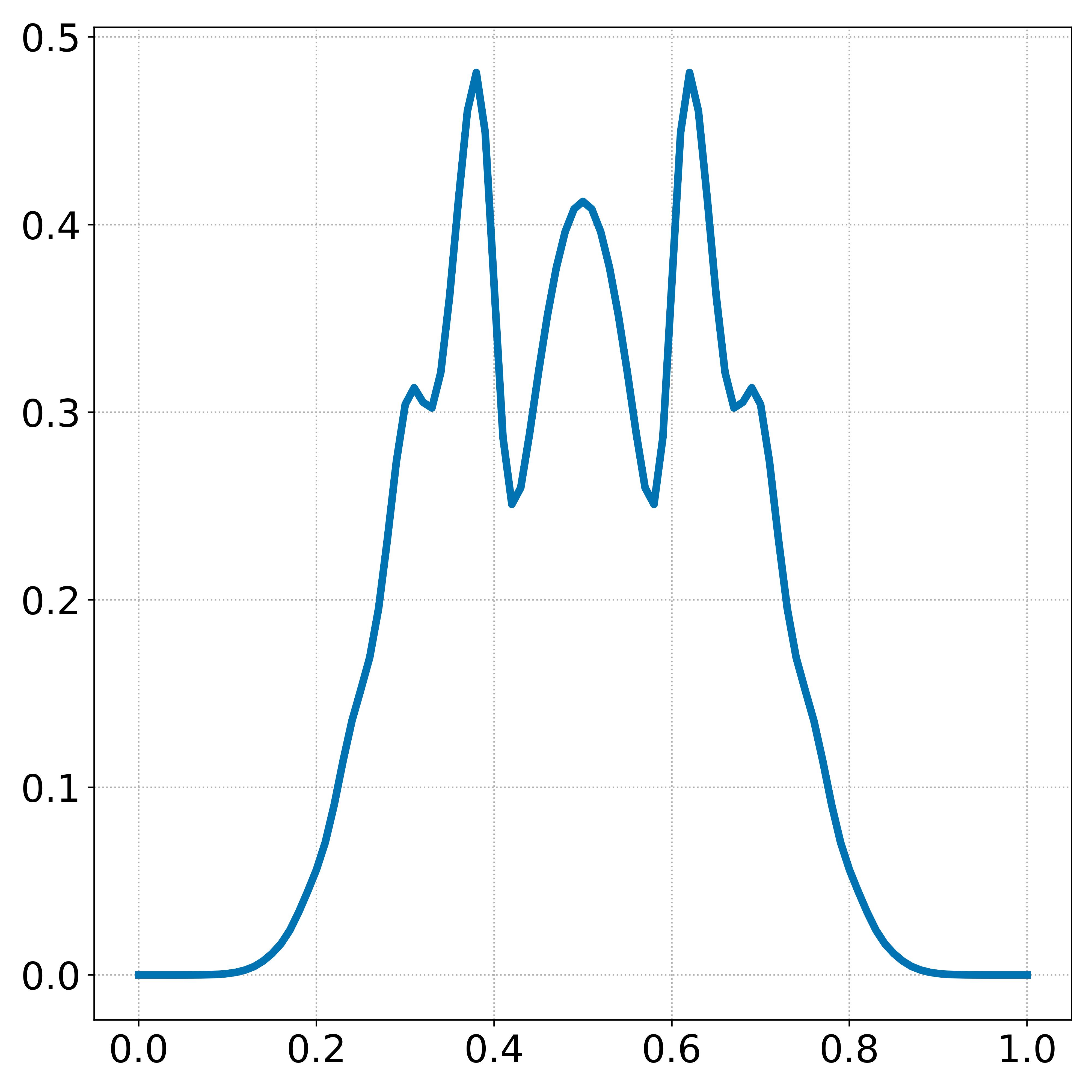}
          \caption{$\phi_A$ at $t = 5$}\label{fig:phiAt5}
          \end{subfigure}
        \begin{subfigure}{0.22\textwidth}
        \includegraphics[width=\textwidth]{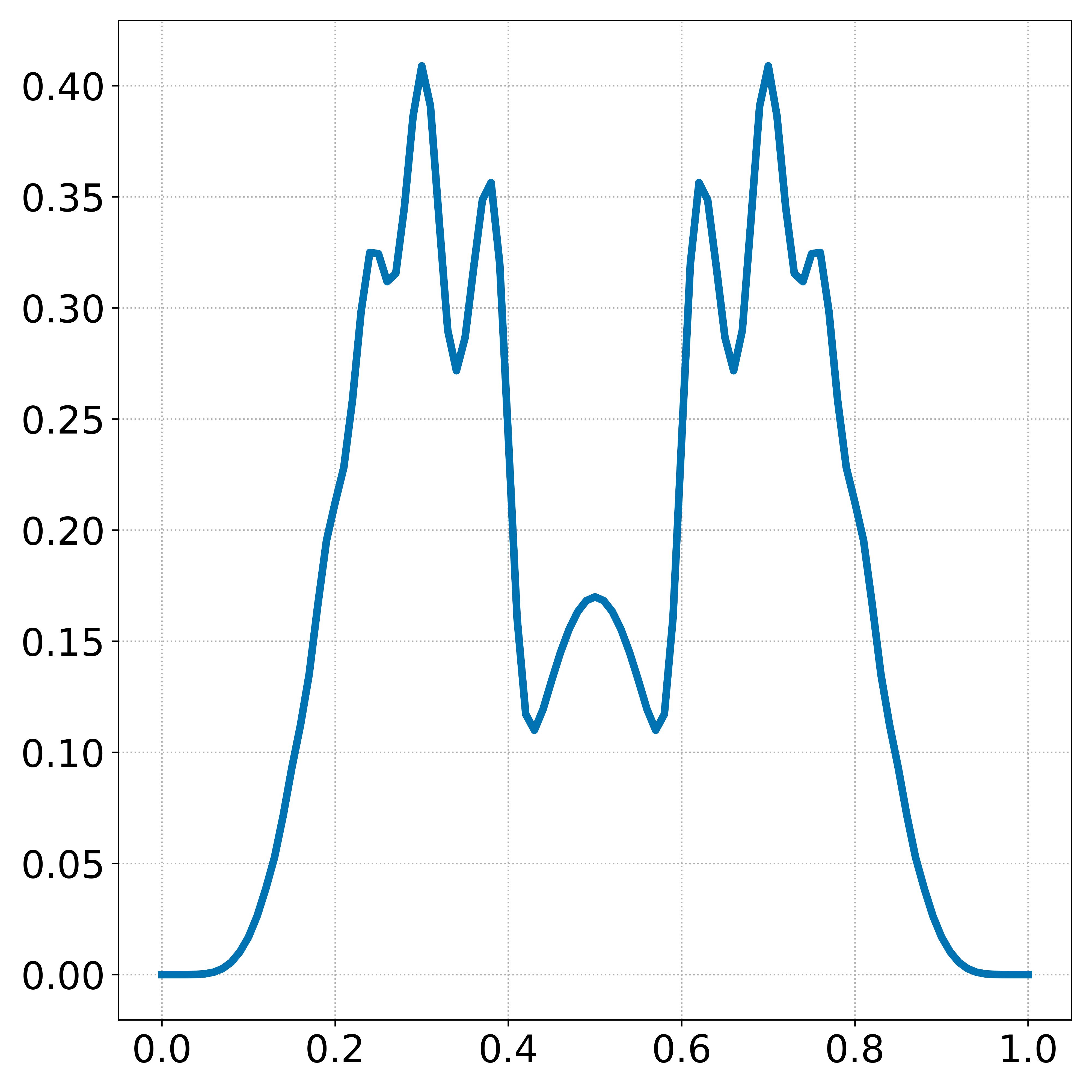}
          \caption{$\phi_A$ at $t = 10$}\label{fig:phiAt10}
          \end{subfigure}
          \begin{subfigure}{0.22\textwidth}
        \includegraphics[width=\textwidth]{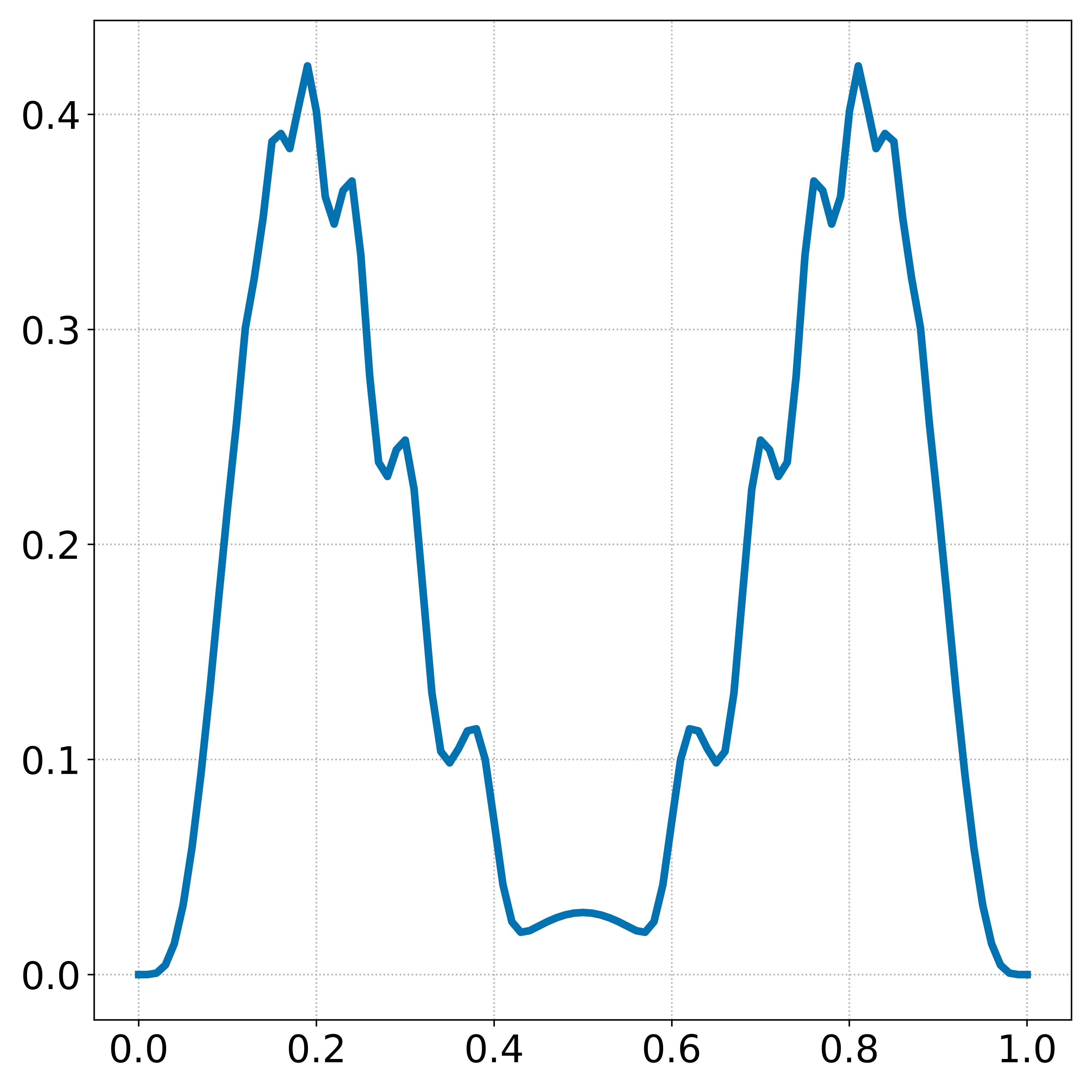}
          \caption{$\phi_A$ at $t = 20$}\label{fig:phiAt20}
          \end{subfigure}\\
           \begin{subfigure}{0.22\textwidth}
        \includegraphics[width=\textwidth]{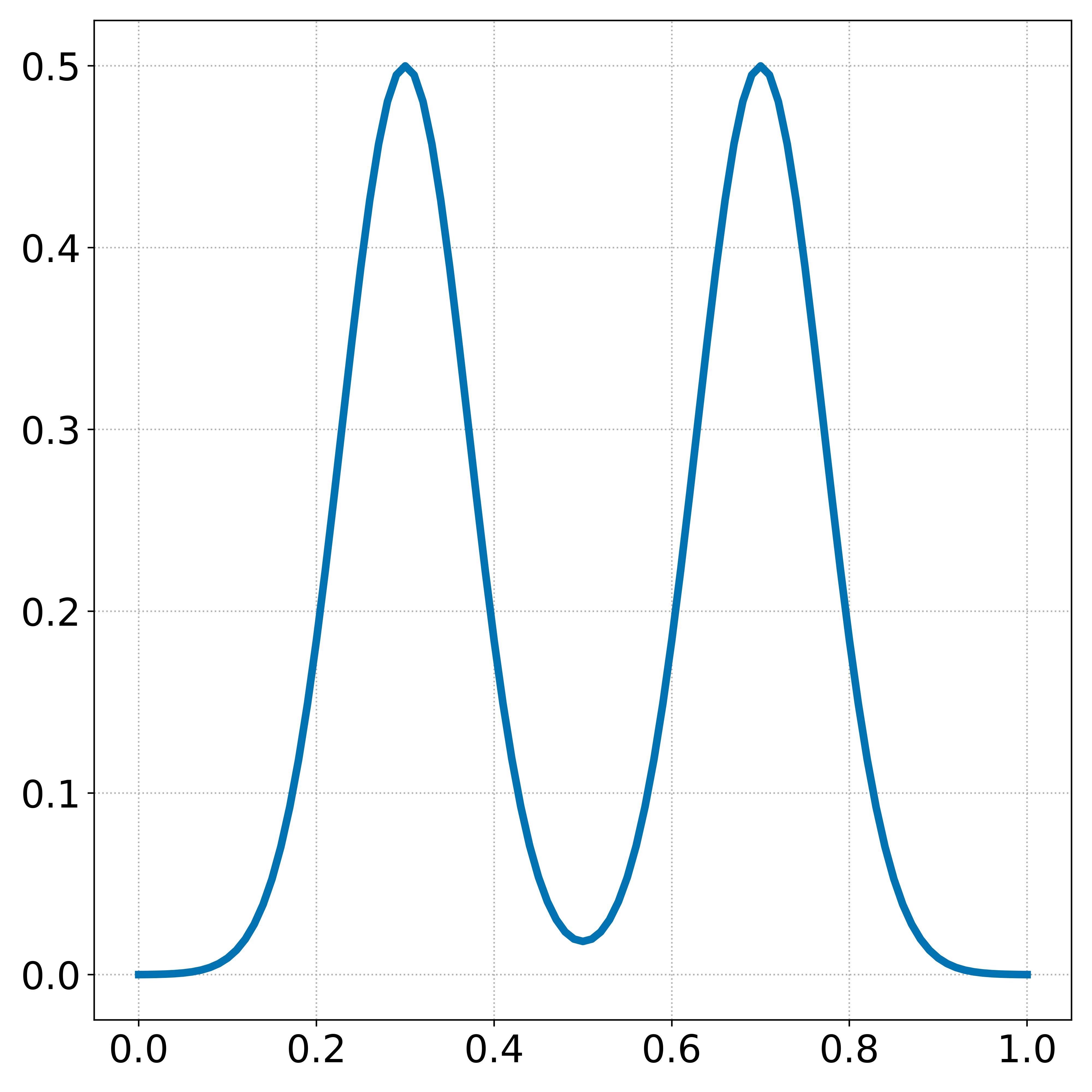}
          \caption{$\phi_T$ at $t = 0$}\label{fig:phiTt0}
          \end{subfigure}
        \begin{subfigure}{0.22\textwidth}
        \includegraphics[width=\textwidth]{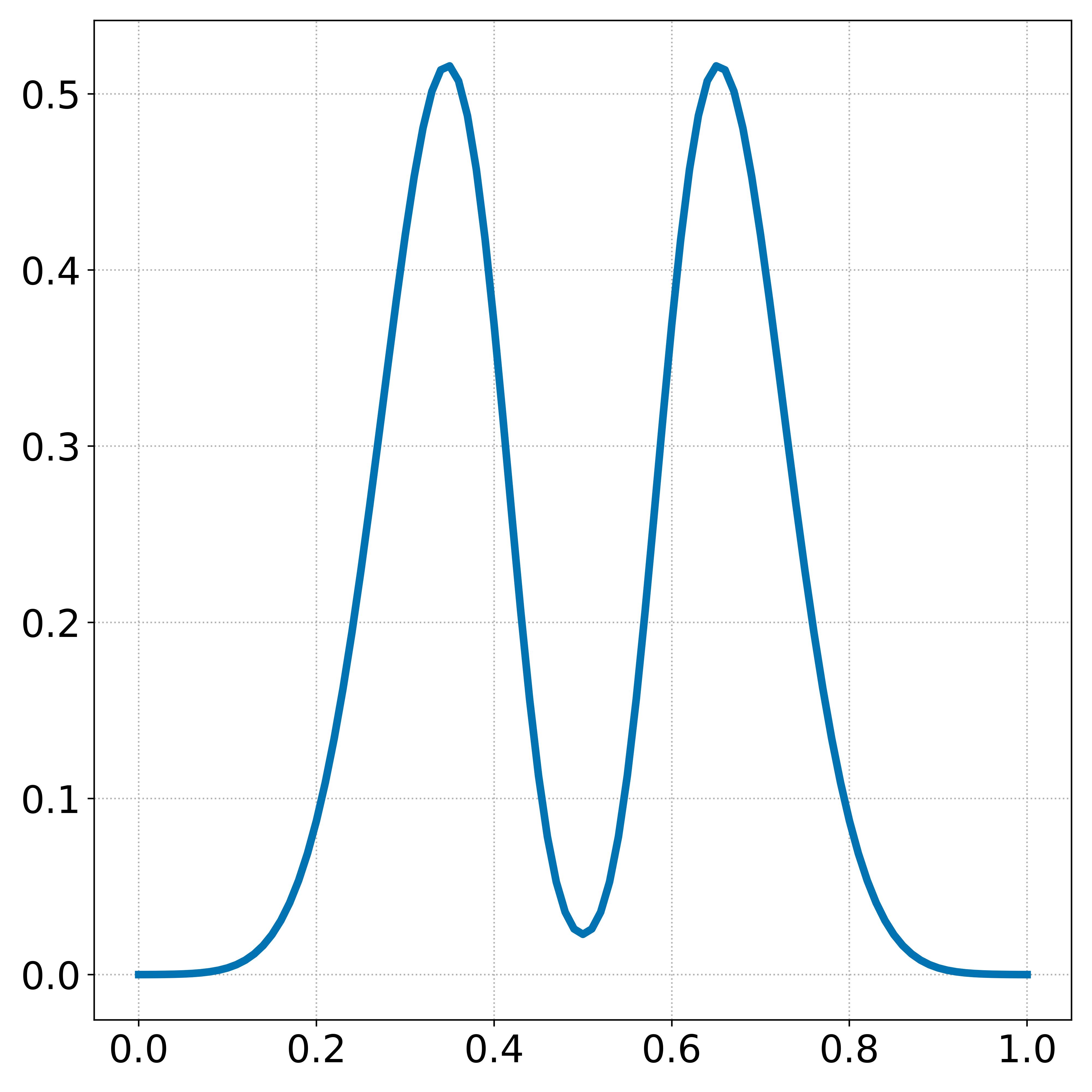}
          \caption{$\phi_T$ at $t = 5$}\label{fig:phiTt5}
          \end{subfigure}
        \begin{subfigure}{0.22\textwidth}
        \includegraphics[width=\textwidth]{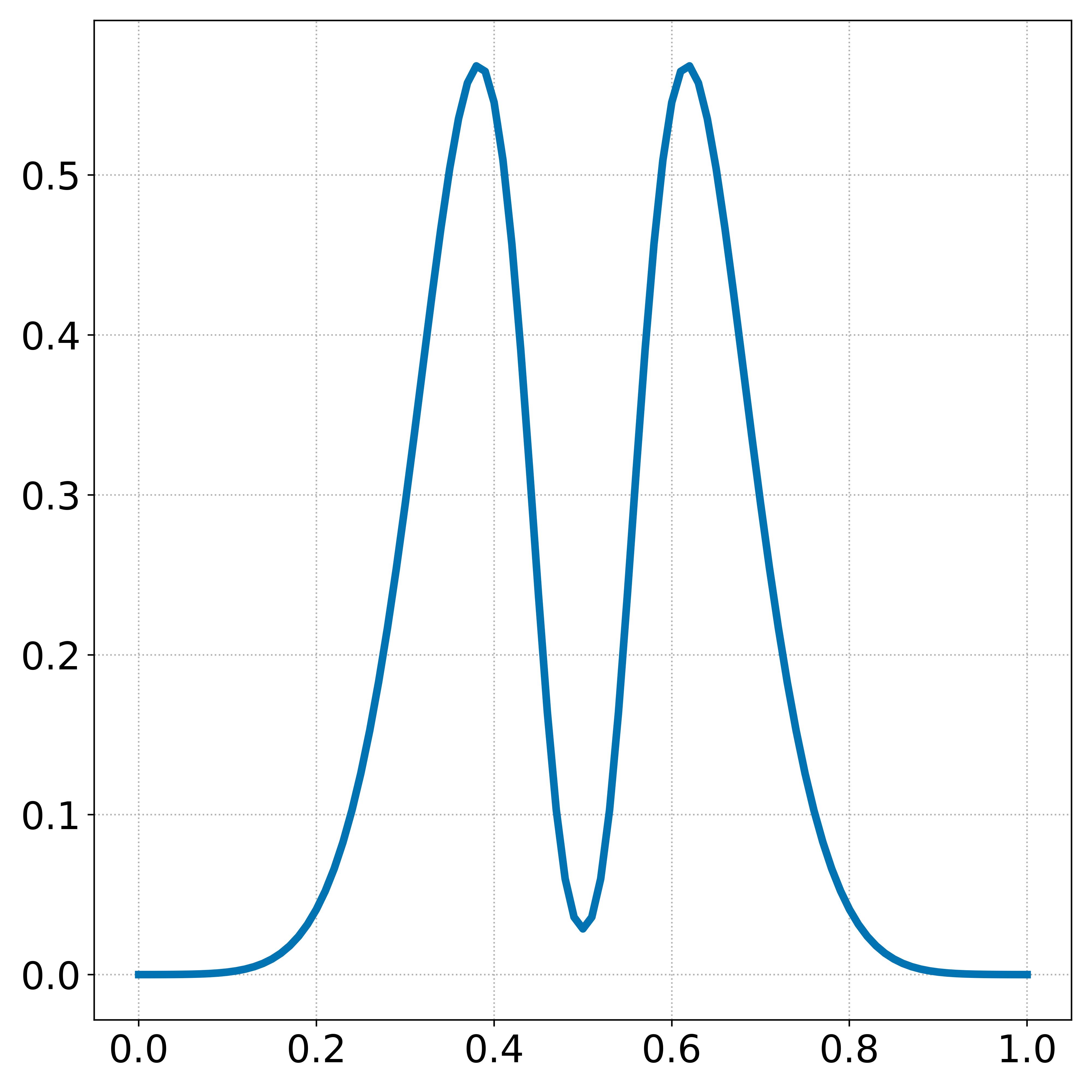}
        \caption{$\phi_T$ at $t = 10$}\label{fig:phiTt10}
          \end{subfigure}
          \begin{subfigure}{0.22\textwidth}
        \includegraphics[width=\textwidth]{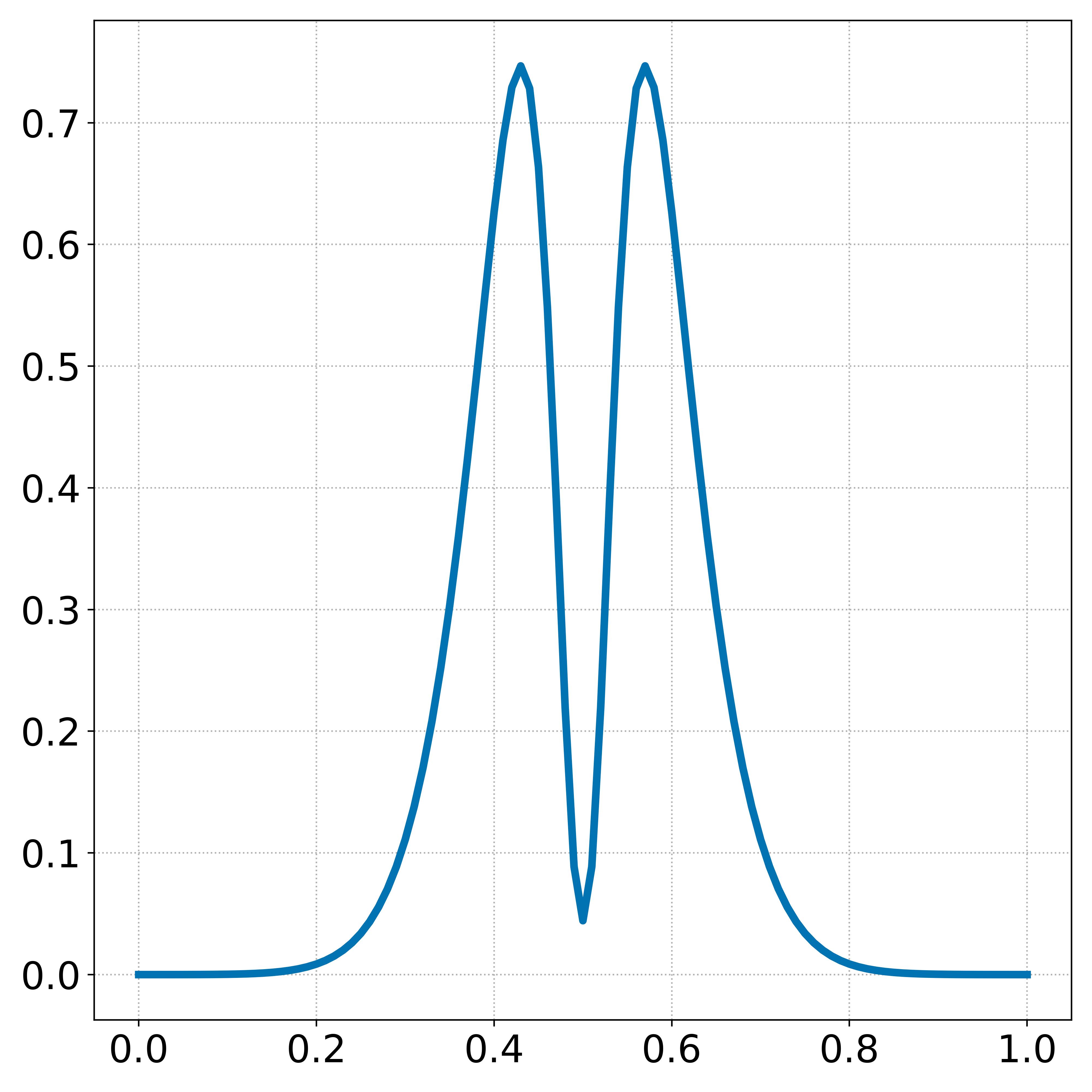}
          \caption{$\phi_T$ at $t = 20$}\label{fig:phiTt20}
          \end{subfigure}\\
\caption{Evolution of density $f_1$ for different transition functions $\phi$ (defined in Table \ref{tab:phitab}) and $\gamma = 0.4$; each row corresponds to one of the four representative families of functions. Initial conditions are the same for $\phi_L$, $\phi_R$, and $\phi_A$ to allow for direct comparison of agent movement patterns; the example for $\phi_T$ is otherwise chosen to show a case starting from a bimodal distribution. All simulations are run using MPCM with $100$ collocation points over the time interval $[0,20]$.}
\label{fig:simonepart}
\end{figure}

\begin{figure}[H]
    \centering
    \subcaptionbox{Log-log plot of the self-conv. metric  \label {fig:err}}{\includegraphics[width=.45\textwidth]{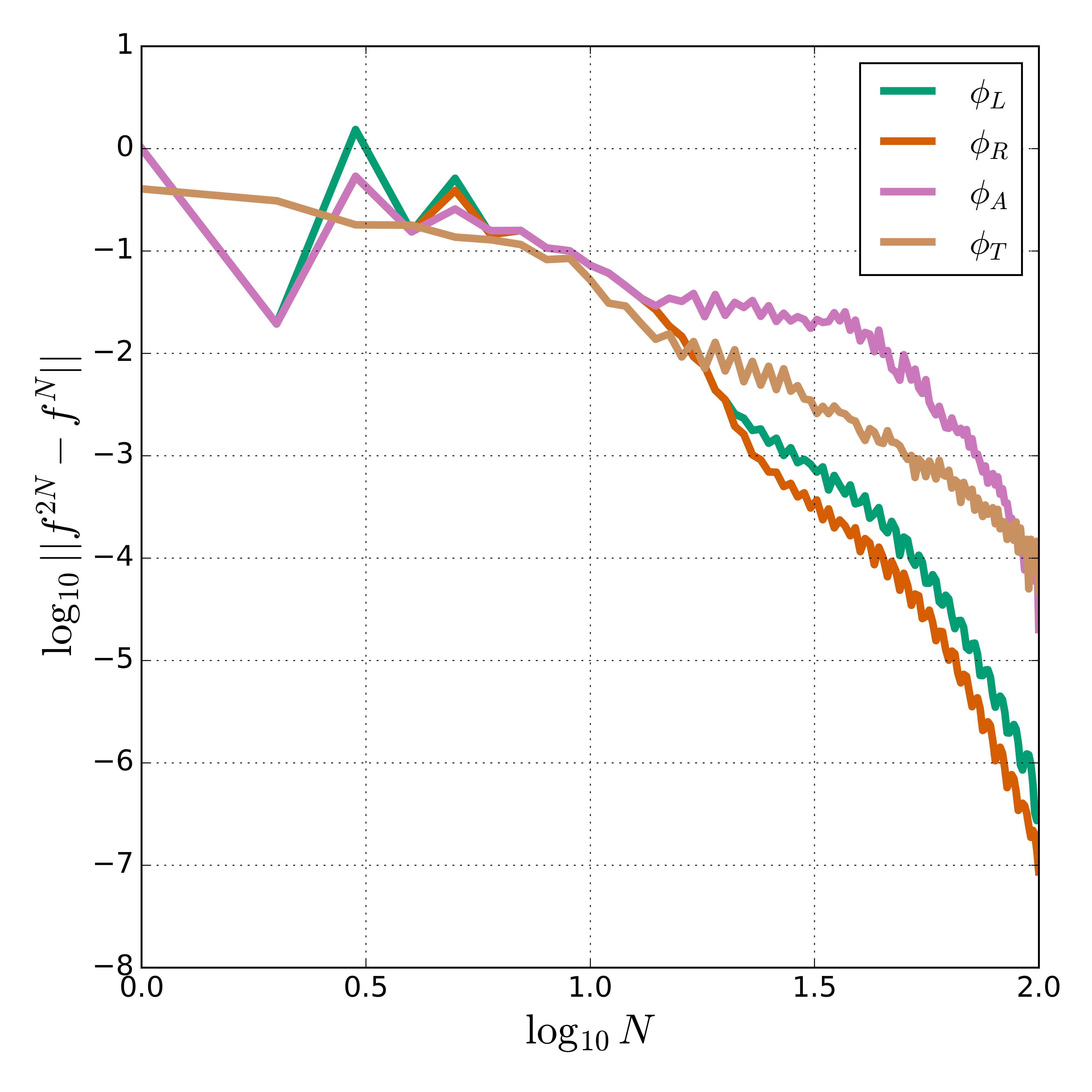}}\hspace{1em}%
    \subcaptionbox{Log-log plot of the runtime of MPCM\label{fig:runtime1}}{\includegraphics[width=.45\textwidth]{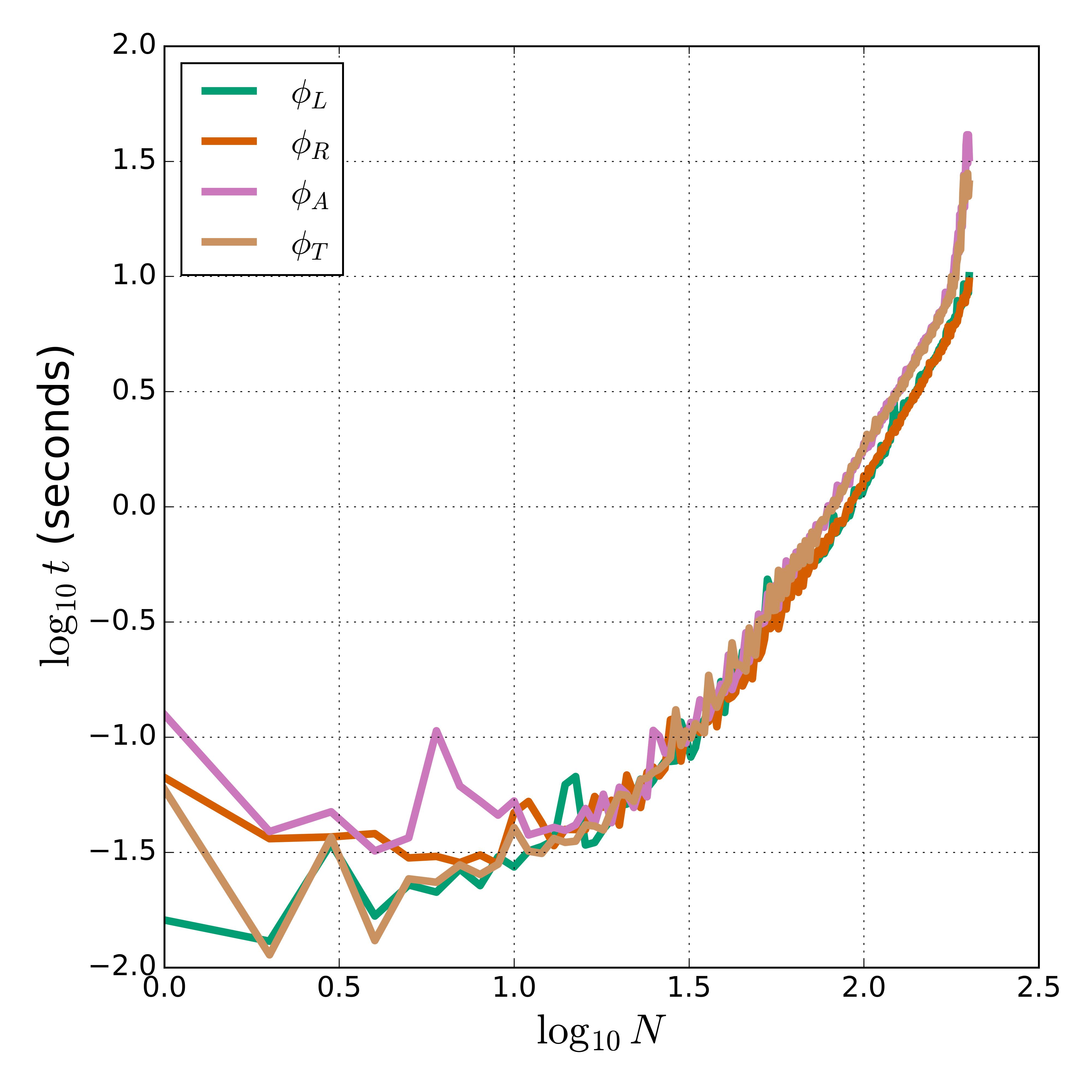}}\hspace{1em}%
    \caption{Comparison of the self-convergence metric and time needed to run for system \eqref{EQ:oneparticles} with different transition rates from Table \ref{tab:phitab}. (a) Log-log plot of the self-convergence metric for different numbers of nodes $N$. This plot shows that the difference between simulations with $N$ and $2N$ approaches zero superalgebraically as $N$ grows.  (b) Log-log plot of the average runtime needed to run for different initial values and transition rates; runtime and computational cost are observed to be around $O(N^3)$.}
    \label{fig:errorruntime}
\end{figure} 

\paragraph{Tests with two dynamic subsystems} Next, we allow both subsystems to evolve in time; we require two different transition rates, $T_{12}^1$ and $T_{21}^2$, and set encounter rates to $\eta_{12} = \eta_{21} = 1$. The integro-differential system takes the following form
{\small
\begin{align}\label{EQ:twoparticles}
\left\{\begin{array}{l}
    \partial_t f_1(t,u) = \int_0^1\int_0^1\  \delta_{\phi_{12}^1(x,y) - u}f_1(t,x)f_2(t,y)dxdy- f_1(t,u)\int_0^1 f_2(t,y)dy, \vspace{6pt}\\
    \partial_t f_2(t,u) = \int_0^1 \int_0^1 \delta_{\phi_{21}^2(x,y) - u}f_1(t,x)f_2(t,y)dxdy- f_2(t,u)\int_0^1 f_1(t,x)dx,\end{array}\right.
\end{align}}
where $\phi_{12}^1(x,y)$ and $\phi_{21}^2(x,y)$ are chosen from the families in Table \ref{tab:phitab}. 

As a representative example, Figure \ref{fig:sim2phiLphiR} illustrates the dynamics of the solution to the kinetic system \eqref{EQ:twoparticles} with $\phi_{12}^1(x,y) = x - 0.4xy$ and $\phi_{21}^2(x,y) = x + 0.4(1-x)(1-y)$. We take two normal distributions as the initial distributions for $f_1$ and $f_2$, the first centered at $0.8$ and the second centered at $0.2$. We observe mirrored behavior: $f_1$ transitions to the left toward $0$, while $f_2$ exhibits symmetric behavior, shifting toward microstate $1$.

\begin{figure}[H]
    \centering
    \begin{subfigure}{0.22\textwidth}
        \includegraphics[width=\textwidth]{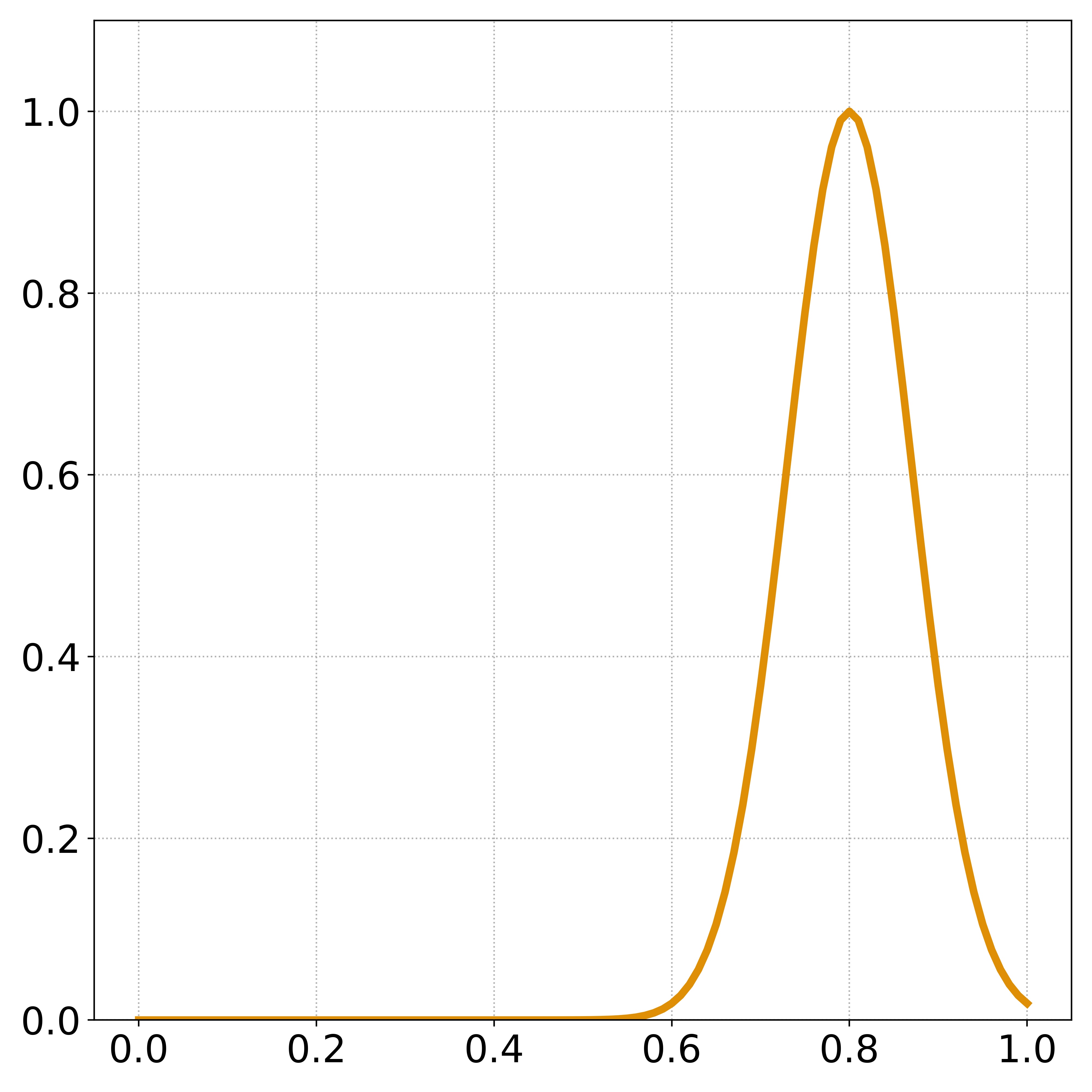}
          \caption{$f_1$ at $t = 0$}
          \end{subfigure}
        \begin{subfigure}{0.22\textwidth}
        \includegraphics[width=\textwidth]{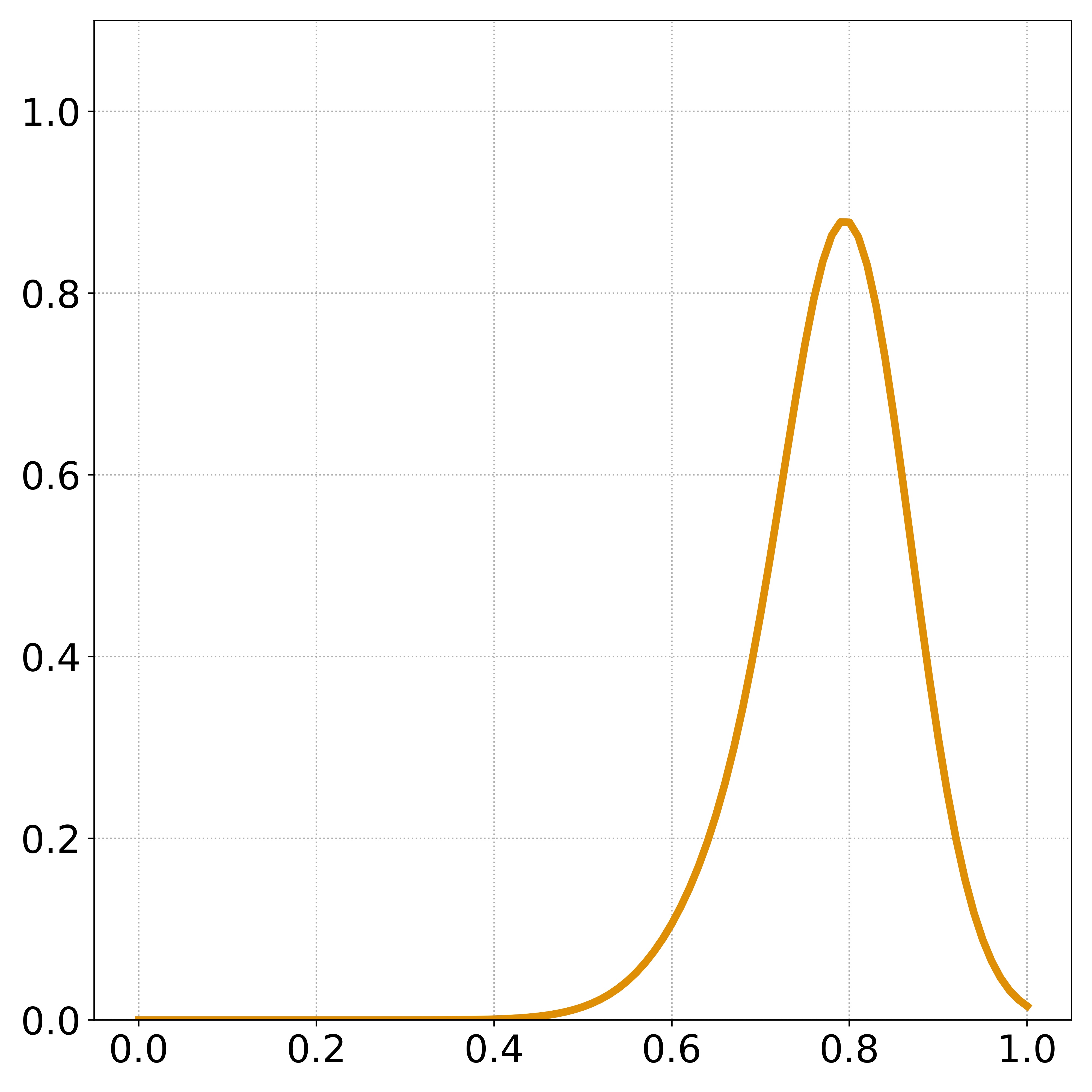}
          \caption{$f_1$ at $t = 1$}
          \end{subfigure}
        \begin{subfigure}{0.22\textwidth}
        \includegraphics[width=\textwidth]{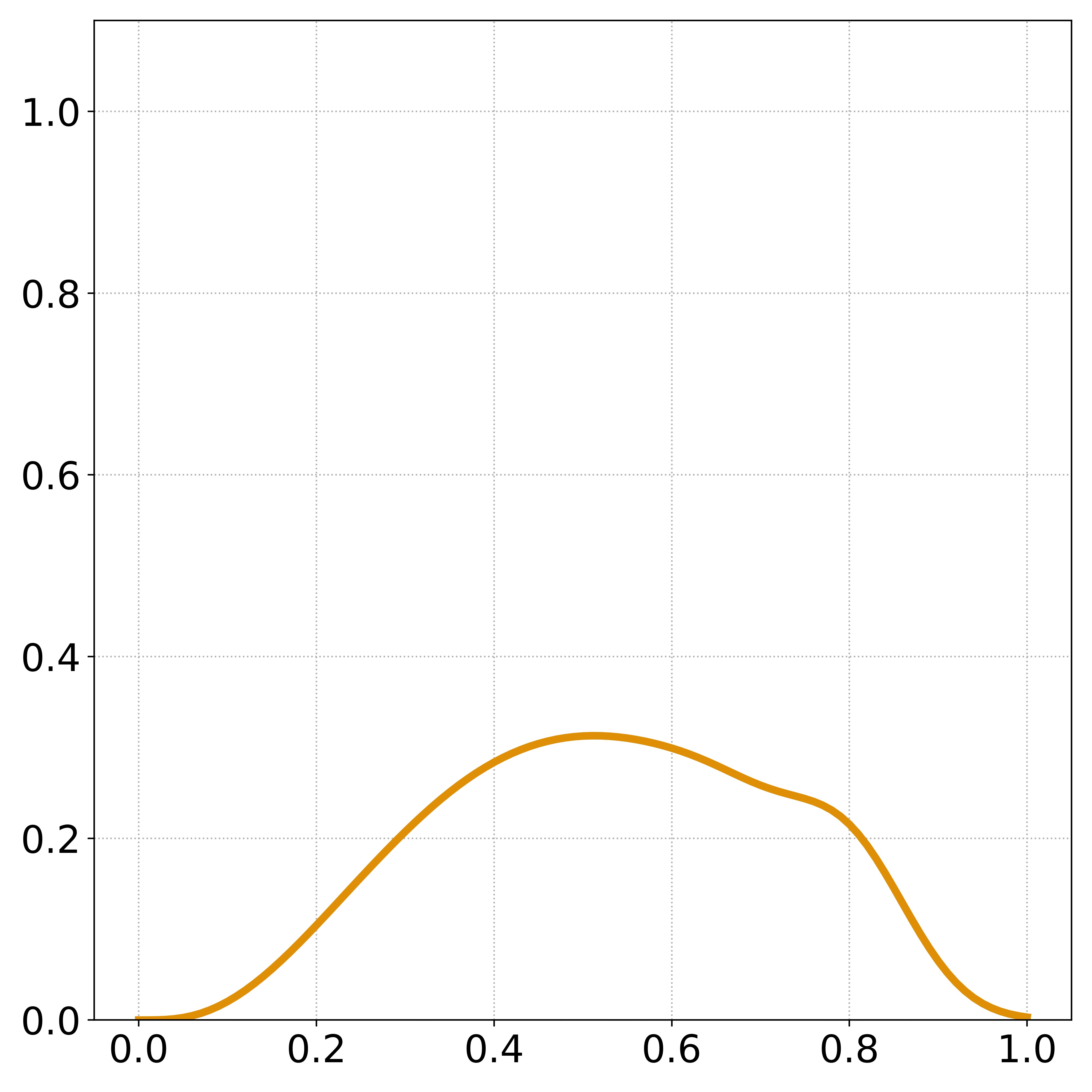}
          \caption{$f_1$ at $t = 10$}
          \end{subfigure}
          \begin{subfigure}{0.22\textwidth}
        \includegraphics[width=\textwidth]{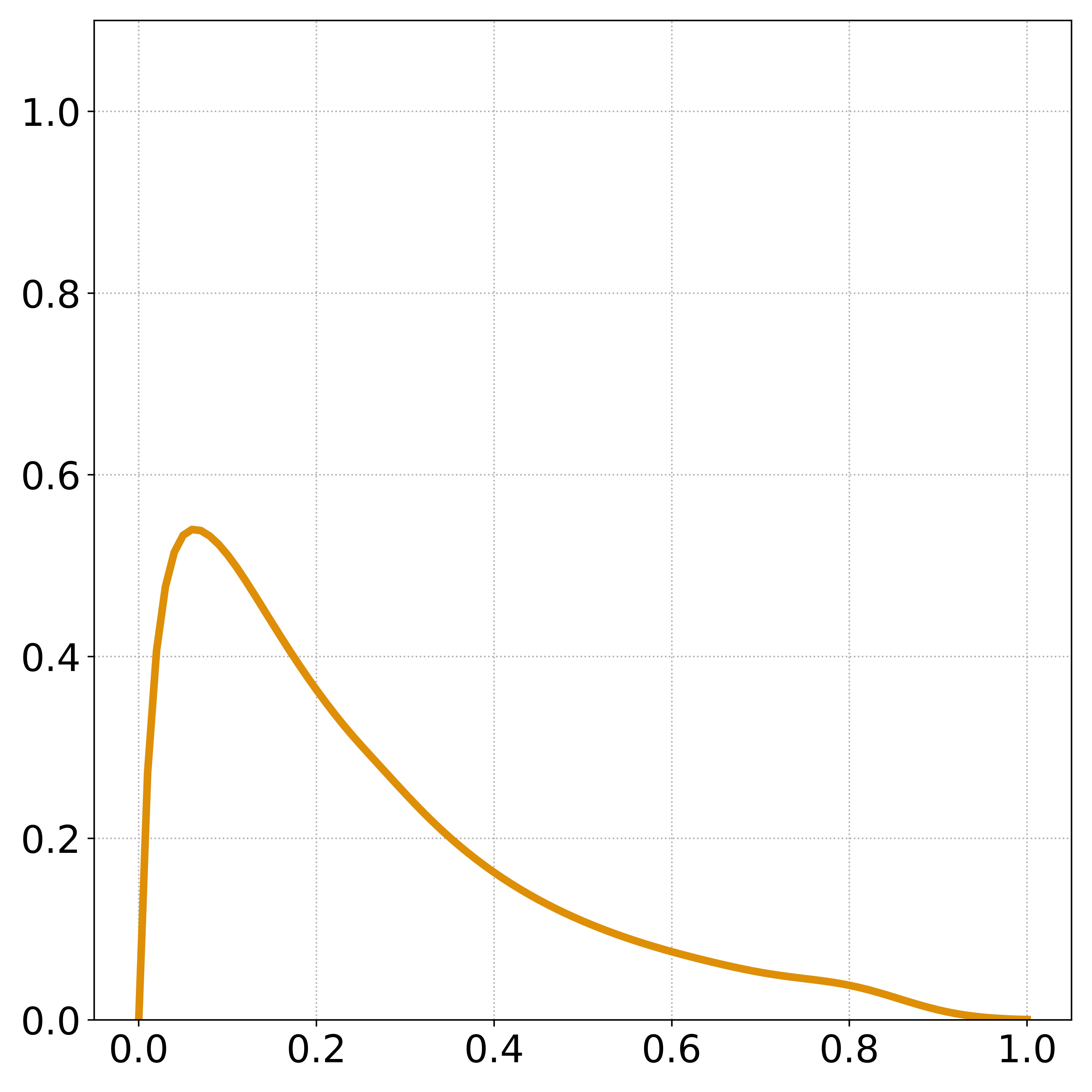}
          \caption{$f_1$ at $t = 20$}
          \end{subfigure}\\
        \begin{subfigure}{0.22\textwidth}
        \includegraphics[width=\textwidth]{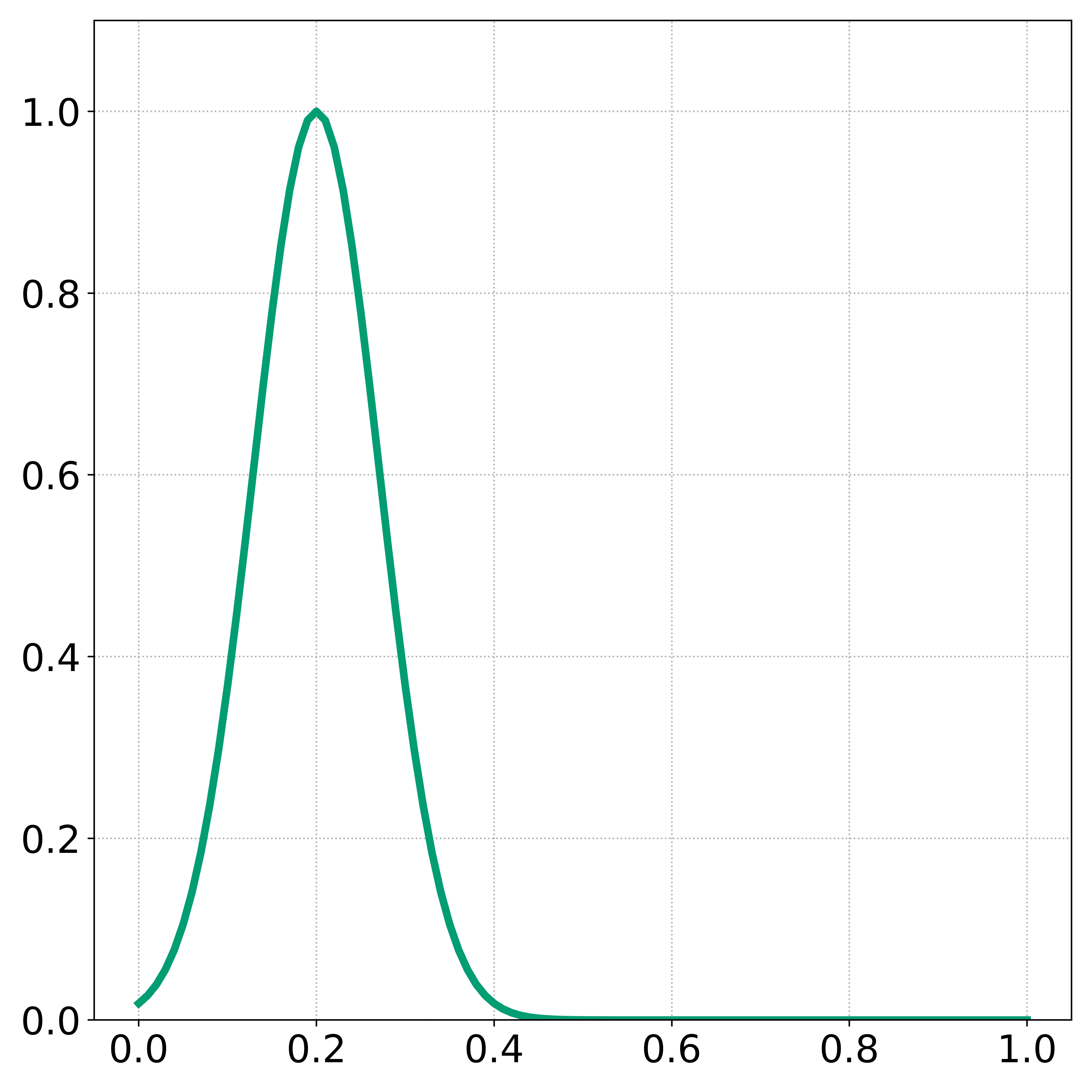}
          \caption{$f_2$ at $t = 0$}
          \end{subfigure}
        \begin{subfigure}{0.22\textwidth}
        \includegraphics[width=\textwidth]{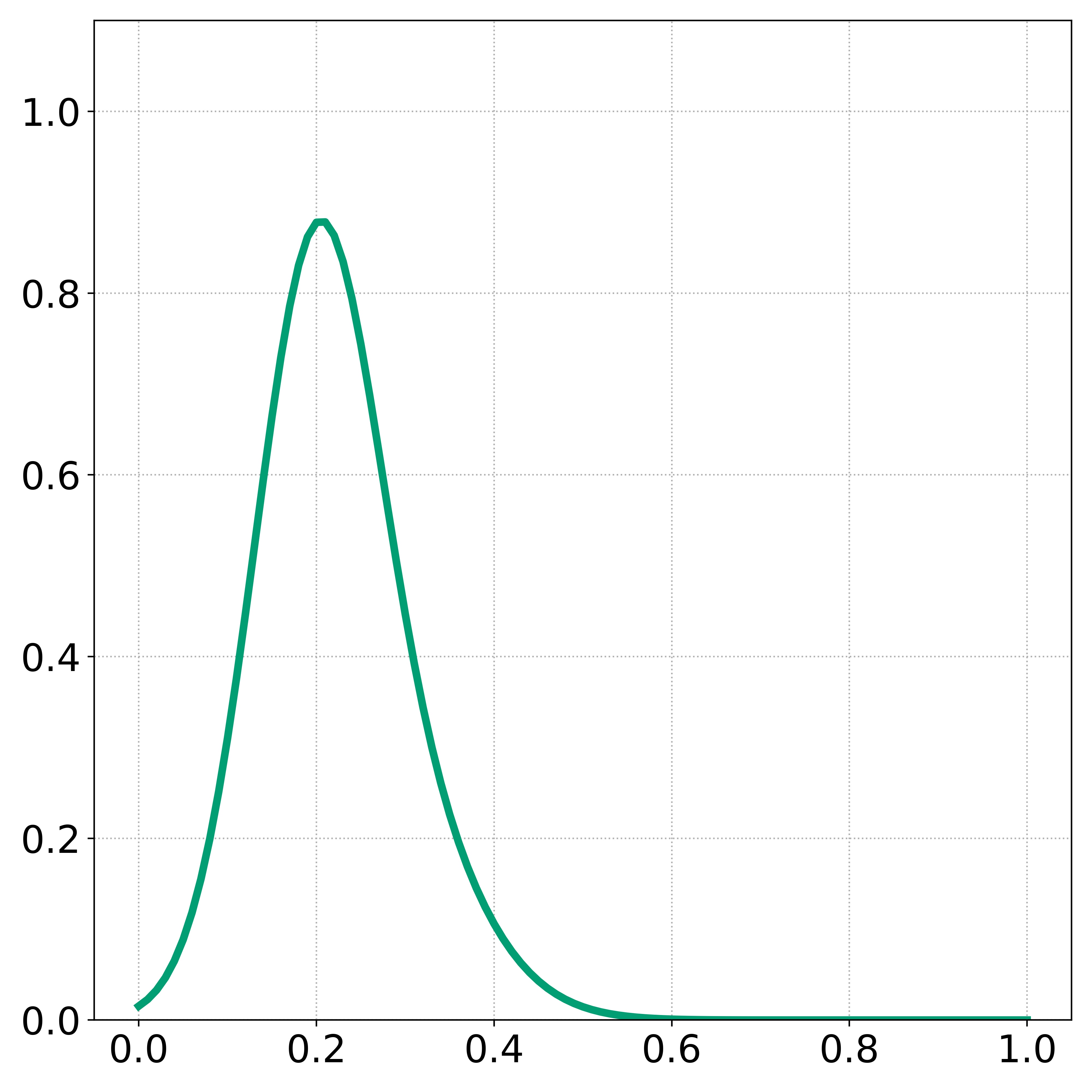}
          \caption{$f_2$ at $t = 1$}
          \end{subfigure}
        \begin{subfigure}{0.22\textwidth}
        \includegraphics[width=\textwidth]{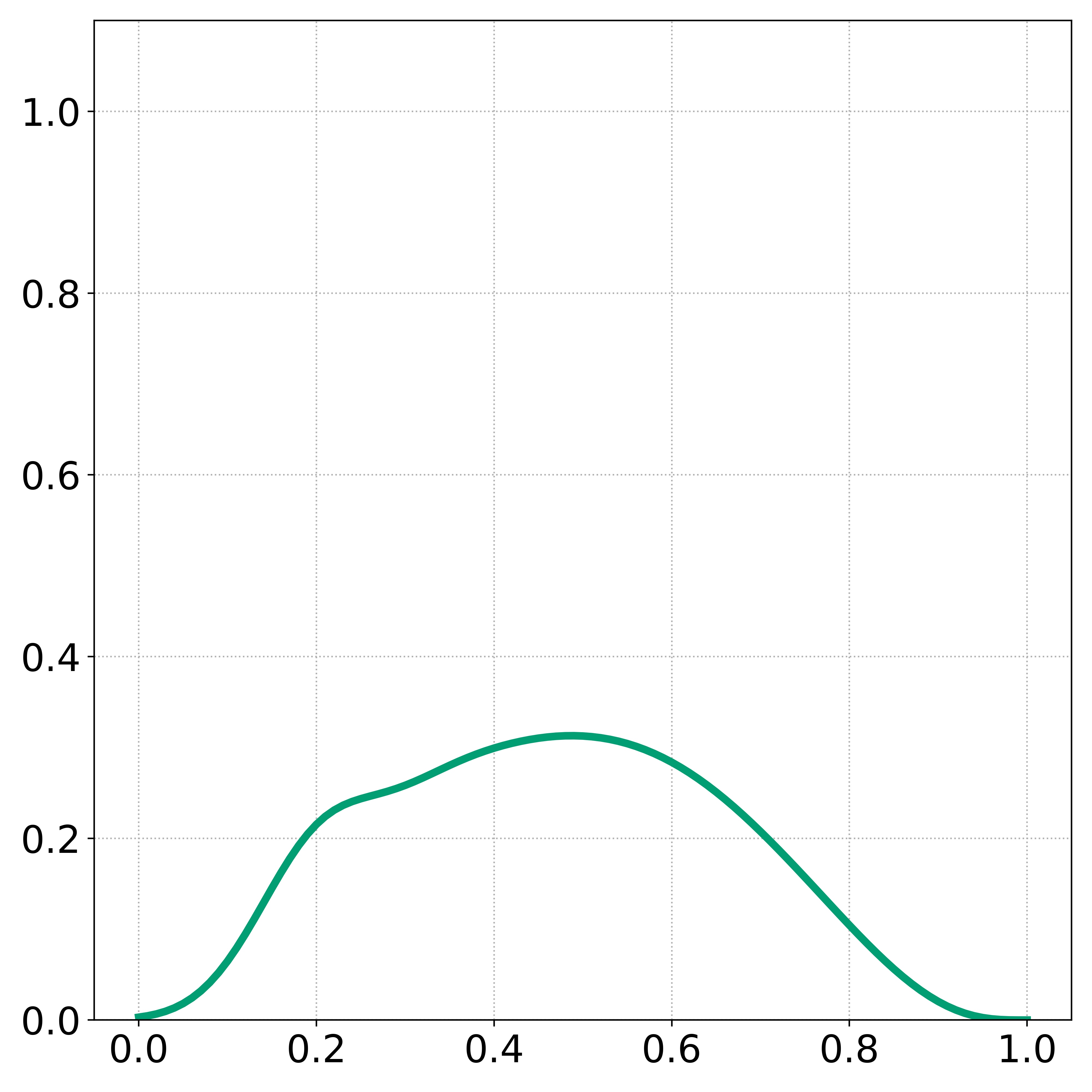}
          \caption{$f_2$ at $t = 10$}
          \end{subfigure}
          \begin{subfigure}{0.22\textwidth}
        \includegraphics[width=\textwidth]{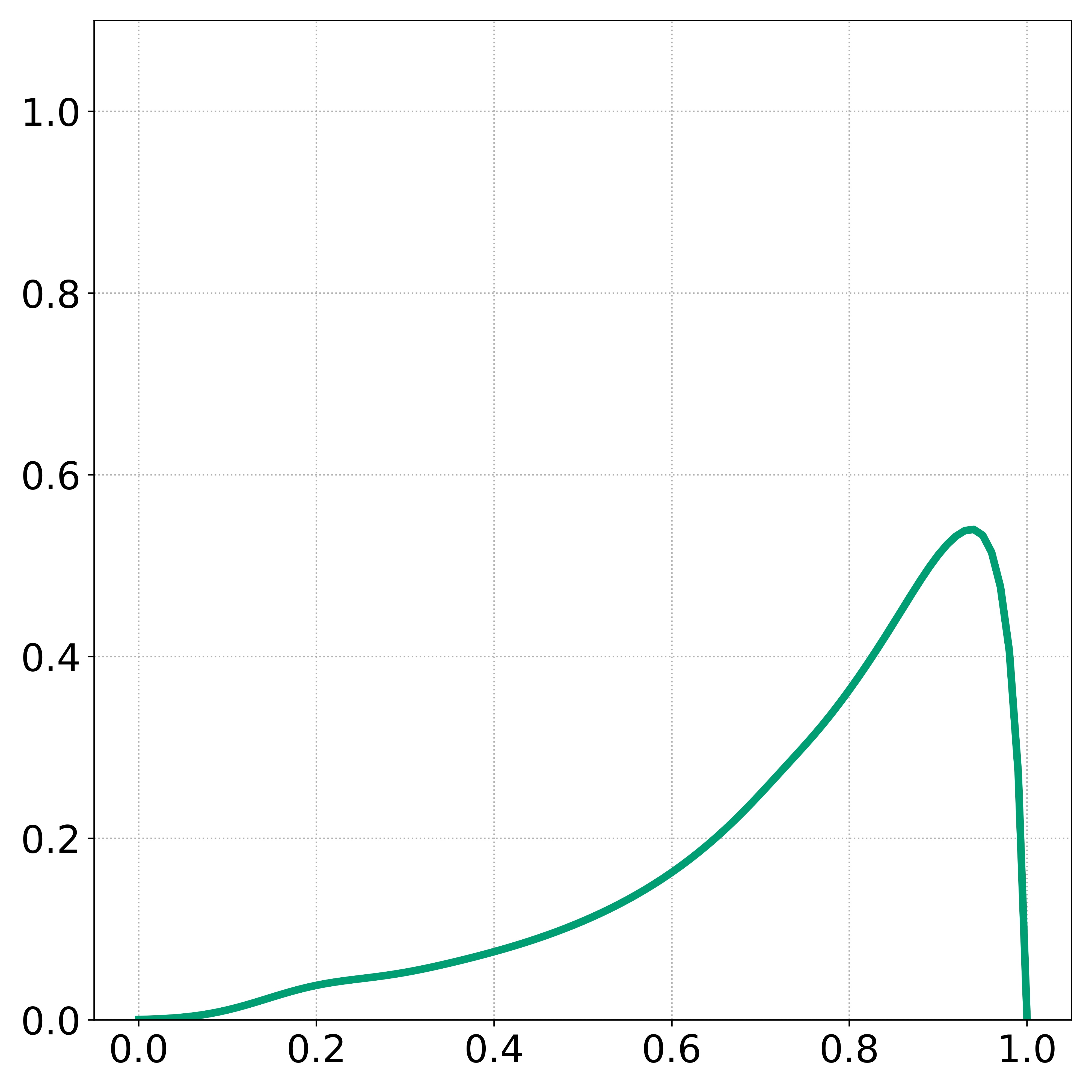}
          \caption{$f_2$ at $t = 20$}
          \end{subfigure}
  \caption{Temporal evolution of the system of kinetic equations \eqref{EQ:twoparticles} for the transition functions $\phi_{12}^1 = \phi_L$ and  $\phi_{21}^2 = \phi_R$ with $\gamma = 0.4$. The top row shows the evolution of $f_1$, while the bottom row illustrates the evolution of $f_2$. The simulation spans the time interval from $[0,20]$ using MPCM with $100$ collocation points.}
  \label{fig:sim2phiLphiR}
\end{figure}

We perform three separate sets of experiments corresponding to distinct choices of transition function pairs, tracking self-convergence metrics and average runtime. Results (Figure \ref{fig:errorruntime2}) are consistent with those in Figure \ref{fig:errorruntime}: the self-convergence metric decreases superalgebraically with $N$, and time complexity remains consistently $O(N^3)$, with precomputation dominating runtime.

\begin{figure}[H]
    \centering
    \subcaptionbox{Log-log plot of the self-conv. metric \label{fig:err2}}{\includegraphics[width=.45\textwidth]{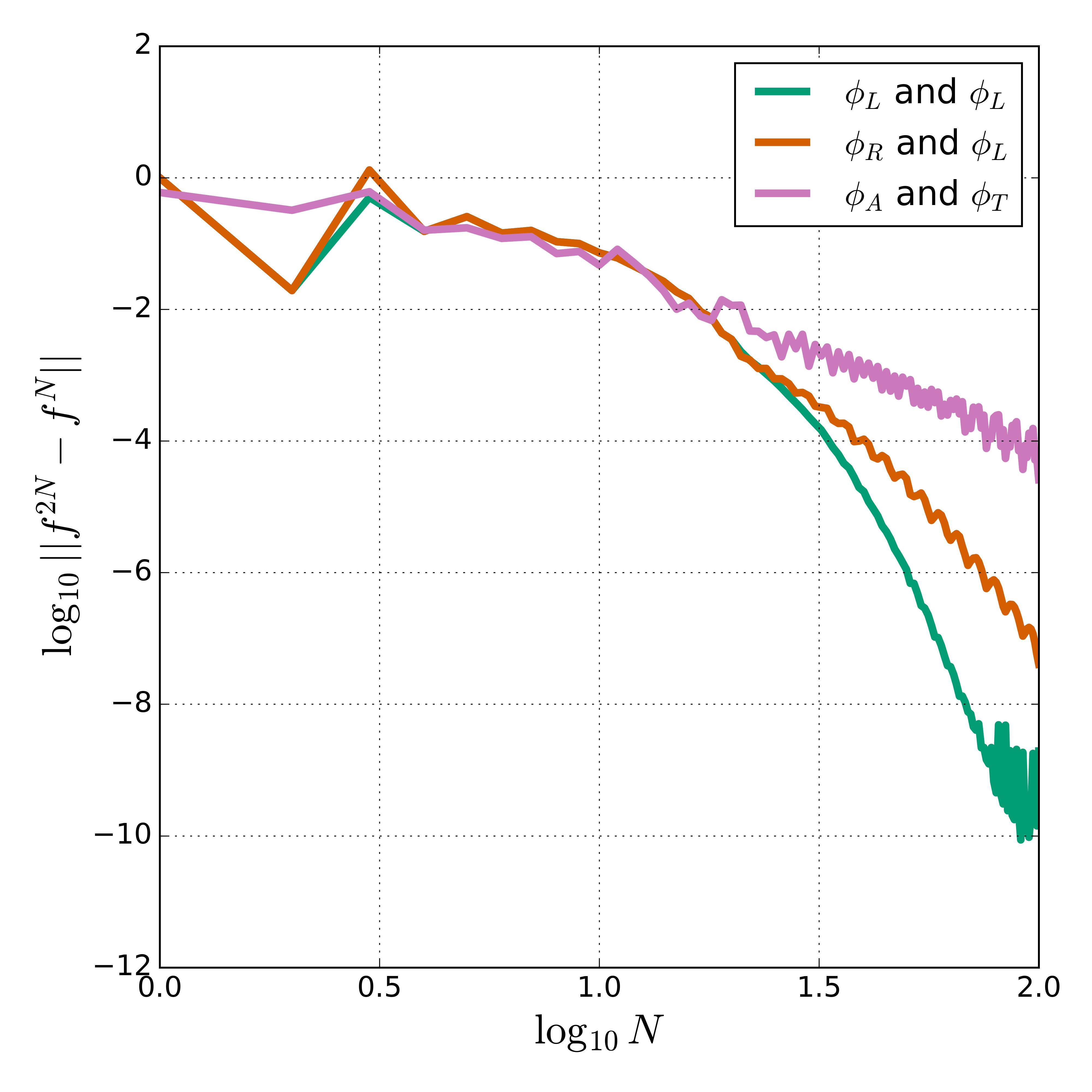}}\hspace{1em}%
    \subcaptionbox{ Log-log plot of the runtime of MPCM \label{fig:runtime}}{\includegraphics[width=.45\textwidth]{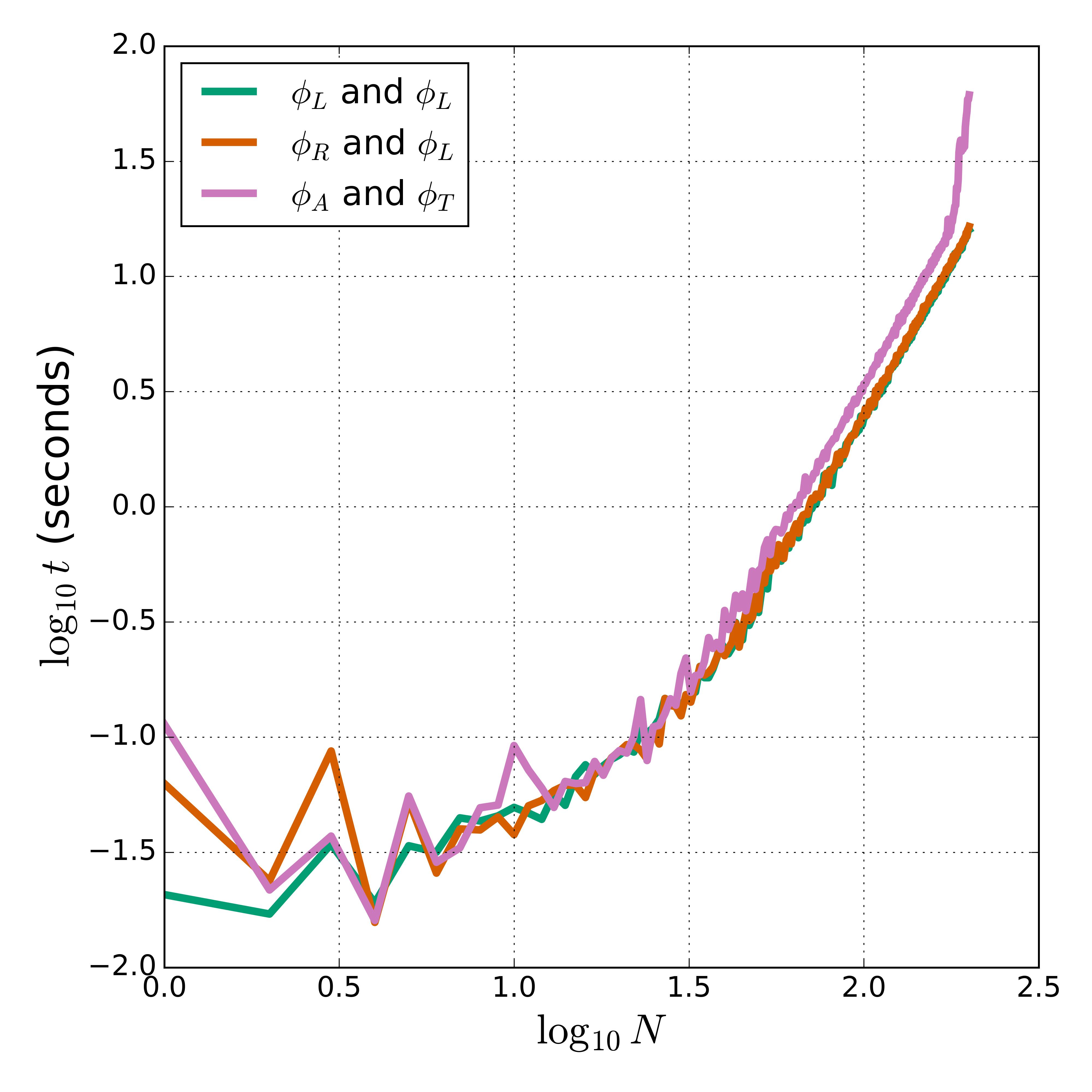}}\hspace{1em}%
    \caption{(a) Log-log plot of the MPCM self-convergence metric for system \eqref{EQ:twoparticles}, compared to $\log N$.  (b) Log-log plot of the average runtime needed to run for different initial values and transition rates for a kinetic system of two subsystems with two transition rates.}
    \label{fig:errorruntime2}
\end{figure}

\subsection{Comparison of MPCM with Tau-leaping and Hybrid Algorithms}

\paragraph{Experimental setup and metrics} In this section, we compare the performance of MPCM with two stochastic alternatives: the Tau-leaping Gillespie algorithm and a hybrid stochastic–deterministic approach. Both Tau-leaping and hybrid methods involve stochastic, agent-based simulation for which interacting agents are divided into $N_B = 100$ bins according to their microstate. The variability inherent to this approach can then be reduced by averaging results for $M$ independent simulations. We briefly describe each of these two methods, as well as additional setup and parameters. 

\emph{Tau-leaping method \cite{gillespie2007stochastic}:} it accelerates the standard Gillespie SSA \cite{gillespie}, by allowing multiple reactions to occur within a fixed time step $\tau$, modeled as Poisson random variables \cite{gillespie2001approximate,anderson2012multilevel}, the full algorithm of which is presented in \ref{app:taulp}. The choice of time step $\tau$ determines the trade-off between accuracy and computational efficiency. In this work, we use a fixed ($\tau$ = 0.001), which is small enough to maintain stability and capture particle interaction dynamics while still allowing efficient system evolution; this choice for $\tau$ is standard in kinetic systems modeling \cite{rathinam2007reversible}. 


\emph{Hybrid method:} as described in \ref{app:hybrid}, this approach aims to reduce computational cost by treating low-density regions stochastically (through Tau-leaping) while using deterministic updates in high-density regions where fluctuations are negligible; this requires the user to choose a deterministic-stochastic threshold parameter of $n_T$ particles. In our experience, finding an optimal $n_T$ can be problem-dependent; through empirical testing, we found $n_T = 1000$ maintained mass preservation within machine precision while maintaining accuracy comparable to that of Tau-leaping. 


\emph{MPCM setup and comparison:} Unlike both methods above, MPCM is a deterministic integrator, only requiring the user to choose the number of collocation nodes $N$. Unless otherwise stated, we employ $N=100$ first-kind Chebyshev nodes to ensure that the numerical resolution of the deterministic method matches the granularity of the stochastic methods. To compare discrepancies between density dynamics obtained by MPCM and either stochastic solver, we use the difference metric
\begin{equation}\label{eq:diffmetric}
    D = \max_{t \in [0, T_f]}\sum_i \sum_j \bigg\lvert f_i^{N}(t,u_j) - f_i^{M, N_B}(t,u_j)\bigg\lvert  du,
\end{equation}
where $f_i^{N}(t,u)$ is the MPCM result with $N$ collocation nodes and $f_i^{M,N_B}(t,u)$ is the stochastic solution with $N_B$ bins averaged over $M$ simulations. 

\paragraph{Tests with one dynamic subsystem} Figures \ref{fig:tauleaphybridleft1sim} show time evolution snapshots for system \eqref{EQ:oneparticles} setting transition function $\phi_L$ with $\gamma = 0.4$, comparing MPCM with the two stochastic solvers. In all cases, microstate densities produced by MPCM and both stochastic methods exhibit similar qualitative behavior. However, single realizations of Tau-leaping and hybrid simulations show noticeable fluctuations around the deterministic MPCM solution, as shown in the top row of Figure \ref{fig:tauleaphybridleft1sim}. Averaging over (M = 1000) realizations reduces this variability, producing close agreement between the methods; we display corresponding snapshots in the bottom row. A log-log plot of the difference metric (Figures \ref{fig:tauerrsims} -\ref{fig:hyberrsims}) shows convergence of both stochastic methods at the expected rate of $M^{-\frac{1}{2}}$.
\begin{figure}[H]
    \centering
    \begin{subfigure}{0.32\textwidth} \label{fig:tauphiL0}
        \includegraphics[width=\textwidth]{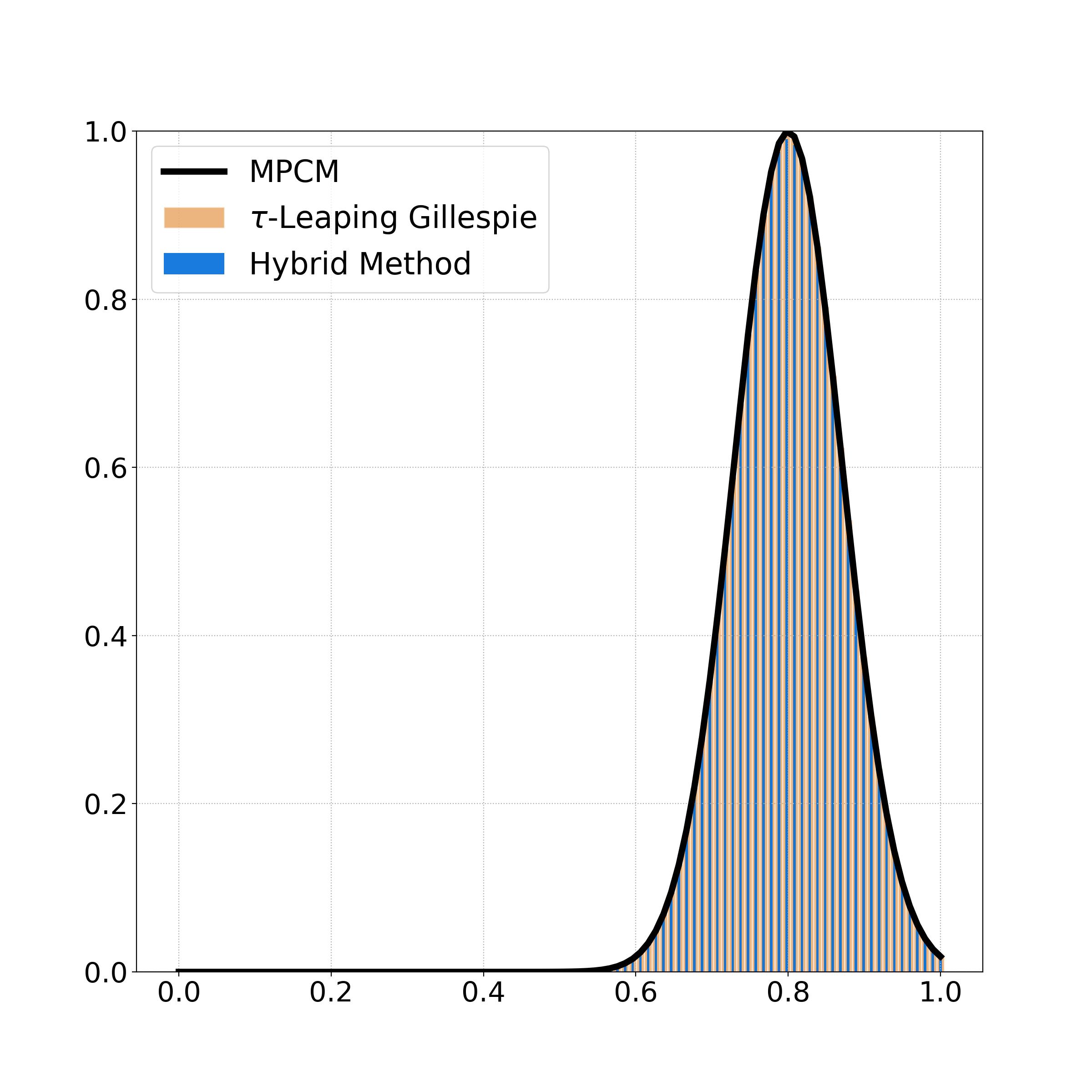}
          \caption{$M= 1$, $t = 0$}
          \end{subfigure}
        \begin{subfigure}{0.32\textwidth}
        \includegraphics[width=\textwidth]{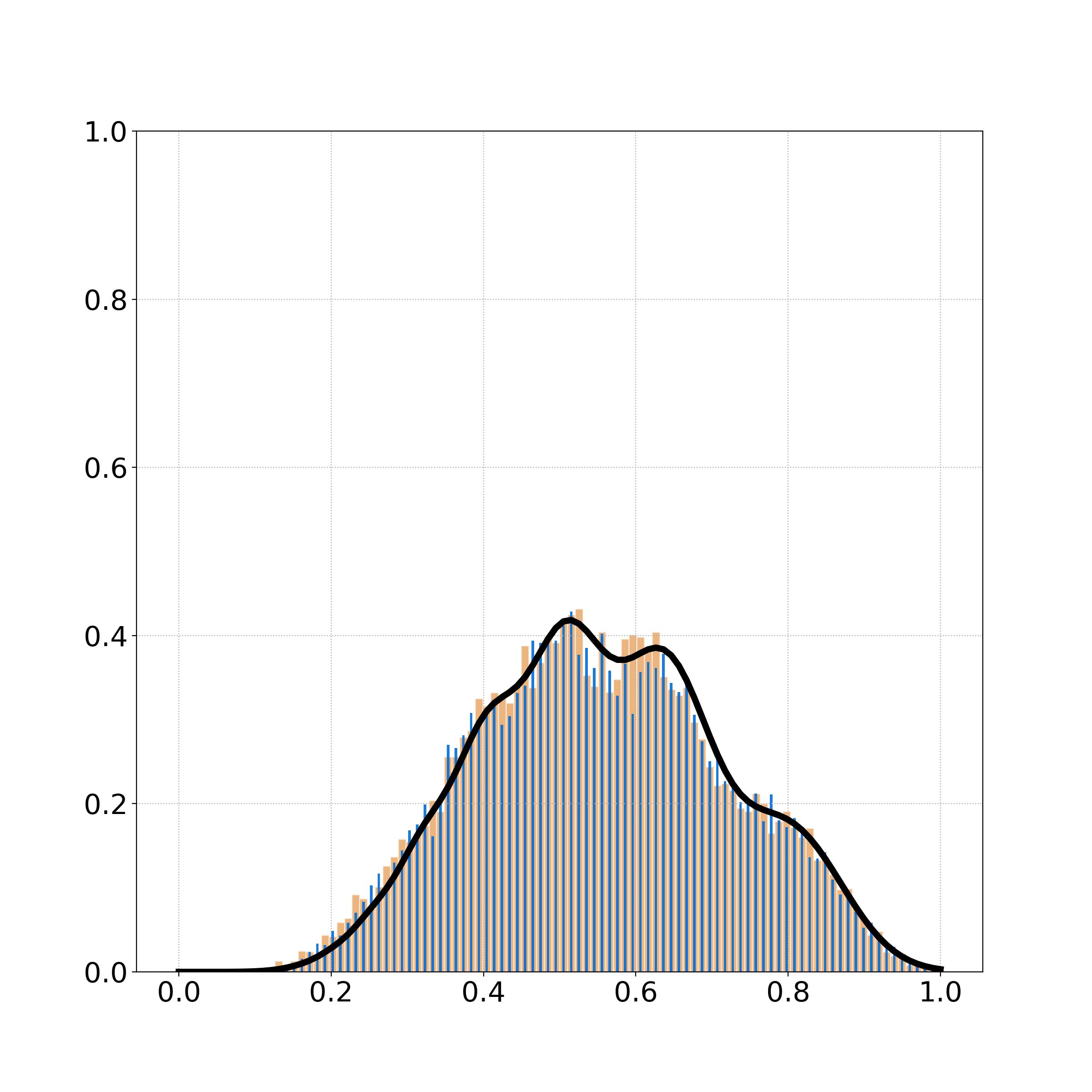}
          \caption{$M= 1$, $t = 10$}\label{fig:tauphiL10}
          \end{subfigure}
          \begin{subfigure}{0.32\textwidth}
        \includegraphics[width=\textwidth]{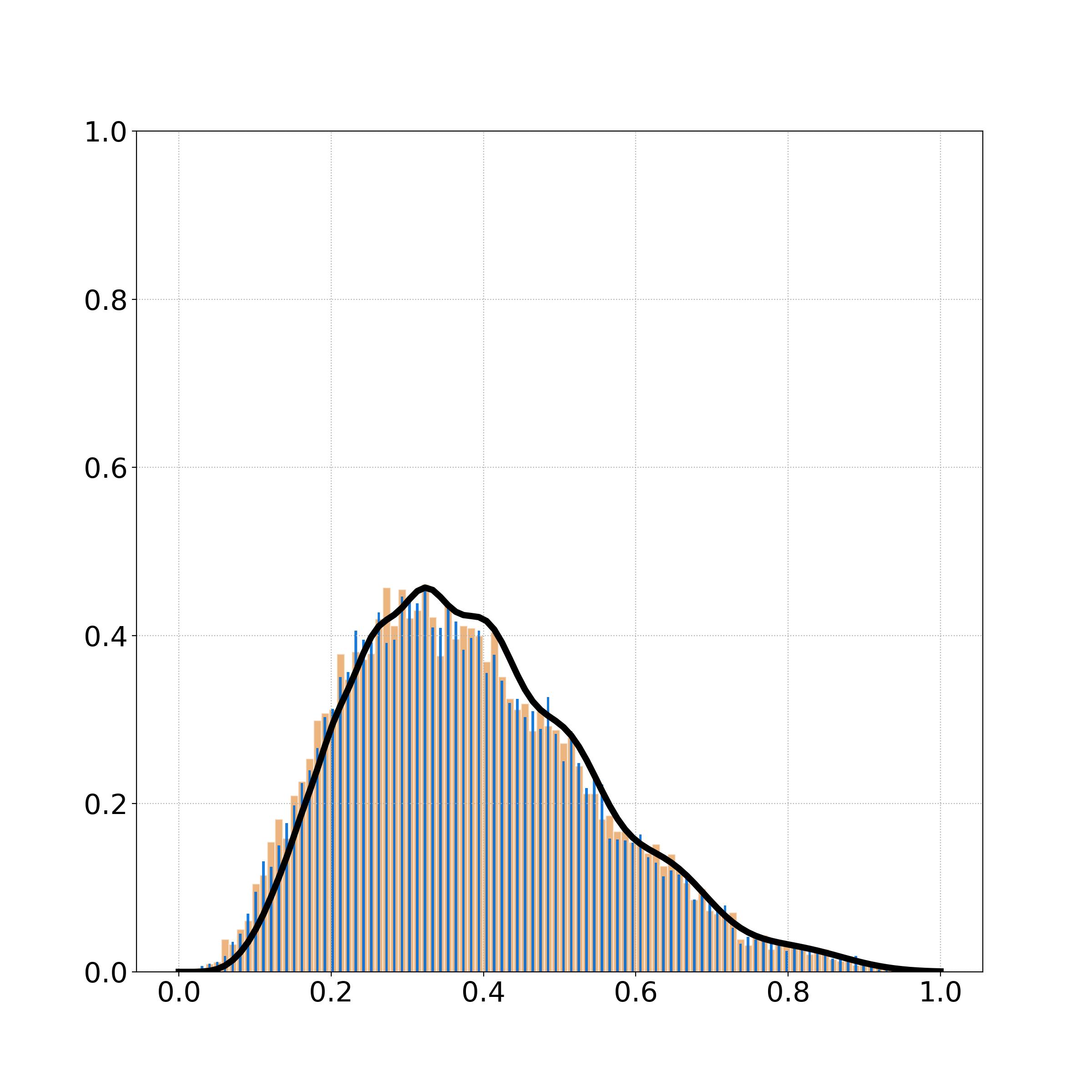}
          \caption{$M= 1$, $t = 20$}\label{fig:tauphiL0}
          \end{subfigure}\\
              \begin{subfigure}{0.32\textwidth} \label{fig:tauphiL0}
        \includegraphics[width=\textwidth]{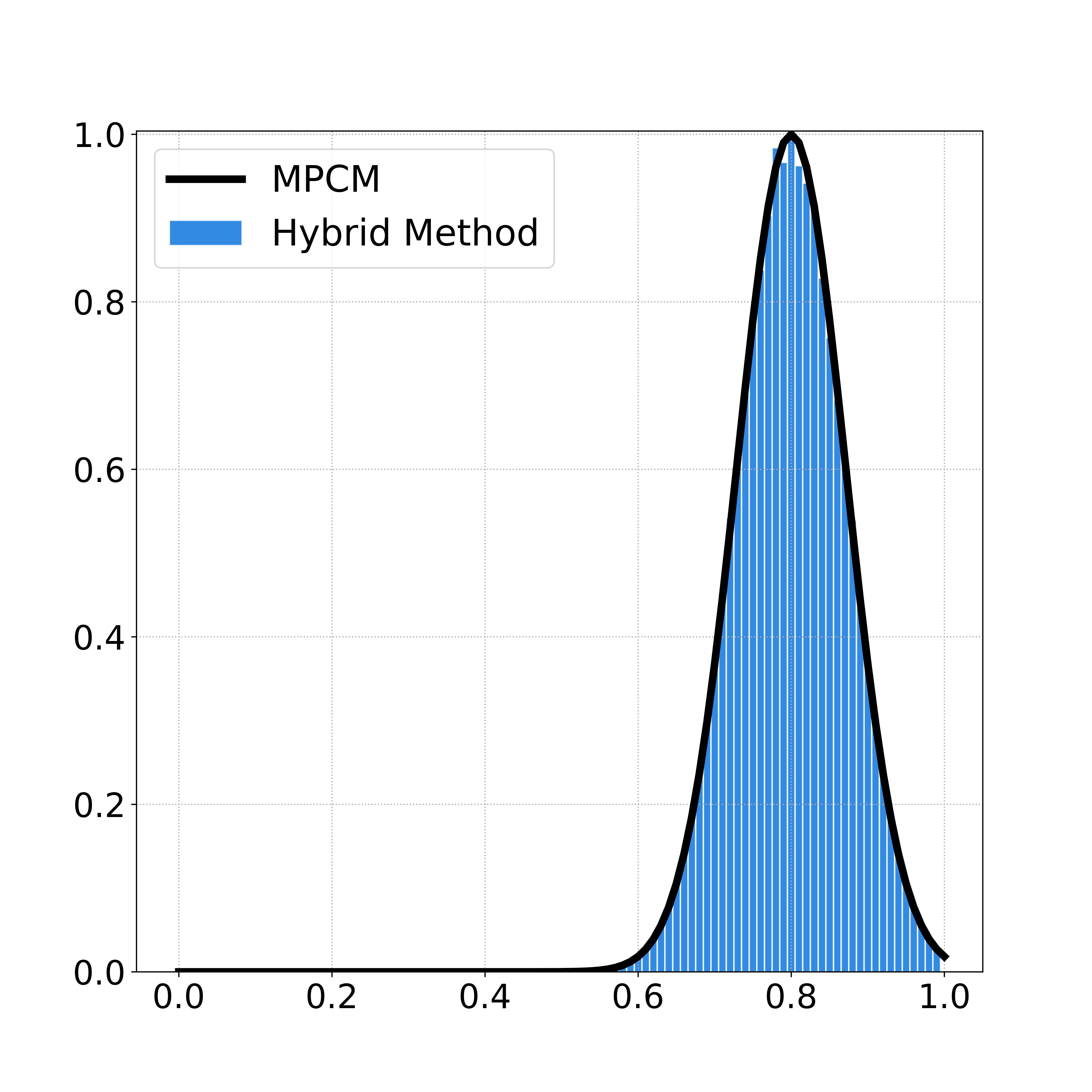}
          \caption{$M= 1000$, $t = 0$}
          \end{subfigure}
        \begin{subfigure}{0.32\textwidth}
        \includegraphics[width=\textwidth]{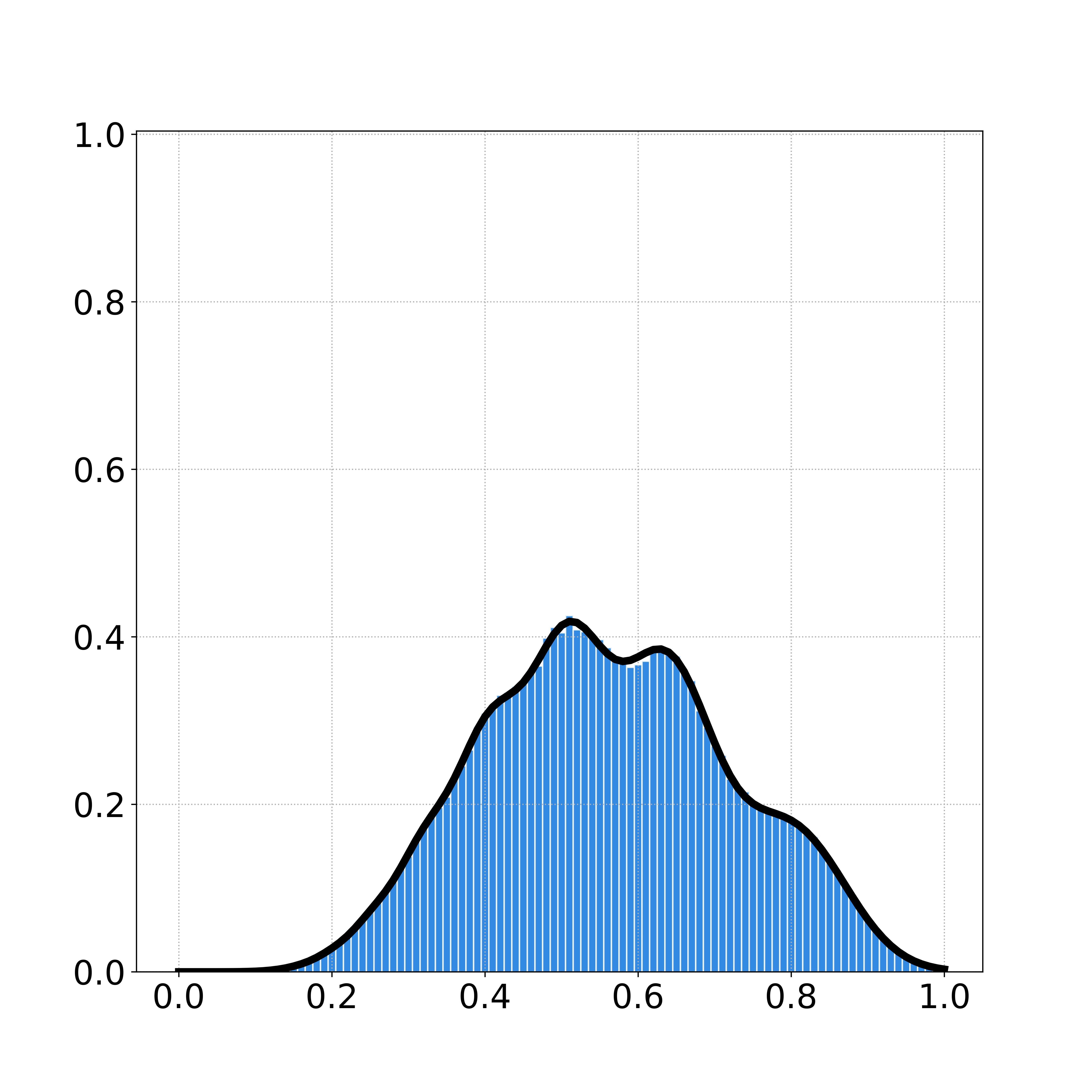}
          \caption{$M= 1000$, $t = 10$}\label{fig:tauphiL10}
          \end{subfigure}
          \begin{subfigure}{0.32\textwidth}
        \includegraphics[width=\textwidth]{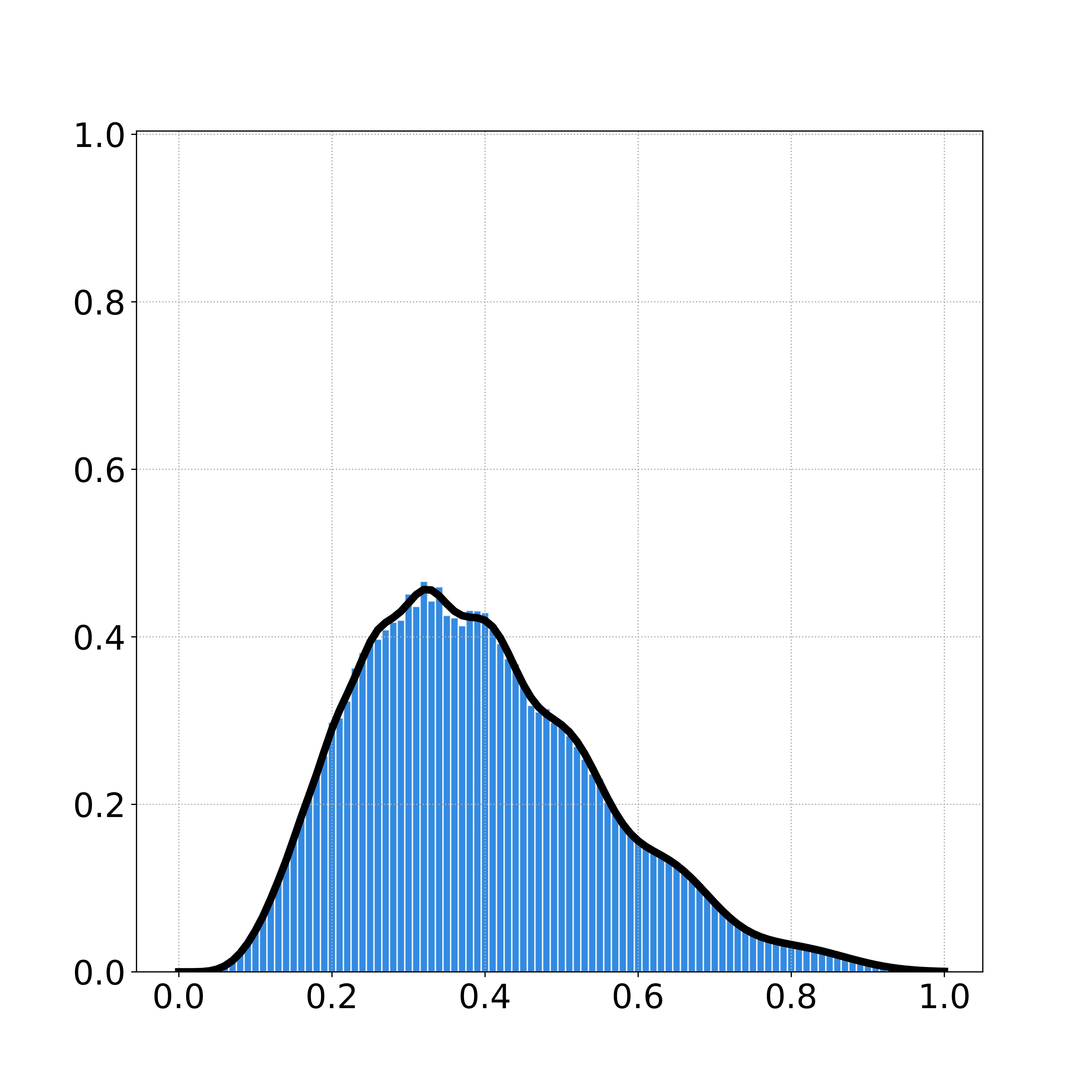}
          \caption{$M= 1000$, $t = 20$}\label{fig:tauphiL0}
          \end{subfigure}\\

\caption{Time evolution snapshots of system \eqref{EQ:oneparticles} for transition function $\phi_L$ with $\gamma = 0.4$. The black curve represents the solution obtained using MPCM, while bar plots present histograms on $N_B = 100$ bins for Tau-leaping (yellow) and the hybrid method (blue). The system is integrated for $t \in [0, 20]$. The top row shows results from a single stochastic simulation, while the bottom row shows results averaged over $M=1000$ simulations. Only the hybrid method is shown in the bottom row, as the Tau-leaping method produced identical results.}
\label{fig:tauleaphybridleft1sim}
\end{figure}

\begin{figure}[H]
    \centering
    \begin{subfigure}{0.45\textwidth}
        \includegraphics[width=\textwidth]{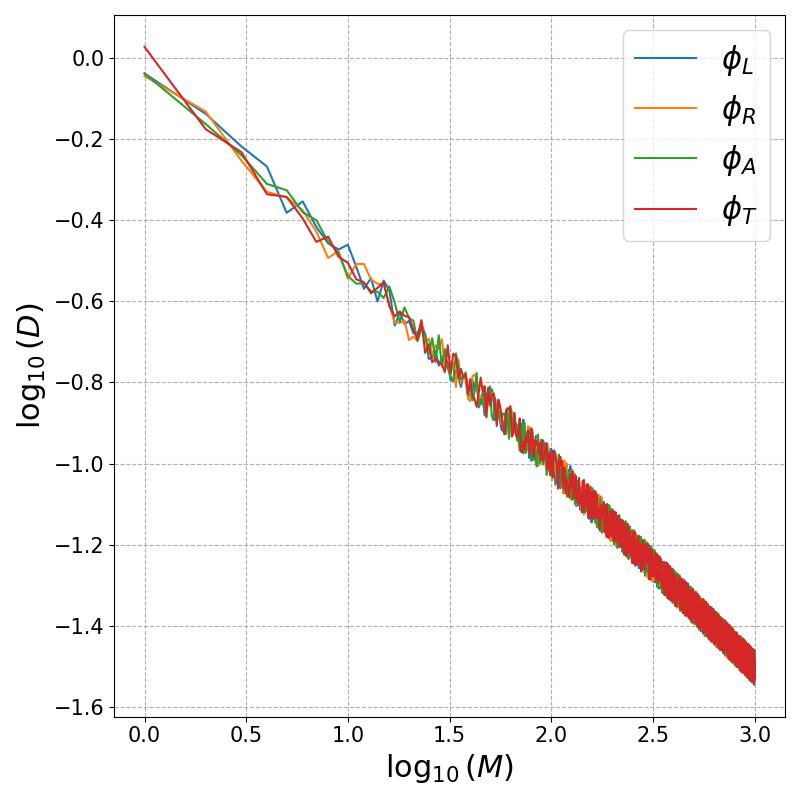}
         \caption{ Tau-leaping}\label{fig:tauerrsims}
          \end{subfigure}
   \begin{subfigure}{0.45\textwidth}
        \includegraphics[width=\textwidth]{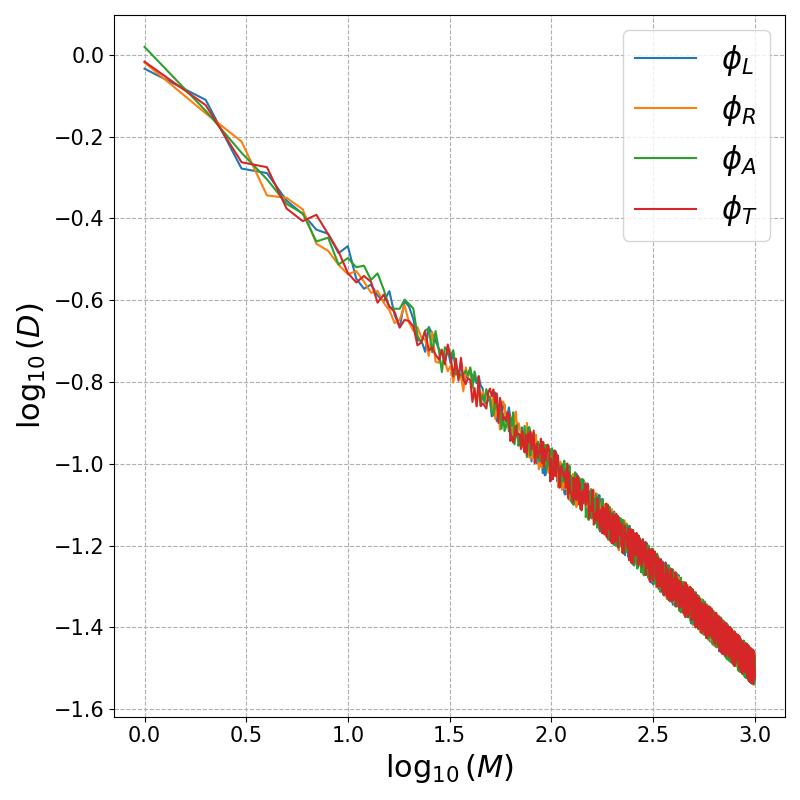}
        \caption{Hybrid Method}\label{fig:hyberrsims}
          \end{subfigure}
\caption{Log–log plot showing the difference between (a) the Tau-leaping Gillespie method and MPCM, and (b) the hybrid method and MPCM, using 100 collocation nodes and $N_B = 100$ compartments, for varying numbers of simulations $(M)$.}
\label{fig:errgilone}
\end{figure}

In Table \ref{tab:solvertimetoruntauhyb}, we compare the total runtimes for the MPCM and for single simulations ($M=1$) of both stochastic methods; this allows us to roughly estimate how cost-efficient each method is at delivering density dynamics of similar quality. We observe a clear progression in runtime efficiency. The Tau-leaping method is the slowest of the three methods. The hybrid method is more efficient, running approximately ten times faster than the Tau-Leaping method while maintaining nearly identical accuracy. MPCM runs the continuous density dynamics on a single computer in just a few seconds, making it about thirty times faster than the hybrid method. It also avoids randomness and the need to tune parameters. Since these results are shown for $M=1$, mitigating the stochasticity in the Tau-leaping and hybrid methods would require running more simulations, which increases the computation time. This extra cost can be reduced by running the simulations on $P$ processors in parallel, which ideally reduces the total time by a factor of $M/P$.

\paragraph{Tests with two dynamic subsystems} We repeated the comparison for the two-particle kinetic system \eqref{EQ:twoparticles} with $\phi_{12}^1(x,y) = x - 0.4xy$ and $\phi_{21}^2(x,y) = x + 0.4(1-x)(1-y)$. As seen in Figure \ref{fig:tauleaphybrid2part}, the qualitative behavior mirrors the single-particle case: MPCM, Tau-leaping, and the hybrid method capture the same macroscopic dynamics, with stochastic fluctuations again diminishing under ensemble averaging. The difference metric \eqref{eq:diffmetric} exhibits the same $M^{-1/2}$ convergence for stochastic solvers. Computational cost roughly doubles for all three methods due to the higher dimensionality of the state space. Tau-leaping remains the slowest approach; the hybrid method is about ten times faster, and MPCM is the most efficient, running about thirty times faster than the hybrid method. This shows that although higher-dimensional systems require more resources, the relative efficiency of MPCM remains consistent.

Taken together, these results indicate that while stochastic solvers offer a natural probabilistic description of the dynamics, their computational overhead and variability render them less practical for large-scale kinetic systems. MPCM offers a deterministic alternative that preserves accuracy, ensures stability, and achieves substantial efficiency gains.

\newcolumntype{P}[1]{>{\centering\arraybackslash}p{#1}}
\begin{table}[!htbp]
    \centering
            \begin{tabular}{|P{5cm}|P{2cm}|P{2cm}|P{2cm}|}
         \toprule
        \multirow{2}{*}{Model} & \multicolumn{3}{c|}{Time (Seconds)} \\
        \cline{2-4}
          & Tau-leaping Gillespie & Hybrid Method & MPCM with 100 nodes \\
         \midrule
         System \eqref{EQ:oneparticles} with $\delta_{\phi_L(x,y) - u}$ & $364$ & $31$ & $1.4$\\
         \midrule
         System \eqref{EQ:oneparticles} with $\delta_{\phi_R(x,y) - u}$ & $352$ & $29$ & $1.4$\\
         \midrule
         System \eqref{EQ:oneparticles} with $\delta_{\phi_A(x,y) - u}$ & $357$ & $37$ & $1.8$\\
         \midrule
         System \eqref{EQ:oneparticles} with $ \delta_{\phi_T(x,y) - u}$ & $361$  & $35$ & $1.8$\\
         \midrule
         System \eqref{EQ:twoparticles} & $624$ & $62$ & $1.8$\\
         \bottomrule
    \end{tabular}
    \caption{Time to simulate kinetic systems \eqref{EQ:oneparticles} and \eqref{EQ:twoparticles} with different transition rates. The Tau-leaping and hybrid method run for $1$ simulations across $100$ compartments. MPCM is run $100$ times, and the average simulation time is measured to compare performance.}
    \label{tab:solvertimetoruntauhyb}
\end{table}

\subsection{Kinetic Model with Five Subsystems and Multiple Transition Rates}\label{sec:five}

In sociological models, the number of subsystems is usually greater than two. One example is the model presented in \cite{bellomo2013modeling}. In this section, we apply MPCM to a system with five subsystems, denoted as $\{S_k\}_{k=1}^5$, and six transition rates; a subsystem graph connecting all interacting pairs and including directed edges (notated with a corresponding transition kernel) is shown in Figure \ref{fig:sys5}. All transition functions are described in Table \ref{tab:5partTrans}. Our objective in this section is to demonstrate the accuracy and efficiency of MPCM on larger systems. 

\begin{figure}[H]
    \centering
    \begin{subfigure}{0.32\textwidth} \label{fig:tauphiL0}
        \includegraphics[width=\textwidth]{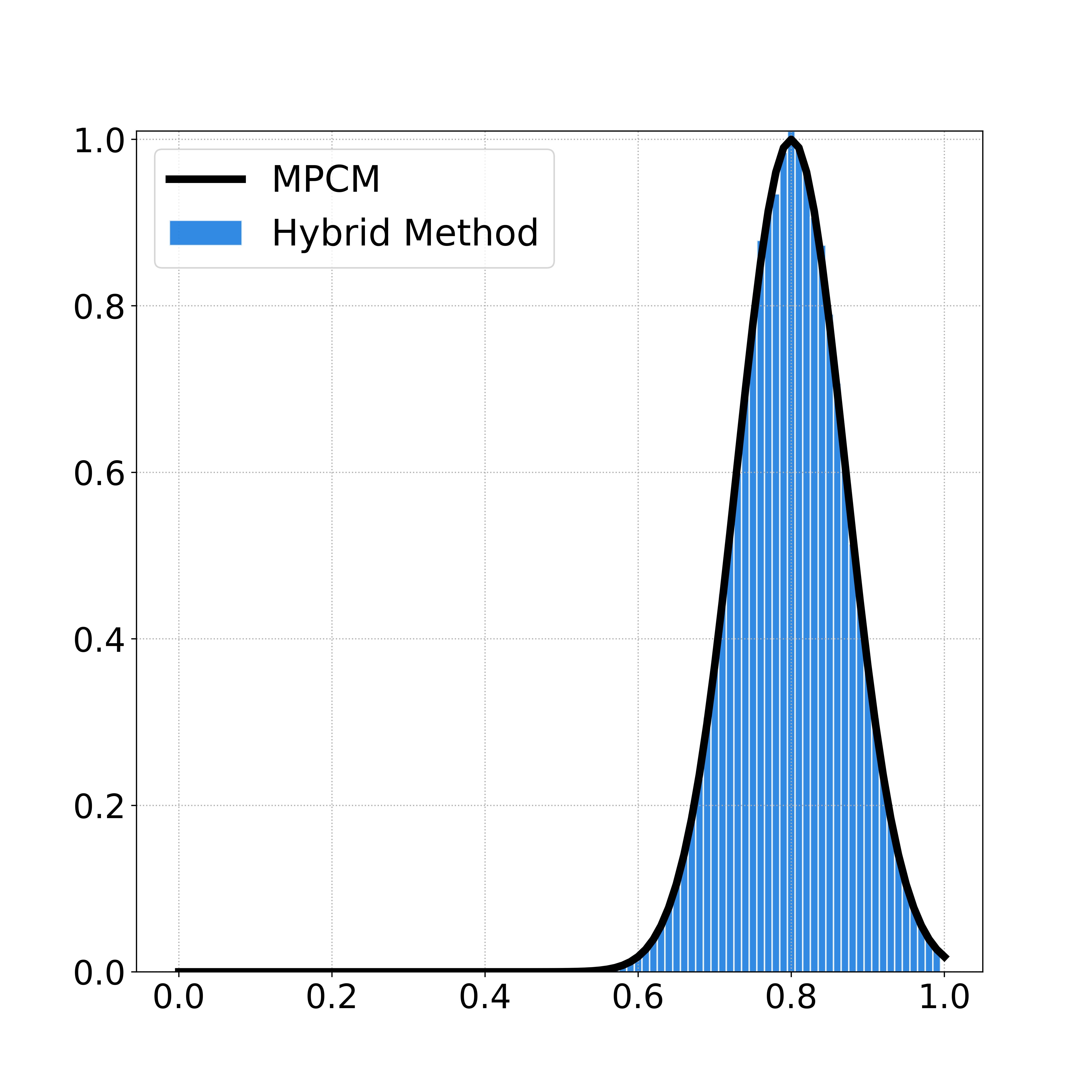}
          \caption{$f_1$ at $t = 0$}
          \end{subfigure}
        \begin{subfigure}{0.32\textwidth}
        \includegraphics[width=\textwidth]{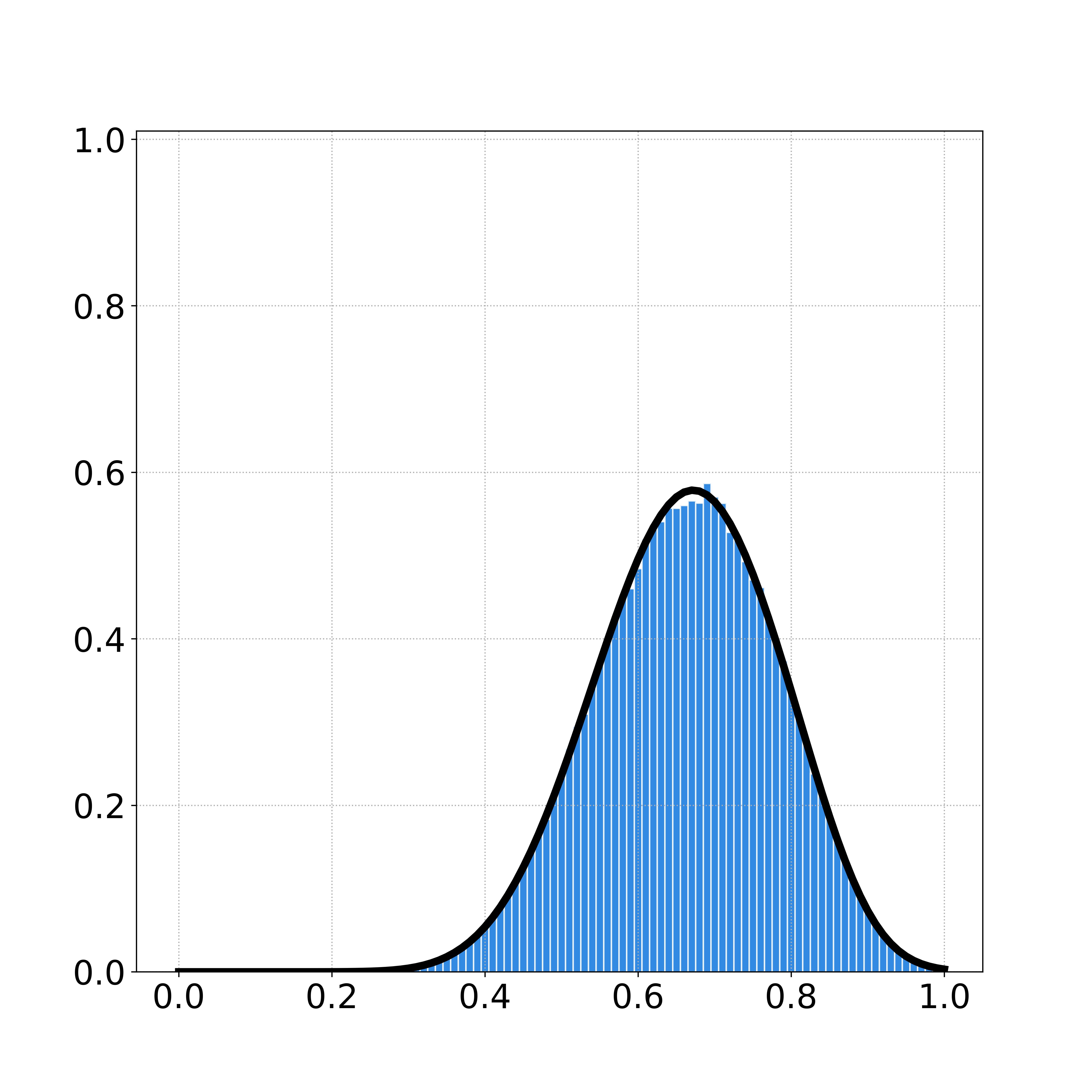}
          \caption{$f_1$ at $t = 10$}\label{fig:tauphiL10}
          \end{subfigure}
          \begin{subfigure}{0.32\textwidth}
        \includegraphics[width=\textwidth]{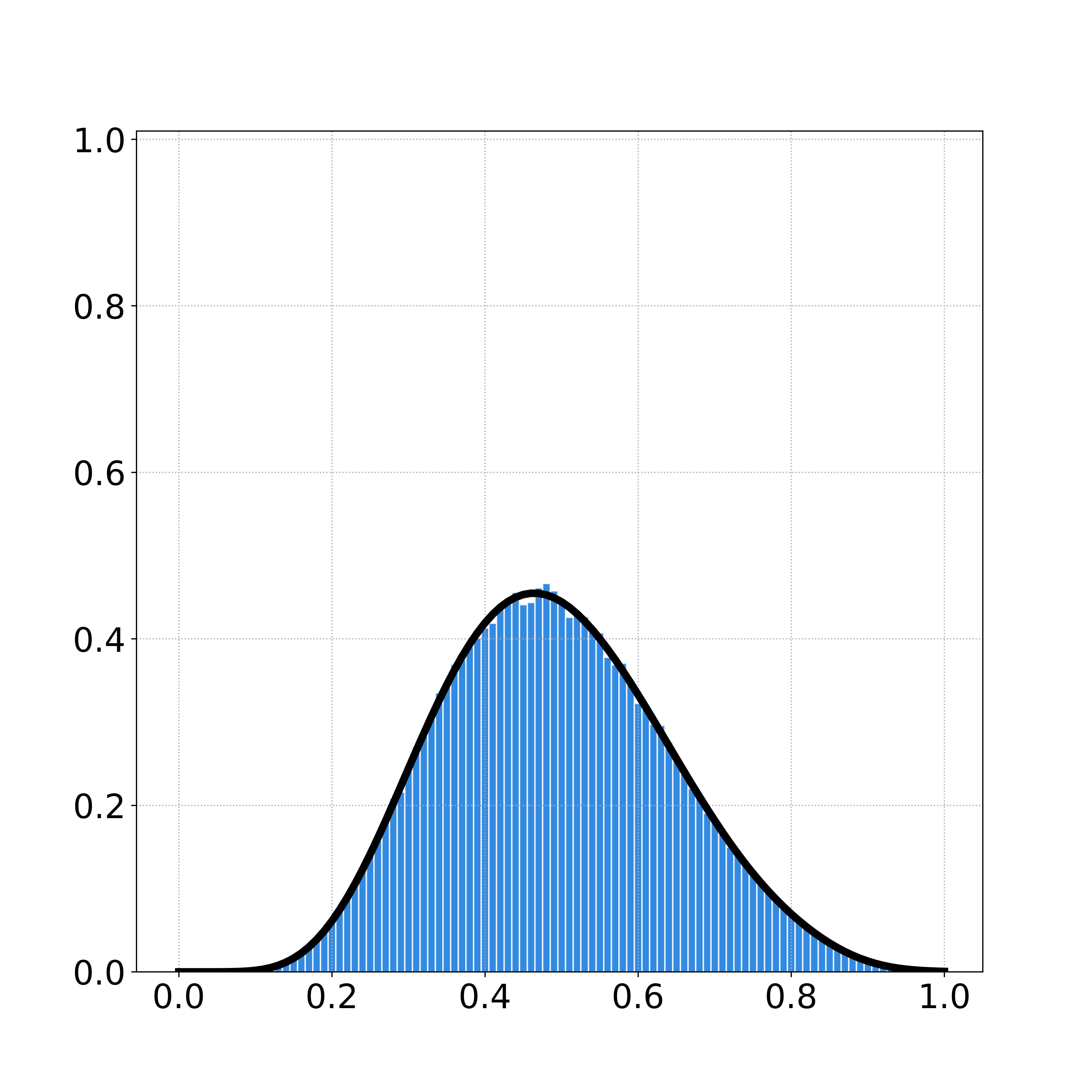}
          \caption{$f_1$ at $t = 20$}\label{fig:tauphiL0}
          \end{subfigure}\\
        \begin{subfigure}{0.32\textwidth} \label{fig:tauphiL0}
        \includegraphics[width=\textwidth]{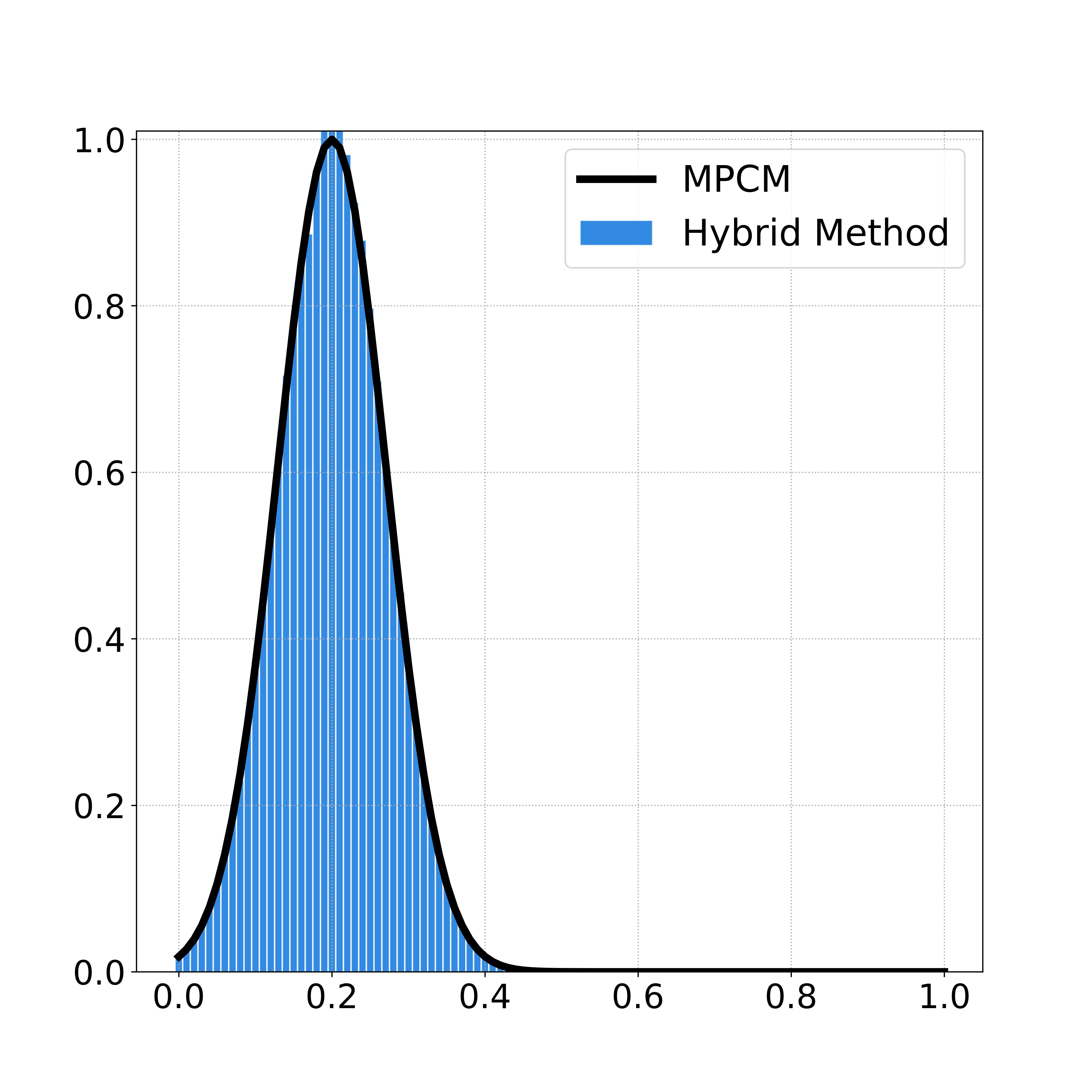}
          \caption{$f_2$ at $t = 0$}
          \end{subfigure}
        \begin{subfigure}{0.32\textwidth}
        \includegraphics[width=\textwidth]{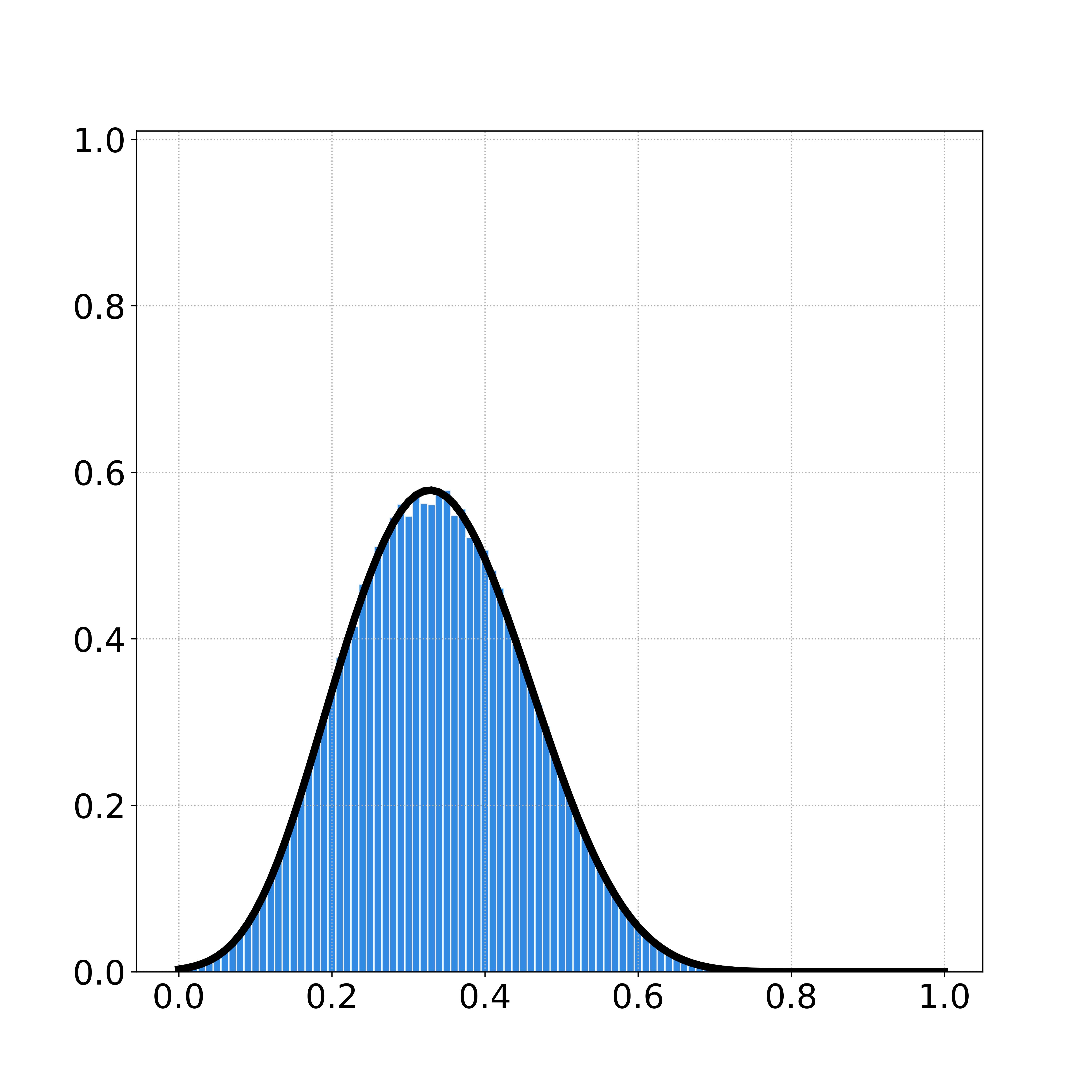}
          \caption{$f_2$ at $t = 10$}\label{fig:tauphiL10}
          \end{subfigure}
          \begin{subfigure}{0.32\textwidth}
        \includegraphics[width=\textwidth]{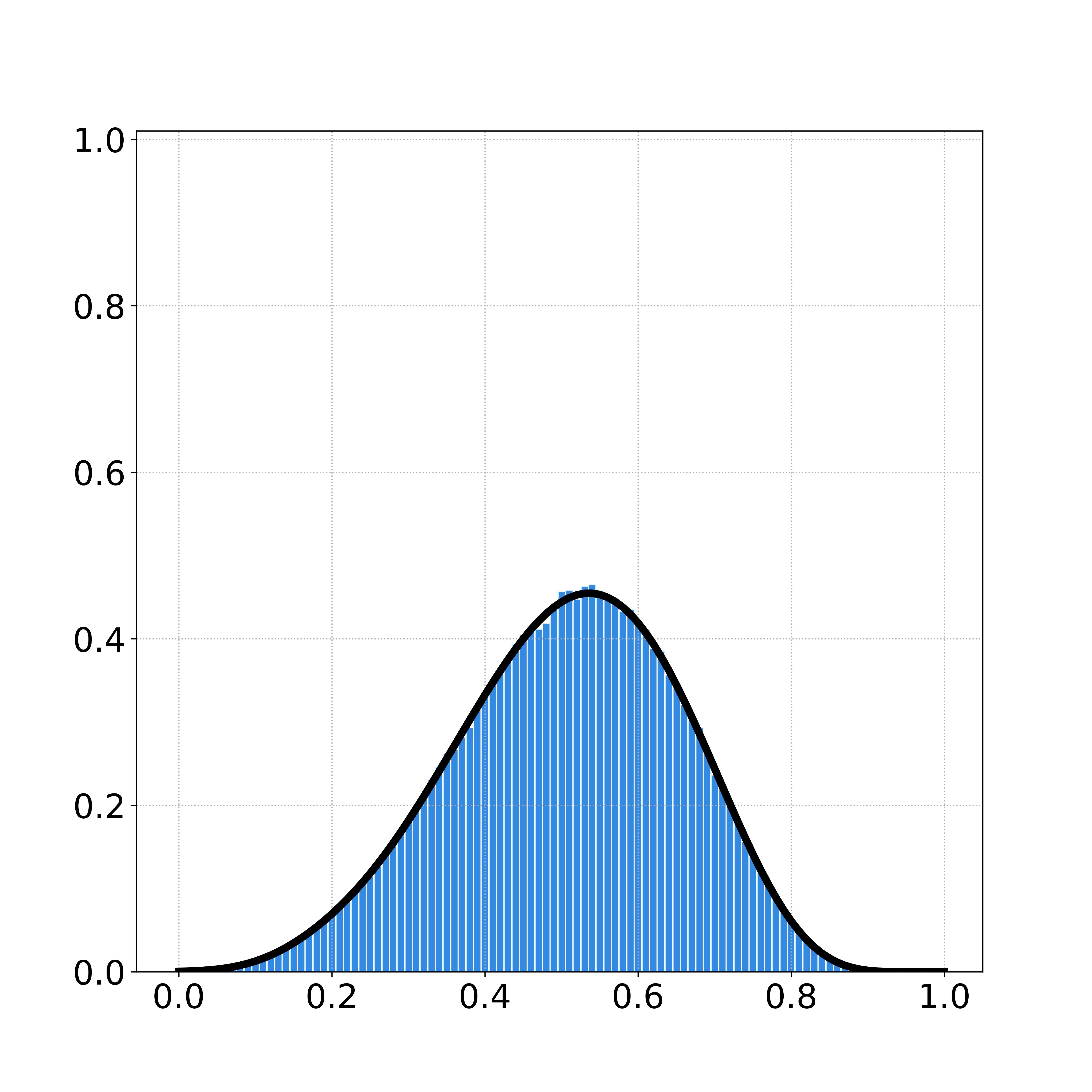}
          \caption{$f_2$ at $t = 20$}\label{fig:tauphiL0}
          \end{subfigure}
\caption{Temporal evolution of the kinetic system \eqref{EQ:twoparticles} with $\phi_{12}^1(x,y) = x - 0.4xy$ and $\phi_{21}^2(x,y) = x + 0.4(1-x)(1-y)$, simulated using the hybrid method and MPCM. The black curve represents the MPCM solution, and the blue bars display the results from the hybrid method. Simulations cover the time interval $[0, 20]$. The Tau-leaping results are not shown, as they are identical to those of the hybrid method. Each stochastic result represents an average over 1000 runs.}
\label{fig:tauleaphybrid2part}
\end{figure}

\paragraph{System dynamics} From the schematic in Figure \ref{fig:sys5}, we can observe that $S_2$ outflows into $S_3$, which in turn outflows into $S_1$; this suggests the mass in $S_2$, and then subsequently that in $S_3$, will both go to zero as time advances. In contrast, the mass in $S_1$ should increase, and the mass in the remaining two (which do not have an inflow or an outflow) should remain constant.  In Figures \ref{fig:sim4f1t0} - \ref{fig:sim4f1t20}, we can observe a few snapshots from the evolution of microstate densities for the populations in each subsystem. These confirm our predictions above, and they allow us to see how the microstate densities change in detail. For example, an additional observation that can be made from the dynamics is that $S_1$ quickly reaches a steady state (as microstate changes are due to interaction with a vanishing population). At the same time, that of $S_4$ and $S_5$ will continue to evolve for longer (as microstate changes are due to interactions with $S_1$ and itself, respectively). 

\begin{figure}[H]
    \centering
    \includegraphics[width=0.8\linewidth]{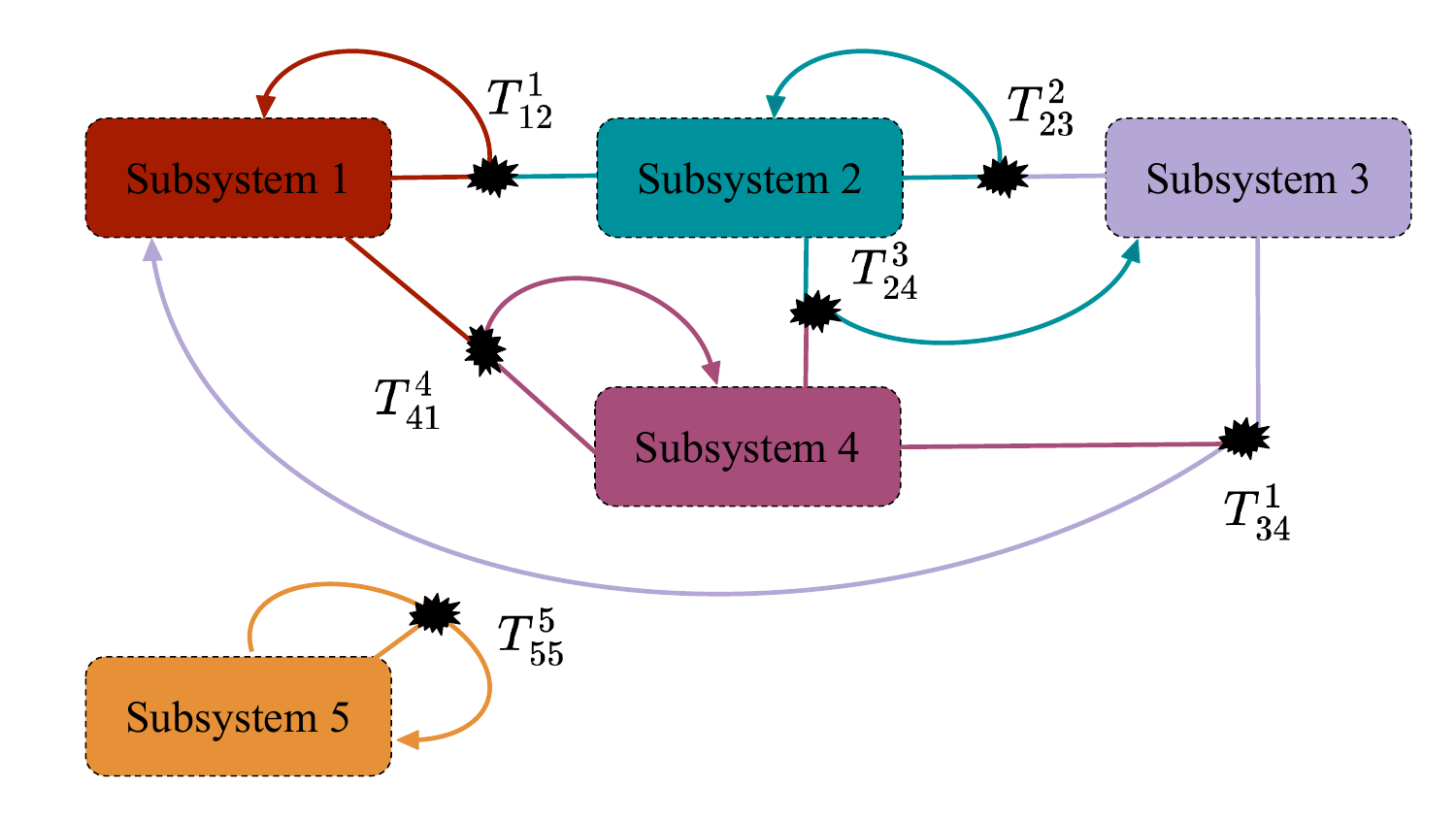}
    \caption{Schematic of our test kinetic system: each node represents a subsystem, with undirected edges between interacting pairs and directed edges reflecting transitions notated by kernel $T_{ij}^k$ (transition functions can be found in Table \ref{tab:5partTrans}).}
    \label{fig:sys5}
\end{figure}

\begin{table}[H]
    \centering
            \begin{tabular}{|p{6.5cm}|p{5.5cm}|}
         \toprule
         Transition function & Description\\
         \midrule
         {\scriptsize $\phi_{12}^1(x,y) = x - 0.4 xy$} & $S_1$ agents decrease microstate upon interaction with $S_2$. \\
         \midrule
        {\scriptsize $\phi_{34}^1(x,y)=x + 0.5 (1-x)(1-y) - u$ }& $S3$ agents transition to $S_1$ and increase microstate upon interaction with $S4$. \\
         \midrule
        {\scriptsize $\phi_{23}^2(x,y) = \begin{cases} x - 0.5 xy, & \text{if } x \leq 0.5,\\ x + 0.5 (1-x)(1-y), & \text{if } x > 0.5.\end{cases}$ }&  $S_2$ moves microstates away from $0.5$ upon interaction with $S3$. \\
         \midrule
        {\scriptsize $\phi_{24}^3(x,y) =x - 0.2xy $ }&$S2$ agents transition to $S_3$ and decrease microstate upon interaction with $S_4$.\\
         \midrule
         {\scriptsize $\phi_{41}^4(x,y) =\begin{cases} x - 0.3 (x-0.5)y, & \text{if } x > 0.5,\\ x + 0.3 (0.5-x)(1-y), & \text{if } x \leq 0.5. \end{cases} $} & $S_4$ moves microstates away from $0.5$ upon interaction with $S_1$.\\
         \midrule
         {\scriptsize $\phi_{55}^5(x,y) =x - 0.4 xy$ }& Agents in $S_5$ decrease microstates upon self-interaction.\\
         \bottomrule
    \end{tabular}
    \caption{   
    List of transition function formulas and physical interpretation of transition functions for the kinetic system described in Figure \ref{fig:sys5}. Subsystems are notated here as $S_k$, $k=1,\dots,5$.}
    \label{tab:5partTrans}
\end{table}

\paragraph{MPCM performance} In Figure \ref{fig:errorruntime5}, we observe self-convergence and runtime scaling results for MPCM consistent with those for smaller kinetic systems. Precomputation time is once again the most expensive stage of the algorithm, taking 4.41 seconds to compute the necessary coefficient tensors. Overall, integrating our system for $N=100$ takes 0.71 seconds to complete on a single workstation. This exercise confirms the practicality of this approach for systems with a moderate to large number of subsystems and reinforces our observations on MPCM performance in Section \ref{sec:KMresults}. 


\begin{figure}[H]
    \centering
    \subcaptionbox{Log-log plot of the self-conv. metric \label{fig:err5}}{\includegraphics[width=.45\textwidth]{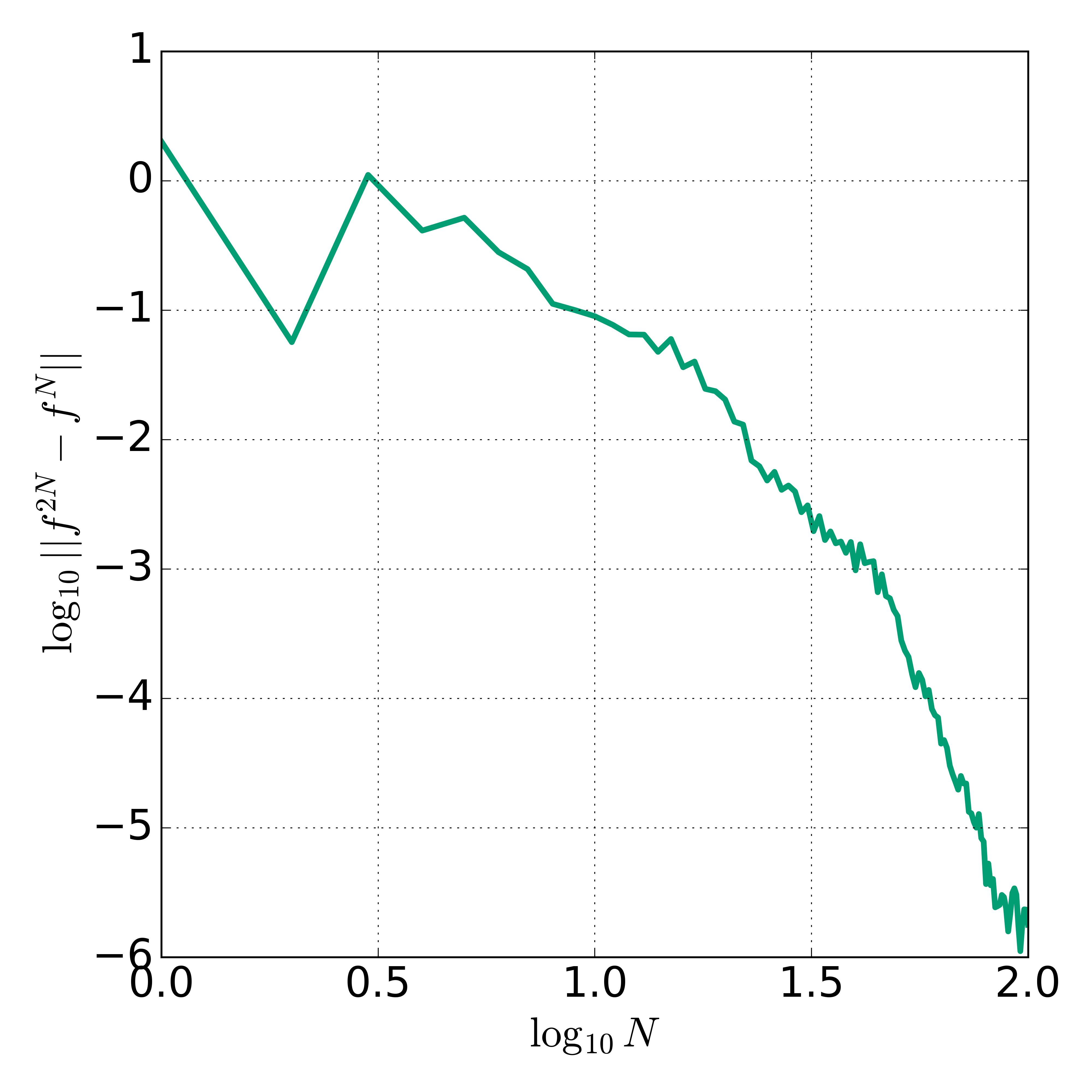}}\hspace{1em}%
    \subcaptionbox{Average time needed to simulate system from Figure \ref{fig:sys5} \label{fig:runtime5}}{\includegraphics[width=.45\textwidth]{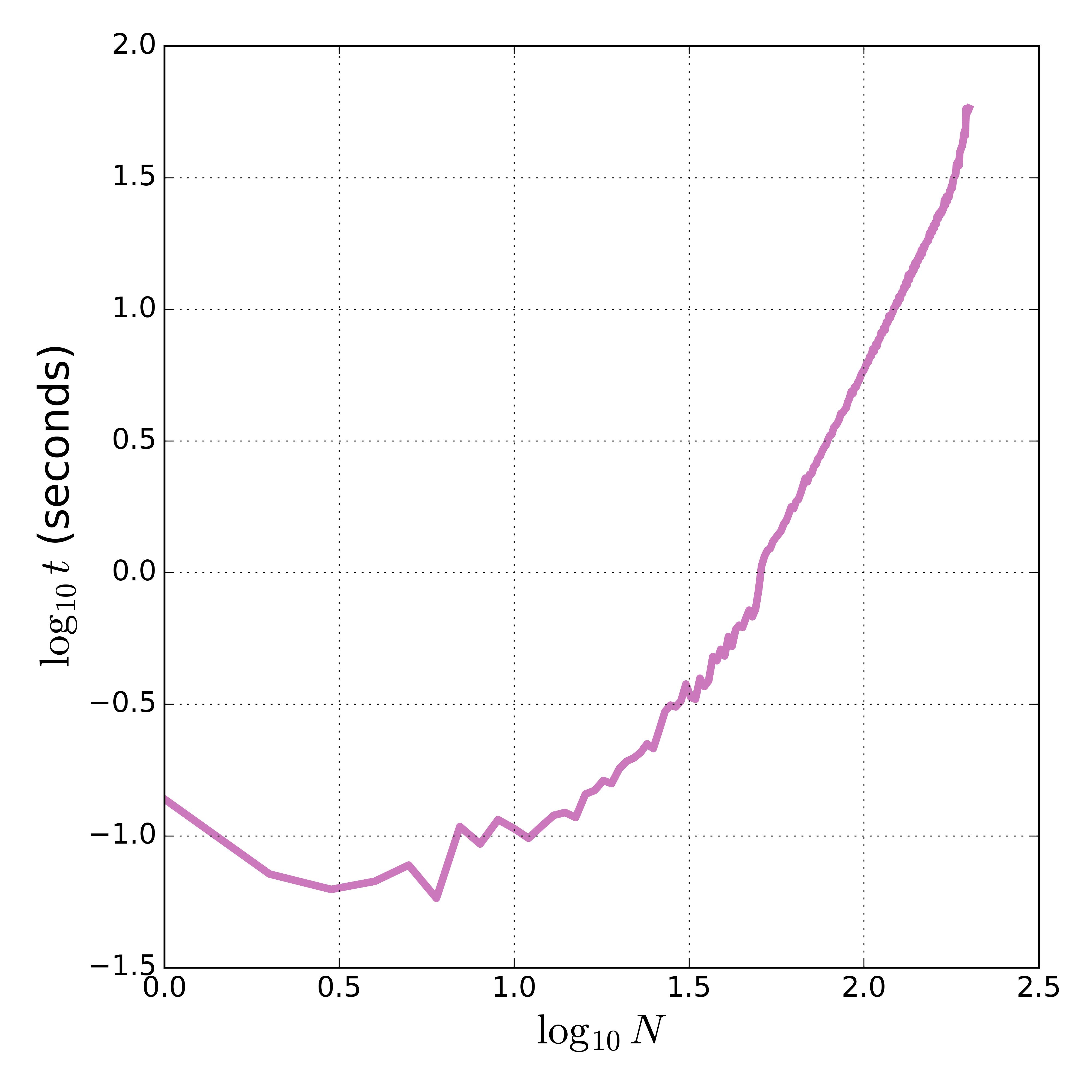}}\hspace{1em}%
    \caption{(a) Log-Log plot of the self-convergence metric of the simulation for system \ref{fig:sys5} compared to the log of the number of nodes $N$.  (b) Log-Log plot of the average time needed to simulate the system from Figure \ref{fig:sys5} as a function of the number of interpolating nodes.}
    \label{fig:errorruntime5}
\end{figure}

\section{Conclusion and Discussion}

We have introduced the mass-preserving collocation method (MPCM) for kinetic models of social dynamics with Dirac delta transition rates. The scheme ensures exact mass conservation and offers significant gains in computational efficiency compared to traditional stochastic methods; even when implemented serially at a high-level language like Python, it is 30 times faster than a shared-memory parallel hybrid method. MPCM's construction is particularly well-suited to models where transition dynamics are concentrated on lower-dimensional manifolds, which arise frequently in sociological modeling \cite{pareschi2013interacting}. As shown in Section \ref{sec:five}, MPCM handles large systems well and can easily model kinetic systems with many subsystems. While a formal convergence proof is left for future work, our experiments show consistent high-order convergence and match both expected qualitative behavior and dynamics obtained by two state-of-the-art stochastic agent-based methods (Tau-leaping and hybrid). In comparison with these stochastic methods, not only is MPCM consistently faster, but it also avoids variability as well as  problem-dependent parameter and threshhold tuning present in these methods, as it only requires the user to set the number of collocation nodes.


\paragraph{Future work} One of the main drivers for the method contributed in this work was the need for efficient solvers to integrate kinetic equations in social systems, the applications of which were of great interest to the authors and collaborators. A natural next step in our ongoing work is thus utilizing MPCM to simulate specific sociological models, especially kinetic models of crime dynamics. These models often describe how people transition between non-criminal and criminal states through interactions with police \cite{short2008statistical, bertozzimathematics}. Because they use Dirac delta transition functions to represent discrete events, like arrests or crime escalations, MPCM is well-suited to track population changes accurately. We are currently developing detailed mesoscale models to investigate the impact of policing on crime and violence in society. Another future research direction involves the use of kinetic models to study online-to-offline spillovers; an example is the interaction of gangs in online environments that escalates into offline violence. This project will build on the work of Leverso and collaborators in \cite{leverso2025measuring}. A natural extension would be to build on the agent-based model proposed in that paper by introducing a suitable kinetic framework, in which the escalation of violent rhetoric online represents a microstate, its increase driving the emergence of offline violence. Other possible applications could be building on Toscani's opinion dynamics \cite{toscani2006kinetic}, building simulations for biological kinetic systems \cite{resat2009kinetic}, simulating the market economy \cite{cordier2005kinetic}, or even simulating pedestrian dynamics \cite{cristiani2014multiscale}.

While MPCM broadens the range of kinetic equation models that we can solve efficiently, there are several useful extensions worthy of consideration. As was discussed in Section 4, a similar methodology can be used to develop a number of Galerkin schemes. In addition, we envision that a number of computational challenges will arise in developing MPCM for models with time-dependent or stochastic transition rates, which could capture complexity and randomness in social systems. This scheme could also potentially be extended to models in which parameters and kernels vary as a result of spatial heterogeneity. These extensions would further broaden the applicability of our methodology and strengthen its potential as a tool for the quantitative analysis of social dynamics.
\\\\
{\bf Acknowledgements:}  Rodr\'iguez and Bahid were partially funded by NSF-DMS-2042413 and AFOSR MURI FA9550-22-1-0380. Corona acknowledges support by NSF-DMS-2309661.

\newpage
\appendix
\section{Proof of Theorem \ref{Th:wellpos}}\label{app:prf}

\begin{proof}
We equip the space $\calC$ with the norm 
$$
\norm{f}= \sup_{t\in[0,T_f]}\sum_{i=1}^N \norm{f_i(t,\cdot)}_{L^1} = \sup_{t\in[0,T_f]}\sum_{i=1}^N \int_{D_i} |f_i(t,u)| du,
$$
making $(\calC, \norm{\cdot})$ a Banach Space. 
Consider the space:
$$\calE = \{f = (f_1,...,f_n) |\ f \in \calC \ ,\ \norm{f} \leq a \text{ and } f_i(t,u) \geq 0,\ \forall (t,u)\in [0,T_f]\times D_i \;\text{for}\; i \in \mathbb{S}\},$$
where $T_f = \frac{1}{C_{\eta}}\min\left(\ln{\frac{4a + 1}{4a}},\ln{\frac{2a^2+a}{2a^2+1}} \right)$ and $a > 1$. 
%
Define the operator $\calT: \calE \rightarrow \calC$ such that:
\begin{equation*}
\begin{split}
    (\calT f)_i &= e^{-C_\eta t}\Bigg[ f_i^0(u) +\int_0^t e^{C_\eta s}\mathlarger{\Bigg(} \sum_{h,k} \int_{D_h} \int_{D_k}\eta_{hk}(x,y)T_{hk}^i(x,y,u)f_h(s,x)f_k(s,y)dxdy \\
    &\phantom{\sum_{h,k} \int_0^1 \int_0^1\eta_{hk}}- f_i(s,u)\Bigg(\sum_h\int_{D_h}\eta_{ik}(u,x)f_h(s, x)dx - C_{\eta} \Bigg)\mathlarger{\Bigg)}  ds \Bigg]\quad \forall i \in \mathbb{S},
\end{split}
\end{equation*}
and note that a fixed point of $\;\calT$ is a solution to system \eqref{eq:integrodiffthrm} with initial conditions $f_0$.
We first show that $\ \calT(\calE) \subset \calE$. To accomplish this, note that for all $t \in [0, T_f]$

\begin{align*}
    \sum_i \int_{D_i} |\ (\calT f)_i(t,u)| du 
    &= \sum_i \int_{D_i} \Biggl\lvert e^{-C_\eta t}\Bigg[ f_i^0(u) du \\
    & + \int_0^t e^{C_\eta s}\Bigg( \sum_{h,k} \int_{D_h} \int_{D_k}\eta_{hk}(x,y)T_{hk}^i(x,y,u)f_h(s,x)f_k(s,y)dxdy \\
    &- f_i(s,u)\bigg(\sum_h\int_{D_k}\eta_{ik}(u,x)f_h(s, x)dx - C_{\eta}\bigg)\Bigg)ds  \Bigg]\Biggl\rvert du\\ 
    &\leq  e^{-C_\eta t}\bigg[ \sum_i \int_{D_i} |f_i^0(u)| du \\
    & +   \int_0^t e^{C_\eta s}\bigg( \sum_{h,k} \int_{D_h} \int_{D_k}\eta_{hk}(x,y)\\
    &\bigg( \sum_{i} \int_{D_i} T_{hk}^i(x,y,u) du\bigg)\lvert f_h(s,x)f_k(s,y)\rvert dxdy \\
    &+\sum_i \int_{D_i}\biggl\lvert f_i(s,u)\bigg(\sum_h\int_{D_k}\eta_{ik}(u,x)f_h(s, x)dx   - C_{\eta} \bigg) \biggl\rvert du \bigg) ds \bigg].
\end{align*} 
Using the assumption that the transition rates satisfy condition \eqref{eq:trans_condition}, we then obtain that
\begin{align*}
 \sum_i \int_{D_i} |\ (\calT f)_i(t,u)| du &\leq  e^{-C_\eta t}\bigg[ \sum_i \int_{D_i} |f_i^0(u)| du \\
    & \phantom{\sum_{h,k} \int_{D_h}}+   \int_0^t e^{C_\eta s}\bigg( C_{\eta}\bigg[\sum_{h} \int_{D_h} |f_h(s,x)| dx \bigg] \bigg[\sum_{k} \int_{D_k} |f_k(s,y)| dy\bigg]  \\
    &\phantom{\sum_{h,k} \int_{D_h}}+\sum_i \int_{D_i}\lvert f_i(s,u)\lvert\bigg(\sum_h\int_{D_k}C_{\eta}\lvert f_h(s, x)\lvert dx + C_{\eta} \bigg) du \bigg) ds \bigg]\\
    &\leq e^{-C_{\eta}t}\bigg[ \norm{f^0}_{L^1} + \int_0^t e^{C_{\eta} s}\bigg( C_{\eta}\norm{f}^2  +\norm{f}\bigg( C_{\eta}\norm{f} + C_{\eta} \bigg) \bigg) ds \bigg]\\ 
    &\leq  e^{-C_\eta t}\bigg[ 1 +  \int_0^t (2a^2+a)C_\eta e^{C_\eta s} ds \bigg] \\
    &\leq e^{-C_{\eta}t}(1 + (2a^2+a)(e^{C\eta t} - 1))\\
    &\leq   e^{-C_{\eta}t} + (2a^2+a)(1 -  e^{-C_{\eta}t}).
\end{align*}
Therefore, we have that
\begin{align*}
    \norm{\ \calT f} &= \sup_{t\in [0,T_f]} \sum_i \int_{D_i} |\ (\calT f)_i(t,u)|  du\\
    &\leq \sup_{t\in [0,T_f]}   e^{-C_{\eta}t} + (2a^2+a)(1 -  e^{-C_{\eta}t})\\
    &\leq 1 + (2a^2+a)(1 -  e^{-C_{\eta}T_f}).
\end{align*}
Given that $T_f \leq \frac{1}{C_{\eta}}\ln{\frac{2a^2+a}{2a^2+1}}$, we observe that $\ \calT\ $   is an operator from $\calE$ to  $\calE$.
To show that $\ \calT\ $ is a contraction, take two functions $f,g \in \calE$. We have for $t \in [0, T_f]$ that 
\begin{align*}
    \norm{\ \calT f - \calT g}
    &=  \sup_{t\in [0,T_f]} \sum_i \int_0^1 |\ \calT f - \calT g|(t,u) du\\
    &=  \sup_{t\in [0,T_f]} \sum_i \int_{D_i}\Biggl\lvert  e^{- C_\eta t}\Bigg[\int_{D_i} f_i^0(u) - f_i^0(u)du \\
    &\phantom{\sum_{h,k} \int_0^1 \int_0^1\eta_{hk}} + \int_0^t e^{C_\eta s}\Bigg( \sum_{h,k} \int_{D_h} \int_{D_k}\eta_{hk}(x,y)T_{hk}^i(x,y,u)[f_h(s,x)f_k(s,y)\\
    &\phantom{\sum_{h,k} \int_0^1 \int_0^1\eta_{hk}}-g_h(s,x)g_k(s,y)]dxdy\\
    &\phantom{\sum_{h,k} \int_0^1 \int_0^1\eta_{hk}}- f_i(s,u)\bigg( \sum_h\int_{D_h}\eta_{ik}(u,x)f_h(s, x) - C_\eta \bigg) \\
    &\phantom{\sum_{h,k} \int_0^1 \int_0^1\eta_{hk}}+g_i(s,u)\bigg(\sum_h\int_{D_h}\eta_{ik}(u,x)g_h(s,x)dx -C_{\eta} \bigg)\Bigg)ds \Bigg] \Biggl\lvert du.
    \end{align*}
    Therefore, we get
\begin{align*}
    \norm{\ \calT f - \calT g}
    &\leq \sup_{t\in [0,T_f]}  e^{-C_\eta t}\bigg[ \int_0^t e^{C_\eta s}\bigg( \sum_{h,k} \int_{D_h} \int_{D_k} \eta_{hk}(x,y)\bigg(  \sum_i \int_{D_i} T_{hk}^i(x,y,u) du\bigg)\\
    &\phantom{\sum_{h,k} \int_0^1 \int_0^1\eta_{hk}}\biggl\lvert f_h(s,x)f_k(s,y) -g_h(s,x)g_k(s,y) \biggl\rvert dxdy \\
    &\phantom{\sum_{i,k} \int_0^1 \int_0^1\eta_{hk}} + \sum_{i,k} \int_{D_i} \int_{D_k} C_\eta \lvert f_i(s,x)f_k(s,y) - g_i(s,x)g_k(s,y) \rvert dxdy\\
    &\phantom{\sum_{i,k} \int_0^1 \int_0^1\eta_{hk}} + C_{\eta}\sum_i \int_{D_i} |f_i(t,u) - g_i(t,u)| du \bigg)ds \bigg]. 
\end{align*}
Hence, it holds that
\begin{align*}
    \norm{\ \calT f - \calT g}
    &\leq \sup_{t\in [0,T_f]}  e^{-C_\eta t}\bigg[ \int_0^t e^{C_\eta s}\bigg( \sum_{h,k} \int_{D_h} \int_{D_k} 2C_{\eta} \biggl\lvert \bigg(f_h(s,x)f_k(s,y)\\
    &\phantom{\sum_{h,k} \int_0^1 \int_0^1\eta_{hk}}-g_h(s,x)g_k(s,y) + f_h(s,x)g_k(s,y) - f_h(s,x)g_k(s,y)\bigg)\biggl\rvert dxdy  \\
    &\phantom{\sum_{h,k} \int_0^1 \int_0^1\eta_{hk}aaaaa} + C_{\eta}\sum_i \int_{D_i} |f_i(t,u) - g_i(t,u)| du \bigg)ds \bigg]. 
    \end{align*}
Therefore,
\begin{align*}
    \norm{\ \calT f - \calT g}
    &\leq \sup_{t\in [0,T_f]}  e^{-C_\eta t}\bigg[ \int_0^t e^{C_\eta s}\bigg(\sum_{h,k} \int_{D_h} \int_{D_k} 2C_\eta \biggl\lvert f_h(s,x)(f_k(s,y) - g_k(s,y)) \\
    &\phantom{\sum_{h,k} \int_0^1 \int_0^1\eta_{hk}aaaaa} - g_k(s,y)(f_h(s,x) - g_h(s,x))\biggl\rvert dxdy \\
    &\phantom{\sum_{h,k} \int_0^1 \int_0^1\eta_{hk}aaaaa} + C_{\eta}\sum_i \int_{D_i} |f_i(t,u) - g_i(t,u)| du\bigg)ds \bigg] \\
    &\leq \sup_{t\in [0,T_f]}  e^{-C_\eta t}\bigg[ \int_0^t e^{C_\eta s}\bigg( C_\eta \bigg[ 4a \norm{f-g} + \norm{f-g} \bigg] \bigg)ds \bigg]\\
    &\leq  (4a+1)[1 - e^{-C_\eta T_f}]\norm{f-g} .
\end{align*}

Given that $T_f < \frac{1}{C\eta}\ln{\frac{4a + 1}{4a}}$, we see that $(4a+1)[1 - e^{-C_\eta T_f}] < 1$.
Therefore, by the contraction mapping theorem, system \eqref{eq:integrodiffthrm} accepts a solution in $\calE$ with $T_f = \frac{1}{C_{\eta}}\min\bigg(\ln{\frac{4a + 1}{4a}},\ln{\frac{2a^2+a}{2a^2+1}} \bigg)$.
If $T<T_f$, then there exists a solution in $\calC$. Otherwise, we know that $T_f$ is independent of the initial state $f_0$. Thus, by setting $f_0(u) = f(T_f,u)$ and having a new final time of $2T_f$, we can reapply the contraction mapping theorem to find a solution on $[T_f, 2T_f]$. Stitching these solutions together up to $T$ yields a unique solution over $[0, T]$.
$$\phantom{adfsadfasgasfgsfgsagf}$$
\end{proof}

\section{Asymptotic Behavior of Solutions for Different $\phi$}\label{app:asymp}
In this section, we aim to gain insight into the long-term behavior under different choices of the function $\phi$. To build intuition, we consider a single equation with one subsystem. We will analyze the first and second moments to determine how the distribution evolves. The kinetic equation is given by
\begin{equation}\label{eq:onesubsystem}
  \partial_t f(t,u) = \int_0^1 \int_0^1 \delta_{\phi(x,y) - u} f(t,x)f(t,y) dx dy - f(t,u)\int_0^1 f(t,y) dy .
\end{equation}
We define the first (mean) and the second moments as
\begin{align*}
    m_1(t) = \int_0^1 u f(t,u)du,\; m_2(t) = \int_0^1 u^2 f(t,u)du,\;\text{and}\;\mathrm{Var}(t) =  m_2(t) - m_1(t)^2.
\end{align*}

Note that taking the time derivative of the two moments and using equation \eqref{eq:onesubsystem} will yield a different equation for each function $\phi$ listed in Table \ref{tab:phitab}. For $\phi_L$, we get the following system of ODEs
\begin{equation}\label{eq:momphiL}
        \begin{cases}
        \frac{d}{dt} m_1(t) &= -\gamma m_1^2(t),\vspace{3pt}\\
        \frac{d}{dt} \mathrm{Var}(t) &= -2\gamma m_1(t)\mathrm{Var}(t).
\end{cases}\end{equation}

From system \eqref{eq:momphiL}, we conclude that both $m_1(t) \to 0$ and $\mathrm{Var}(t) \to 0$ as $t \to \infty$. Therefore, the mass accumulates at the $u = 0$ boundary as time progresses. For the function $\phi_R$, the system of ODEs takes the form
\begin{equation}\label{eq:momphiR}
\begin{cases}
    \frac{d}{dt} m_1(t) &= \gamma (1-m_1(t))^2,\vspace{3pt}\\
    \frac{d}{dt} \mathrm{Var}(t) &= -2\gamma m_1(t)\mathrm{Var}(t).
\end{cases}\end{equation}

From system \eqref{eq:momphiR}, we conclude that $m_1(t) \to 1$ and $\mathrm{Var}(t) \to 0$, so the mass is shifting towards the $u = 1$ boundary in the long term.
For {\it $\phi_T$}, the moments satisfy
     \begin{equation}\label{eq:momphiT}\begin{cases}
         \frac{d}{dt} m_1(t) &= -\gamma (m_1-a)^2(t),\\
         \frac{d}{dt} \mathrm{Var}(t) &= -2\gamma |m_1(t)-a|\mathrm{Var}(t).
    \end{cases}\end{equation}

From system \eqref{eq:momphiT}, we find that as $t \to \infty$, $m_1(t) \to a$ and $\mathrm{Var}(t) \to 0$. This implies that the mass concentrates at $u = a$ over time.
Finally, for $\phi_A$, we decompose the moments into left and right components
$$ \begin{cases}
    m_i^L(t) &= \int_0^a u^i f(t,u)du,\\
    m_i^R(t) &= \int_a^1 u^i f(t,u)du,
\end{cases}$$

which yields the system of ODEs
\begin{equation}\label{eq:momphiA}
\begin{cases}
    \frac{d}{dt} m_1^L(t) &=  -\gamma (m_1^L(t))^2\\
     \frac{d}{dt} m_1^R(t) &=  \gamma (1-m_1^R(t))^2.\\
\end{cases}\end{equation}
System \eqref{eq:momphiA} yields two subpopulations. The mass of the first shifts towards  $0$, while the second's mass shifts towards $1$. This provides insight into the dynamics expected from the solutions, depending on the choice of $\phi$.

\section{Failure of the Standard Collocation Scheme and Convergence of $C\{a_{ij}\}$}\label{app:naivefail}

We seek to compare the two schemes \eqref{ODESYS} and \eqref{NumIntDiff} through the modeling of a kinetic system of two subsystems with one transition rate. The integro-differential equation that governs this system takes the form \eqref{EQ:oneparticles}. For the purposes of this case study, we set $T_{12}^1(x,y,u) = \delta_{x - 0.9xy - u}$. We plot the mass of the entire system of both schemes for different numbers of collocation nodes. Figure \ref{fig:errmassnaive} shows that as time progresses, the mass of the system increases regardless of the number of nodes, while the MPCM preserves the mass as shown in Figure \ref{fig:errmassmpcs}. This is even clearer in Figure \ref{fig:loglogmass}, where we can see the log-log plot of the maximum difference between the mass and $1$ by number of collocation nodes. We observe that the MPCM significantly decreases the difference. This is due to the term $C(\{a_{ij}\})$, which acts as a correction to the mass and that gets closer to 1 as the number of collocation points increases, as seen in Figure \ref{fig:errmassmpcs}

\begin{figure}[H]
    \centering
    \subcaptionbox{\label{fig:errmassnaive}}{\includegraphics[width=.45\textwidth]{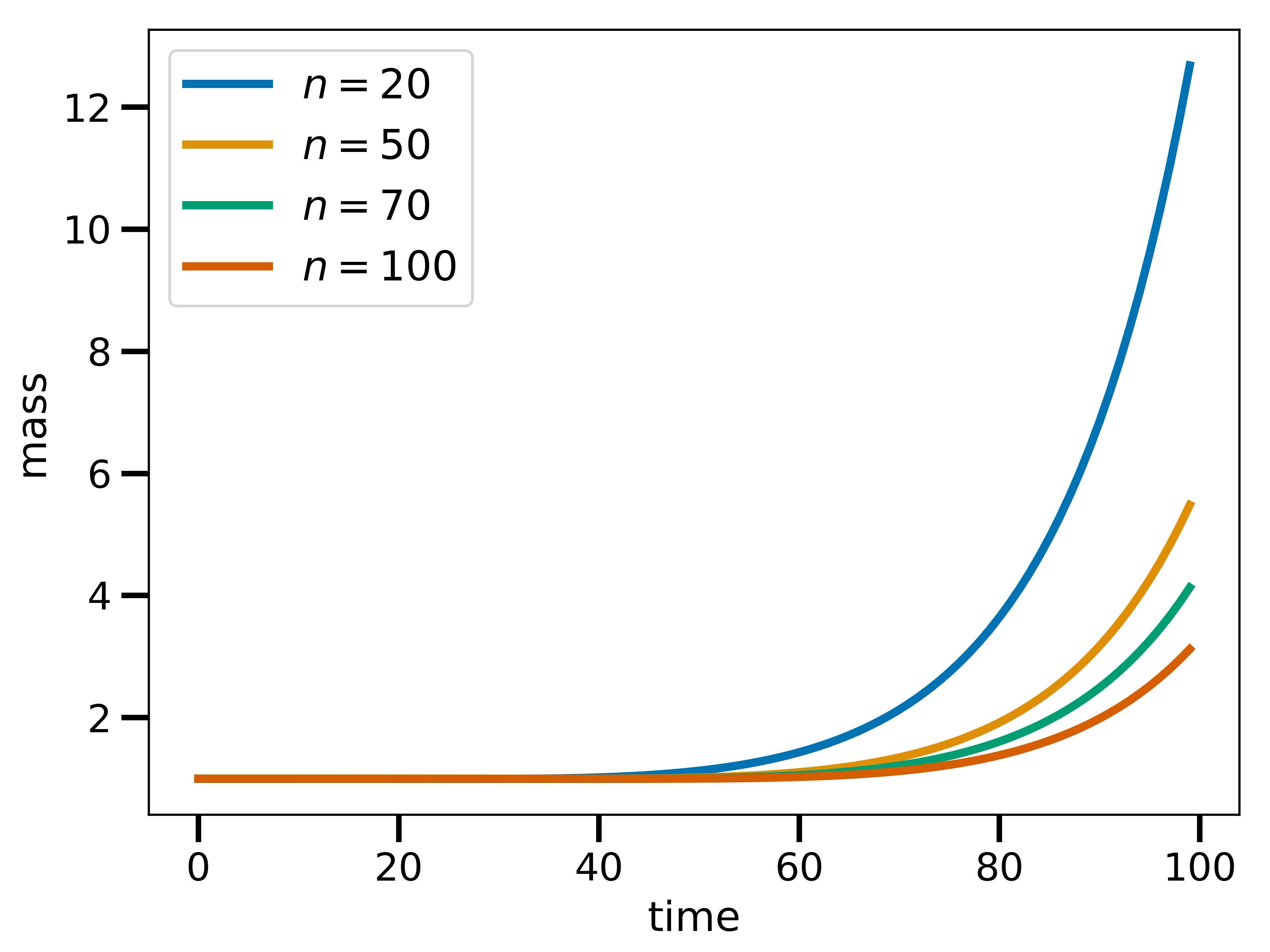}}\hspace{1em}%
    \subcaptionbox{\label{fig:errmassmpcs}}{\includegraphics[width=.45\textwidth]{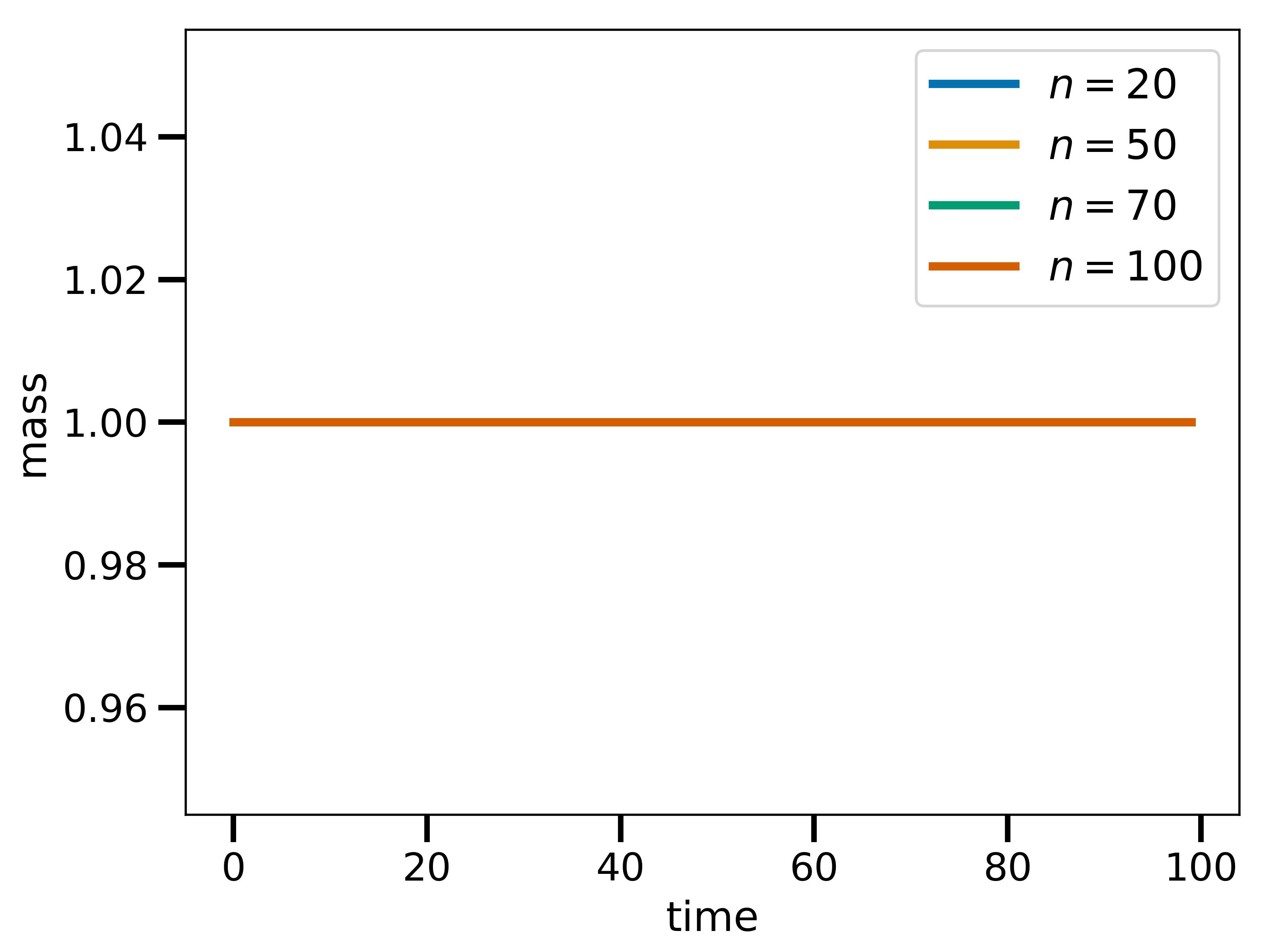}}\hspace{1em}%
    \caption{Evolution of mass over time for system \eqref{EQ:oneparticles} with $T_{12}^1(x,y,u) = \delta_{x - 0.9xy - u}$ simulated using   (a) - scheme \eqref{NumIntDiff} and (b) -  scheme \eqref{ODESYS}.}
    \label{fig:errormass}
\end{figure}

\begin{figure}[H]
    \centering
    \subcaptionbox{\label{fig:loglogmass}}{\includegraphics[width=.45\textwidth]{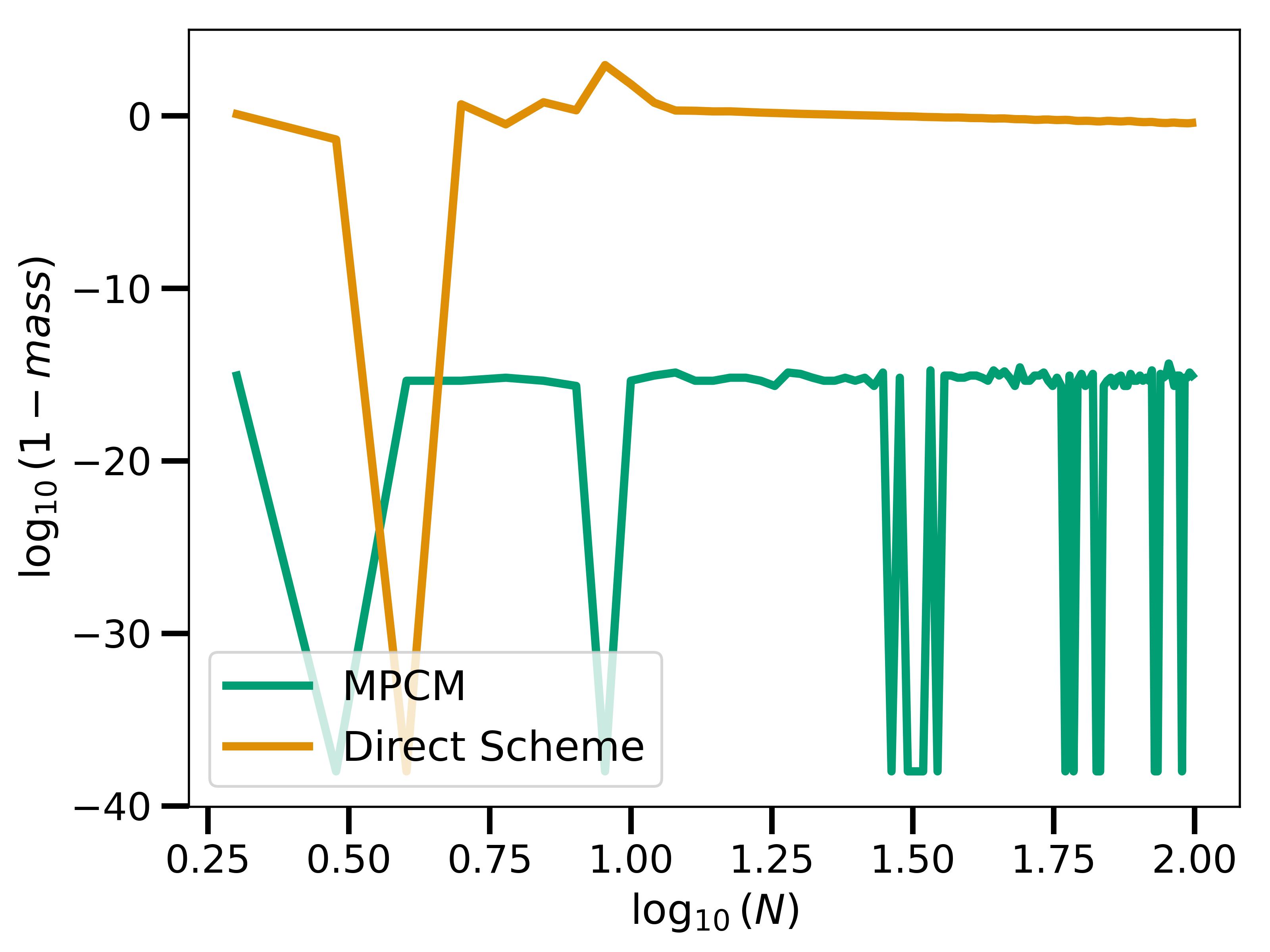}}\hspace{1em}%
    \subcaptionbox{\label{fig:errmassmpcs}}{\includegraphics[width=.45\textwidth]{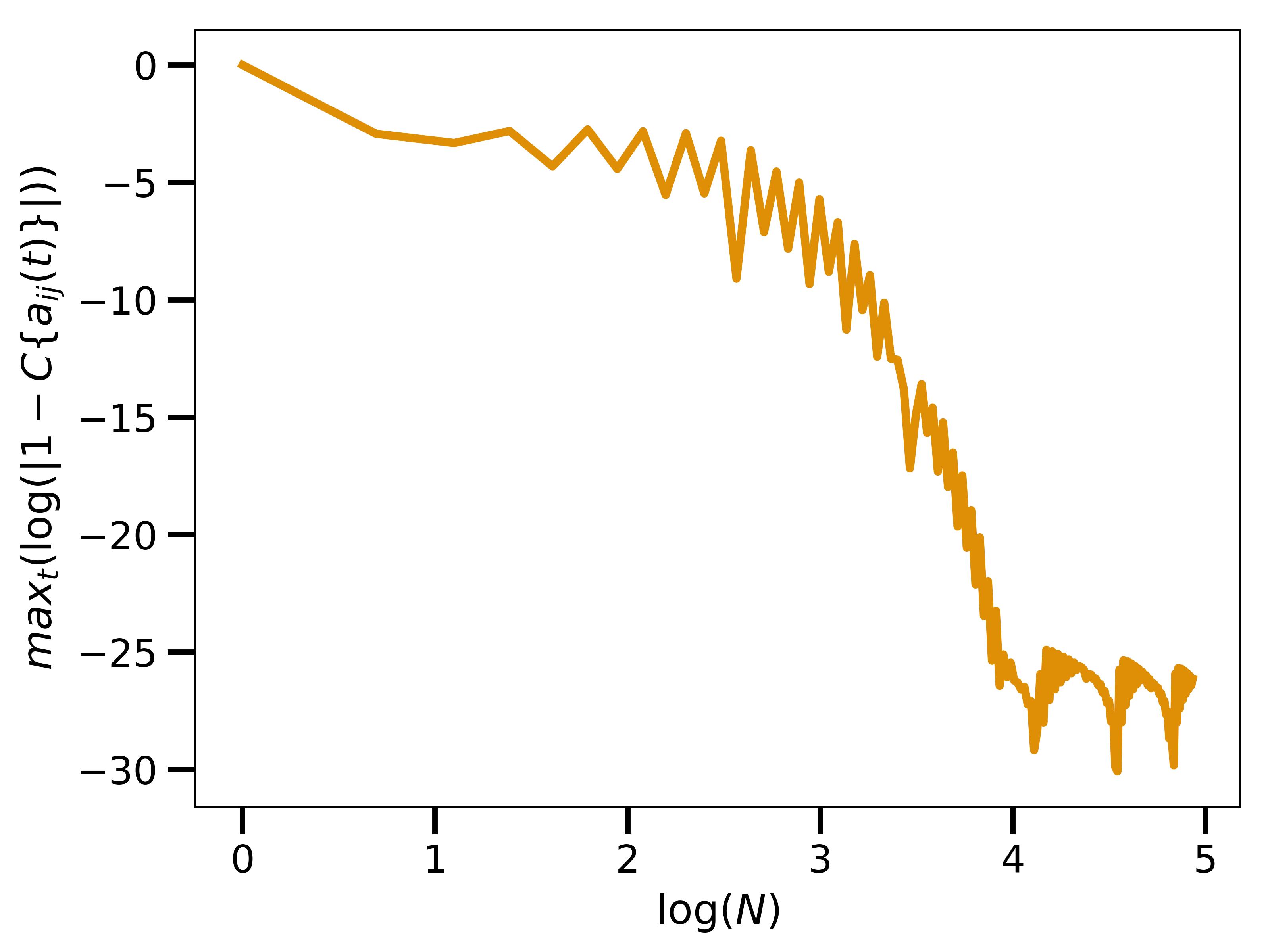}}\hspace{1em}%
    \caption{Log-log plot of the difference in (a) mass and (b) - the coefficient $C\{a_{ij}\}$ for system \eqref{EQ:oneparticles} with $\phi_{12}^1 = \phi_L$ for different numbers of collocation points $N$.}
    \label{fig:errorlogmassc}
\end{figure}

\newpage
\section{Kinetic System with One Transition Rate and Different $\gamma$'s } \label{app:diffgam}
We model the kinetic system \eqref{EQ:oneparticles} with different transition functions $\phi$ from Table \ref{tab:phitab} and different values of $\gamma$.
\begin{figure}[H]
    \centering
    \begin{subfigure}{0.2\textwidth}
        \includegraphics[width=\textwidth]{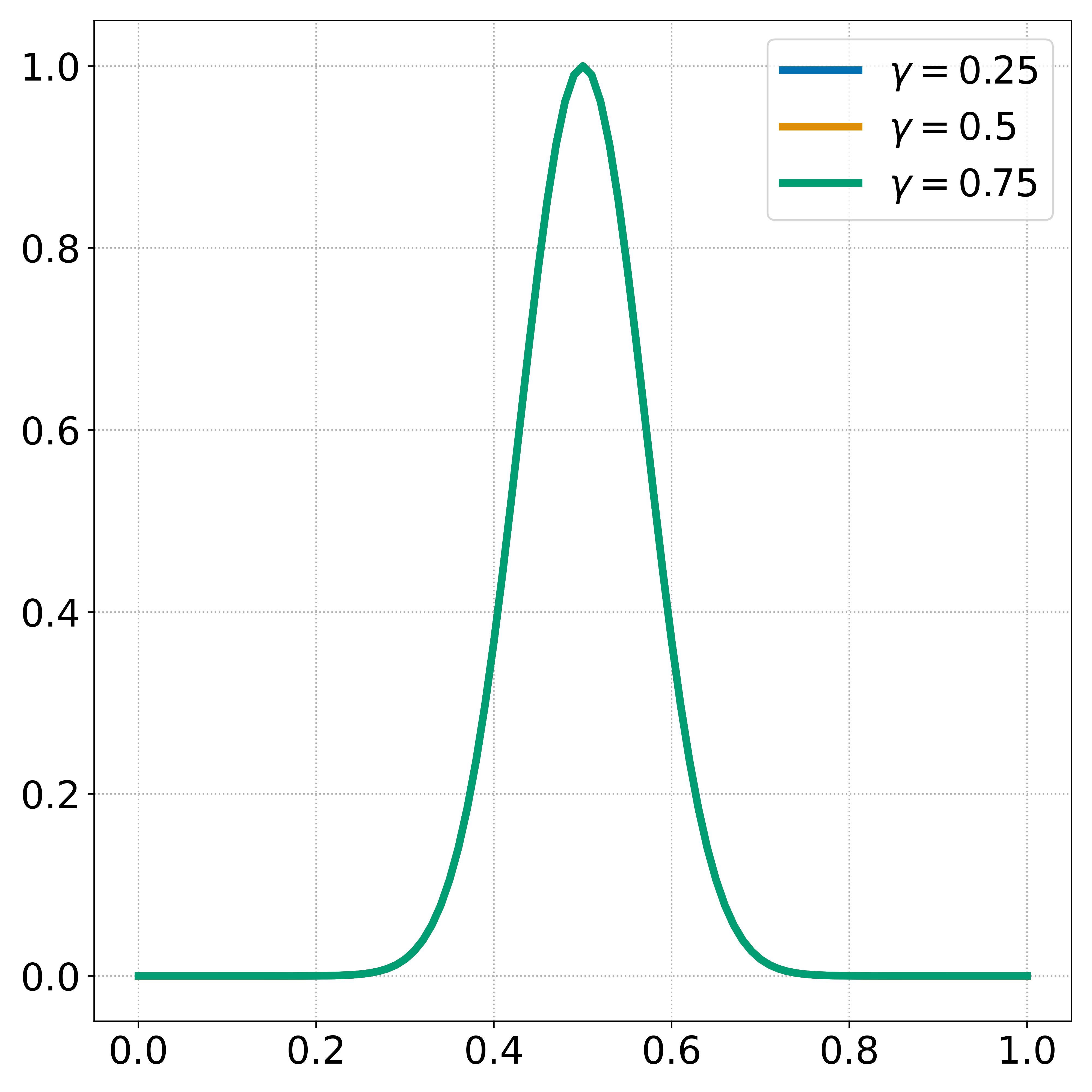}
          \caption{$\phi_L$ at $t = 0$} \label{fig:phiLt0}
          \end{subfigure}
        \begin{subfigure}{0.2\textwidth}
        \includegraphics[width=\textwidth]{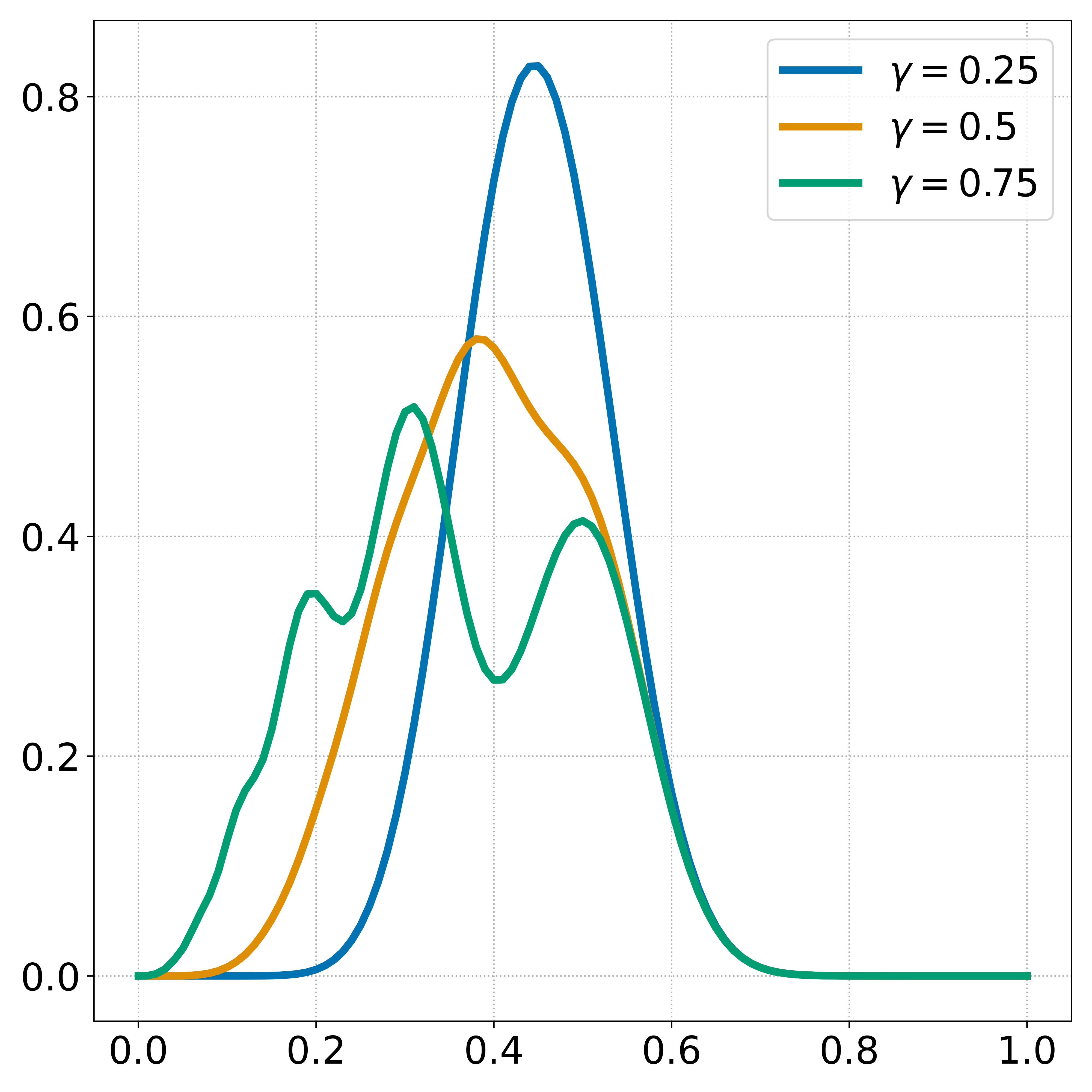}
          \caption{$\phi_L$ at $t = 5$}\label{fig:phiLt5}
          \end{subfigure}
        \begin{subfigure}{0.2\textwidth}
        \includegraphics[width=\textwidth]{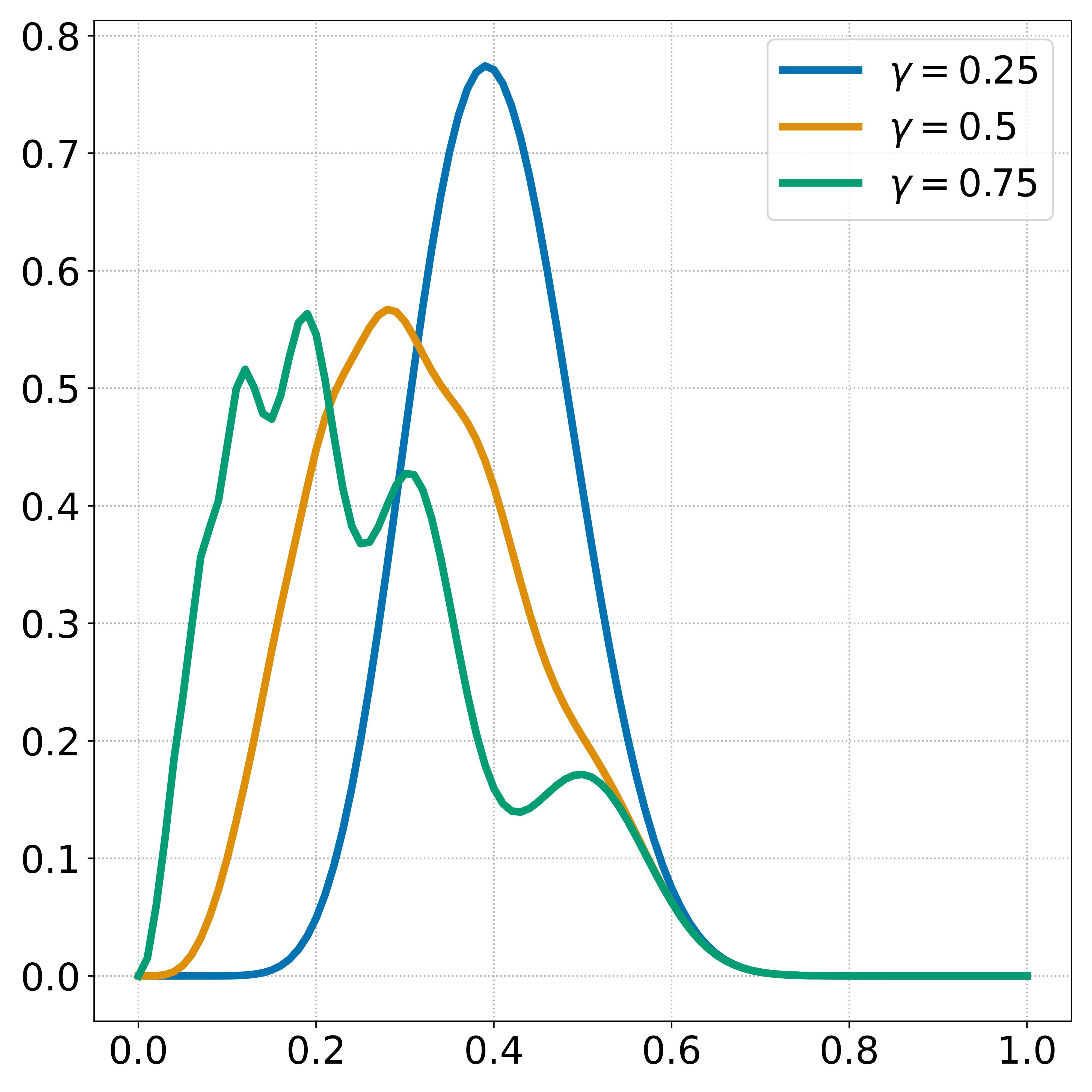}
          \caption{$\phi_L$ at $t = 10$}\label{fig:phiLt10}
          \end{subfigure}
          \begin{subfigure}{0.2\textwidth}
        \includegraphics[width=\textwidth]{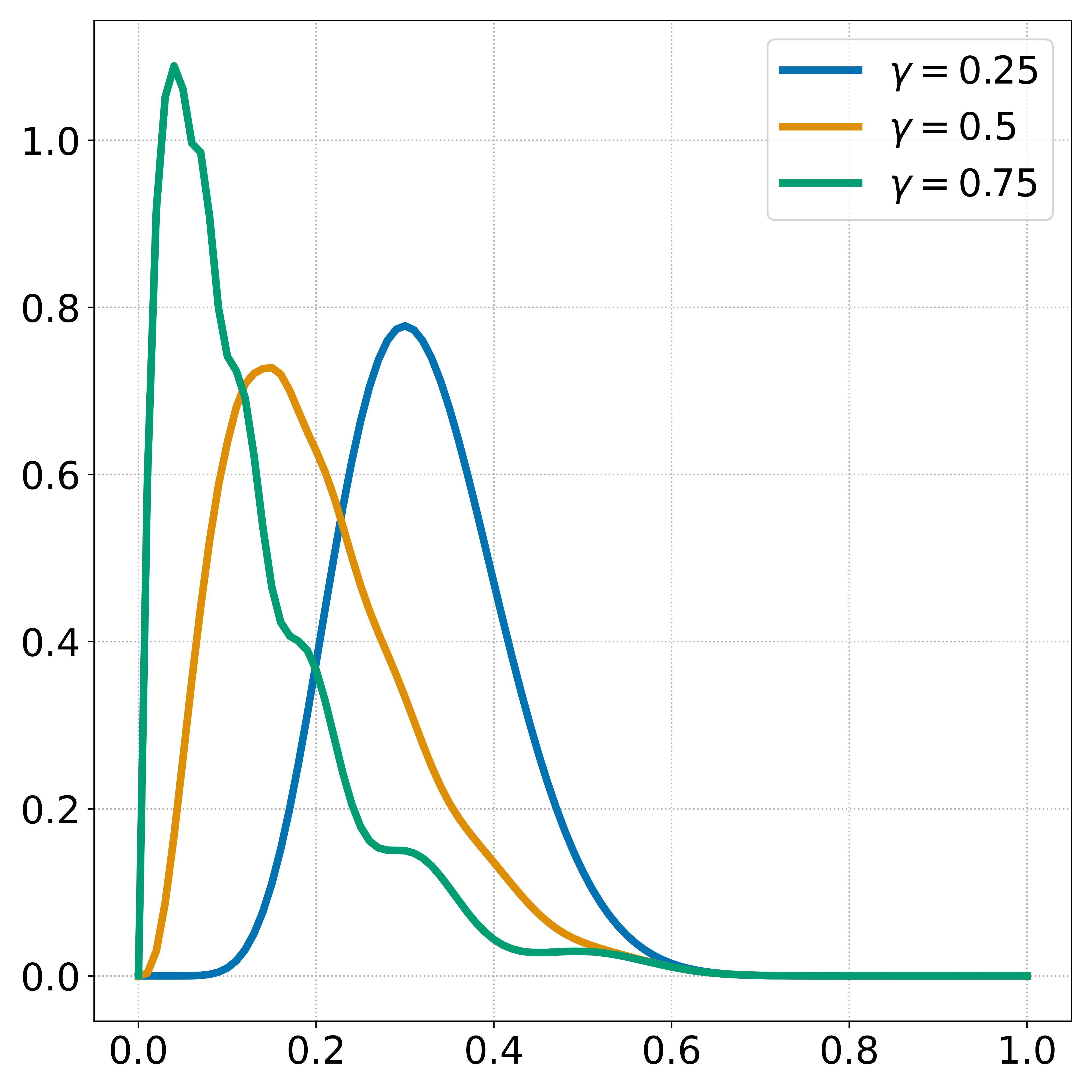}
          \caption{$\phi_L$ at $t = 20$}\label{fig:sphiLt20}
          \end{subfigure}\\
           \begin{subfigure}{0.2\textwidth}
        \includegraphics[width=\textwidth]{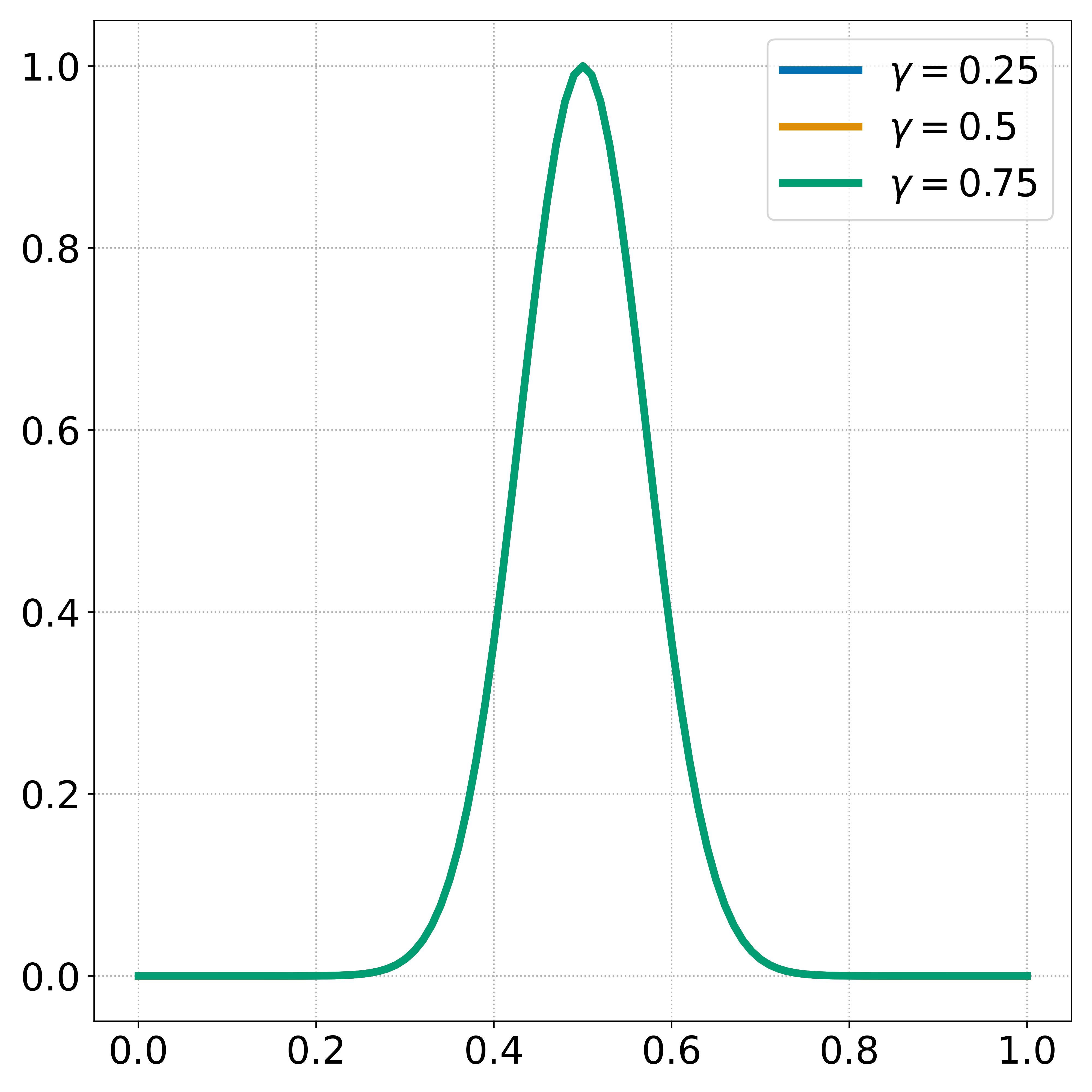}
        \caption{$\phi_R$ at $t = 0$}
        \label{fig:phiRt0}
          \end{subfigure}
        \begin{subfigure}{0.2\textwidth}
        \includegraphics[width=\textwidth]{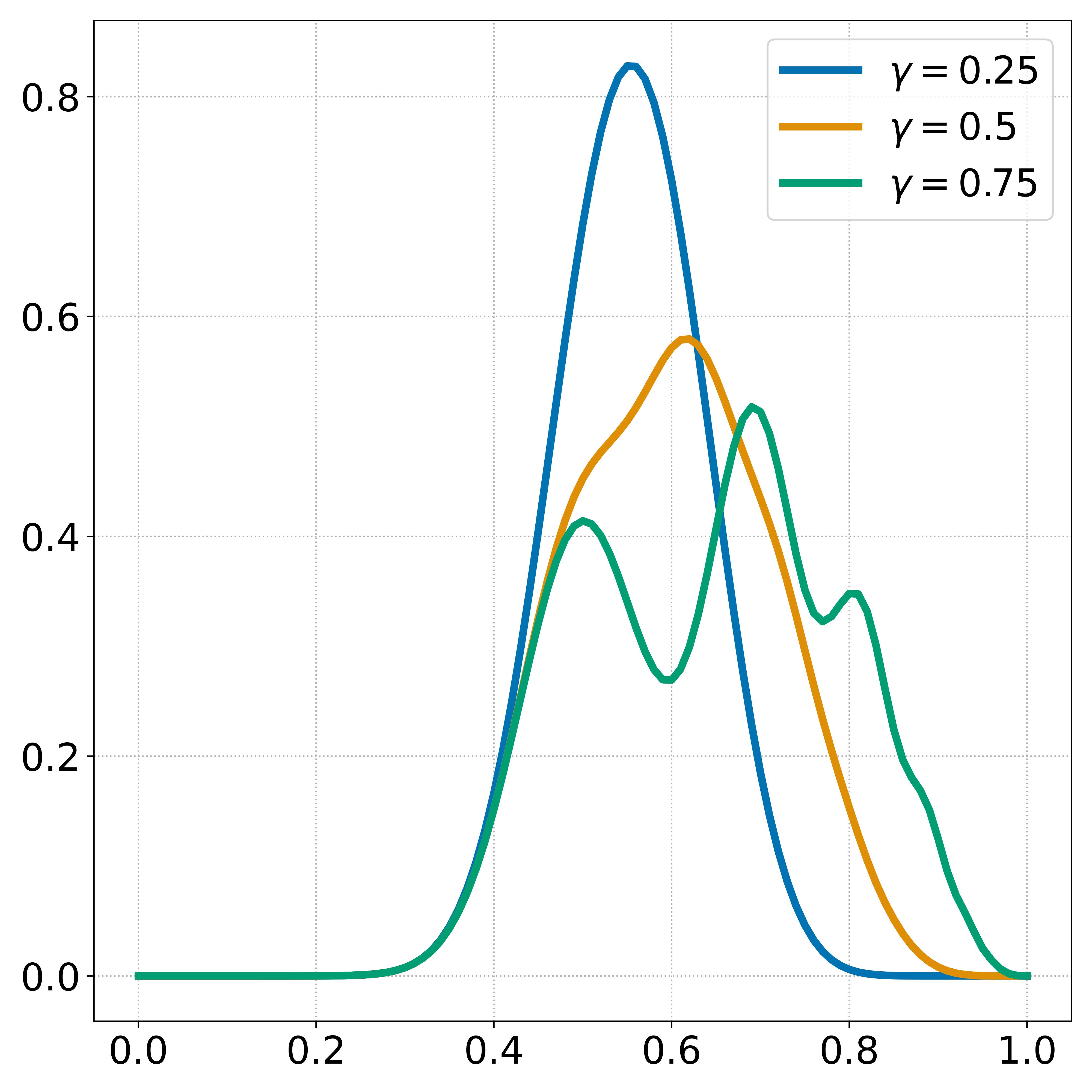}
          \caption{$\phi_R$ at $t = 5$}\label{fig:phiRt5}
          \end{subfigure}
        \begin{subfigure}{0.2\textwidth}
        \includegraphics[width=\textwidth]{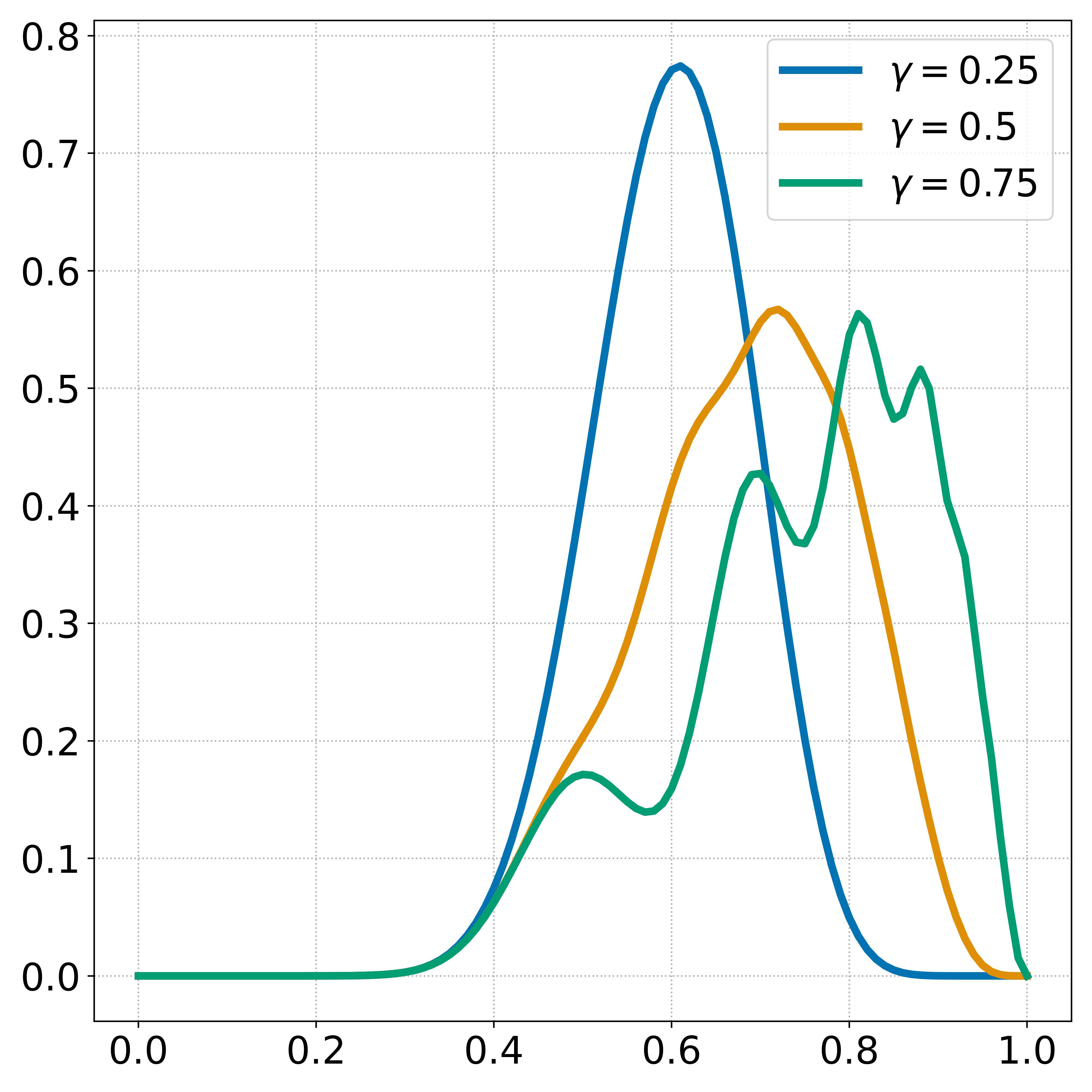}
          \caption{$\phi_R$ at $t = 10$}\label{fig:phiRt10}
          \end{subfigure}
          \begin{subfigure}{0.2\textwidth}
        \includegraphics[width=\textwidth]{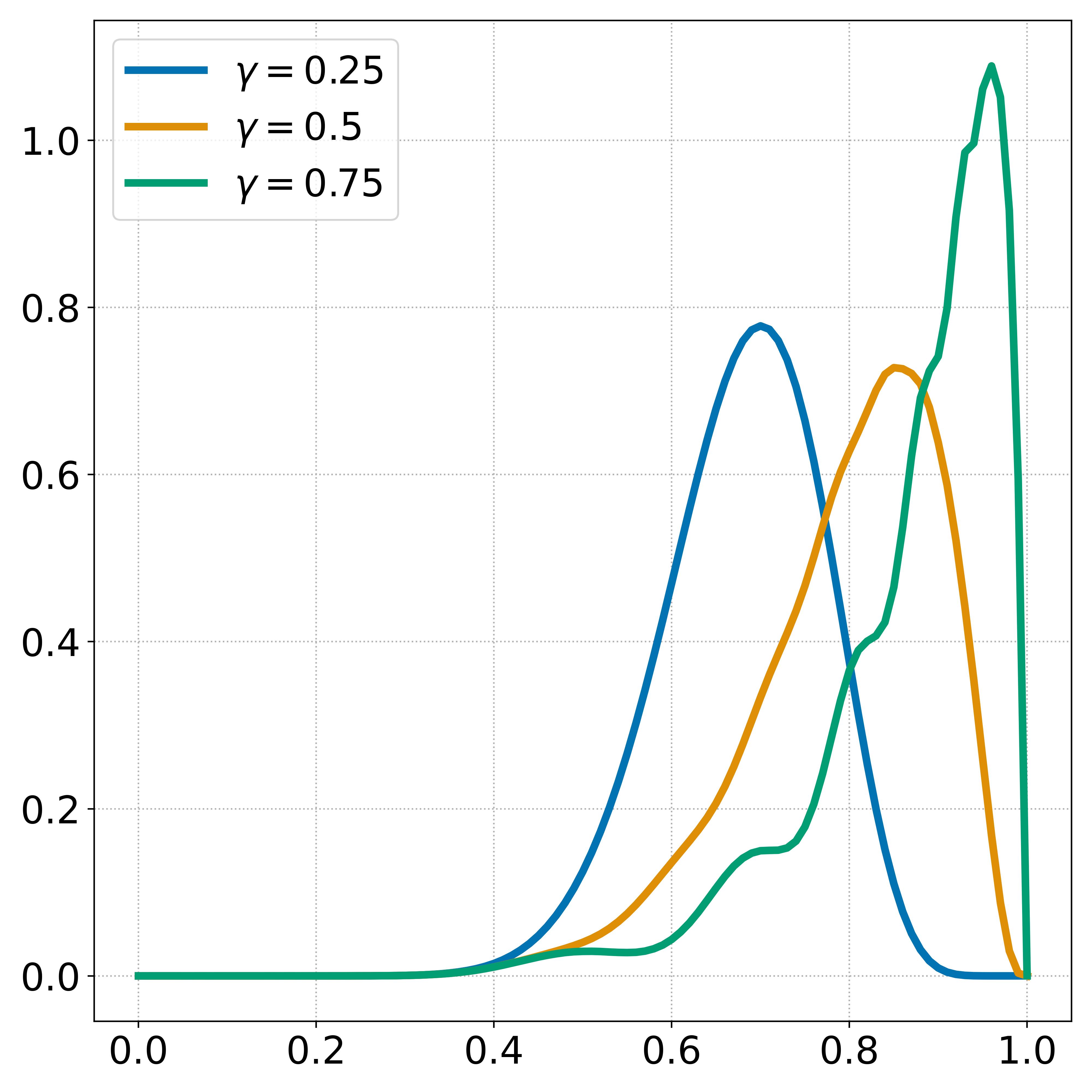}
          \caption{$\phi_R$ at $t = 20$}\label{fig:phiRt20}
          \end{subfigure}\\
           \begin{subfigure}{0.2\textwidth}
        \includegraphics[width=\textwidth]{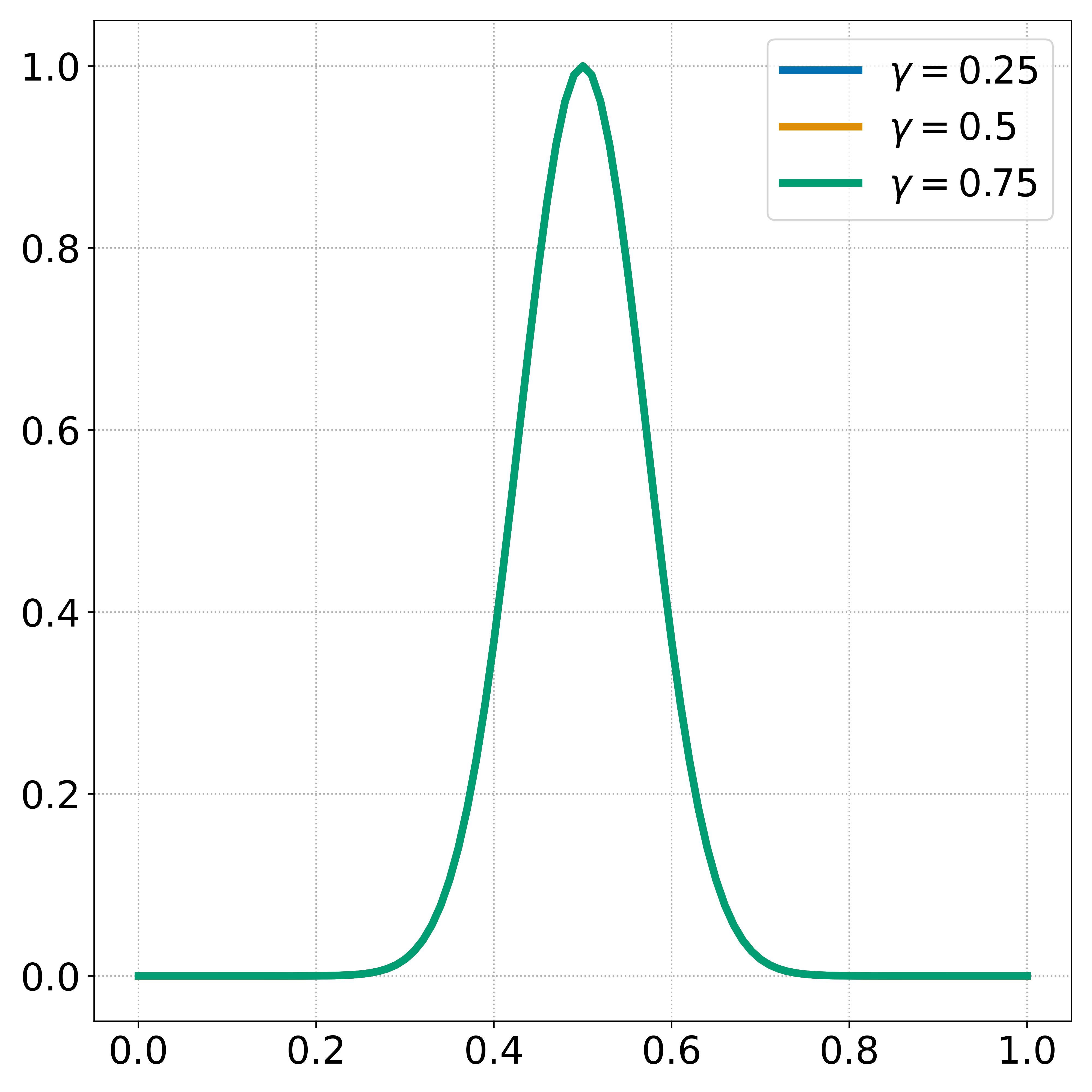}
          \caption{$\phi_A$ at $t = 0$}\label{fig:phiAt0}
          \end{subfigure}
        \begin{subfigure}{0.2\textwidth}
        \includegraphics[width=\textwidth]{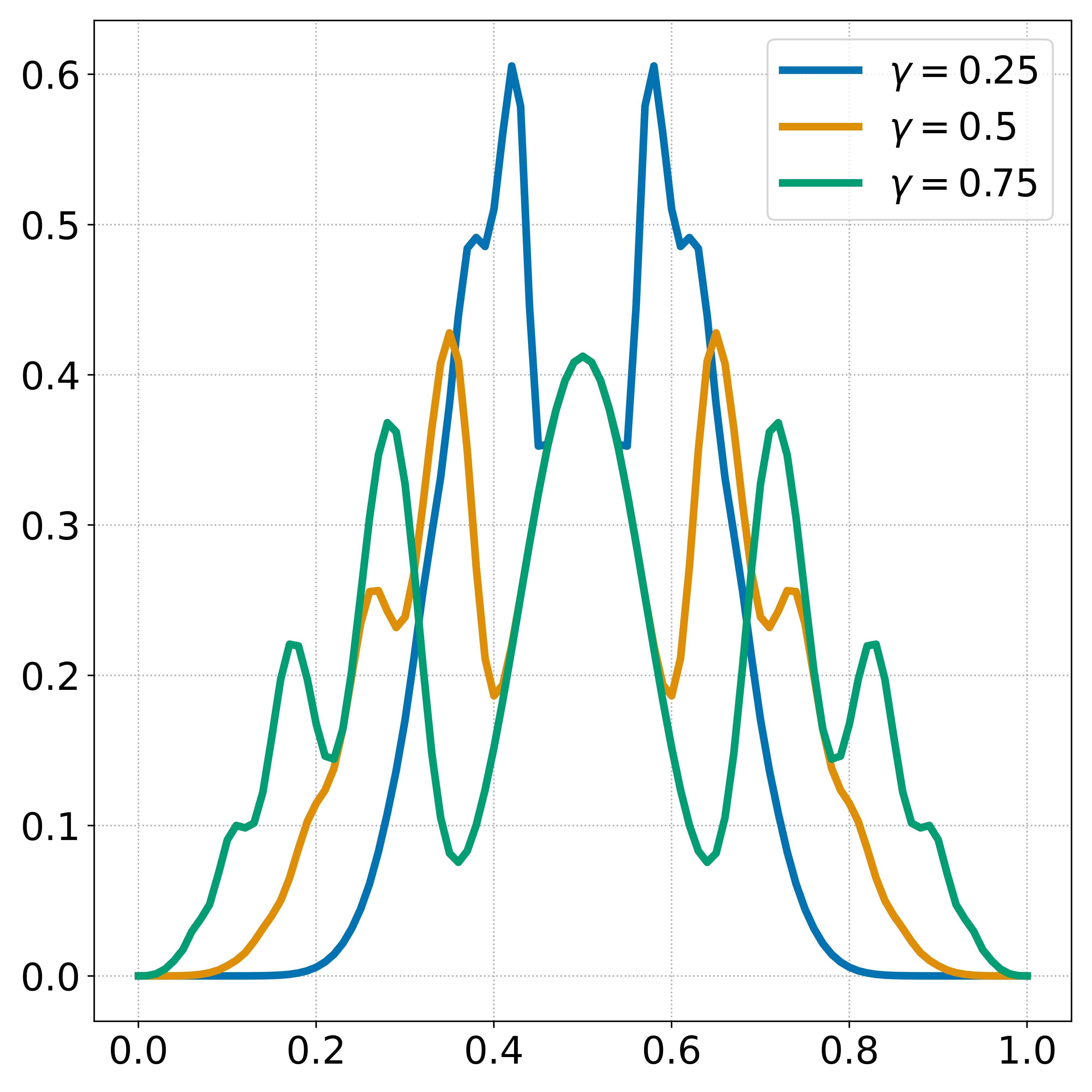}
          \caption{$\phi_A$ at $t = 5$}\label{fig:phiAt5}
          \end{subfigure}
        \begin{subfigure}{0.2\textwidth}
        \includegraphics[width=\textwidth]{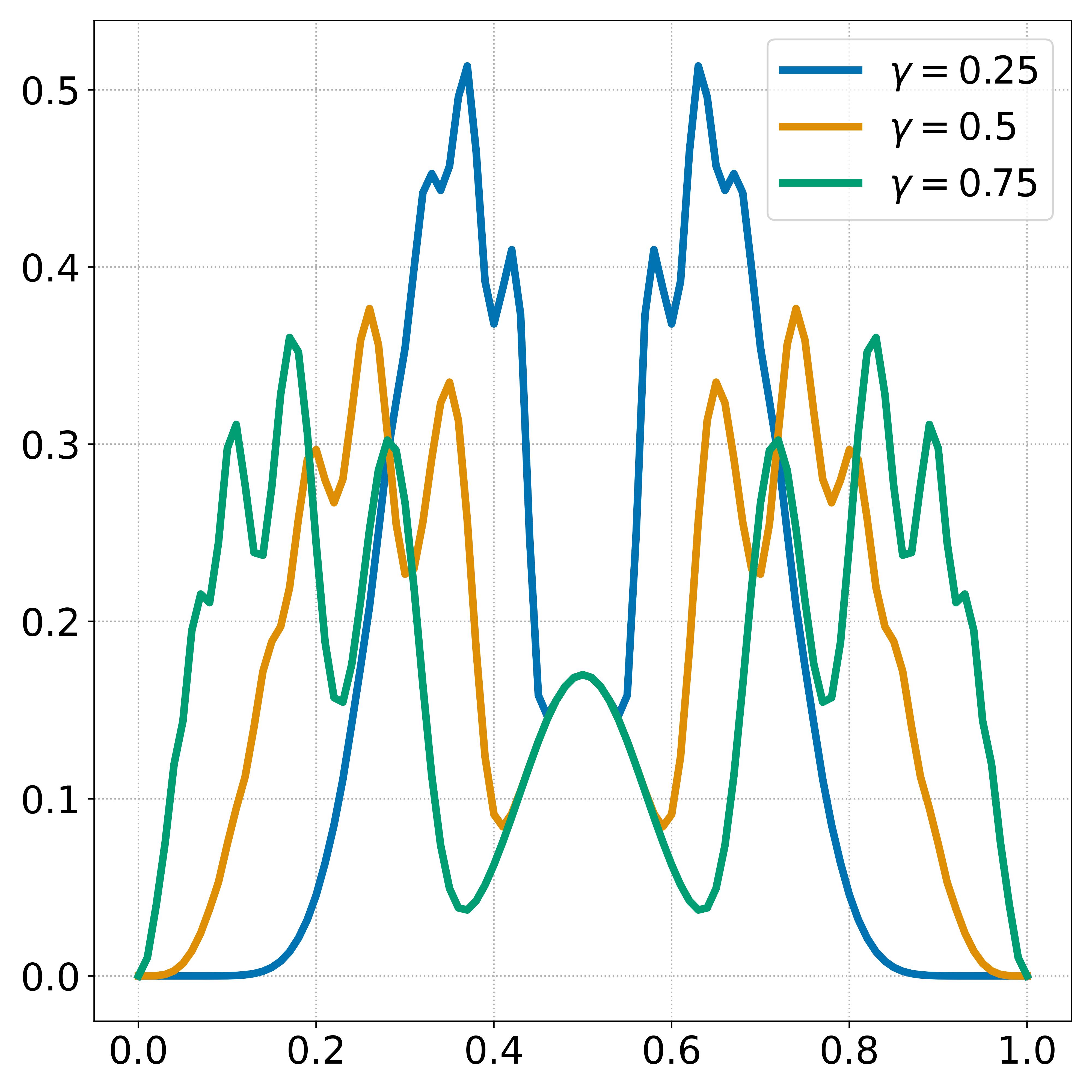}
          \caption{$\phi_A$ at $t = 10$}\label{fig:phiAt10}
          \end{subfigure}
          \begin{subfigure}{0.2\textwidth}
        \includegraphics[width=\textwidth]{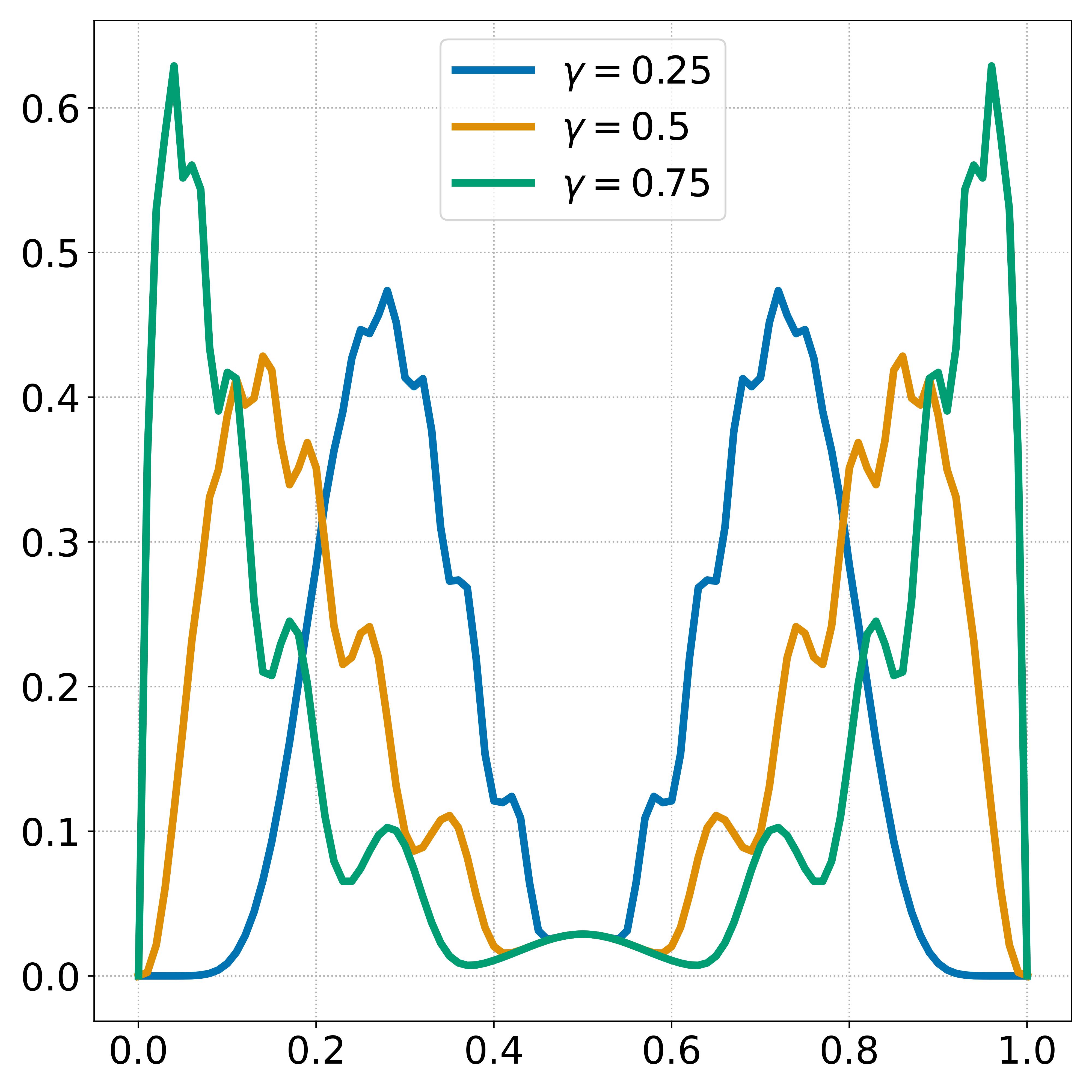}
          \caption{$\phi_A$ at $t = 20$}\label{fig:phiAt20}
          \end{subfigure}\\
           \begin{subfigure}{0.2\textwidth}
        \includegraphics[width=\textwidth]{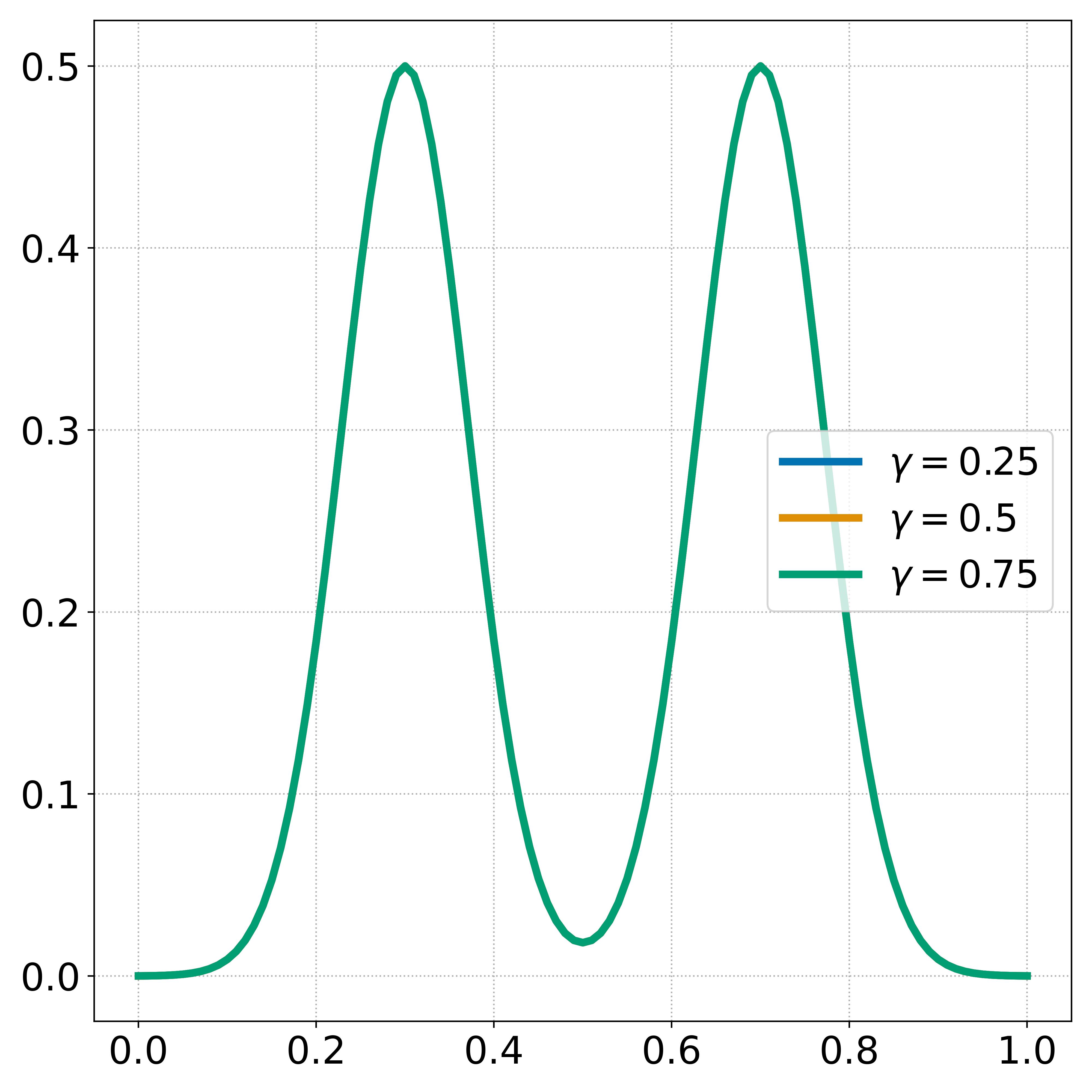}
          \caption{$\phi_T$ at $t = 0$}\label{fig:phiTt0}
          \end{subfigure}
        \begin{subfigure}{0.2\textwidth}
        \includegraphics[width=\textwidth]{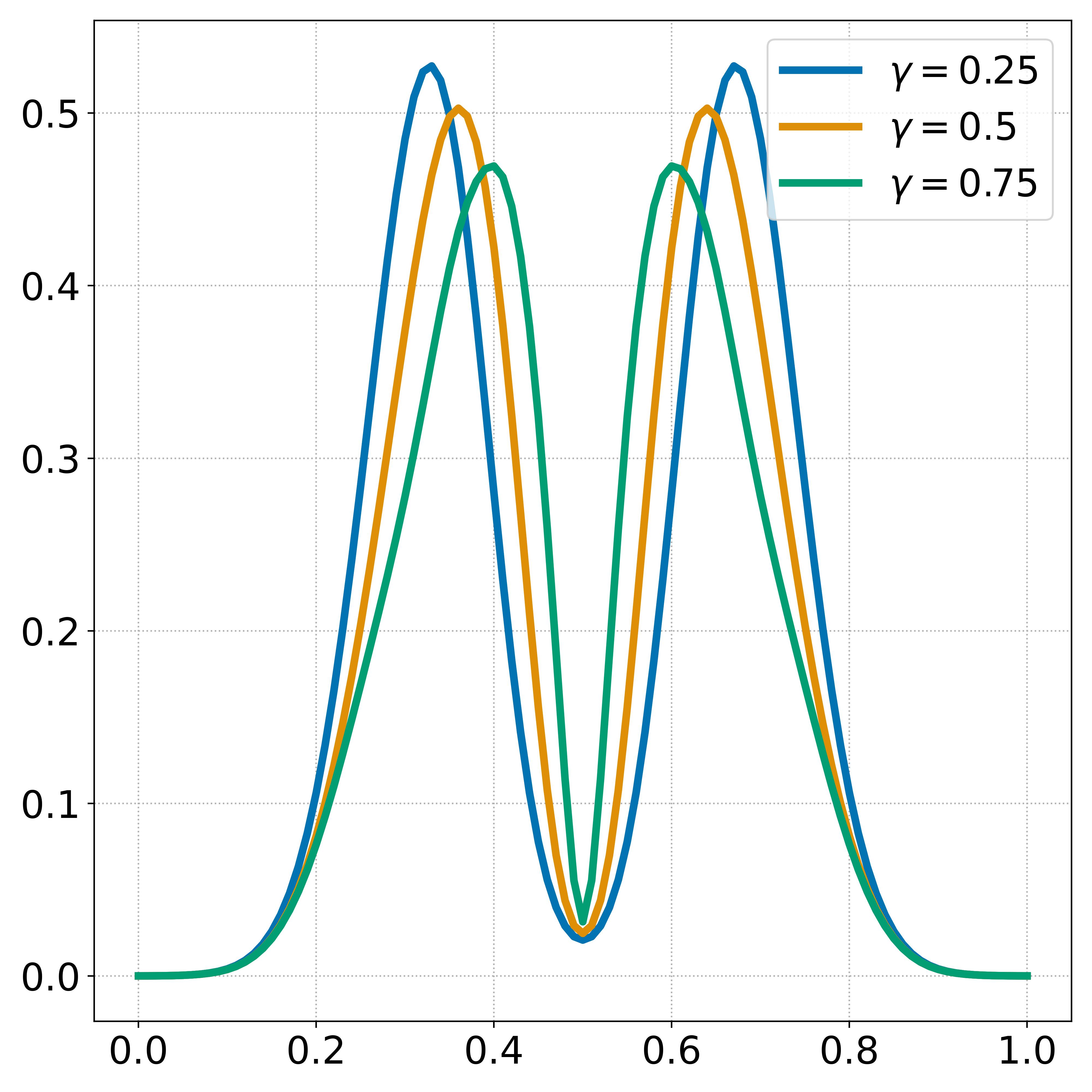}
          \caption{$\phi_T$ at $t = 5$}\label{fig:phiTt5}
          \end{subfigure}
        \begin{subfigure}{0.2\textwidth}
        \includegraphics[width=\textwidth]{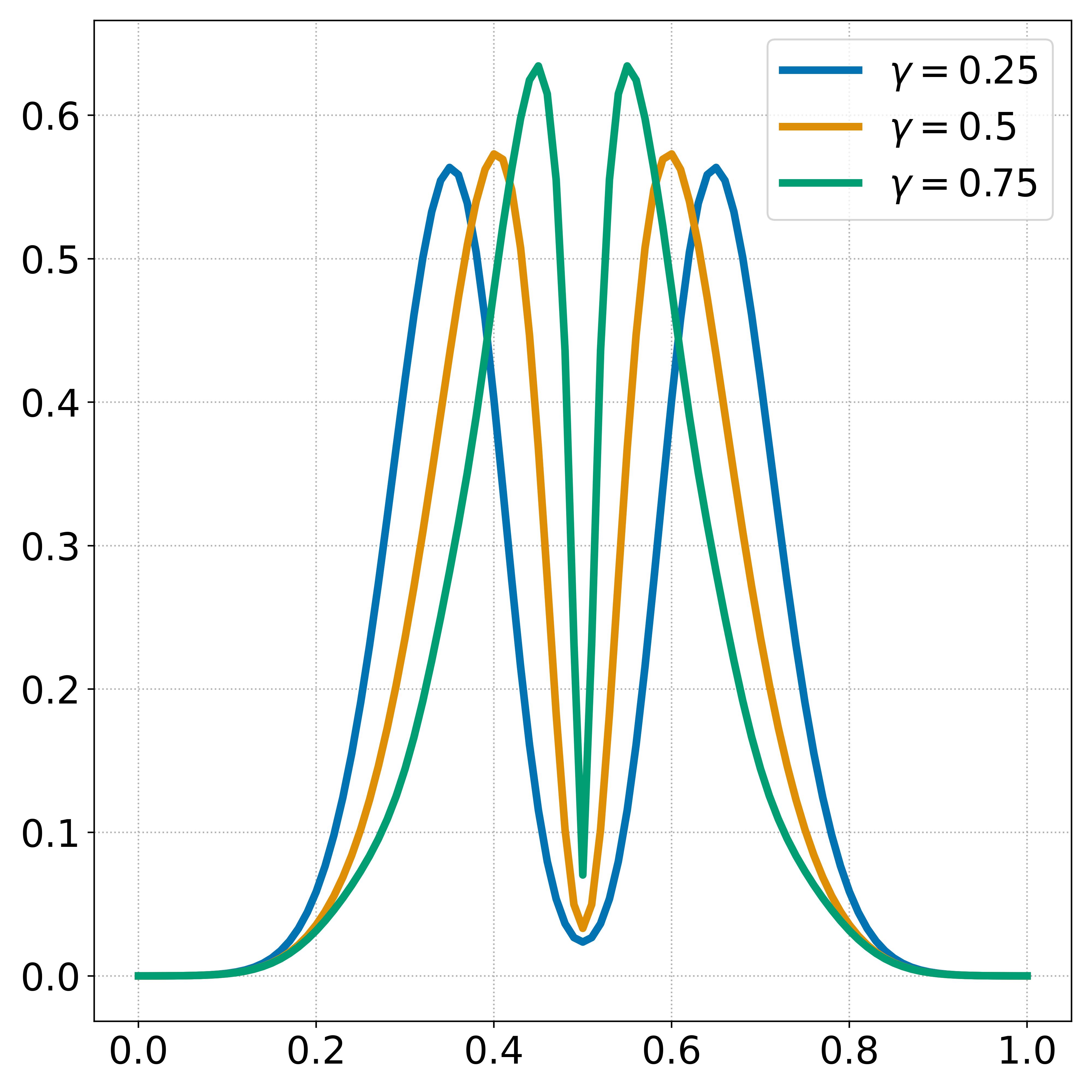}
        \caption{$\phi_T$ at $t = 10$}\label{fig:phiTt10}
          \end{subfigure}
          \begin{subfigure}{0.2\textwidth}
        \includegraphics[width=\textwidth]{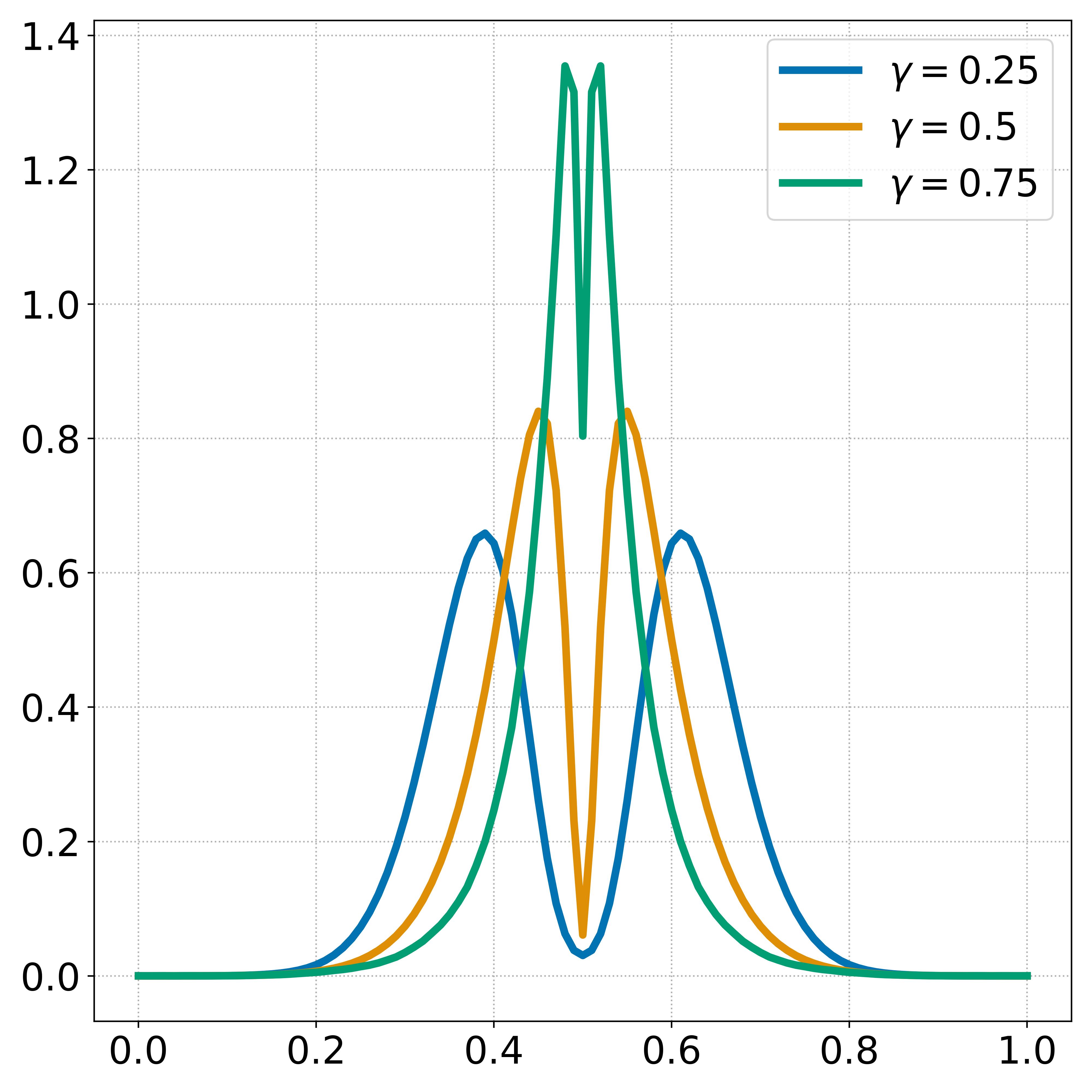}
          \caption{$\phi_T$ at $t = 20$}\label{fig:phiTt20}
          \end{subfigure}\\
\caption{Temporal evolution of the kinetic equation \eqref{EQ:oneparticles} for different transition functions $\phi$ (defined in Table \ref{tab:phitab}), and varying values of $\gamma$. Each row represents the time evolution of the solution for a distinct transition function $\phi$, highlighting differences in agent movement patterns. The initial conditions remain consistent across the simulations for $\phi_L$, $\phi_R$, and $\phi_A$ to allow direct comparison. The simulations are run using MPCM on the time interval $[0,20].$ }
\label{fig:simonepartdiffg}
\end{figure}

\section{Tau-leaping Algorithm}\label{app:taulp}

To simulate system \eqref{eq:integrodiff} with Tau-leaping, we use the following steps:

\begin{enumerate}
\item Divide the microstate into $N_b$ bins $\{u_1, ..., u_{N_b}\}$, each with width $\Delta u$.
\item Represent $f_i(t,u)$ as a collection of particle counts $F_i^l(t)$ in each bin $l$.
\item Precompute all target bins $\phi_{hk}(l,m)$ such that an interaction between bins $l$ and $m$ for populations $h$ and $k$ produces a particle in bin $\phi_{hk}(l,m)$ of population $i$.
\item Use Poisson-distributed random variables to sample the number of reactions occurring in each time step $\tau$.
\end{enumerate}

\begin{algorithm}[H]
\caption{Tau-leaping Simulation for General Kinetic Equation \eqref{eq:integrodiff}}
\begin{algorithmic}[1]
\State Initialize particle counts $F_i^l(0)$ from the initial condition $f_i(0,u_l)$

\State Precompute $\psi_{hk}(l,m) =$ bin index corresponding to transition outcome from $T_{hk}^i(u_l, u_m, ·)$

\For{each time step $t \rightarrow t + \tau$}
\For{each pair of populations $(h,k)$ and bins $(l,m)$}
\State Compute reaction rate:
$     a_{lm}^{hk} = F_h^l \cdot f_k^m \cdot \eta_{hk}(u_l, u_m) \cdot \Delta u
    $
\State Sample number of reactions:
$     N_{lm}^{hk} \sim \text{Poisson}(a_{lm}^{hk} \cdot \tau)
    $
\State Update:
\begin{itemize}
\item $F_h^l ← F_h^l - N_{lm}^{hk}$
\item $F_i^{\phi_{hk}(l,m)} ← F_i^{\phi_{hk}(l,m)} + N_{lm}^{hk}$
\end{itemize}
\EndFor
\EndFor
\end{algorithmic}
\end{algorithm}

The loss term is implicitly captured by the decrease in $F_h^l$ when particles of type $h$ in bin $l$ are removed due to interaction events. The gain term is handled by the redistribution of particles into the destination bin $\phi_{hk}(l,m)$.

The selection of the time step $\tau$ is crucial in the Tau-leaping method to strike a balance between accuracy and computational efficiency. A small $\tau$ ensures that reaction propensities remain approximately constant over the time interval $[t, t+\tau]$, which is a core assumption of the method \cite{gillespie2001approximate}. However, excessively small values reduce the efficiency advantage over the exact SSA. Adaptive schemes that estimate the maximal allowable $\tau$ based on local changes in propensity functions have been proposed \cite{cao2006efficient, anderson2012multilevel}. Still, in the context of discretized kinetic equations where each bin contains a large number of particles and interactions are smooth in time, a fixed $\tau$ may suffice. In this work, we fix $\tau = 0.001$, which is small enough to resolve the timescale of particle interactions and preserve stability, while being large enough to produce significant leaps in system evolution without oversampling individual reaction events. Experimentally, choosing $\tau = 0.001$ yielded the most accurate and stable results for the system under consideration. 

\section{Hybrid Algorithm}\label{app:hybrid}
To improve the efficiency and scalability of Tau-leaping for nonlinear interaction-driven systems, we use a hybrid stochastic–deterministic framework. This method dynamically partitions the computational domain into regions where stochastic updates (via Tau-leaping) are appropriate and regions where deterministic integration suffices. The hybridization significantly accelerates simulations, particularly in high-density regimes where stochastic fluctuations become negligible. This strategy is inspired by the multiscale approach introduced by Yates et al.\cite{yates2020blending}, which balances continuous and discrete updates in biochemical kinetics.

We consider the general integro-differential system \eqref{eq:integrodiff}. Direct simulation via tau-leaping requires sampling interactions and redistributing outcomes in each bin, which can be computationally intensive in dense regions. To mitigate this, we define a hybrid scheme:

\begin{itemize}
\item If $f_h(x)/\Delta u > \theta$ and $f_k(y)/\Delta u > \theta$, we use a deterministic approximation for the number of interactions.
\item If the densities are low, we retain stochasticity via Poisson sampling of reaction events.
\end{itemize}

Each outcome of the interaction $u = \phi_{hk}(x,y)$ is discretized using linear interpolation across the spatial grid. To ensure mass conservation and avoid artifacts introduced by naive binning, once the density of a bin is low, fewer than $1000$ particles, we revert to the Tau-leaping algorithm for those interactions. The choice of the cutoff between deterministic and the stochastic regimes remains an open problem in hybrid simulation methods. It also depends heavily on the equations at hand. For our set of kinetic systems, we found experimentally that using a cutoff of $1000$ particles provided the best balance, yielding mass preservation errors below machine precision.

\vspace{1em}
\noindent The full hybrid algorithm is outlined below.
\begin{algorithm}[H]
\caption{Hybrid Simulation for system \eqref{eq:integrodiff}}
\begin{algorithmic}[1]
\State Discretize each domain $D_i$ into bins $\{u_1, \dots, u_n\}$ with width $\Delta u$
\State Initialize $f_i(t=0, u)$ on the grid for all $i$
\State Set time step $\tau$ and threshold $\theta > 0$
\For{each time step $t = 0$ to $T$ with step size $\tau$}
    \For{each type $i \in \{1,\dots,N\}$}
        \State Initialize $\Delta f_i(u) \gets 0$
    \EndFor
    \For{each pair $(h, k)$}
        \For{each grid point $x \in D_h$, $y \in D_k$}
            \State Compute $\lambda \gets \eta_{hk}(x, y) f_h(x) f_k(y) \Delta u^2$
            \If{ $f_h(x)/\Delta u > \theta$ and $f_k(y)/\Delta u > \theta$}
                \For{each target type $i$, for all $u \in D_i$}
                    \State $\Delta f_i(u) \mathrel{+}= \lambda \cdot T_{hk}^i(x,y,u) \cdot \tau$
                \EndFor
            \Else
                \State Sample $\Delta N \sim \mathrm{Poisson}(\lambda \cdot \tau)$
                \For{each target type $i$, for all $u \in D_i$}
                    \State $\Delta f_i(u) \mathrel{+}= \Delta N \cdot T_{hk}^i(x,y,u)$
                \EndFor
            \EndIf
        \EndFor
    \EndFor
    \For{each $i$ and $u \in D_i$}
        \State Compute loss: $L_i(u) \gets f_i(u) \cdot \sum_k \sum_{x \in D_k} \eta_{ik}(u,x) f_k(x) \cdot \Delta u$
        \State Update: $f_i(u) \gets f_i(u) + \Delta f_i(u) - L_i(u) \cdot \tau$
    \EndFor
\EndFor
\end{algorithmic}
\end{algorithm}


This algorithm ensures positivity and mass conservation (up to interpolation error) and automatically transitions between stochastic and deterministic dynamics depending on local density. It is particularly useful for systems exhibiting localized concentrations or multiscale dynamics.

\section{Evaluating the coefficients $T_{ilmj}^{pq}$} \label{app:evalcoeff}
Before applying quadrature to evaluate the terms  $T_{ilmj}^{pq}$,  we can first analytically transform the integrals into path integrals without delta functions stemming from the transition rates: $T(x,y,u) = \delta_{\phi(x,y) - u}$. Given that 
$$
T_{ilmj}^{pq} = \int_{D_l} \int_{D_m} \eta_{lm}(x,y)T_{lm}^i(x,y,x_j)B_p(x)B_q(y)dxdy,
$$

we project this integral into a subspace of $D_l \times D_m$ by using the theorem from \cite{hormander2007analysis}. We end up with
\begin{align*}
    T_{ilmj}^{pq} &= \int_{D_l} \int_{D_m} \eta_{lm}(x,y)T_{lm}^i(x,y,x_j)B_p(x)B_q(y)dxdy\\
    &= \int_{D_l} \int_{D_m} \eta_{lm}(x,y)\delta_{\phi(x,y) - u}B_p(x)B_q(y)dxdy\\
    &= \int_{\phi(x,y) - x_j = 0} \frac{1}{|\nabla \phi(x,y)|}B_p(x)B_q(y)dxdy.
\end{align*}
Provided $\phi$ does not have a singularity in $D_l \times D_m$, an adaptive quadrature can now be used to evaluate the coefficients.

\textbf{Example: $\phi(x,y) = x - \gamma xy$}

For this peculiar case, the quantity $T_{ilmj}^{pq}$ will then become:
\begin{align*}
     T_{ilmj}^{pq} &= \int_{D_l} \int_{D_m} \eta_{lm}(x,y)\delta_{x - \gamma xy - x_j}B_p(x)B_q(y)dxdy\\
     &= \int_{x - \gamma xy - x_j = 0}  \eta_{lm}(x,y)\frac{1}{|\nabla (x- \gamma xy - u )|}B_p(x)B_q(y)dxdy\\
     &= \int_{x - \gamma xy - x_j = 0} \eta_{lm}(x,y) \frac{1}{\sqrt{(x-\gamma)^2 + \gamma^2}}B_p(x)B_q(y)dxdy\\
     &= \int_{0}^{min(1, \frac{1-x_j}{\gamma})} \eta_{lm}(\frac{x_j}{1-\gamma y}, y) B_p(\frac{x_j}{1-\gamma y})B_q(y)dy
\end{align*}

\section{Simulation of the System Presented in \ref{fig:sys5}}

\begin{figure}[H]
    \centering
    \begin{subfigure}{.21\textwidth}
        \includegraphics[width=\textwidth]{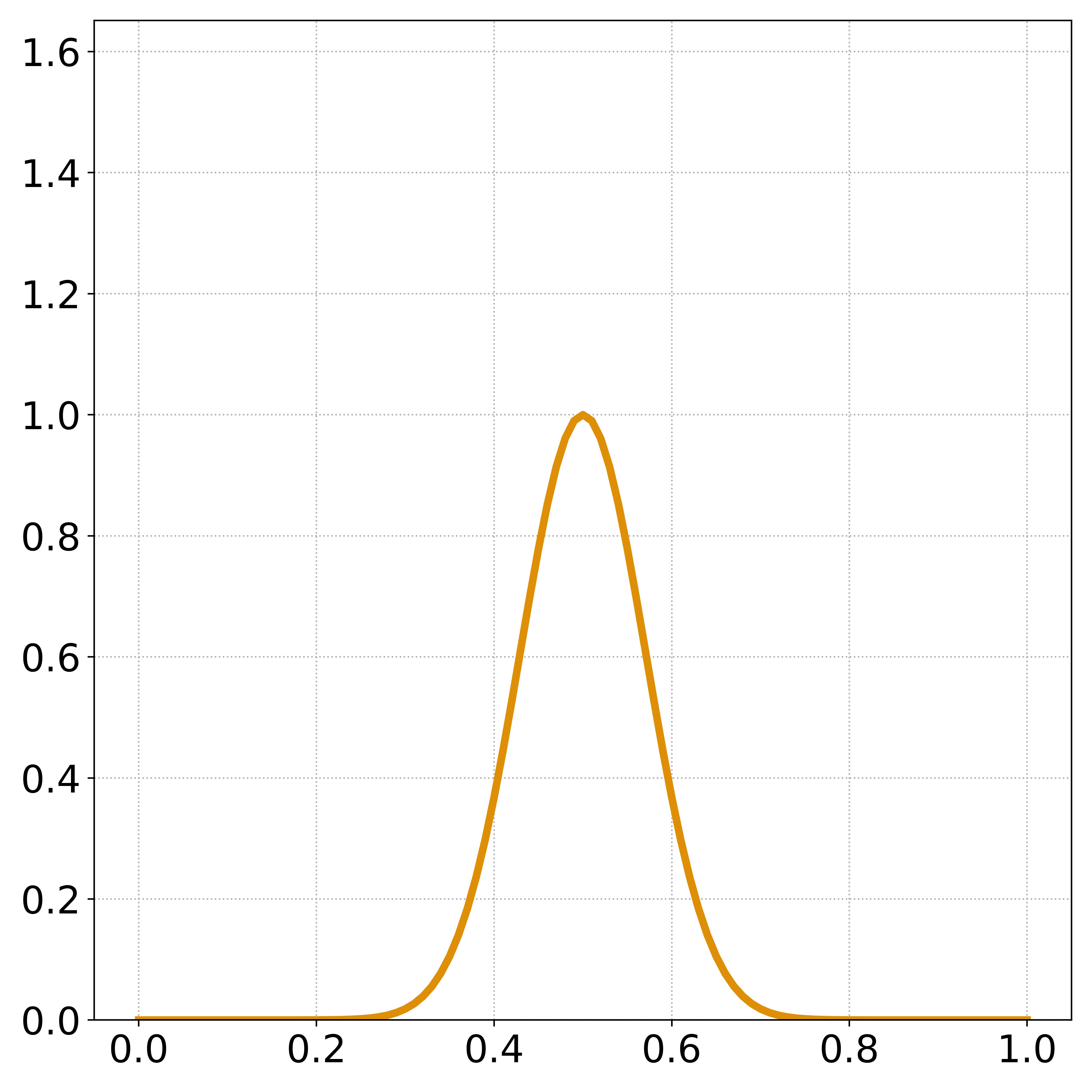}
          \caption{$f_1$ at $t = 0$} \label{fig:sim4f1t0}
          \end{subfigure}
        \begin{subfigure}{.21\textwidth}
        \includegraphics[width=\textwidth]{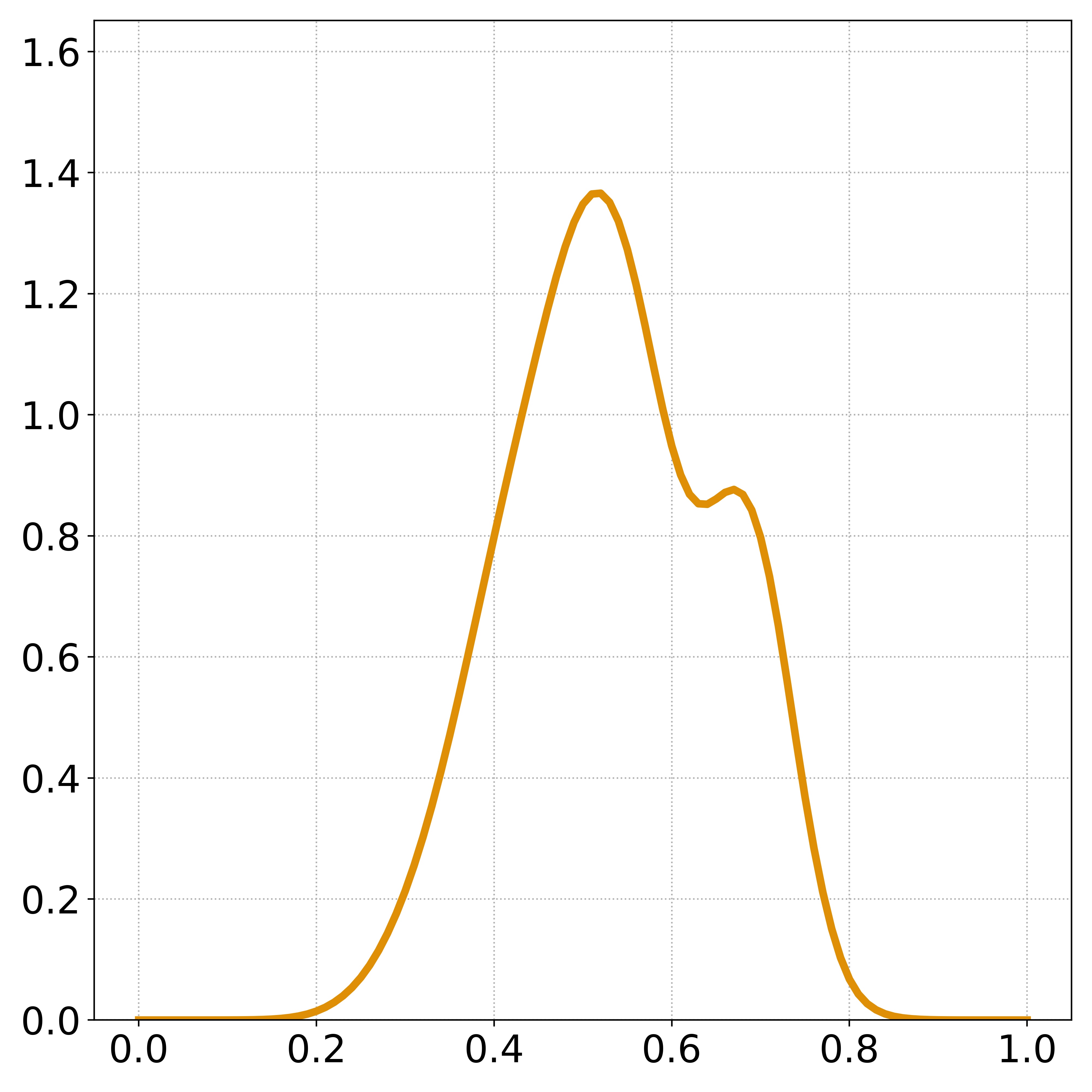}
          \caption{$f_1$ at $t = 5$}\label{fig:sim4f1t5}
          \end{subfigure}
        \begin{subfigure}{.21\textwidth}
        \includegraphics[width=\textwidth]{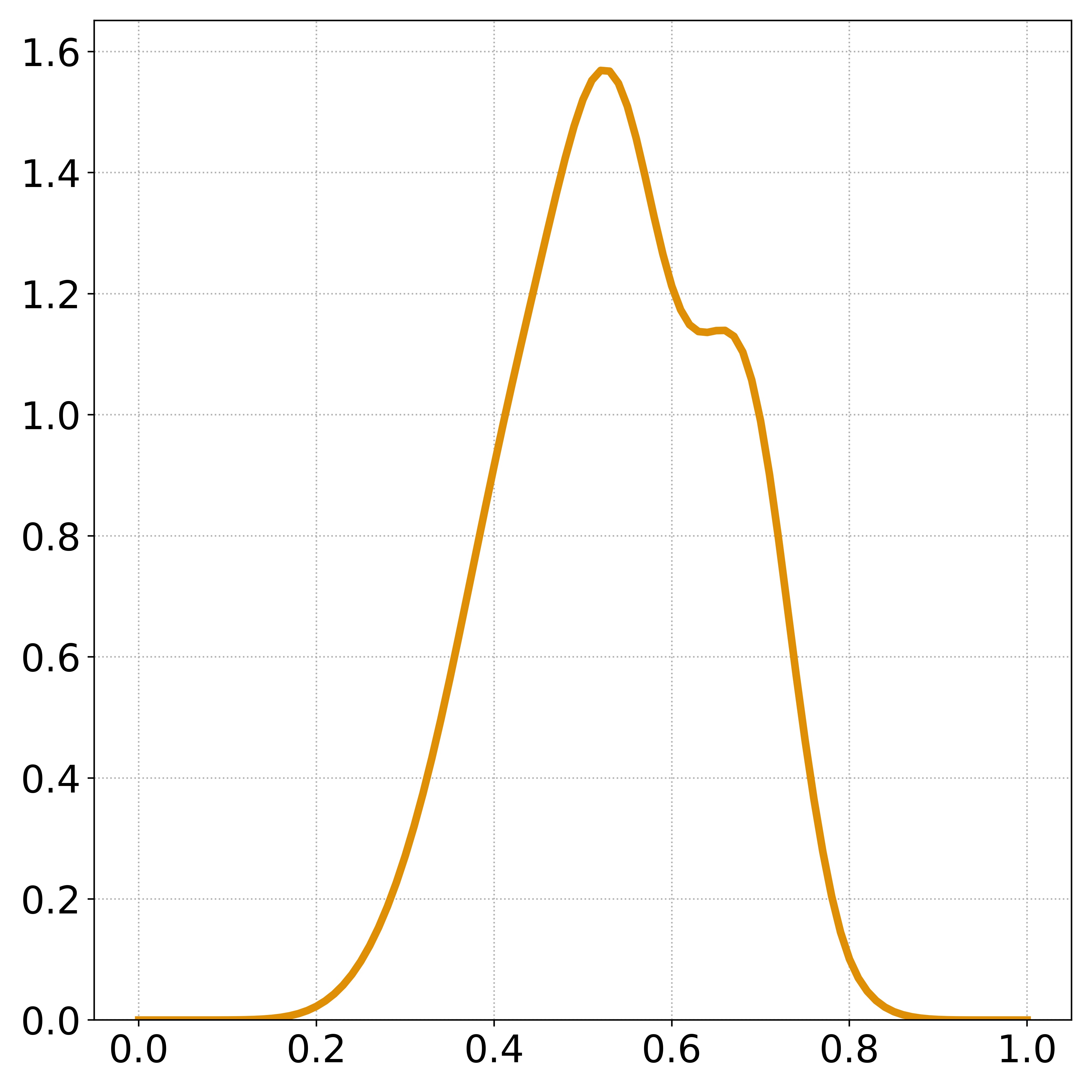}
          \caption{$f_1$ at $t = 10$}\label{fig:sim4f1t10}
          \end{subfigure}
          \begin{subfigure}{.21\textwidth}
        \includegraphics[width=\textwidth]{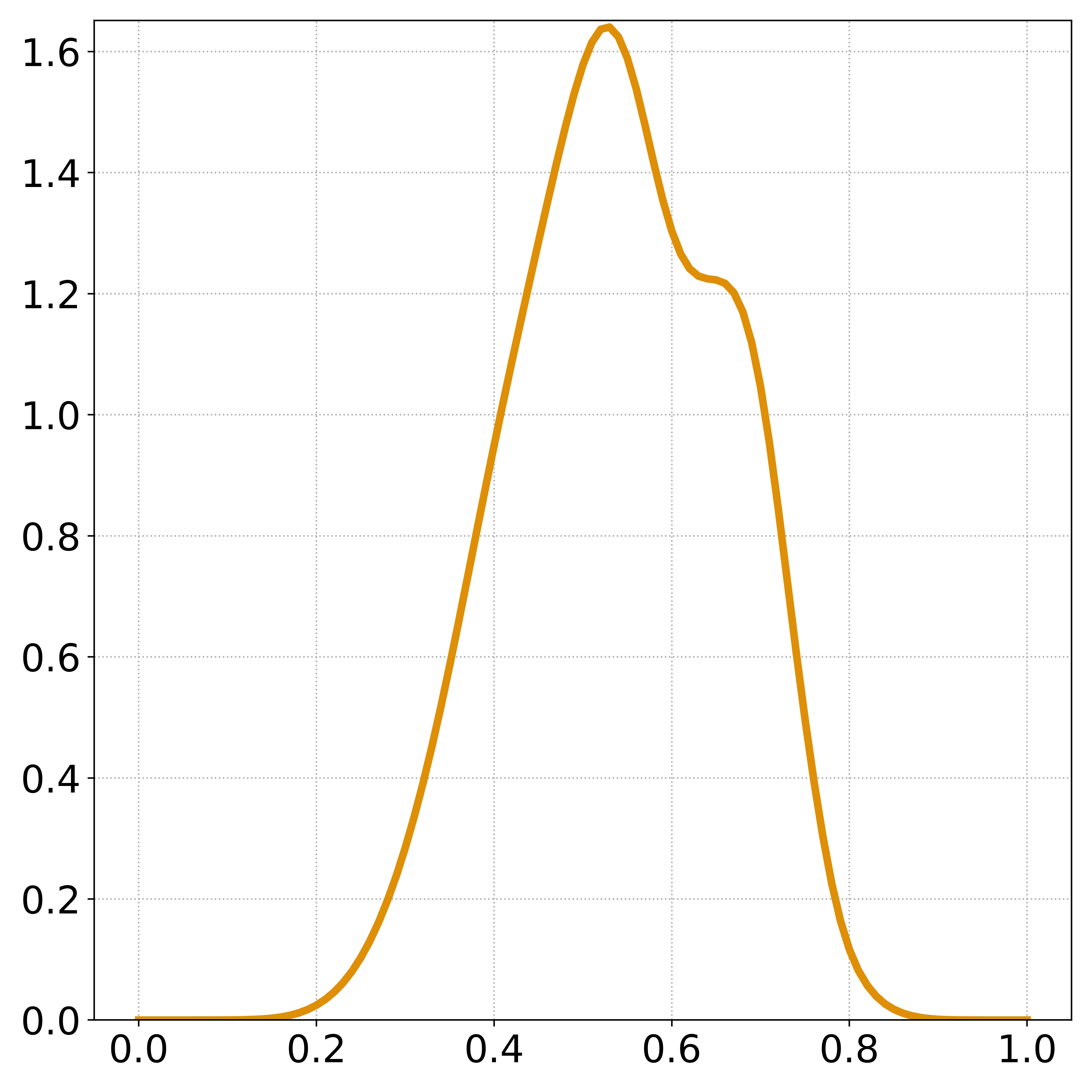}
          \caption{$f_1$ at $t = 20$}\label{fig:sim4f1t20}
          \end{subfigure}\\
           \begin{subfigure}{.21\textwidth}
        \includegraphics[width=\textwidth]{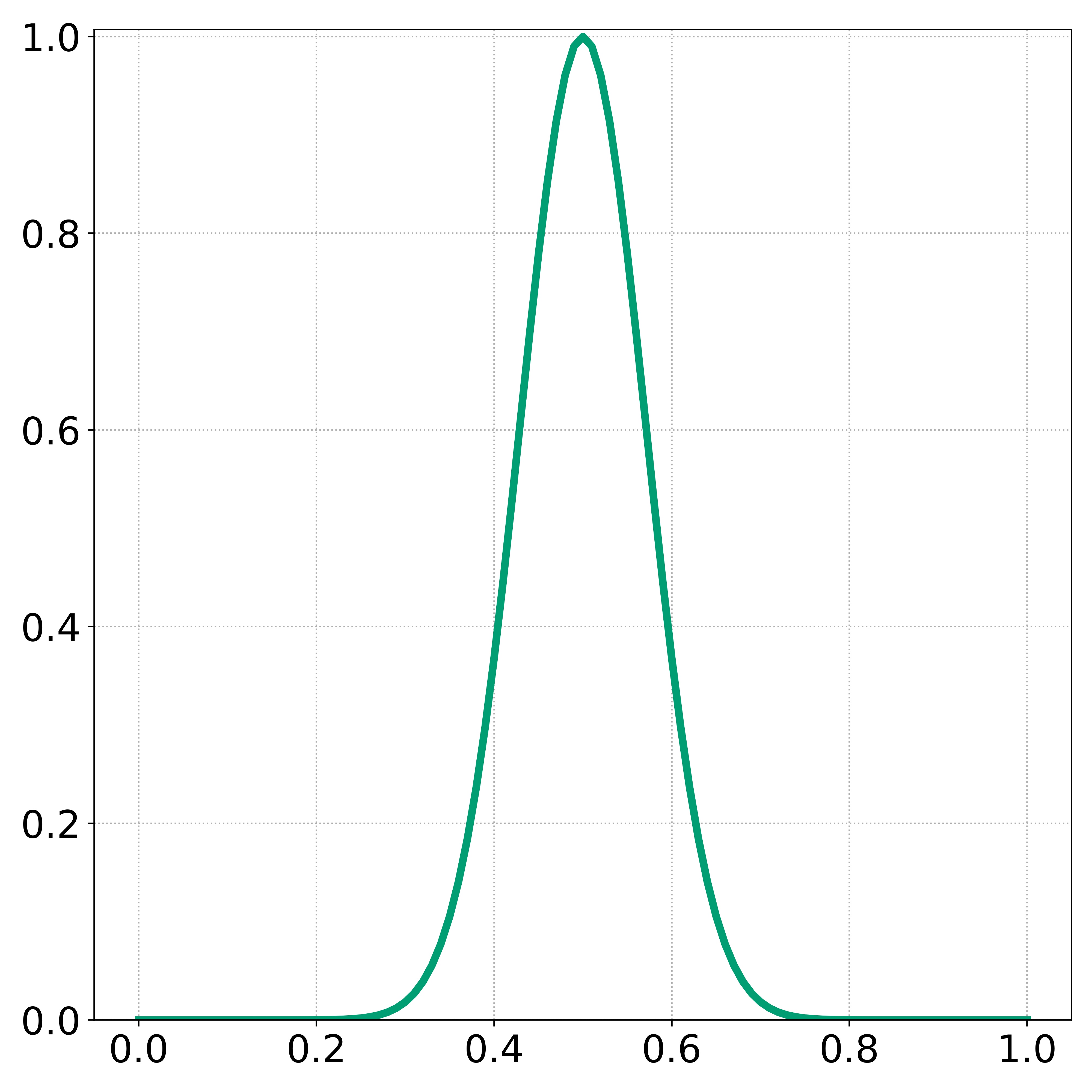}
        \caption{$f_2$ at $t = 0$}
        \label{fig:sim4f2t0}
          \end{subfigure}
        \begin{subfigure}{.21\textwidth}
        \includegraphics[width=\textwidth]{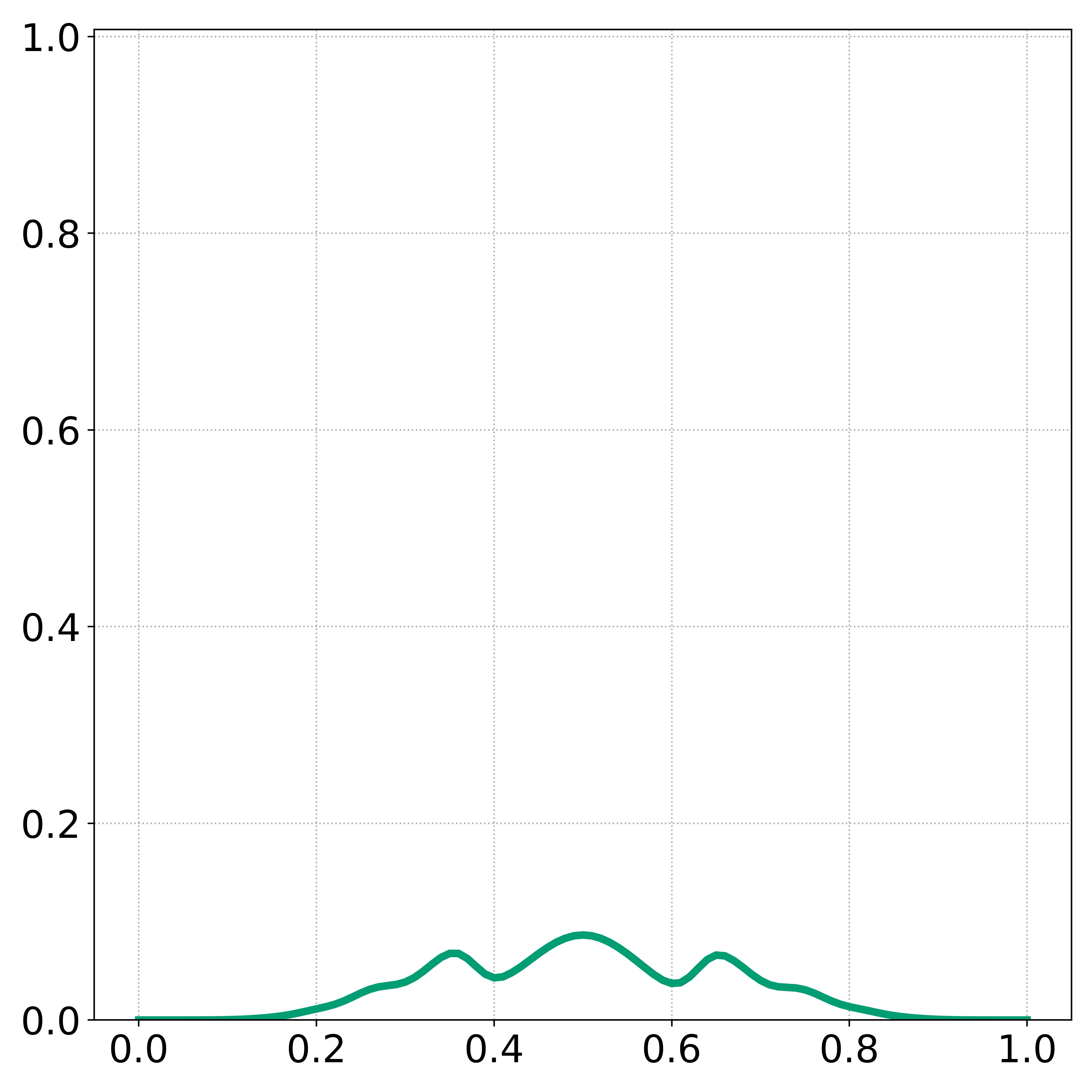}
          \caption{$f_2$ at $t = 5$}\label{fig:sim4f2t5}
          \end{subfigure}
        \begin{subfigure}{.21\textwidth}
        \includegraphics[width=\textwidth]{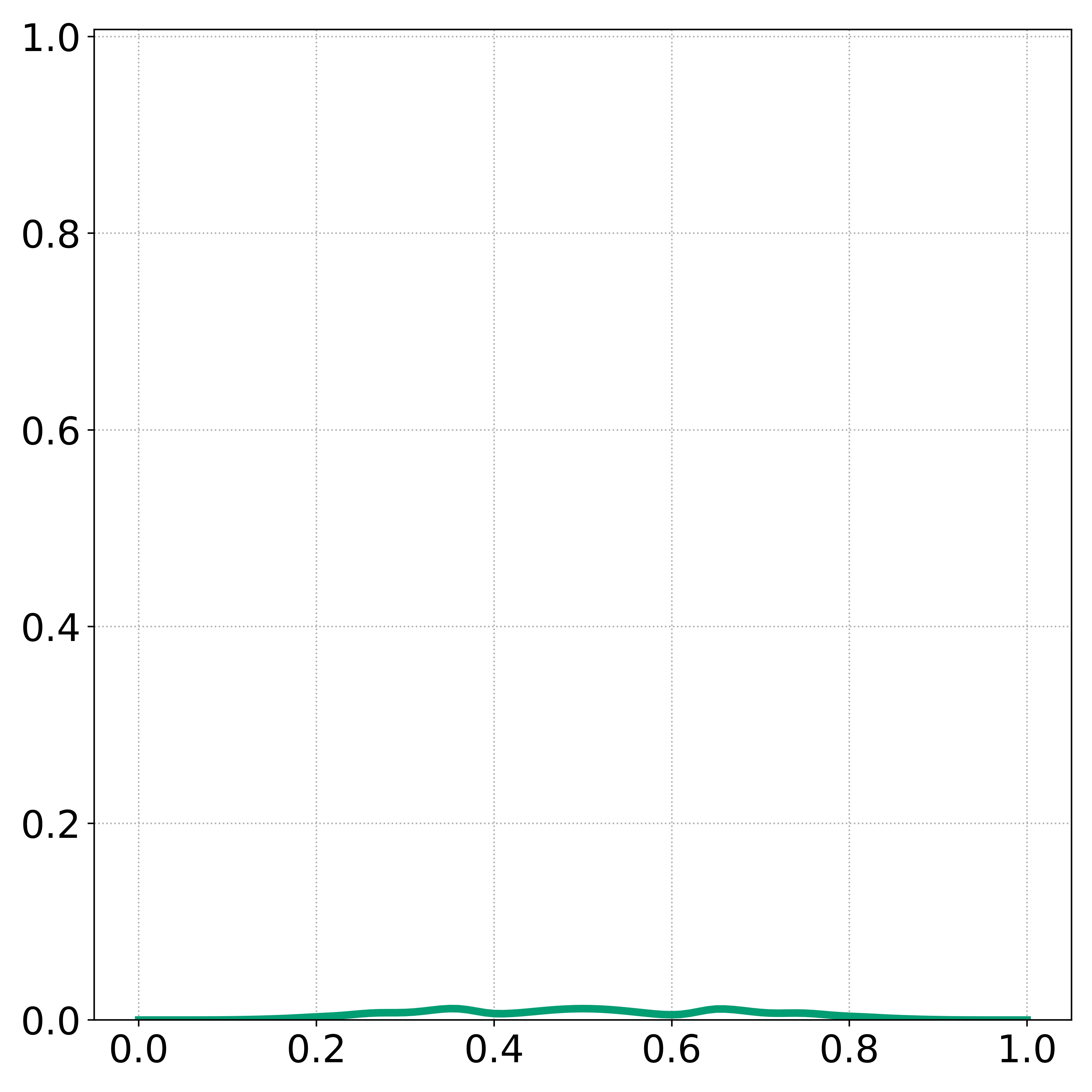}
          \caption{$f_2$ at $t = 10$}\label{fig:sim4f2t10}
          \end{subfigure}
          \begin{subfigure}{.21\textwidth}
        \includegraphics[width=\textwidth]{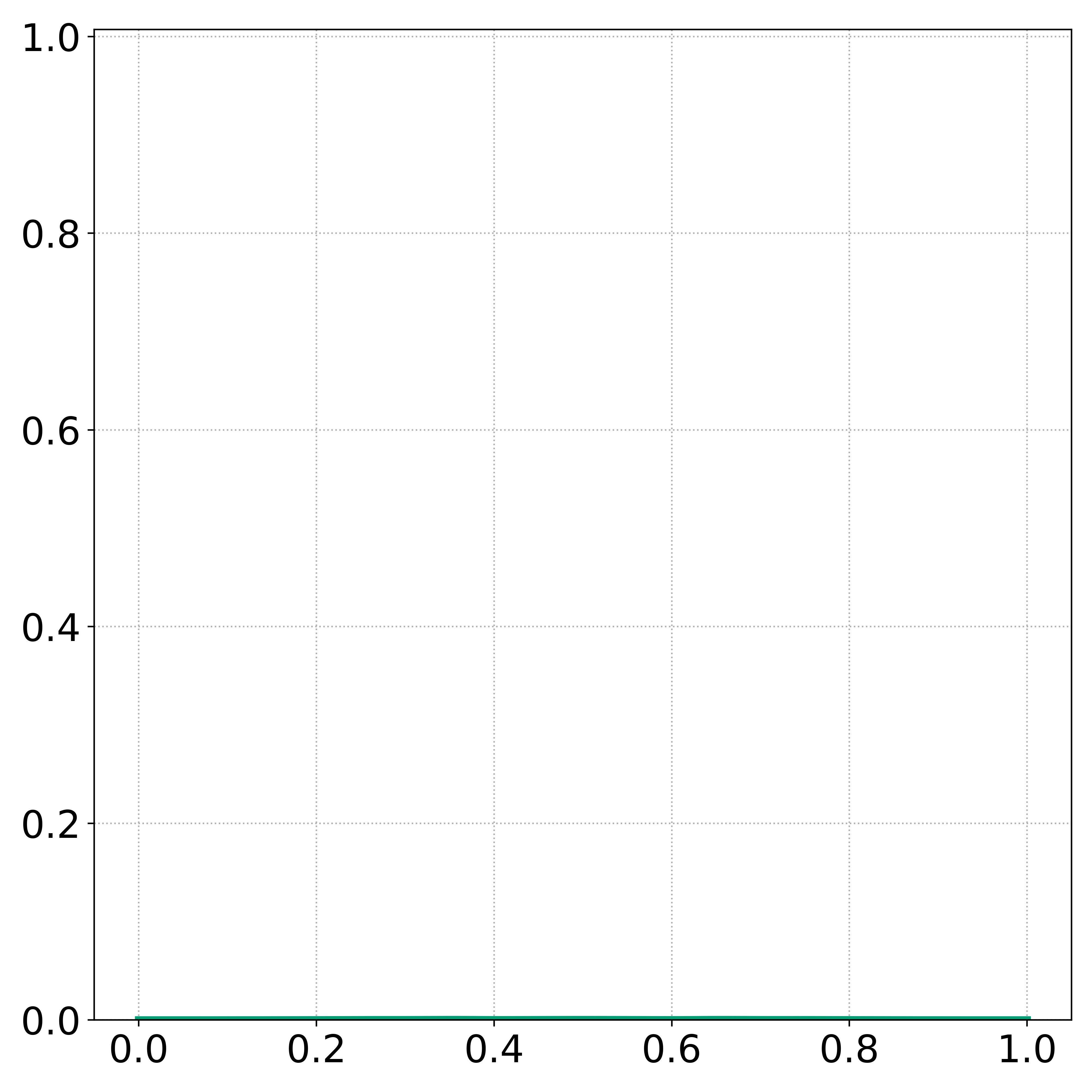}
          \caption{$f_2$ at $t = 20$}\label{fig:sim4f2t20}
          \end{subfigure}\\
           \begin{subfigure}{.21\textwidth}
        \includegraphics[width=\textwidth]{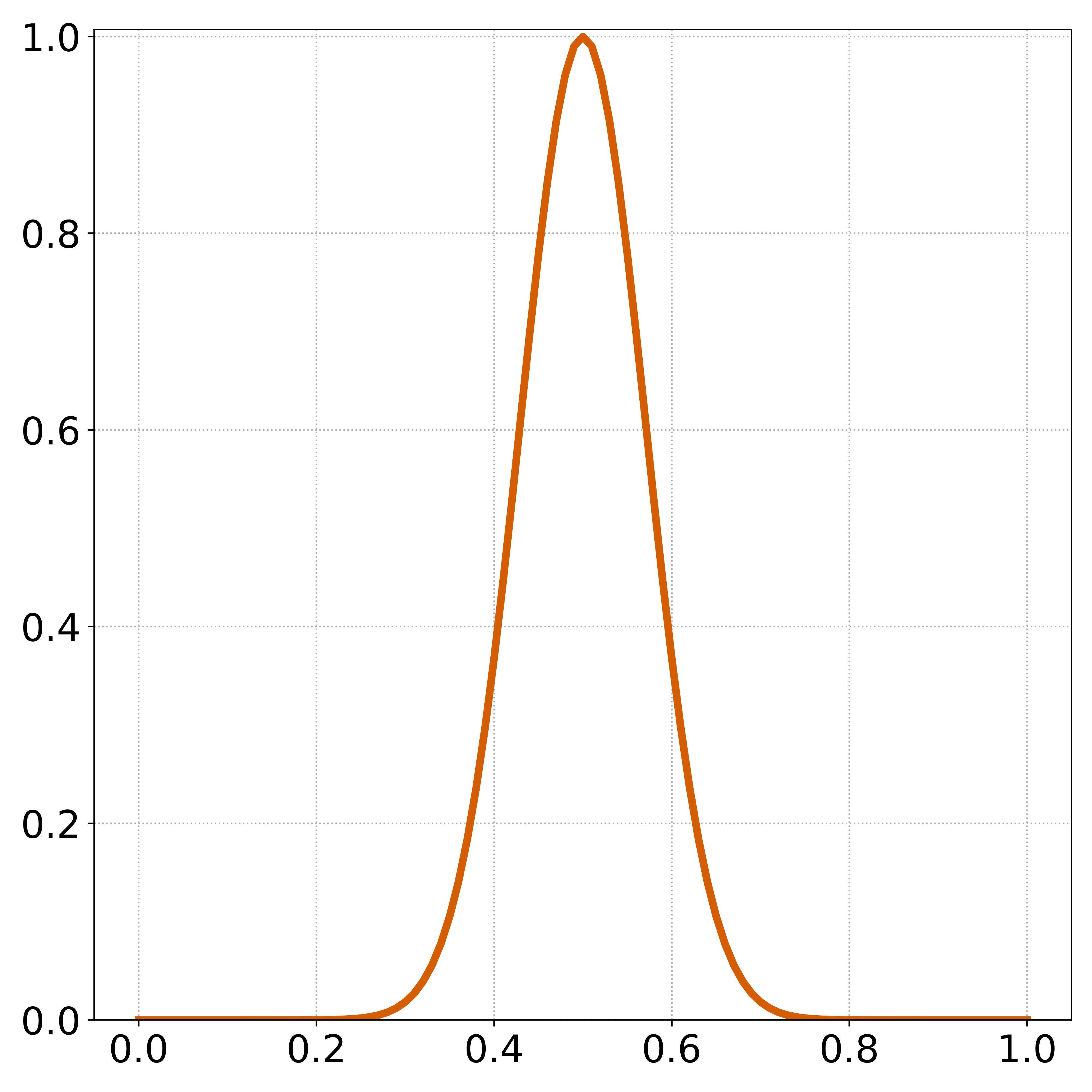}
          \caption{$f_3$ at $t = 0$}\label{fig:sim4f3t0}
          \end{subfigure}
        \begin{subfigure}{.21\textwidth}
        \includegraphics[width=\textwidth]{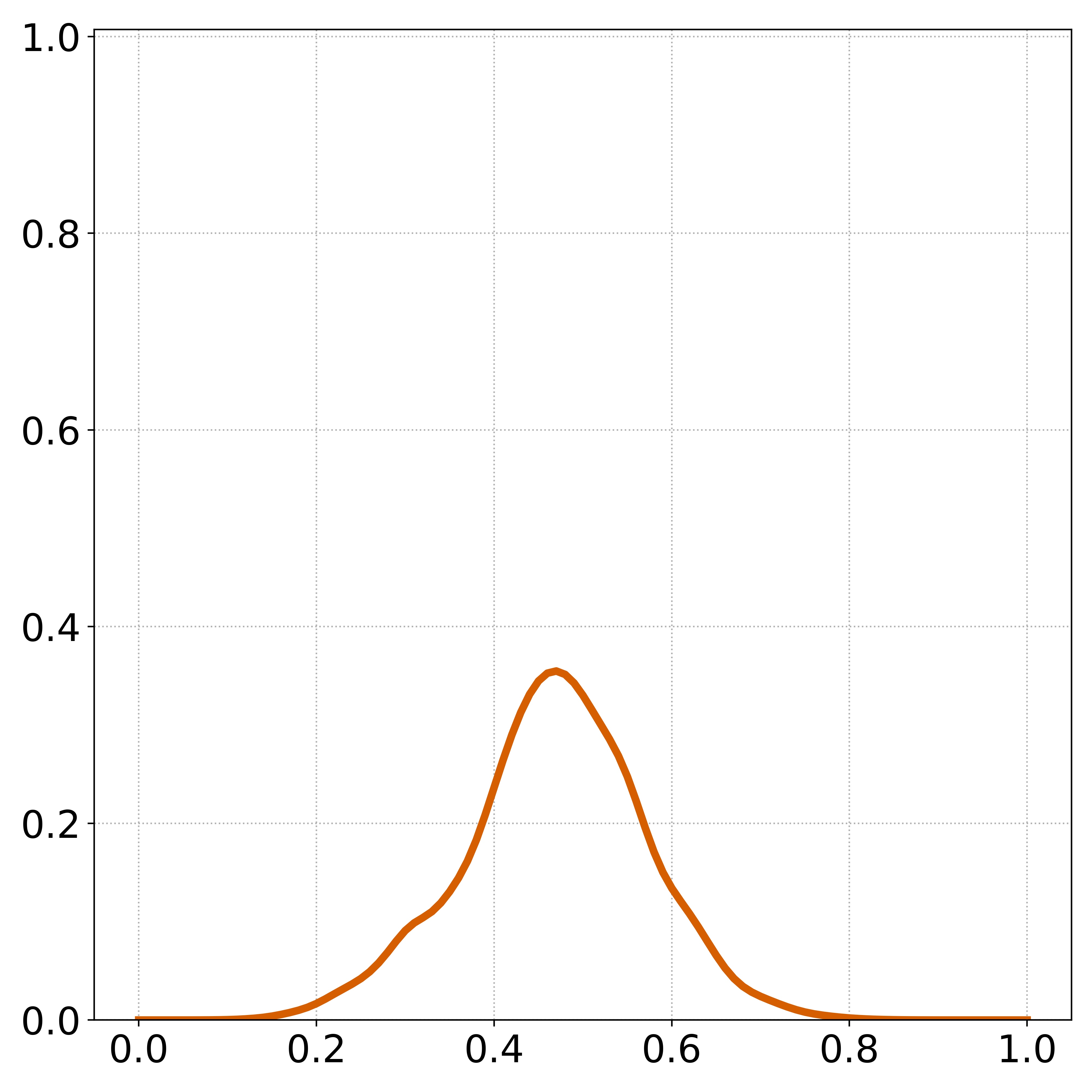}
          \caption{$f_3$ at $t = 5$}\label{fig:sim4f3t5}
          \end{subfigure}
        \begin{subfigure}{.21\textwidth}
        \includegraphics[width=\textwidth]{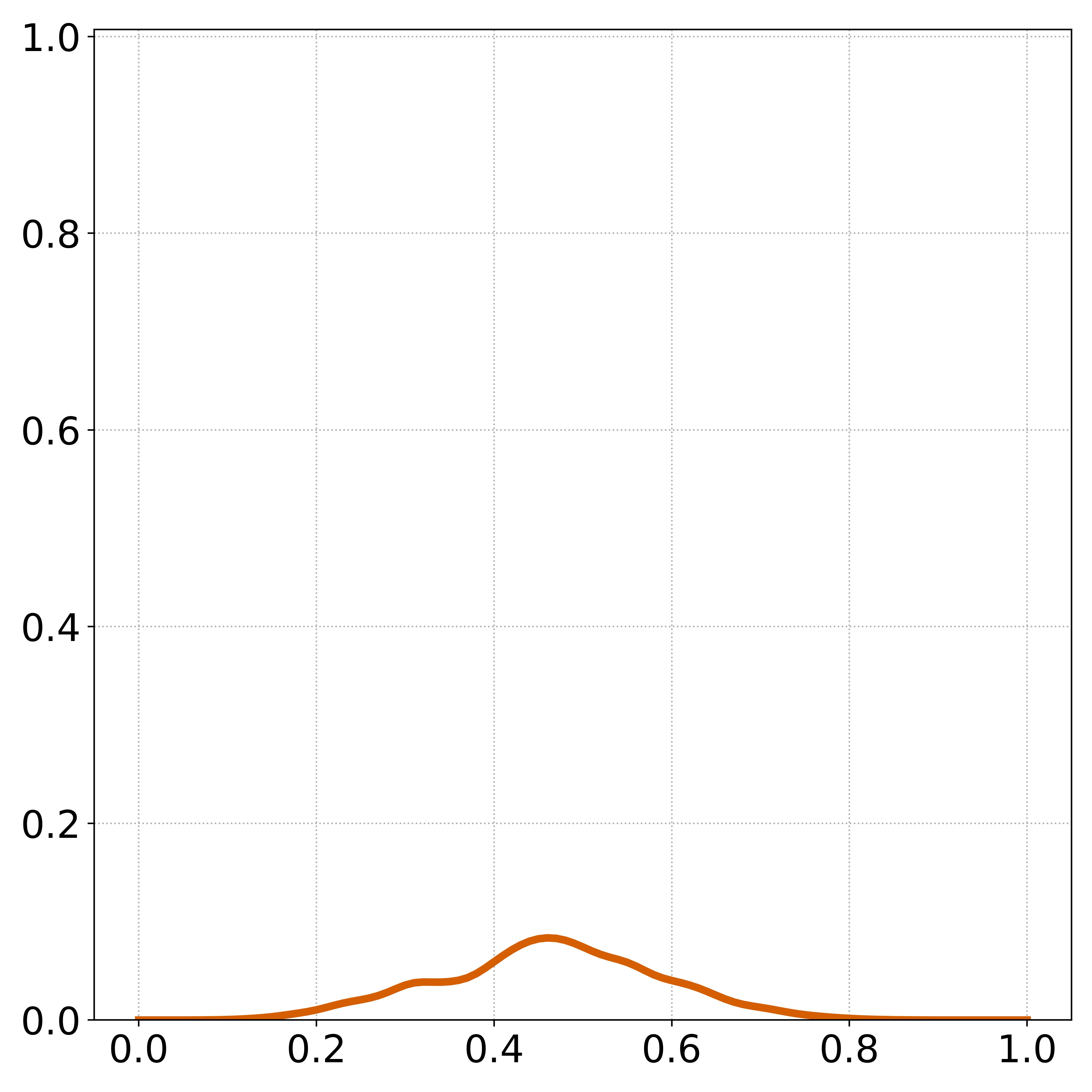}
          \caption{$f_3$ at $t = 10$}\label{fig:sim4f3t10}
          \end{subfigure}
          \begin{subfigure}{.21\textwidth}
        \includegraphics[width=\textwidth]{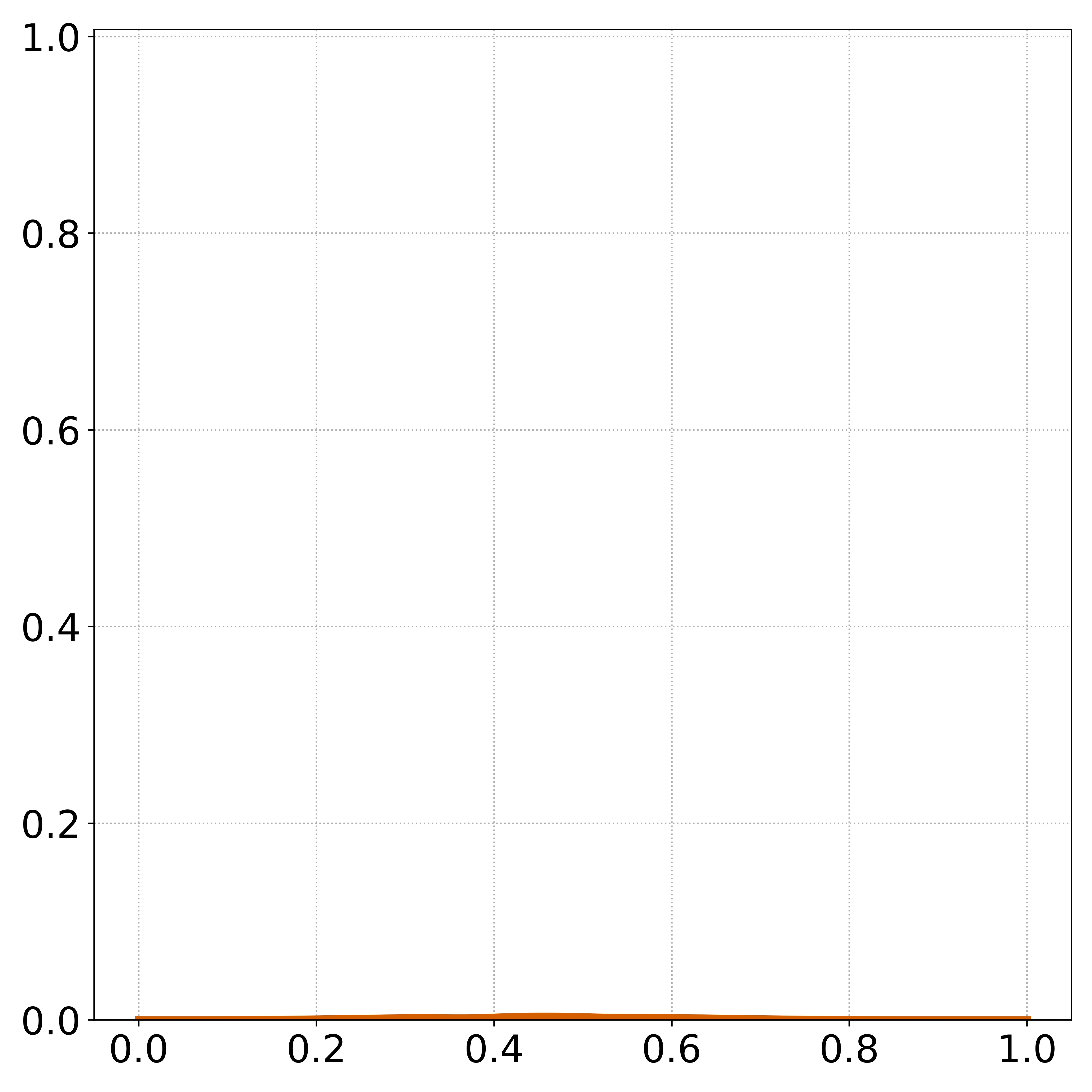}
          \caption{$f_3$ at $t = 20$}\label{fig:sim4f3t20}
          \end{subfigure}\\
           \begin{subfigure}{.21\textwidth}
        \includegraphics[width=\textwidth]{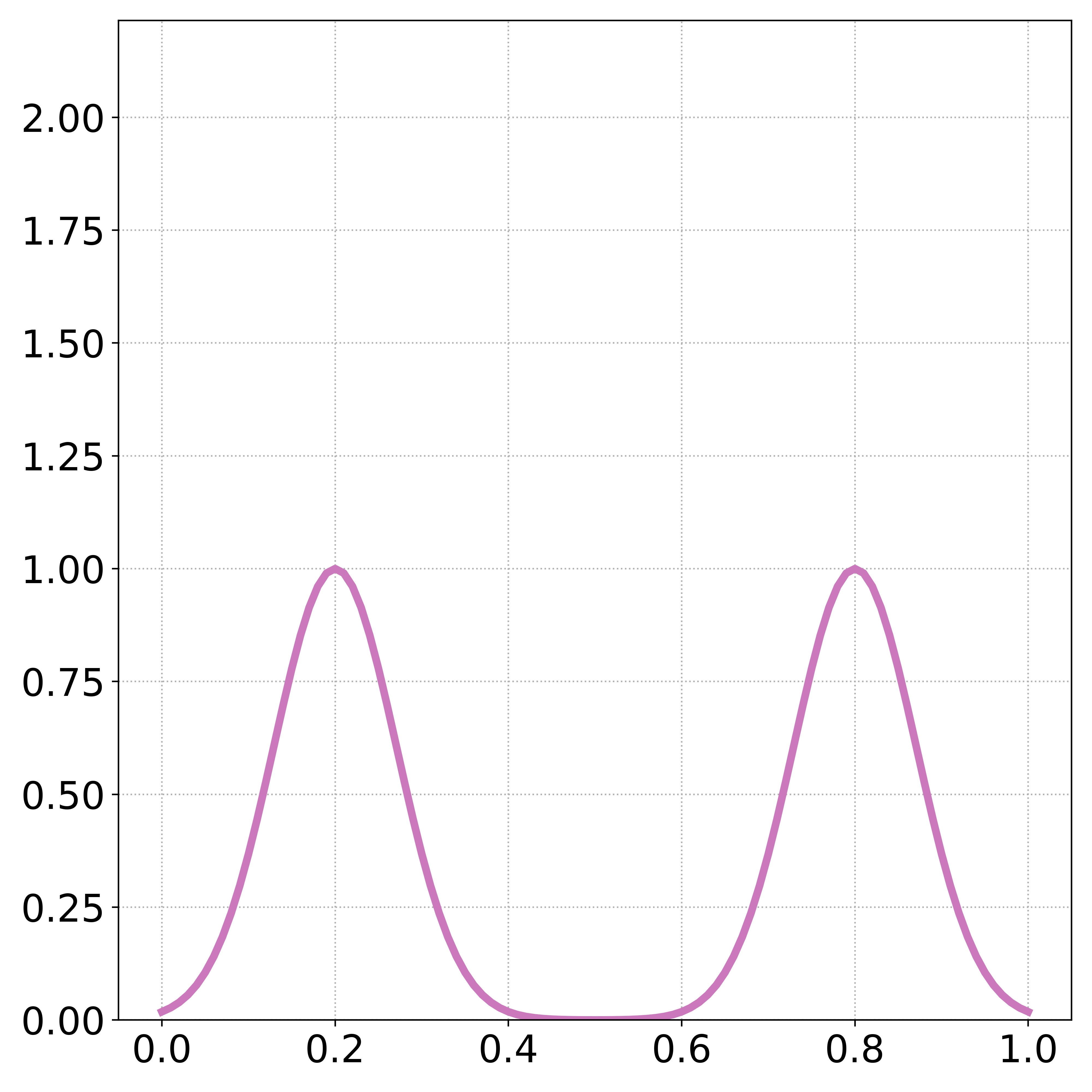}
          \caption{$f_4$ at $t = 0$}\label{fig:sim4f4t0}
          \end{subfigure}
        \begin{subfigure}{.21\textwidth}
        \includegraphics[width=\textwidth]{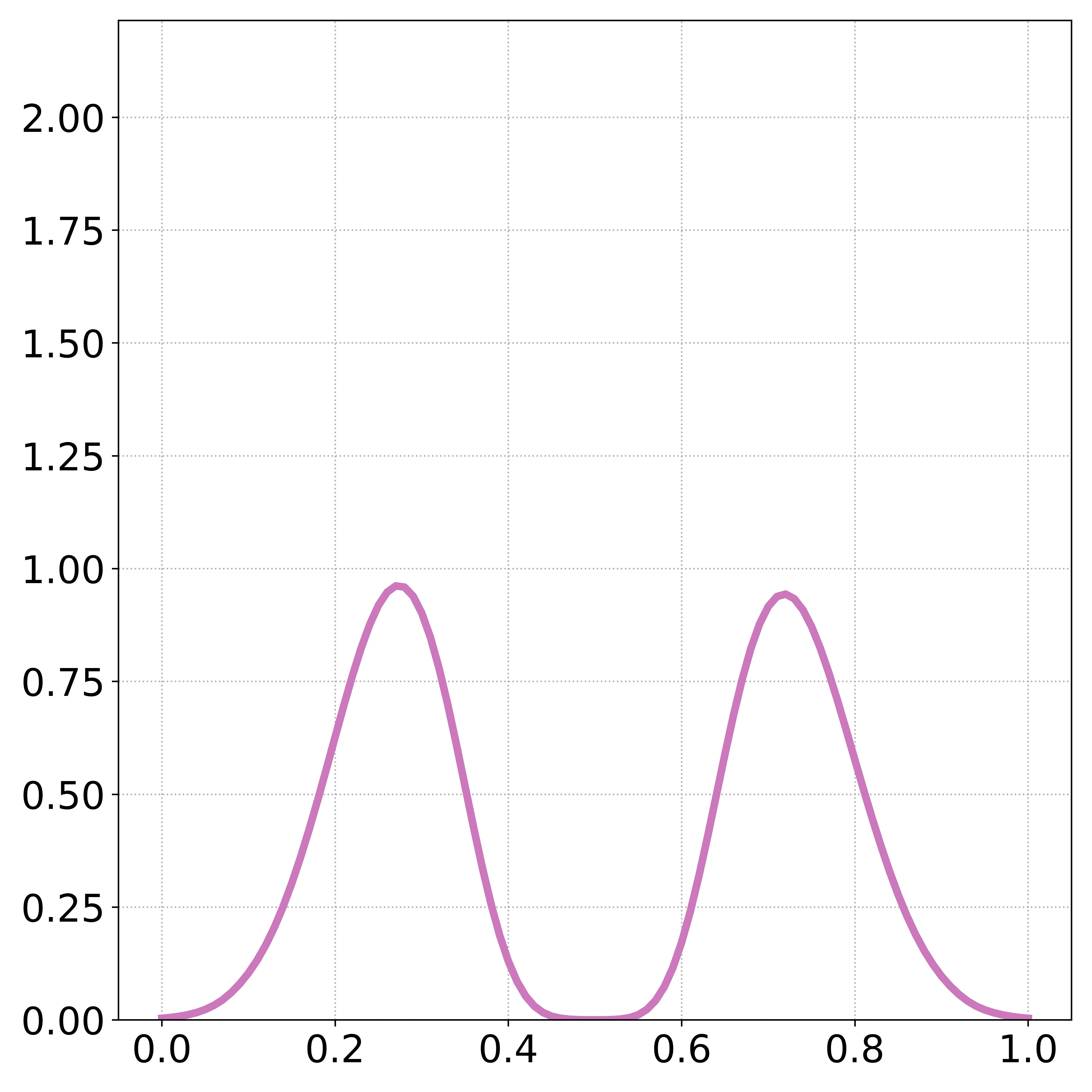}
          \caption{$f_4$ at $t = 5$}\label{fig:sim4f4t5}
          \end{subfigure}
        \begin{subfigure}{.21\textwidth}
        \includegraphics[width=\textwidth]{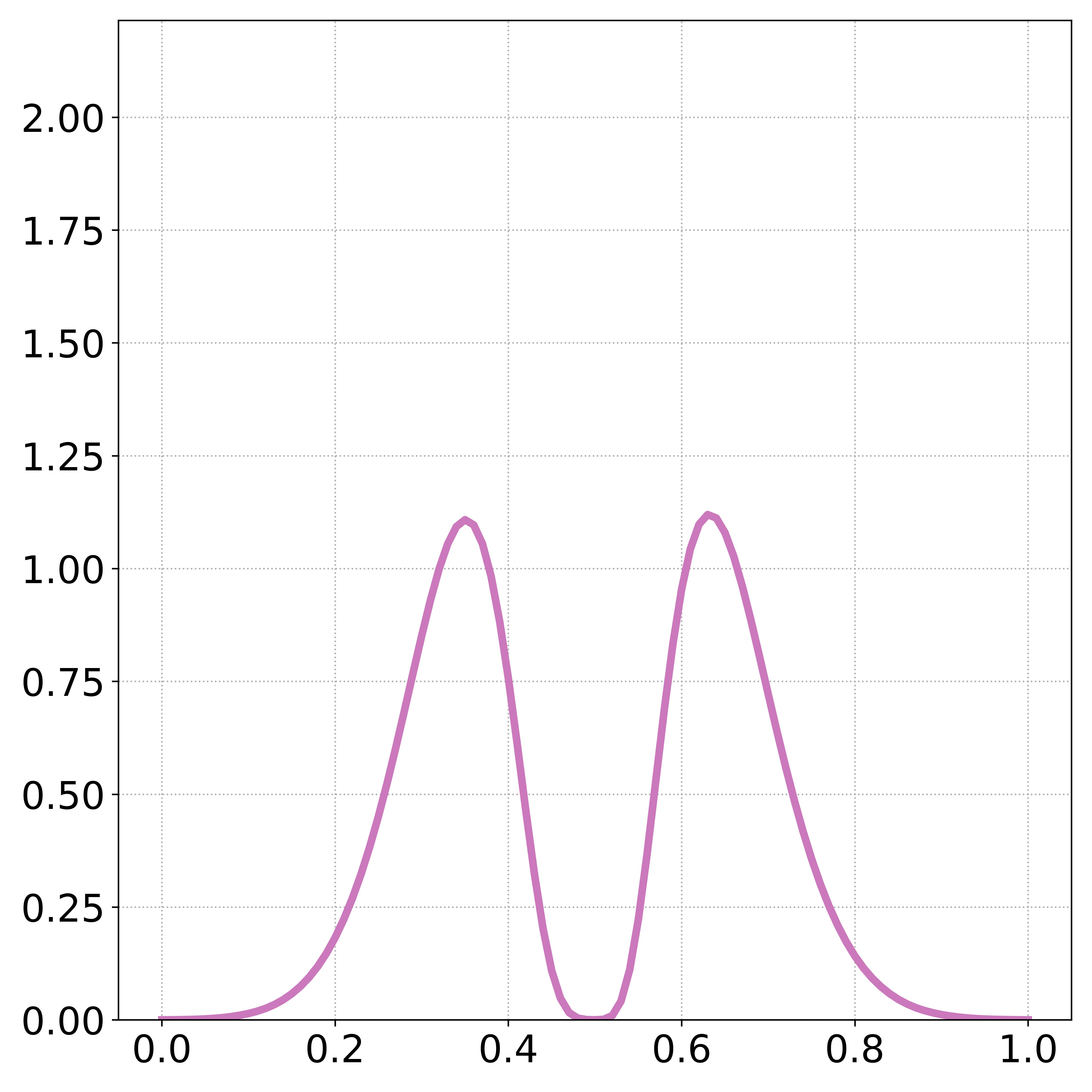}
        \caption{$f_4$ at $t = 10$}\label{fig:sim4f4t10}
          \end{subfigure}
          \begin{subfigure}{.21\textwidth}
        \includegraphics[width=\textwidth]{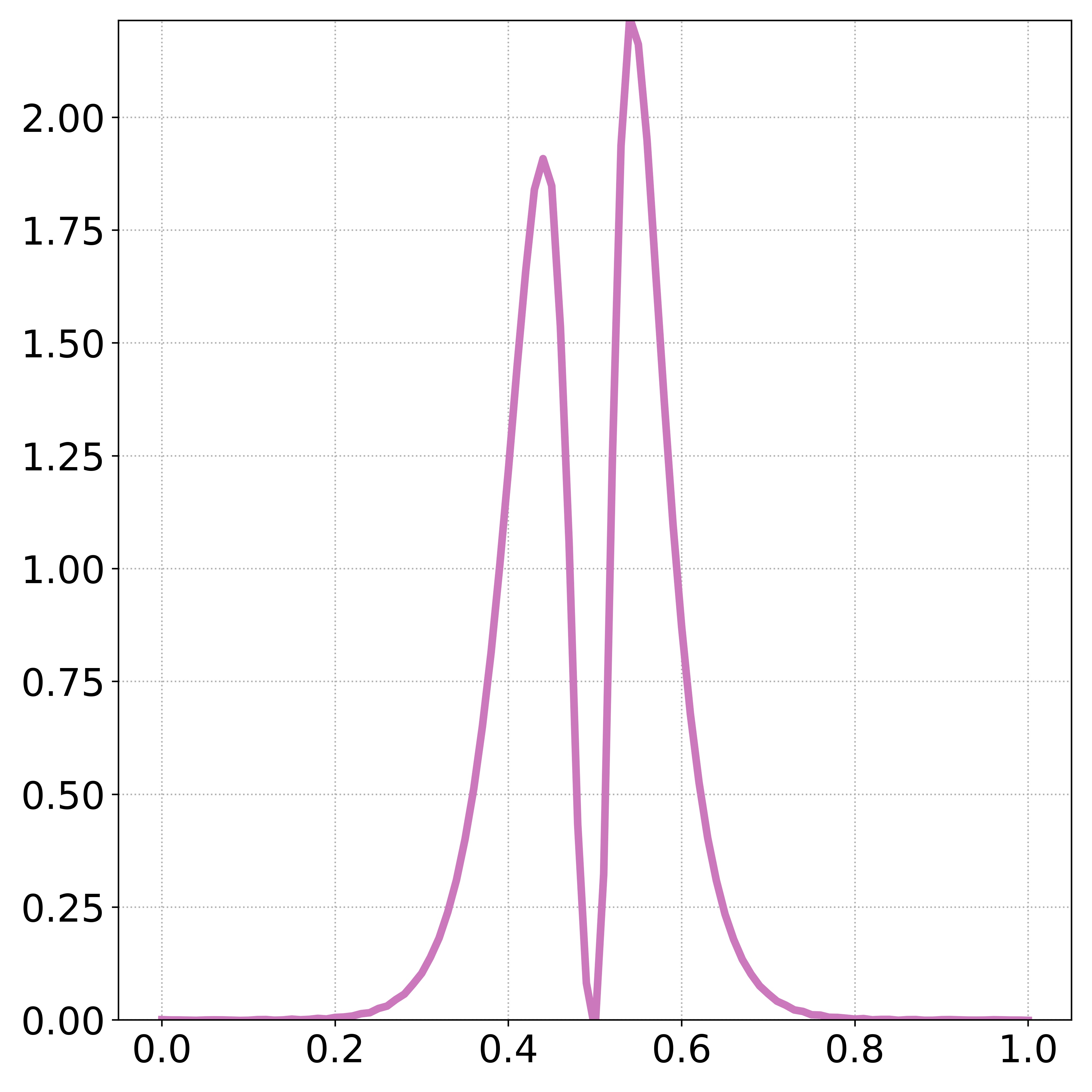}
          \caption{$f_4$ at $t = 20$}\label{fig:sim4f4t20}
          \end{subfigure}\\
           \begin{subfigure}{.21\textwidth}
        \includegraphics[width=\textwidth]{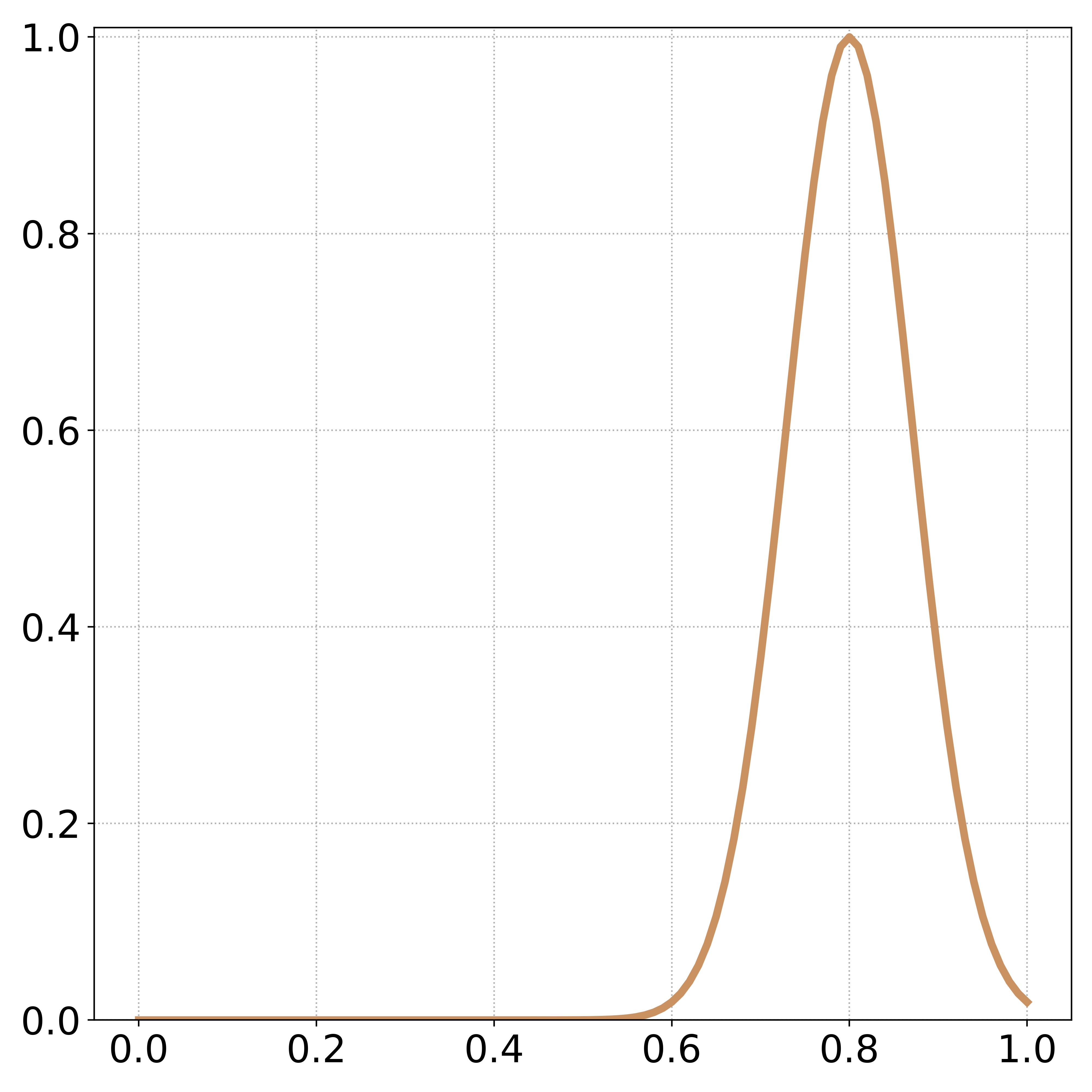}
        \caption{$f_5$ at $t = 0$}\label{fig:sim4f5t0}
          \end{subfigure}
        \begin{subfigure}{.21\textwidth}
        \includegraphics[width=\textwidth]{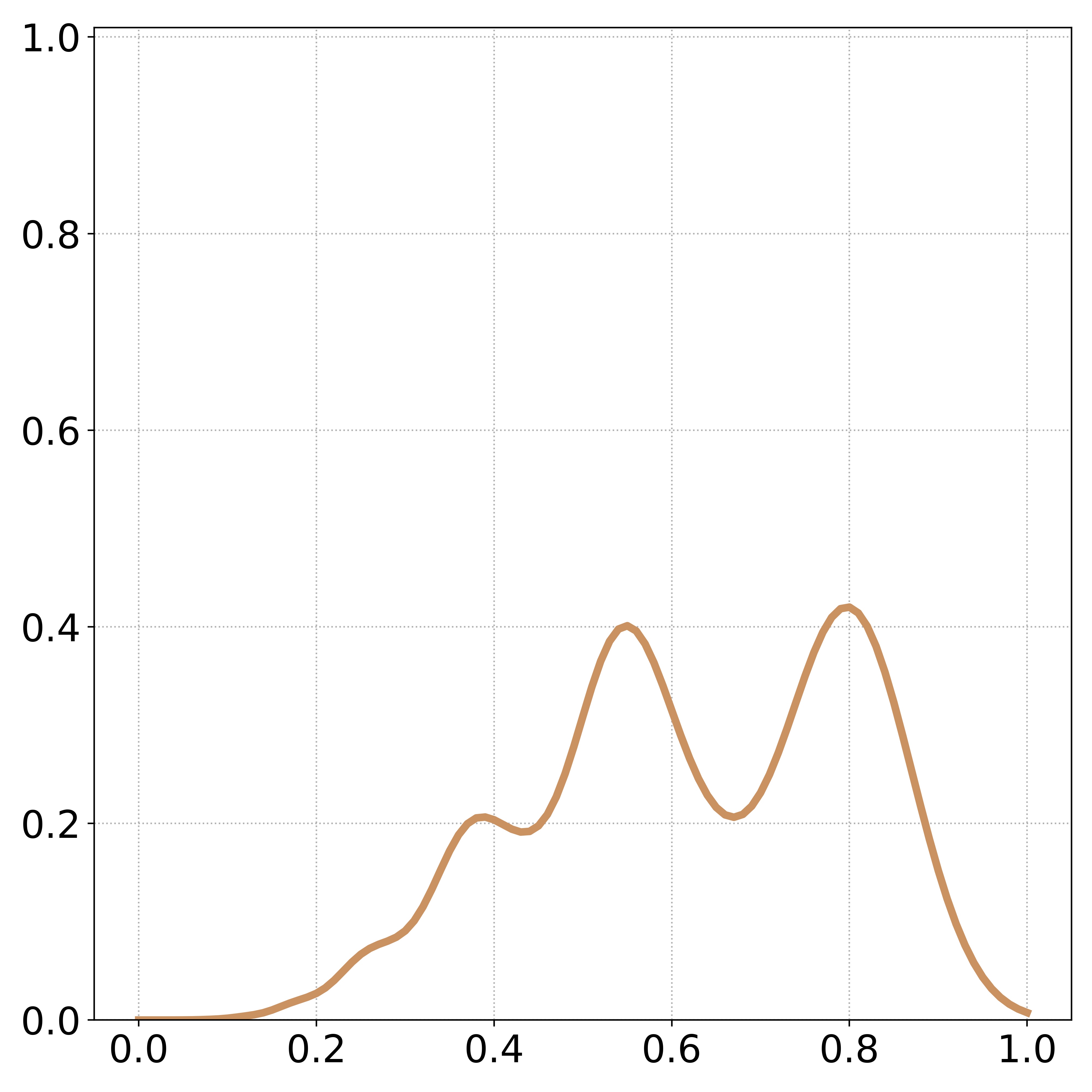}
          \caption{$f_5$ at $t = 5$} \label{fig:sim4f5t5}
          \end{subfigure}
        \begin{subfigure}{.21\textwidth}
         \includegraphics[width=\textwidth]{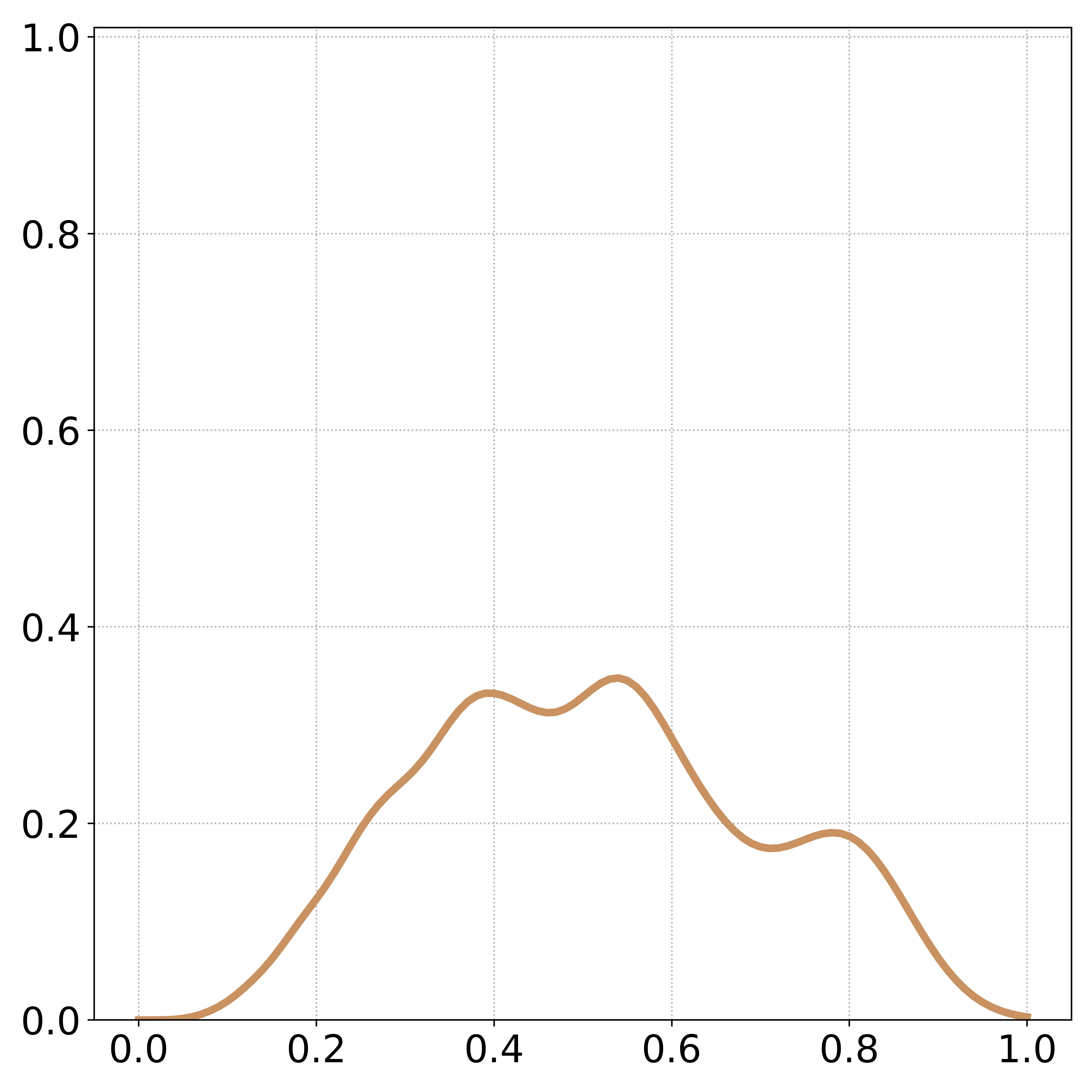}
         \caption{$f_5$ at $t = 10$}\label{fig:sim4f5t10}
          \end{subfigure}
          \begin{subfigure}{.21\textwidth}
        \includegraphics[width=\textwidth]{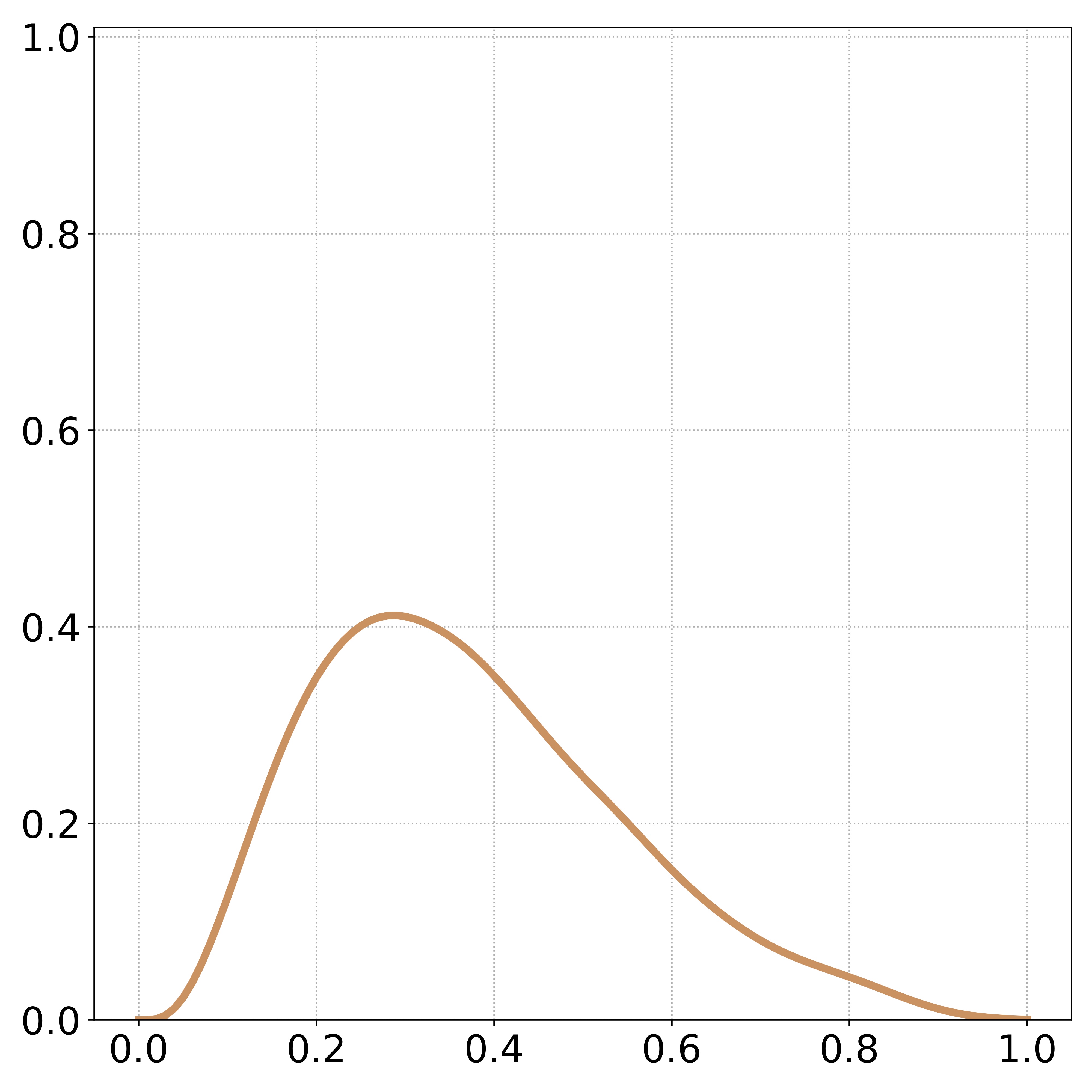}
          \caption{$f_5$ at $t = 20$}
           \label{fig:sim4f5t20}
          \end{subfigure}\\
  \caption{Time evolution of the kinetic system from Figure \ref{fig:sys5} using the transition functions from Table \ref{tab:5partTrans}. The figure shows how the microstate distribution changes over time based on these transitions. Each row represents the evolution of a different subsystem over the interval $[0,1]$.}
  \label{fig:sim4}
\end{figure}

\bibliographystyle{elsarticle-num} 
\bibliography{ref.bib}

@article{gillespie,
author = {Gillespie, Daniel T.},
title = {Exact stochastic simulation of coupled chemical reactions},
journal = {The Journal of Physical Chemistry},
volume = {81},
number = {25},
pages = {2340-2361},
year = {1977},
doi = {10.1021/j100540a008},

URL = { 
    
        https://doi.org/10.1021/j100540a008
    
    

},
eprint = { 
    
        https://doi.org/10.1021/j100540a008
    
    

}

}

@article{anderson2012multilevel,
  title={Multilevel Monte Carlo for continuous time Markov chains, with applications in biochemical kinetics},
  author={Anderson, David F and Higham, Desmond J},
  journal={Multiscale Modeling \& Simulation},
  volume={10},
  number={1},
  pages={146--179},
  year={2012},
  publisher={SIAM}
}

@article{cao2006efficient,
  title={Efficient step size selection for the tau-leaping simulation method},
  author={Cao, Yang and Gillespie, Daniel T and Petzold, Linda R},
  journal={The Journal of chemical physics},
  volume={124},
  number={4},
  year={2006},
  publisher={AIP Publishing}
}

@article{gillespie2001approximate,
  title={Approximate accelerated stochastic simulation of chemically reacting systems},
  author={Gillespie, Daniel T},
  journal={The Journal of chemical physics},
  volume={115},
  number={4},
  pages={1716--1733},
  year={2001},
  publisher={AIP Publishing}
}

@article{rathinam2007reversible,
  title={Reversible-equivalent-monomolecular tau: A leaping method for “small number and stiff” stochastic chemical systems},
  author={Rathinam, Muruhan and El Samad, Hana},
  journal={Journal of Computational Physics},
  volume={224},
  number={2},
  pages={897--923},
  year={2007},
  publisher={Elsevier}
}

@article{yates2020blending,
  title={The blending region hybrid framework for the simulation of stochastic reaction--diffusion processes},
  author={Yates, Christian A and George, Adam and Jordana, Armand and Smith, Cameron A and Duncan, Andrew B and Zygalakis, Konstantinos C},
  journal={Journal of the Royal Society Interface},
  volume={17},
  number={171},
  pages={20200563},
  year={2020},
  publisher={The Royal Society}
}

@article{weidlich1971statistical,
  title={The statistical description of polarization phenomena in society},
  author={Weidlich, Wolfgang},
  journal={British Journal of Mathematical and Statistical Psychology},
  volume={24},
  number={2},
  pages={251--266},
  year={1971},
  publisher={Wiley Online Library}
}

@article{toscani2006kinetic,
  title={Kinetic models of opinion formation},
  author={Toscani, Giuseppe},
  year={2006}
}

@article{flegg2012two,
  title={The two-regime method for optimizing stochastic reaction--diffusion simulations},
  author={Flegg, Mark B and Chapman, S Jonathan and Erban, Radek},
  journal={Journal of the Royal Society Interface},
  volume={9},
  number={70},
  pages={859--868},
  year={2012},
  publisher={The Royal Society}
}

@article{leverso2025measuring,
  title={Measuring Online--Offline Spillover of Gang Violence Using Bivariate Hawkes Processes},
  author={Leverso, John and Diouane, Youness and Mohler, George},
  journal={Journal of Quantitative Criminology},
  volume={41},
  number={1},
  pages={103--131},
  year={2025},
  publisher={Springer}
}

@article{resat2009kinetic,
  title={Kinetic modeling of biological systems},
  author={Resat, Haluk and Petzold, Linda and Pettigrew, Michel F},
  journal={Computational systems biology},
  pages={311--335},
  year={2009},
  publisher={Springer}
}

@book{cristiani2014multiscale,
  title={Multiscale modeling of pedestrian dynamics},
  author={Cristiani, Emiliano and Piccoli, Benedetto and Tosin, Andrea},
  volume={12},
  year={2014},
  publisher={Springer}
}

@article{cordier2005kinetic,
  title={On a kinetic model for a simple market economy},
  author={Cordier, Stephane and Pareschi, Lorenzo and Toscani, Giuseppe},
  journal={Journal of Statistical Physics},
  volume={120},
  pages={253--277},
  year={2005},
  publisher={Springer}
}

@book{pareschi2013interacting,
  title={Interacting multiagent systems: kinetic equations and Monte Carlo methods},
    chapter={Ch. 5},
  author={Pareschi, Lorenzo and Toscani, Giuseppe},
  year={2013},
  publisher={OUP Oxford}
}

@article{borsche2014coupling,
  title={Coupling traffic flow networks to pedestrian motion},
  author={Borsche, Raul and Klar, A and K{\"u}hn, S and Meurer, A},
  journal={Mathematical Models and Methods in Applied Sciences},
  volume={24},
  number={02},
  pages={359--380},
  year={2014},
  publisher={World Scientific}
}

@article{bellomo2014systems,
title = {From a systems theory of sociology to modeling the onset and evolution of criminality},
journal = {Networks and Heterogeneous Media},
volume = {10},
number = {3},
pages = {421-441},
year = {2015},
issn = {1556-1801},
doi = {10.3934/nhm.2015.10.421},
author = {Nicola  Bellomo and Francesca  Colasuonno and Damián  Knopoff and Juan Soler},
keywords = {Kinetic theory, active particles, stochastic games, system theory, sociology, social systems, criminality}
}

@article{franceschi2022spreading,
  title={Spreading of fake news, competence and learning: kinetic modelling and numerical approximation},
  author={Franceschi, Jonathan and Pareschi, Lorenzo},
  journal={Philosophical Transactions of the Royal Society A},
  volume={380},
  number={2224},
  pages={20210159},
  year={2022},
  publisher={The Royal Society}
}

@article{dimarco2020wealth,
  title={Wealth distribution under the spread of infectious diseases},
  author={Dimarco, Giacomo and Pareschi, Lorenzo and Toscani, Giuseppe and Zanella, Mattia},
  journal={Physical Review E},
  volume={102},
  number={2},
  pages={022303},
  year={2020},
  publisher={APS}
}

@book{bird1994molecular,
  title={Molecular gas dynamics and the direct simulation of gas flows},
  author={Bird, Graeme A},
  year={1994},
  publisher={Oxford university press}
}

@book{frenkel2023understanding,
  title={Understanding molecular simulation: from algorithms to applications},
  author={Frenkel, Daan and Smit, Berend},
  year={2023},
  publisher={Elsevier}
}

@article{gillespie2007stochastic,
  title={Stochastic simulation of chemical kinetics},
  author={Gillespie, Daniel T},
  journal={Annu. Rev. Phys. Chem.},
  volume={58},
  number={1},
  pages={35--55},
  year={2007},
  publisher={Annual Reviews}
}

@book{helbing2012social,
  title={Social self-organization: Agent-based simulations and experiments to study emergent social behavior},
  author={Helbing, Dirk},
  year={2012},
  publisher={Springer}
}

@book{bellomo2013modeling,
  title={Modeling in applied sciences: A kinetic theory approach},
  author={Bellomo, Nicola and Pulvirenti, Mario},
  year={2013},
  publisher={Springer Science \& Business Media}
}

@book{cercignani1988boltzmann,
  title={The boltzmann equation},
  author={Cercignani, Carlo and Cercignani, Carlo},
  year={1988},
  publisher={Springer}
}

@article{galam1994towards,
  title={Towards a theory of collective phenomena. II: Conformity and Power},
  author={Galam, Serge and Moscovici, Serge},
  journal={European journal of social psychology},
  volume={24},
  number={4},
  pages={481--495},
  year={1994},
  publisher={Wiley Online Library}
}

@article{bellomo2013difficult,
  title={On the difficult interplay between life," complexity", and mathematical sciences},
  author={Bellomo, Nicola and Knopoff, Damian and Soler, Juan},
  journal={Mathematical Models and Methods in Applied Sciences},
  volume={23},
  number={10},
  pages={1861--1913},
  year={2013},
  publisher={World Scientific}
}

@book{hormander2007analysis,
  title={The analysis of linear partial differential operators III: Pseudo-differential operators},
  author={H{\"o}rmander, Lars},
  year={2007},
  publisher={Springer Science \& Business Media}
}

@book{hughes2003finite,
  title={The finite element method: linear static and dynamic finite element analysis},
  author={Hughes, Thomas JR},
  year={2003},
  publisher={Courier Corporation}
}

@book{canuto2007spectral,
  title={Spectral Methods: Evolution to Complex Geometries and Applications to Fluid Dynamics},
  author={Canuto, C},
  year={2007},
  publisher={Springer-Verlag}
}

@book{nowak2006evolutionary,
  title={Evolutionary dynamics: exploring the equations of life},
  author={Nowak, Martin A},
  year={2006},
  publisher={Harvard university press}
}

@article{short2010cooperation,
  title={Cooperation and punishment in an adversarial game: How defectors pave the way to a peaceful society},
  author={Short, Martin B and Brantingham, P Jeffrey and D’orsogna, Maria R},
  journal={Physical Review E—Statistical, Nonlinear, and Soft Matter Physics},
  volume={82},
  number={6},
  pages={066114},
  year={2010},
  publisher={APS}
}

@article{bertozzimathematics,
  title={Mathematics of crime},
  author={Bertozzi, Andrea},
  journal={Mathematical Modelling Of Complex Systems},
  pages={4}
}

@article{short2008statistical,
  title={A statistical model of criminal behavior},
  author={Short, Martin B and D'orsogna, Maria R and Pasour, Virginia B and Tita, George E and Brantingham, Paul J and Bertozzi, Andrea L and Chayes, Lincoln B},
  journal={Mathematical Models and Methods in Applied Sciences},
  volume={18},
  number={supp01},
  pages={1249--1267},
  year={2008},
  publisher={World Scientific}
}
\end{document}